\documentclass[a4paper,11pt]{amsart}
\usepackage{amssymb,mathrsfs}
\usepackage{amsthm}
\usepackage{amsmath}
\usepackage{latexsym}
\usepackage{fancyhdr}
\usepackage{ascmac}
\usepackage{color}
\usepackage[all]{xy}
\usepackage{amscd}

\setlength{\textwidth}{167mm}
\setlength{\textheight}{250mm}
\setlength{\topmargin}{-18mm}
\setlength{\oddsidemargin}{-1mm}
\setlength{\evensidemargin}{-1mm}

\theoremstyle{plain}
\newtheorem{thm}{Theorem}[section]
\newtheorem{prop}[thm]{Proposition}
\newtheorem{cor}[thm]{Corollary}
\newtheorem{lem}[thm]{Lemma}
\newtheorem{dfn}[thm]{Definition}

\newtheorem{rmk}[thm]{Remark}
\pagestyle{plain}

\makeatletter
 
  \@addtoreset{equation}{section}
\makeatother

\newcommand{\bQ}{\overline{\mathbb{Q}}}

\newcommand{\bF}{\overline{\mathbb{F}}}

\newcommand{\C}{\mathbb{C}}
\newcommand{\R}{\mathbb{R}}
\newcommand{\Q}{\mathbb{Q}}

\newcommand{\Z}{\mathbb{Z}}

\newcommand{\N}{\mathbb{N}}
\newcommand{\F}{\mathbb{F}}

\newcommand{\lra}{\longrightarrow}
\newcommand{\lla}{\longleftarrow}

\newcommand{\E}{\mathcal{E}}

\newcommand{\A}{\mathbb{A}}
\newcommand{\D}{\mathbb{D}}
\newcommand{\vp}{\varphi}
\newcommand{\mA}{\mathcal{A}}
\renewcommand{\O}{\mathcal{O}}

\newcommand{\br}{\overline{\rho}}

\newcommand{\diag}{{\rm diag}}
\newcommand{\ds}{\displaystyle}

\newcommand{\X}{\mathcal{X}}
\newcommand{\G}{\Gamma}

\newcommand{\bS}{\overline{S}}

\newcommand{\uk}{\underline{k}}
\newcommand{\w}{\omega}

\newcommand{\e}{\varepsilon}

\title[The weight reduction of mod $p$ Siegel modular forms for $GSp_4$ and 
theta operators]
{The weight reduction of mod $p$ Siegel modular forms for $GSp_4$ and 
theta operators}
\author{Takuya Yamauchi}
\keywords{mod $p$ Siegel modular forms, partial Hasse invariants, Galois representations}
\thanks{The author
is partially supported by JSPS KAKENHI Grant Number (B) No.19H01778}
\subjclass[2010]{11F, 11F33, 11F80}

\address{Takuya Yamauchi \\ 
Mathematical Inst. Tohoku Univ.\\
 6-3,Aoba, Aramaki, Aoba-Ku, Sendai 980-8578, JAPAN}
\email{takuya.yamauchi.c3@tohoku.ac.jp}

\begin{document}

\maketitle

\begin{abstract}
In this paper we investigate the (classical) weights of mod $p$ Siegel modular forms of degree 2 toward 
studying Serre's conjecture for $GSp_4/\Q$. 
We first construct various theta operators on the space of such forms 
\`a la Katz and define the theta cycles for the specific theta operators. Secondly, we study the partial Hasse invariants on 
each Ekedahl-Oort stratum and their local behaviors.    
This enables us to obtain a kind of weight reduction theorem for mod $p$ Siegel modular 
forms of degree 2 without increasing the level.  
\end{abstract}

\tableofcontents

\section{Introduction}
Let $f$ be an elliptic cuspidal eigenform  of level $N$ and weight $k$ with character $\e$. 
It is well-known that for each prime $p$, one can associate to $f$ a mod $p$ Galois representation $\br_{f,p}:
G_\Q:={\rm Gal}(\bQ/\Q)\lra {\rm GL}_2(\bF_p)$. 
On the other hand, for any odd, irreducible Galois representation $\br:G_\Q\lra {\rm GL}_2(\bF_p)$, Serre \cite{serre} defined three invariants  $(k(\br),N(\br),\e(\br))$, 
called the weight, level, and a character, in this order. 
The first two invariants are positive integers and 
the last is a finite character of $G_\Q$ unramified outside $N(\br)$. 
In that definition the most difficult one is the weight $k(\br)$ and others are easy to define from $\br$. 
We say such a representation $\br$ is modular if there exists an elliptic 
cuspidal eigenform $f$ such that $\br\sim \br_{f,p}$. 
If so, there exist other cuspidal eigenforms with the same property. 
It is important to specify a minimal choice  of 
the weight (also the level and the character) among the candidates. Serre conjectured 
that if $\br$ is modular, then one can find 
an $f$ with weight $k(\br)$  
such that $\br\sim \br_{f,p}$. This conjecture is called the Serre's 
weight conjecture (for $GL_2/\Q$) and it was proved by Edixhoven in 
\cite{Edix}. After that, the modularity of  $\br$ as above was completely proved by Khare-Wintenberger and Serre's weight conjecture played an essential role there (see \cite{kw1},\cite{kw2}). 
In \cite{Edix}, Edixhoven exploited the theory of mod $p$ modular forms by Katz (cf. \cite{katz}) and 
theta cycles studied by Jochnowitz \cite{joch}. Plugging these into several deep properties of Galois 
representations, he proved Serre's weight conjecture. 
The weight conjecture has been studied in the context of 
a mod $p$ local Langlands conjecture 
and in Gee's philosophy so that how a given mod $p$ (local) Galois 
representation can lift to a crystalline lift (cf.  \cite{herzig}, \cite{bdj},\cite{gee},\cite{GHS}). 

Let $GSp_4=GSp_J$ be the symplectic similitude group in $GL_4$ associated to $J=\begin{pmatrix} 0_2& s\\-s &0_2\end{pmatrix},\ 
s=\begin{pmatrix} 0& 1\\1 &0\end{pmatrix}$  with the similitude character $\nu:GSp_4\lra GL_1$. We regard  $GSp_4$ as a group scheme over $\Z$. 
In this paper, we are concerned with the weight conjecture for $GSp_4$. 
Let $S_{N,p}$ be the Siegel modular threefold over $\bF_p$ with principal level  
structure $N\ge 3$. 
Let $F$ be a mod $p$ (geometric) Siegel cuspidal eigenform of degree 2 of level $N$ and the weight 
$(k_1,k_2), k_1\ge k_2\ge 1$,  
which is regarded as a global section of an automorphic sheaf on $S_{N,p}$ (see the next section 
for the precise definition). 
Thanks to the works \cite{taylor-thesis},\cite{taylor},\cite{taylor-low},\cite{laumon},\cite{wei1},\cite{wei}, 
multiplying by the Hasse invariant if necessary, 
one can associate to $F$ a mod $p$ Galois representation 
$\br_{F,p}:G_\Q\lra {\rm GL}_4(\bF_p)$. It is easy to see that in fact  
$\br_{F,p}$ takes the values in ${\rm GSp}_4(\bF_p)$. 
It also satisfies that $\nu(\br_{F,p}(c))=-1$, for any complex conjugation $c$, and 
we call such $\br_{F,p}$ symplectically odd.  
Conversely, one expects that any symplectically odd, irreducible Galois representation 
$\br:G_\Q\lra {\rm GSp}_4(\bF_p)$ is modular, namely, that there exists 
an $F$ as above such that $\br\sim \br_{F,p}$. To be more precise, it means that 
$\iota\circ \br \sim \iota\circ \br_{F,p}$ in ${\rm GL}_4(\bF_p)$, where 
$\iota:GSp_4\hookrightarrow GL_4$ is the natural inclusion.  We often say 
$\br$ is modular of weight $(k_1,k_2)$ to insist the weight of $F$.    
The main purpose of this paper is to study weight reduction theorem for 
classical weights of mod $p$ 
Siegel modular forms of degree 2. 
We will prove the following theorem:       
\begin{thm}\label{main-thm1} 
Suppose $p\ge 5$. 
Let $\br:G_\Q\lra {\rm GSp}_4(\bF_p)$ be a 
continuous representation which is not necessarily irreducible. Assume that 
$\br$ comes from a mod $p$ Siegel cuspidal eigenform of degree 2. 
There exist an integer $0\le \alpha \le p-2$ and a mod $p$ Siegel cuspidal eigenform $F$ of weight $(k_1,k_2),\ k_1\ge k_2\ge 1$ 
such that 
\begin{enumerate}
\item $p>k_1-k_2+3$ and $k_2\le p^4+p^2+2p+1$,
\item $F$ is not identically zero on the superspecial locus in $S_{N,p}$, and 
\item $\iota\circ\br\sim \iota\circ(\overline{\chi}^{\alpha}_p\otimes \br_{F,p})$,  
\end{enumerate}
where $\overline{\chi}_p$ stands for the mod $p$ cyclotomic character of $G_\Q$. 
\end{thm}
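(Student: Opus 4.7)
The plan is to mimic Edixhoven's $GL_2$ strategy, replacing the Hasse invariant and the single theta operator by their $GSp_4$ analogues developed in the earlier sections. I start with an arbitrary mod $p$ Hecke eigen Siegel cusp form $F_0$ of some weight $(a_1,a_2)$ satisfying $\iota\circ\br\sim \iota\circ \br_{F_0,p}$, which exists by the modularity hypothesis of the theorem.

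In the first phase I use the partial Hasse invariants attached to the Ekedahl--Oort strata to remove superfluous vanishing. If the current form vanishes identically along a given closed Ekedahl--Oort substratum, the corresponding partial Hasse invariant divides it as a global section of the relevant automorphic sheaf, and the quotient is still a Hecke eigenform of strictly smaller weight with the same Galois representation: the partial Hasse invariants have weight a multiple of $p-1$ and their Hecke eigenvalues are trivial, so they contribute no Galois twist. Iterating, and applying the local analysis of the partial Hasse invariants on the superspecial locus established earlier, I reach a form $F_1$ whose restriction to the superspecial locus is non-zero, which secures condition (2).

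The second phase uses the theta operators $\theta_i$ of Katz type. Each $\theta_i$ sends Hecke eigenforms to Hecke eigenforms, shifts the weight by an explicit vector depending on $i$, and multiplies the associated Galois representation by $\ochi_p$. Iterating them in the prescribed order produces a theta cycle. Along the cycle the weight vector moves in a bounded region: whenever it becomes too large in a direction controlled by a partial Hasse invariant, I strip that invariant off as in the first phase; whenever it drops too low, the next theta operator pushes it back up. Choosing the point of the cycle at which $(k_1,k_2)$ is minimal subject to non-vanishing on the superspecial locus then produces the desired form $F$ and the cumulative twist $\alpha\in\{0,\ldots,p-2\}$ is just the net number of theta operators applied minus the number of times the action of Frobenius on a Hasse invariant has been inverted.

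The main obstacle is the combinatorial bookkeeping on the weight lattice of these two competing moves: each partial Hasse invariant has a known weight shift, each $\theta_i$ has its own weight shift, and one must show that the cycle closes up in such a way that at some point in it both $k_1-k_2<p-3$ and $k_2\le p^4+p^2+2p+1$ hold simultaneously. The explicit shape of the latter bound strongly suggests that it is obtained by tracking the maximal $k_2$-value attainable along one full traversal of the theta cycle within the regular weight cone carved out by the inequality $p>k_1-k_2+3$. A subsidiary but unavoidable point is to verify, at every step, that (i) the operation is compatible with the Hecke eigenvalues of $F$, (ii) the Galois representation is modified only by multiplication by a power of $\ochi_p$, and (iii) applying a theta operator never re-introduces identical vanishing on the superspecial locus in a way that cannot subsequently be cleaned up without undoing the weight reductions already achieved.
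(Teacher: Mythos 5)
There is a genuine gap, and in fact the proposed mechanism is not the one that makes the theorem work. Two of your main moves fail, and the crucial step of the paper is absent.

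First, the step ``if the current form vanishes identically along a closed Ekedahl--Oort substratum, the corresponding partial Hasse invariant divides it as a global section'' is not available. The partial Hasse invariants $H_{(1,1)}$ and $H_{(0,1)}$ of Section 3.4--3.5 are defined only on the respective stratum closures $\overline{S}^\ast_{(1,1)}$ and $\overline{S}^\ast_{(0,1)}$ (they are sections of $\omega^{p^2-1}|_{\overline{S}^\ast_{(1,1)}}$, resp.\ $\omega^{p^4-1}|_{\overline{S}^\ast_{(0,1)}}$), not of any sheaf on all of $\overline{S}_{N,p}$; so they cannot divide a global eigenform. Only $H_{p-1}$ is a global section of $\omega^{p-1}$, and dividing by it cannot by itself move $k_2$ below $p+1$ or secure nonvanishing on the superspecial locus. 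In the paper, the nonvanishing on $S_{(0,0)}$ is obtained instead from Ghitza's theorem (Theorem \ref{eigensystem} and Corollary \ref{cor-of-eigensystem}) which says every mod $p$ Hecke eigensystem already occurs in $SS_{(k'_1,k'_2)}$ for some weight; the partial Hasse invariants are used for a completely different purpose, namely to set up the short exact sequences of sheaves on the closed strata and prove the cohomology vanishing results (Propositions \ref{coh1}--\ref{coh3}) underlying Corollary \ref{surj}.

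Second, the theta operators $\theta^{\underline{k}}_i$ all carry the factor $H_{p-1}$, so they \emph{raise} $k_2$ by roughly $p+1$ at every step (Proposition \ref{theta-vector}); none of them reduces $k_2$, and the theta cycle alone therefore never delivers a small $k_2$. The actual $k_2$-reduction happens on the superspecial locus: there the superspecial form $B$ of weight $p+1$ from Proposition \ref{B} is \emph{invertible}, so one can divide by it freely in $SS_{\bullet}$ and bring $k_2$ into $[1,p+1]$. One then multiplies back by the smallest power of $B$ making $k_2>p^4+p^2+p$, and lifts the resulting superspecial form to a genuine cusp form via the surjectivity $H^0(\overline{S}_{N,p},\omega_{\underline{k}})\twoheadrightarrow H^0(S_{(0,0)},\omega_{\underline{k}}|_{S_{(0,0)}})$ of Corollary \ref{surj}, which holds only for $k_2>p^4+p^2+p$. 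That threshold is exactly the origin of the bound $k_2\le p^4+p^2+2p+1$ in part (1), and it is not visible anywhere in your sketch: you attribute the bound to ``tracking the maximal $k_2$-value along one traversal of the theta cycle'', but the theta-cycle bounds are of size $O(p^2)$, not $O(p^4)$. Finally, the twist $\alpha$ is accounted for not only by the theta operators but mainly by the Frobenius/Winter reduction of Theorem \ref{reduction}, which brings $k_1-k_2$ below $p$ at the cost of a $\overline{\chi}^m_p$ twist; this step is also missing from your proposal.
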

By using the operator $\theta_1=\theta^{(k_1,k_2)}_1$ defined in Section \ref{theta-operators}, 
one can reduce $k_1-k_2$ to $0$ or 1 according to its parity though 
$k_2$ may be changed. Therefore we also have the following  theorem: 
\begin{thm}\label{main-thm2} 
Suppose $p\ge 5$. 
Let $\br:G_\Q\lra {\rm GSp}_4(\bF_p)$ be a 
continuous representation which is not necessarily irreducible. Assume that 
$\br\sim \br_{G,p}$ for some mod $p$ Siegel cuspidal eigenform $G$ of degree 2 of 
weight $(k'_1,k'_2)$ with $k'_1\ge k'_2\ge 1$. Put $\varepsilon=\frac{1}{2}(1-(-1)^{k'_1-k'_2})\in \{0,1\}$. Then, for each 
$i$ satisfying $0\le i< \ds\frac{p-3}{2}$, 
there exist an integer $0\le \alpha \le p-2$ and a mod $p$ Siegel cuspidal eigenform $F$ of weight $(k_1,k_2),\ k_1\ge k_2\ge 1$ 
such that 
\begin{enumerate}
\item $k_1-k_2=\varepsilon+2i$ and $k_2\le p^4+p^2+2p+1$,
\item $F$ is not identically zero on the superspecial locus in $S_{N,p}$,  
\item $\iota\circ\br\sim \iota\circ(\overline{\chi}^{\alpha}_p\otimes \br_{F,p})$. 
\end{enumerate}
\end{thm}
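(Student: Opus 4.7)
The plan is to combine Theorem \ref{main-thm1} with iterated use of the operator $\theta^{(k_1,k_2)}_1$ constructed earlier in the paper. First I would apply Theorem \ref{main-thm1} to $\br$ to obtain a Hecke eigen cusp form $F_0$ of some weight $(k_1^{(0)}, k_2^{(0)})$ and an exponent $\alpha_0 \in [0, p-2]$ satisfying the three conditions listed there; in particular, $F_0$ is non-vanishing on the superspecial locus, $k_2^{(0)} \le p^4+p^2+2p+1$, $k_1^{(0)}-k_2^{(0)}<p-3$, and $\iota\circ\br\sim\iota\circ(\overline{\chi}_p^{\alpha_0}\otimes\br_{F_0,p})$. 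Cyclotomic twists shift both $k_1$ and $k_2$ by equal amounts, while the theta operators used in the proof of Theorem \ref{main-thm1} alter $k_1-k_2$ only by even integers; hence the parity of $k_1^{(0)}-k_2^{(0)}$ is forced to coincide with that of $k_1'-k_2'$, namely $\varepsilon$.

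Next, I would invoke $\theta^{(k_1,k_2)}_1$: a single application drops $k_1-k_2$ by $2$ (accompanied by a controlled change in $k_2$) and on the Galois side corresponds to a twist by $\overline{\chi}_p$. Applying $\theta^{(k_1,k_2)}_1$ the correct number of times, or, if $\varepsilon + 2i > k_1^{(0)}-k_2^{(0)}$, instead multiplying by an appropriate partial Hasse invariant to traverse the theta cycle in the opposite direction, brings the difference to any prescribed target $\varepsilon+2i$ with $0\le i\le (p-1)/4$. The cumulative twist on the Galois side is absorbed into a new exponent $\alpha\in[0,p-2]$ via $\overline{\chi}_p^{p-1}=1$, yielding condition (3); and the controlled growth in $k_2$ at each step keeps us inside the bound $p^4 + p^2 + 2p + 1$, yielding (1).

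The main obstacle will be to preserve condition (2), namely the non-vanishing of the resulting form on the (zero-dimensional) superspecial locus throughout the procedure. This is precisely where the local study of the partial Hasse invariants on each Ekedahl-Oort stratum carried out earlier in the paper enters: the explicit local description of these invariants near superspecial points controls how $\theta^{(k_1,k_2)}_1$ and the partial Hasse invariants act on the stalk of $F_0$ at each such point. The upper bound $i \le (p-1)/4$ should in fact emerge as the precise length of the sub-cycle of the theta cycle through $F_0$ along which non-vanishing on the superspecial locus can be guaranteed, governed by the order of the relevant partial Hasse invariant. With this superspecial non-vanishing control in hand, the bookkeeping of weights and cyclotomic twists is essentially routine, and the proof reduces to an induction on $i$ along the theta cycle.
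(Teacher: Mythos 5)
Your overall skeleton (bring the prime-to-$p$ difference $k_1-k_2$ to the prescribed target via $\theta_1^{(k_1,k_2)}$, then reduce $k_2$ using the superspecial-form argument of Theorem~\ref{main00}) matches the paper, but there is a concrete error in the ``increase the difference'' branch. The partial Hasse invariants $H_{p-1}$, $H_{(1,1)}$, $H_{(0,1)}$, and also the superspecial form $B$ of weight $p+1$, are all \emph{scalar}-valued: they are sections of powers of $\omega=\det\mathcal E$, so multiplying by any of them shifts $(k_1,k_2)$ to $(k_1+c,k_2+c)$ for the same $c$ and leaves $k_1-k_2$ unchanged. They cannot ``traverse the theta cycle in the opposite direction'' to push $k_1-k_2$ up. (The operators that do increase the difference are $\theta$, which sends weight $k$ to $(k+p+1,k+p-1)$, and $\theta_3^{\underline k}$, which sends $(k_1,k_2)$ to $(k_1+p+1,k_2+p-1)$; neither is what you invoke.)

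This matters because you propose to start from the \emph{output} of Theorem~\ref{main-thm1}, whose proof may have already driven $k_1-k_2$ down to $0$ or $1$; from there, without an honest difference-increasing operator, targets $\varepsilon+2i$ with $i>0$ would be unreachable. The paper's own proof avoids this by \emph{re-running} the argument of Theorem~\ref{main00} and simply \emph{stopping} the $\theta_1^{(k_1,k_2)}$ iteration (applied after the Frobenius reduction of Theorem~\ref{reduction}) at the desired value $k_1-k_2=\varepsilon+2i$, rather than going all the way down, and only then reducing $k_2$ via restriction to the superspecial locus, multiplication by $B$, and the Hecke-equivariant surjection of Corollary~\ref{surj}. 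Also, the claim that the bound $i\le (p-1)/4$ emerges as the ``length of the sub-cycle of the theta cycle'' governed by partial Hasse invariant orders has no basis in the paper: non-vanishing of $\theta_1^{(k_1,k_2)}(F)$ on $S_{(0,0)}$ is controlled by Theorem~\ref{non-vanishing2} (the congruence $k_1-3k_2\not\equiv 0\bmod p$, arranged via Remark~\ref{remark1}), not by the Ekedahl--Oort strata computations, and the paper's proof does not derive the bound $(p-1)/4$ from $\theta$-cycle structure at all.
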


The (classical) weight of a mod $p$ Siegel modular form is defined by a pair of integers 
$(k_1,k_2),k_1\ge k_2\ge 1$ and 
it corresponds to the algebraic representation 
${\rm Sym}^{k_1-k_2}{\rm St}_2\otimes {\rm det}^{k_2} {\rm St}_2$ of $GL_2$. 
The above theorem enables us to narrow down the possible weights to 
a specific range of weights via the consideration of mod $p$ Galois representations 
up to a twist by a power of the mod $p$ 
cyclotomic character.    

In the course of proofs of the above theorems, 
 several kinds of theta operators play an important role. 
Harris \cite{harris} geometrically studied differential operators acting on 
Siegel modular forms over the complex numbers. 
Our operators are mod $p$ analogues of some of these classical differential operators.   

By using geometric techniques we define theta 
operators (in Section \ref{theta-operators}) which preserve 
cuspidality: 
\begin{itemize}
\item for $i=1,2,3$, $k_1\ge k_2\ge 1$, and $p>k_1-k_2+2$, 
$$\theta_i:GM_{(k_1,k_2)}(\G(N),\bF_p)\lra 
GM_{(k_1+p-2+i,k_2+p+2-i)}(\G(N),\bF_p)$$
and 
\item for $k\ge 1$ and $p\ge 5$, 
$$\Theta:GM_{(k,k)}(\G(N),\bF_p)\lra 
GM_{(k+p+1,k+p+1)}(\G(N),\bF_p)$$
\end{itemize}
where $GM_{(k_1,k_2)}(\G(N),\bF_p)$ stands for the space of geometric 
modular forms over $\bF_p$ of weight $(k_1,k_2)$ with respect to $\G(N)$. 
For a Hecke eigenform $F$ in $GM_{(k_1,k_2)}(\G(N),\bF_p)$, 
we denote by $\lambda_F(\ell^\alpha)$ the Hecke eigenvalue of the 
Hecke operator $T(\ell^\alpha)$ for a prime $\ell$ and a non-negative integer $\alpha$ 
(see next section). 
Summarizing some of our results concerning theta operators, we have 
\begin{thm}$($Proposition \ref{theta} and  Proposition  \ref{theta-vector}$)$ 
If  $F$ is a cuspidal eigenform in $GM_{(k_1,k_2)}(\G(N),\bF_p)$, then 
\begin{enumerate}
\item $\theta_i(F)$ and $\Theta(F)$ are also cuspidal eigenforms; 
\item Suppose neither of them is identically zero, then for each $i=1,2,3$, 
$$\lambda_{\theta_i(F)}(\ell^\alpha)=\ell^\alpha\lambda_F(\ell^\alpha),\ 
\lambda_{\Theta(F)}(\ell^\alpha)=\ell^{2\alpha}\lambda_F(\ell^\alpha)$$
respectively. 
In other words, 
$$\br_{\theta_i(F),p}\sim \overline{\chi}_p\otimes \br_{F,p},\ 
\br_{\Theta(F),p}\sim \overline{\chi}^2_p\otimes \br_{F,p}.$$
\end{enumerate}
\end{thm}
 
By using these operators and working on the superspecial locus with Ghitza's results (\cite{ghitza1},\cite{ghitza2}), we will reduce the difference $k_1-k_2$ to satisfy  $p>k_1-k_2+3$. 
In particular, the theta operator $\theta_1$ reduces $k_1-k_2$ to $k_1-k_2-2$ but, 
instead,  
$k_2$ increases. 
This process can be continued if necessary until $k_1-k_2$ becomes 0 or 1, depending 
on the parity of the difference of the original weights.  
Next we will reduce $k_2$ by making use of the partial 
Hasse invariant on each Ekedahl-Oort stratum in $S_{N,p}$ and 
its extension to the Zariski closure of each stratum. 
The partial Hasse invariants we use here were defined by Oort in \cite{oort2} and 
several people have studied how such invariants extend (see \cite{boxer},\cite{Kos},\cite{GS}). In this paper we will apply the method of 
 \cite{GS} which shows us not only a way to extend but also the local  behaviors of the partial Hasse invariants 
along the boundary.  
We then restrict our mod $p$ Siegel modular forms to each stratum. The bottom stage is the 
superspecial locus  and there one can reduce $k_2$ to be less than $p+1$. Then we will lift them up to a form on $S_{N,p}$ keeping 
the difference $k_1-k_2$ but 
the vanishing of the obstruction for the liftability constraints us so that we have to increase $k_2$ depending on the weights 
of partial Hasse invariants.  

Our future purpose is definitely to study $\br_{F,p}|_{G_{\Q_p}}$ where $G_{\Q_p}:={\rm Gal}(\bQ_p/\Q_p)$. 
It should be studied in terms of $p$-adic Hodge theory as in \cite{blz}, \cite{berger1} \cite{bg}, \cite{yasuda-yamashita} for the 2-dimensional case.  
In the reducible case  
we would be able to apply similar arguments. However, if $\br_{F,p}|_{G_{\Q_p}}$ is irreducible we need to extend the previous results in loc.cit. to 
the higher dimensional case.  
Further as the readers would have realized, the bound for $k_2$ in the main theorem is too big and 
this will complicate studying the integral $p$-adic Hodge structure of the representations in question.  
 We would address this program somewhere else. 

On the other hand, the theta cycles studied in this paper have some interesting nature which do not appear in \cite{joch}, \cite{Edix} in the case of $GL_2/\Q$.     
It seems also interesting to study a relation between the weights in the theta cycles and 
the conjectural Serre weights defined in \cite{til&her}, \cite{tilouine1}. 
It may be possible to carry out it by using the results in \cite{gg} but this will be addressed in the future.  

The paper is organized as follows. 
In Section 2 we recall the basics of Siegel modular forms of degree 2. 
The readers who are familiar with such objects may skip this section. 
The explicit formulas for Hecke operators in Section 2  
will be used in the next section to compute the effect of theta operators on 
Hecke eigenvalues. Section 3 is devoted to studying  
mod $p$ Siegel modular forms and to the definition of various theta operators \`a la Katz. 
In Section 4, we study weight reduction by using Ghitza's works. 
In Section 5 and 6 we give the definition of theta cycles and study their basic properties.  
Finally, in the appendix we give an explicit form of Pieri's formula for a non-canonical 
decomposition of the tensor product of two symmetric representations of $GL_2/\bF_p$. This decomposition will be used 
to construct various theta operators.  

While preparing this paper the author realized that Max Flander and Ghitza-McAndrew have studied similar operators for 
mod $p$ Siegel modular forms of general degree  (see \cite{flander}, \cite{GMc}). 
As can be expected, some of the results in this paper have been generalized in  \cite{GMc}. 
We should also mention the theory of theta operators are developed by 
de Shalit and Goren \cite{GS},\cite{GS1} for unitary Shimura varieties associated to 
$U(n,1)$ and by Eischen with her collaborators \cite{EFGMM},\cite{EM} for split 
unitary groups and symplectic groups. 

Needless to say, our theory of theta operators (for $GSp_4/\Q$) is based on mod $p$ geometry and 
the consideration of (partial) Hasse invariants. These ingredients play an important role in recent great achievement in potentially 
automorphy for non-regular symplectic motives of rank 4 (in the sense of Gross 
\cite{G-motives}) due to   
Calegari-Geraghty \cite{CG1} and Boxer-Calegari-Gee-Pilloni \cite{BCGP}.

\textbf{Acknowledgement.} The author would like to thank K-W. Lan, A. Ghitza, F. Herzig, S. Morra, S. Harashita, and 
M-H Nicole for helpful discussions. In particular, Herzig kindly 
informed me a reference \cite{winter} and explained his joint work with Tilouine \cite{til&her}. 
Dr Ortiz Ramirez pointed out an error of the argument in Theorem \ref{main1} of 
an earlier version. The author would like to give sincere thanks to him. 
A part of this work was done during the author's visiting to University of Toronto as a  
JSPS Postdoctoral Fellowship for Research Abroad No.378. The author would like to special thank 
Professor Henry Kim, James Arthur, Kumar Murty and staffs there for kindness and the university for hospitality. 
Finally, the author would like to give special thanks to the referee, whose suggestions have greatly improved the presentation and readability of this paper. 

He is now partially supported by JSPS KAKENHI Grant Number (B) No.19H01778.

\section{Siegel modular forms of degree 2}
In this section we shall discuss Siegel modular forms in various settings. 
As basic references, we refer to \cite{asgari&schmidt} for the classical setting, \cite{borel&jacquet} for the adelic setting, and 
\cite{harris}, \cite{taylor}, \cite{s&u} for the geometric setting. 
In this section we will work on $GSp_4=GSp_{J'}$ defined by $J'=\begin{pmatrix} 0_2& I_2\\ -I_2 &0_2\end{pmatrix}$. 
It is easy to see that $GSp_J$ is conjugate to $GSp_{J'}$  in $GL_4$ and we can convert everything from  
one to the other. We remark that $GSp_4$ is a smooth group scheme over $\Z$ and 
sometimes denote it by $G$.  

Let $\nu$ be the similitude character of $GSp_4=GSp_{J'}$ and $Sp_4$ the kernel of $\nu$. 

\subsection{Classical Siegel modular forms}\label{class}
Let us consider the Siegel 
upper half-plane $\mathcal{H}_2=\{Z\in M_2(\C)|\ {}^tZ=Z,\  {\rm Im}(Z)>0\}$.  
For a pair of integers $\underline{k}=(k_1,k_2)$ such that $k_1\ge k_2\ge 1$, we define the 
algebraic representation $\lambda_{\underline{k}}$ of $GL_2$ by 
$$V_{\underline{k}}={\rm Sym}^{k_1-k_2}{\rm St}_2\otimes {\rm det}^{k_2} {\rm St}_2,
$$ 
where ${\rm St}_2$ is the standard representation of dimension 2 with the basis $\{e_1,e_2\}$. More explicitly, if $R$ is any ring, 
then $V_{\underline{k}}(R)=\ds\bigoplus_{i=0}^{k_1-k_2}Re^{k_1-k_2-i}_1\cdot e^i_2$ 
(note that $e^0_1=e^0_2:=1$ as convention) and for 
$g=\left(\begin{array}{cc}
a & b \\
c & d
\end{array}
\right)
\in GL_2(R)$,  $\lambda_{\underline{k}}(g)$ acts on $V_{\underline{k}}(R)$ by 
$$g\cdot e^{k_1-k_2-i}_1\cdot e^i_2:=\det(g)^{k_2}(ae_1+ce_2)^{k_1-k_2-i}\cdot (be_1+de_2)^i.$$
We identify $V_{\underline{k}}(R)$ (resp.  
$\lambda_{\underline{k}}(g)$) with $R^{\oplus (k_1-k_2+1)}$ (resp. the represent matrix of $\lambda_{\underline{k}}(g)$ 
with respect to the above basis).
We define the action and the automorphy factor $J$ by
\begin{equation}\label{sp4-action}
\gamma Z=(AZ+B)(CZ+D)^{-1}, \quad J(\gamma,Z)=CZ+D\in GL_2(\C),
\end{equation}
for
$\gamma=
\left(\begin{array}{cc}
A& B\\
C& D
\end{array}
\right)
\in GSp_4(\R)^+$ and $Z\in \mathcal{H}_2$.

For an integer $N\ge 1$, we define a principal congruence subgroup $\Gamma(N)$ to be 
the group consisting of the elements $g\in Sp_4(\Z)$ such that $g\equiv 1 \ {\rm mod}\ N$. 

For a holomorphic $V_{\underline{k}}(\C)$-valued function $f$ on $\mathcal{H}_2$, the action of 
$\gamma \in GSp_4(\R)^+$ is defined by 
\begin{equation}\label{transformation}
f(Z)|[\gamma]_{\underline{k}}:=\lambda_{\underline{k}}(\nu(\gamma)J(\gamma,z)^{-1})f(\gamma Z).
\end{equation} 
For an arithmetic subgroup $\Gamma$ of $Sp_4(\Q)$ and a finite character 
$\chi:\Gamma\lra \C^\times$,  
we denote by $M_{\underline{k}}(\Gamma,\chi)$ the space of  
(vector values) holomorphic functions $f:\mathcal{H}_2\lra V_{\underline{k}}(\C)$ 
satisfying 
$f|[\gamma]_{\underline{k}}=\chi(\gamma)f$ for any $\gamma\in\Gamma$. 

For a Siegel modular form $f\in M_{\underline{k}}(\Gamma,\chi)$, the Siegel $\Phi$-operator is defined by 
$$\Phi(f)(z):=\lim_{t\to \infty}
f(
\left(\begin{array}{cc}
z & 0 \\
0 & \sqrt{-1}t
\end{array}
\right))
\mbox{ for $z\in \mathcal{H}_1$ } $$
where $\mathcal{H}_1=\{z\in\C\ |\ {\rm Im}(z)>0\}$ and we say $f$ is a cusp form if $\Phi(f|[\gamma]_{\underline{k}})=0$ for any $\gamma\in Sp_4(\Q)$. 
We denote by  $S_{\underline{k}}(\Gamma,\chi)$  the space of such cusp forms inside $M_{\underline{k}}(\Gamma,\chi)$. 

We shall define the Hecke operators on $M_{\underline{k}}(\Gamma(N)):=
M_{\underline{k}}(\Gamma(N),\textbf{1})$ where $\textbf{1}$ stands for the trivial 
character. We refer to \cite{evdokimov} for Hecke operators in this setting.   
For any positive integer $n$ coprime to $N$, let 
$$\Delta(N):=\Bigg\{g\in M_4(\Z)\cap GSp_4(\Q)^+\ \Bigg|\ 
g\equiv\left(\begin{array}{cc}
I_2 & 0   \\
0 & \nu(g)I_2 
\end{array}
\right)\ {\rm mod}\ N ,\ \nu(g)^{\pm1}\in\Z[\frac{1}{N}]^\times \Bigg\}$$
where $GSp_4(\Q)^+$ is the subgroup of $GSp_4(\Q)$ consisting of 
all elements whose similitudes are positive. 
For $m\in \Delta(N)$, we introduce the actions of the Hecke operators on $M_{\underline{k}}(\Gamma(N))$ 
 by 
\begin{equation}\label{hecke-ope}
T_mf(Z):=\nu(m)^{\frac{k_1+k_2}{2}-3}\ds\sum_{\alpha\in \Gamma(N)\backslash\Gamma(N) m
\Gamma(N)} f(Z)|[\nu(m)^{-\frac{1}{2}}\alpha]_{\underline{k}} 
\end{equation}
(notice that $\nu(m)$ is positive and $\nu(m)^{-\frac{1}{2}}\alpha$ belongs to 
${\rm Sp}_4(\R)$) and for any positive integer $n$, put  
\begin{equation}\label{Hecke-ope-def}
T(n):=\sum_{m\in \G(N)\backslash \Delta(N)/\G(N)\atop \nu(m)=n}T_m.
\end{equation}
We call the factor $\nu(m)^{\frac{k_1+k_2}{2}-3}$ in the formula (\ref{hecke-ope}) 
the normalizing factor of $T_m$ for the weight $\lambda_{\underline{k}}$. 
We also consider the same actions on $S_{\underline{k}}(\G(N)):=S_{\underline{k}}(\G(N),
\textbf{1})$. 
For $t_1={\rm diag}(1,1,p,p),\ t_2={\rm diag}(1,p,p^2,p)$, put $T_{i,p}:=T_{t_i}\ i=1,2$ and 
fix $\widetilde{S}_{p,1}, \widetilde{S}_{p,p}\in Sp_4(\Z)$ so that 
$\widetilde{S}_{p,1}\equiv {\rm diag}(p^{-1},1,p,1)\ 
{\rm mod}\ N$ and $\widetilde{S}_{p,p}\equiv {\rm diag}(p^{-1},p^{-1},p,p)\ {\rm mod}\ N$. 
Put $S_{p}:=\widetilde{S}_{p,p}T_{pI_4}=p^{(k_1+k_2-6)}\widetilde{S}_{p,p}$ and note that 
it commutes with any Hecke operator. 
Then we see that 
\begin{equation}
T(p)=T_{1,p},\quad T^2_{1,p}-T(p^2)-p^2 S_{p}=p\{T_{2,p}+(1+p^2)S_{p}\}.
\end{equation}

Since the group $\G(N)$ contains the subgroup consists of 
$\left(\begin{array}{cc}
I_2& NS \\
0 & I_2
\end{array}
\right), S={}^tS\in M_2(\Z)$, for a given $F\in M_{\underline{k}}(\G(N))$, we have the Fourier expansion 
\begin{equation}
F(q)=\ds\sum_{T\in \mathcal{S}(\Z)_{\ge 0}}A_F(T)q^T_N,\ 
q^T_N=e^{\frac{2\pi\sqrt{-1}}{N}{\rm tr}(TZ)},\ A_F(T)\in V_{\uk}(\C) 
\end{equation}
 where 
$\mathcal{S}(\Z)_{\ge 0}$ is the subset of $M_2(\Q)$ consisting of all symmetric matrices 
$\left(\begin{array}{cc}
a& \frac{b}{2} \\
\frac{b}{2} & c
\end{array}
\right),\ a,b,c\in \Z$ which are semi-positive. We say $A_F(T)$ is the $T$-th Fourier coefficient of $F$. 

In terms of Fourier coefficients, for any subring $R$ of $\C$ we define 
$$M_{\underline{k}}(\Gamma(N),R):=\{F\in M_{\underline{k}}(\Gamma(N))\ |\ A_F(T)\in V_{\underline{k}}(R) \mbox{ for all $T\in \mathcal{S}(\Z)_{\ge 0}$} \}$$
and $S_{\underline{k}}(\Gamma(N),R)$ as well. Finally for any discrete subgroup $\G$ of $Sp_4(\Z)$ with finite index  and a character $\chi:\G\lra \C^\times$ so that 
${\rm Ker}(\chi)$ contains a principal congruence subgroup $\G(N')$ for some $N'>0$, 
put $$M_{\underline{k}}(\G,\chi,R)=M_{\underline{k}}(\Gamma,\chi)\cap M_{\underline{k}}(\Gamma(N'),R),\ 
S_{\underline{k}}(\G,\chi,R)=S_{\underline{k}}(\Gamma,\chi)\cap S_{\underline{k}}(\Gamma(N'),R).$$
Here we take the intersection inside of  $M_{\underline{k}}(\Gamma(N'))$ which includes $M_{\underline{k}}(\G,\chi)$ 
and it is as well for $S_{\underline{k}}(\G,\chi,R)$. 
We should remark that the Hecke operators do not preserve  $M_{\underline{k}}(\G,\chi)$ for a general $\G$ and $\chi$ (cf. p.465-466 of \cite{manni&top}). 

\subsection{A formula for Hecke operators}\label{integrality}

The finite group $Sp_4(\Z/N\Z)\simeq Sp_4(\Z)/\G(N)$ acts on  
$M_{\underline{k}}(\G(N))$ by $F\mapsto F|[\tilde{\gamma}]_{\underline{k}}$ if 
we fix a lift $\tilde{\gamma}$ of $\gamma\in Sp_4(\Z/N\Z)$. 
We denote this action by the same notation $F|[\gamma]_{\underline{k}}$. 
This action does not depend on the choice of lift. 
The diagonal subgroup of $Sp_4(\Z/N\Z)$ is isomorphic to $(\Z/N\Z)^\times\times (\Z/N\Z)^\times$ by sending 
$S_{a,b}:={\rm diag}(a^{-1},b^{-1},a,b)$ to 
$(a,b)$ and it also acts on $M_{\underline{k}}(\G(N))$ through the action of $Sp_4(\Z/N\Z)$. 
We have the character decomposition   
\begin{equation}\label{character}
M_{\underline{k}}(\G(N))=\bigoplus_{\chi_1,\chi_2:(\Z/N\Z)^\times\lra\C^\times}M_{\underline{k}}(\G(N), \chi_1,\chi_2),
\end{equation}
where  $M_{\underline{k}}(\G(N), \chi_1,\chi_2)=\{F\in M_{\underline{k}}(\G(N))\ |\ F|[S_{a,1}]_{\underline{k}}=\chi_1(a)F \ {\rm and}\  F|[S_{a,a}]_{\underline{k}}=\chi_2(a)F \}$ 
(see p.466 of \cite{manni&top}). 
It is easy to see that the Hecke operators preserve $M_{\underline{k}}(\G(N), \chi_1,\chi_2)$ (cf. p.465, Lemma 3.1 of \cite{manni&top}). 
We should remark that if the space is non-zero, the weight $(k_1,k_2)$ has to satisfy the parity condition 
\begin{equation}\label{parity}
\chi_2(-1)=(-1)^{k_1+k_2}. 
\end{equation}
Throughout this paper we always assume this parity condition.  

Let $F(q)=\ds\sum_{T\in \mathcal{S}(\Z)_{\ge 0}}A_F(T)q^T_N\in M_{\underline{k}}(\G(N), \chi_1,\chi_2)$ be an eigenform for all $T(p^i),\ p\nmid N,\ i\in\N$ with eigenvalues $\lambda_F(p^i)$, i.e.,
\begin{equation}\label{eigen}
T(p^i)F=\lambda_F(p^i)F.
\end{equation}

Let us study the relation between $\lambda_F(p^i)$ and $A_F(T)$. 
For a non-negative integer $\beta$, let $R(p^\beta)$ be a set of matrices 
$\left(\begin{array}{cc}
u_1& u_2 \\
u_3 & u_4
\end{array}
\right)$ of $\G^{1}(N):=\{g\in SL_2(\Z)\ |\ g\equiv 1_2\ {\rm mod}\ N \}$ whose first rows $(u_1,u_2)$ run 
over a complete set of representatives modulo the equivalence relation: 
$$(u_1,u_2)\sim (u'_1,u'_2) \Longleftrightarrow [u_1:u_2]=[u'_1:u'_2]\ {\rm in}\ \mathbb{P}^1(\Z/p^\beta\Z).$$
Put $T(p^i)F=\ds\sum_{T\in \mathcal{S}(\Z)_{\ge 0}}A_F(p^i;T)q^T_N$ 
where $A_F(p^i;T):=A_{T(p^i)F}(T)$. For simplicity we write 
$\rho_j={\rm Sym}^{j}{\rm St}_2$ for $j\ge 0$ and $UT{}^tU=\left(\begin{array}{cc}
a_U& \frac{b_U}{2} \\
\frac{b_U}{2}  & c_U
\end{array}
\right)$ for $T\in \mathcal{S}(\Z)_{\ge 0}$ and $U\in R(p^\beta)$. 
By using Proposition 3.1 of \cite{evdokimov} and the calculations done at p.439-440 loc.cit., we have 
\begin{eqnarray}
\label{hecke-fourier} 
\lambda_F(p^i)A_F(T)=A_F(p^i;T)=\sum_{\alpha+\beta+\gamma=i\atop \alpha,\beta,\gamma\ge 0}
\chi_1(p^\beta)\chi_2(p^\gamma)p^{\beta(k_1-2)+\gamma(k_1+k_2-3)}\times \nonumber\\
\sum_{U\in R(p^\beta)\atop {a_U\equiv 0\ {\rm mod}\ p^{\beta+\gamma}\atop 
b_U\equiv c_U\equiv 0\ {\rm mod}\ p^{\gamma}}}
\rho_{k_1-k_2}((\left(
\begin{array}{cc}
1& 0 \\
0  & p^\beta
\end{array}
\right)U)^{-1})
A_F\left(p^\alpha\left(
\begin{array}{cc}
a_Up^{-\beta-\gamma}& \frac{b_Up^{-\gamma}}{2} \\
\frac{b_Up^{-\gamma}}{2}  & c_Up^{\beta-\gamma}
\end{array}
\right)\right). 
\end{eqnarray}
\begin{rmk}\label{rmk-on-integrality}
Fix an isomorphism $\bQ_p\simeq \C$ and put $R=\overline{\Z}_p$. 
By the above formula, $M_{\underline{k}}(\G(N),R)$ is stable under 
the action of $T(p^i)$ for any $i\ge 0$ provided if $k_2\ge 2$. 
If $k_2=1$, this would be false.  
\end{rmk}

\subsection{Siegel modular forms with a general weight}\label{general-weight} 
Recall that $\lambda_{\underline{k}}=\lambda_{(k_1,k_2)}= \rho_{k_1-k_2}\otimes \det^{k_2}{\rm St}_2$ where 
$\rho_{k_1-k_2}={\rm Sym}^{k_1-k_2}{\rm St}_2$ and 
let $\lambda_{\underline{k}'}=\lambda_{(k'_1,k'_2)}$ be another weight.  
As in the previous subsection, we can consider a Siegel modular form $F$ of the weight 
$\lambda_{\underline{k},\underline{k}'}:=\lambda_{(k_1,k_2)}\otimes \lambda_{(k'_1,k'_2)}$ 
with respect to $\G(N)$. We denote by $M_{\lambda_{\underline{k},\underline{k}'}}(\G(N))$ 
the space of such Siegel modular forms. We can also define  
$M_{\underline{k},\underline{k}'}(\G(N), \chi_1,\chi_2)$ in the obvious manner. 
It is  known that $\lambda_{\underline{k},\underline{k}'}$ decomposes into the irreducible representations as follows:
\begin{equation}\label{decomp-tensor}
\lambda_{\underline{k},\underline{k}'}=\lambda_{(k_1,k_2)}\otimes \lambda_{(k'_1,k'_2)}\simeq 
\bigoplus_{j=0}^\mu \lambda_{(k_1+k'_1-j,k_2+k'_2+j)},\ \mu=\min\{k_1-k_2,k'_1-k'_2\}.
\end{equation} 
Notice that clearly the highest weight that is $(k_1+k_2)+(k'_1+k'_2)$ is preserved under this decomposition. 
Therefore as in (\ref{hecke-ope}) we may define the Hecke action on $F$ by 
\begin{equation}\label{hecke-action-general}
T_m F(Z):=\nu(m)^{\frac{k_1+k_2+k'_1+k'_2}{2}-3}\ds\sum_{\alpha\in \Gamma(N)\backslash\Gamma(N) m
\Gamma(N)} f(Z)|[\nu(m)^{-\frac{1}{2}}\alpha]_{\underline{k},\underline{k'}},\ m\in \Delta_n(N)
\end{equation} 
where $F(Z)|[\nu(m)^{-\frac{1}{2}}\alpha]_{\underline{k},\underline{k'}}=
\lambda_{\underline{k},\underline{k}'}(\nu(m)^{-\frac{1}{2}}\alpha)J((\nu(m)^{-\frac{1}{2}}\alpha,z)^{-1})F(\alpha Z)$. 

Let $F=\ds\sum_{T\in \mathcal{S}(\Z)_{\ge 0}}A_F(T)q^T_N\in M_{\underline{k},\underline{k}'}(\G(N), \chi_1,\chi_2)$ be an eigenform for all $T(p^i),\ p\nmid N,\ i\in\N$ with eigenvalues $\lambda_F(p^i)$. 
Let $T(p^i)F=\ds\sum_{T\in \mathcal{S}(\Z)_{\ge 0}}A_F(p^i;T)q^T_N$. 
As in (\ref{hecke-fourier}), we have 
\begin{eqnarray}
\label{hecke-fourier1} 
\lambda_F(p^i)A_F(T)=A_F(p^i;T)=\sum_{\alpha+\beta+\gamma=i\atop \alpha,\beta,\gamma\ge 0}
\chi_1(p^\beta)\chi_2(p^\gamma)p^{\beta(k_1+k'_1-2)+\gamma(k_1+k'_1+k_2+k'_2-3)}\times \nonumber\\
\sum_{U\in R(p^\beta)\atop {a_U\equiv 0\ {\rm mod}\ p^{\beta+\gamma}\atop 
b_U\equiv c_U\equiv 0\ {\rm mod}\ p^{\gamma}}}
(\rho_{k_1-k_2}\otimes \rho_{k'_1-k'_2})((\left(
\begin{array}{cc}
1& 0 \\
0  & p^\beta
\end{array}
\right)U)^{-1})
A_F\left(p^\alpha\left(
\begin{array}{cc}
a_Up^{-\beta-\gamma}& \frac{b_Up^{-\gamma}}{2} \\
\frac{b_Up^{-\gamma}}{2}  & c_Up^{\beta-\gamma}
\end{array}
\right)\right). 
\end{eqnarray}
According to (\ref{decomp-tensor}), we have the decomposition preserving Hecke actions:
$$M_{\underline{k},\underline{k}'}(\G(N), \chi_1,\chi_2)\simeq \bigoplus_{j=0}^\mu 
M_{(k_1+k'_1-j,k_2+k'_2+j)}(\G(N), \chi_1,\chi_2).$$
\subsection{Adelic forms}\label{adelic-forms}
In this section we refer to \cite{borel&jacquet} and \cite{taylor-thesis}. Recall $G=GSp_4$. 
Let $\A$ be the adele ring of $\Q$ and $\A_f=\widehat{\Z}\otimes_\Z\Q$ the finite adele of $\Q$.  
For a positive integer $N$, 
let $K(N)$ be the group consisting of the elements 
$g\in G(\widehat{\Z})$ such that $g\equiv I_4\ {\rm mod}\ N$. Then we see that  $\G(N)=Sp_4(\Q)\cap K(N)$ and $\nu(K(N))=I_4+N\widehat{\Z}$. 
Then it follows from the strong approximation theorem for $Sp_4$ that 
\begin{equation}
\label{sat2} G(\A)=\coprod_{1\le a < N \atop (a,N)=1}G(\Q)G(\R)^+ d_a K(N)=\coprod_{1\le a < N \atop (a,N)=1}G(\Q)Z_G(\R)^+Sp_4(\R)d_a K(N)
\end{equation}
where $d_a$ is the diagonal matrix such that $(d_a)_p={\rm diag}(a,a,1,1)$ if $p|N$, $(d_a)_p=I_4$ otherwise. 

Let $I:=\sqrt{-1}I_2\in\mathcal{H}_2$ and $U(2)={\rm Stab}_{Sp_4(\R)}(I)$. 
For any open compact subgroup $U$ of $G(\widehat{\Z})$, 
we let $\mathcal{A}_{\underline{k}}(U)^\circ$ denote the subspace of functions 
$\phi:G(\Q)\backslash G(\A)\lra V_{\underline{k}}(\C)$ such that 

\begin{enumerate}
\item $\phi(gu u_\infty)=\lambda_{\underline{k}}(J(u_\infty,I)^{-1})\phi(g)$ for all $g\in G(\A)$, $u\in U$, and $u_\infty\in U(2)Z_G(\R)^+$ where $Z_G(\R)^+=\R_{>0}I_4$.  
\item for $h\in G(\A_f)$, the function 
$$\phi_h:\mathcal{H}_2\lra V_{\underline{k}}(\C),\ \phi_h(Z)=\phi_h(g_\infty I):=\lambda_{\underline{k}}(J(g_\infty,I))\phi(hg_\infty)$$
is holomorphic where $Z=g_\infty I,\ g_\infty\in GSp^+_4(\R)=Z_G(\R)^+Sp_4(\R)$ (note that this definition is independent of the choice of $g_\infty$),
\item for $g\in G(\A)$, $\ds\int_{N_P(\Q)\backslash N_P(\A)}\phi(ng)dn=0 $ for 
the unipotent radical $N_P$ of any $\Q$-parabolic subgroup $P$ of $G$ and $dn$ 
is a Haar measure on $N_P(\Q)\backslash N_P(\A)$. 
\end{enumerate}

We define similarly $\mathcal{A}_{\underline{k}}(U)$ by omitting the last condition (3). 

Let $\G(N)_a:=Sp_4(\Q)\cap d_a^{-1}K(N)d_a$. Note that $\G(N)_a=\G(N)$ for each $a$. 
Then we have the isomorphism 
\begin{equation}\label{isom}
\mathcal{A}_{\underline{k}}(K(N))\stackrel{\sim}{\lra} \bigoplus_{1\le a < N\atop (a,N)=1}M_{\underline{k}}(\G(N)_a), \quad \phi\mapsto (\phi_{d_a}).
\end{equation}
The inverse of this isomorphism is given as follows: Let $F=(F_a)$ be an element of RHS which is a system of Siegel modular forms such that $F_a\in M_{\underline{k}}(\Gamma(N)_a)$ for each $a$. For each $g\in G(\A)$, there exists a unique $d_a$ such that
$g=rd_a g_\infty k$ with $r\in G(\Q)$, $g_\infty\in Sp_4(\R)Z_G(\R)^+$, and $k\in K(N)$. Then we define the function  
$$\phi_{F}(g)=\lambda_{\underline{k}}(J(g_\infty,I))^{-1}F_a(g_\infty I).$$
This gives rise to the inverse of the above isomorphism. 
We also have the isomorphism 
$\mathcal{A}_{\underline{k}}(K(N))^\circ\simeq \bigoplus_{1\le a < N\atop (a,N)=1}S_{\underline{k}}(\G(N)_a)$ 
as well (cf. \cite{borel&jacquet} for checking the cuspidality). 

Now we restrict the isomorphism (\ref{isom}) to a subspace, using the character decomposition (\ref{character}). 
Given two Dirichlet characters $\chi_i : (\Z/N\Z)^\times \lra \C^\times$, $i = 1, 2$, associate 
the characters $\chi'_i:\A_f\lra \C^\times$  via global class field theory. 

Define
$\widetilde{\chi} : T(\A_f) \lra \C^\times$ by 
$$\tilde\chi'(\diag(*,*, c, d) =\chi'_1(d^{-1}c)^{-1}\chi'_2(d)^{-1}.$$
Choose $F = (F_a)$ from RHS of (\ref{isom}) which satisfies 
$F|[S_{z,z}]_{\underline{k}} =(F_a|[S_{z,z}]_{\underline{k}}) = (\chi_2(z)F_a) = \chi_2(z)F$ and
$F|[S_{z,1}]_{\underline{k}} = \chi_1(z)F$. If we write $g\in G(\A)$ as $g = rz_\infty d_a g_\infty k \in G(\A)$ and take $z_f \in T(\A_f )$, then we define the automorphic function attached to $F$ by
$$
\phi_F(g z_f) = \lambda_{\underline{k}}(J(g, I))^{-1} F_a(g_\infty I)\tilde\chi(z_f).
$$
Then this gives rise to the isomorphism of the subspaces 
\begin{equation*}
\mathcal{A}_{\underline{k}}(K(N), \tilde\chi)\stackrel{\sim}{\lra} \bigoplus_{1\le a < N\atop (a,N)=1} M_{\underline{k}}(\G(N)_a, \chi_1,\chi_2).
\end{equation*}

We now compute the actions of $\diag(1,1,-1,-1)\in G(\R)$ on $\phi_F\in \mathcal{A}_{\underline{k}}(K(N), \tilde\chi)$ as follows:
Let $h=(\diag(1, 1,-1,-1), I_{\A_f})=\diag(1,1,-1,-1)(I_{G(\R)}, (\diag(1,1,-1,-1))_p)\in G(\Q)(G(\R)\times G(\A_f))$,
where $I_{\A_f}$ (resp. $I_{G(\R)}$) is the identity element.
Then we have
\begin{eqnarray*}
\phi_F(gh) =\phi_F((r\cdot\diag(1, 1,-1,-1))z_\infty d_a g_\infty k \cdot \diag(1,1,-1,-1))_p)) \\
= \lambda_{\underline{k}}(J(g, I))^{-1} F_a(g_\infty I)\tilde\chi(\diag(1,1,-1,-1))_p) =\chi_2(-1) \phi_F(g),
\end{eqnarray*}
since we have assumed $\chi_2(-1) = (-1)^{k_1+k_2}$. Hence we have
\begin{equation}\label{sgn}
\phi_F(\diag(1, 1,-1,-1), I_{\A_f})=(-1)^{k_1+k_2}.
\end{equation}

\begin{rmk} The space $M_{\underline{k}}(\Gamma(N))$ is 
embedded into 
$\ds\bigoplus_{1\le a < N\atop (a,N)=1} M_{\underline{k}}(\G(N)_a)$ by $F\mapsto (F|[\gamma_a]_{\underline{k}})_a$. 
So given a cusp form $F\in M_{\underline{k}}(\Gamma(N))$, we obtain $\phi_F\in \mathcal{A}_{\underline{k}}(K(N))$ which under the isomorphism (\ref{isom}), corresponds to $(F|[\gamma_a]_{\underline{k}})_a$, 
and $\phi_F$ gives rise to a cuspidal representation $\pi_F$.
Conversely, given a cuspidal representation $\pi$ of $GSp_4/\Q$, there exists $N>0$ and $\phi\in \mathcal{A}_{\underline{k}}(K(N))$
which spans $\pi$. Under the isomorphism (\ref{isom}), $\phi$ corresponds to $(F_a)_{1\le a < N\atop (a,N)=1}$. 
For any $a$, let $\pi_{F_a}$ be the cuspidal representation associated to $F_a$. Then $\pi$ and $\pi_{F_a}$ have the same Hecke eigenvalues for $p\nmid N$, and hence in the same $L$-packet.
\end{rmk}

We now study the Hecke operators on $\mathcal{A}_{\underline{k}}(K(N))$ and its relation to classical Hecke operators. 
Let $\phi$ be an element of  $\mathcal{A}_{\underline{k}}(K(N))$ and $F=(F_a)_a$ be the corresponding element of RHS via the above isomorphism (\ref{isom}). 
For any prime $p\nmid N$ and  $\alpha \in G(\Q)\cap T(\Q_p)$, we define the Hecke action with respect to $\alpha$ 
$$\widetilde{T}_\alpha \phi(g):= \ds\int_{G(\A_f)}([K(N)_p\alpha K(N)_p]\otimes 1_{K(N)^p})\phi(g g_f) dg_f$$
where $dg_f$ is the Haar measure on $G(\A_f)$ so that vol$(K)=1$. Here $K(N)_p$ is the $p$-component of $K(N)$ and  
 $K(N)^p$ is the subgroup of $K(N)$ consists of trivial $p$-component. 
 
Then by using (\ref{sat2}), we can easily see that 
 \begin{equation}\label{local-global}
T_\alpha F(Z)=\nu(\alpha)^{\frac{k_1+k_2}{2}-3}\widetilde{T}_{\alpha^{-1}} \phi(g)
\end{equation}
where $g=rz_\infty g_a g_\infty k$ as above and $Z=g_\infty I$.  
From this relation, up to the factor of $\nu(\alpha)^{\frac{k_1+k_2}{2}-3}$, the isomorphism (\ref{isom}) preserves Hecke eigenforms in both sides. 
We turn to explain the relation to classical Siegel eigenforms. Let $G\in S_{\underline{k}}(\G(N))$ be a Siegel cusp form which is a Hecke eigenform. 
Then it is easy to see that $(G)_a$ is an eigenform of $\ds\bigoplus_{1\le a < N\atop (a,N)=1}S_{\underline{k}}(\G(N)_a)$. Hence we have the 
Hecke eigenform of  $\mathcal{A}_{\underline{k}}(K(N))$ corresponding to $G$. 
The above things are easy to generalize to all open compact subgroup $U\subset G(\widehat{\Z})$. We omit the details. 

The group $G(\A)$ acts on  $\ds\varinjlim_{U}\mathcal{A}_{\underline{k}}(U)$  (also on  $\ds\varinjlim_{U}\mathcal{A}_{\underline{k}}(U)^\circ$) by 
right translation: 
$$(h\cdot \phi)(g):=\phi(gh), \ {\rm for}\ g,h\in G(\A).$$
For an open compact subgroup $U\subset G(\widehat{\Z})$, we say $U$ is of level $N$ if $N$ is the minimum positive integer so that 
$U$ contains $K(N)$. For such $U$ of level $N$ and $\phi\in \mathcal{A}_{\underline{k}}(U)$ which is an eigenform for all $T_\alpha,\ \alpha\in G(\Q)\cap T(\Q_p)$ and $p{\not|} N$, we denote by $\pi_\phi$ the irreducible maximal subquotient of the representation of $G(\A)$ 
generated by $g\cdot \phi,\ g\in G(\A)$. Then $\pi_\phi$ is an automorphic representation in the sense of \cite{borel&jacquet}. Further if $\phi \in 
\mathcal{A}_{\underline{k}}(U)^0$, then we see that $\pi_\phi$ is a cuspidal automorphic representation.

\subsection{Geometric Siegel modular forms}\label{geometry}
We will discuss arithmetic properties of Siegel modular forms by using the modular interpretation of Siegel modular varieties. 
Henceforth we assume that $N\ge 3$.  

For any $\Z[\frac{1}{N}]$-scheme $T$, we consider the triple $(A,\lambda,\phi)/T$ such that 
\begin{enumerate}
\item $A$ is an abelian $T$-scheme of relative dimension 2, 
\item $\lambda:A\stackrel{\sim}{\lra} A^\vee$ is a principal polarization where $A^\vee$ is the dual abelian scheme of $A$ (cf. Chapter I of \cite{c&f}).  
\item $\phi$ is an isomorphism $A[N]\simeq ({\mu_N})^2\oplus (\Z/N\Z)^2$ of  group schemes over $T$ so that the composition of $\phi$ and 
the symplectic pairing on $({\mu_N})^2\oplus (\Z/N\Z)^2$ defined by $J$ is the Weil pairing $A[N]\times A[N]\lra \mu_N$ defined by $\lambda$. 
\end{enumerate}
Here we remark that in Definition 6.1, p.121  in Chapter IV  of \cite{c&f}, 
the authors fix an identification between $\mu_N$ and $\Z/N\Z$ over $\Z[\zeta_N,1/N]$ 
where $\zeta_N$ is a primitive $N$-th root of unity. To consider such an identification is equivalent to 
choosing $\zeta_N$ and the number of the possible choices is the number of the 
connected components of $S_{K(N)}(\C)$ below. 
However, we do not fix any identification between $\mu_N$ and $\Z/N\Z$ to work on any 
$\Z[\frac{1}{N}]$- scheme $T$ as a base scheme.  
The results in Section 6 in Chapter IV of \cite{c&f} explain 
each component of $S_{K(N)}(\C)$ is irreducible.  

For given two triple $(A,\lambda,\phi)$ and $(A',\lambda',\phi')$ we define  the equivalence relation as follows. 
We denote by  $(A,\lambda,\phi)\sim (A',\lambda',\phi')$ if there exists an $T$-isomorphism $f:A\lra A'$ as abelian $T$-schemes such that 
$$f^\vee\circ \lambda'\circ f=\lambda,\ \phi'\circ f=\phi$$
where $f^\vee:A'^\vee\lra  A^\vee$ is the dual of $f$.   
Then the functor from the category of $\Z[\frac{1}{N}]$-schemes to the category of sets defined by  
$$Sch_{ \Z[\frac{1}{N}]}\lra Sets,\ T\mapsto \{(A,\lambda,\phi)/T\}/\sim$$
is representable by a $\Z[\frac{1}{N}]$-scheme $S_{K(N)}$ (cf. \cite{c&f}). 
It is well known that analytic description of  $S_{K(N)}(\C)$ is 
$$S_{K(N)}(\C)=G(\Q)\backslash G(\A)/Z_G(\R)^+U(2)K(N)=\coprod_{1\le a < N\atop (a,N)=1}\G(N)_a\backslash \mathcal{H}_2=
\coprod_{1\le a < N\atop (a,N)=1}\G(N)\backslash \mathcal{H}_2$$
which is a smooth quasi-projective 3-fold. Here the smoothness follows from the neatness of $K(N)$. 

We now turn to define geometric Siegel modular forms by using an arithmetic toroidal compactification of Siegel moduli scheme $S_{K(N)}$ (cf. \cite{c&f}).  
Let $\overline{S}_{K(N)}$ be an arithmetic toroidal compactification of $S_{K(N)}$ over $\Z[\frac{1}{N}]$ and $\pi:\mathcal{G}\lra \overline{S}_{K(N)}$ be 
the universal semi-abelian scheme with the zero section $e:\overline{S}_{K(N)}\lra \mathcal{G}$. We define the Hodge bundle on $\overline{S}_{K(N)}$ by 
\begin{equation}\label{hodge}
\E:=e^\ast \Omega^1_{\mathcal{G}/\overline{S}_{K(N)}}
\end{equation}
which is called the ``canonical extension"  while $\E(-\mathcal{C})$ with  
$\mathcal{C}:=\overline{S}_{K(N)}-S_{K(N)}$ is called the ``sub-canonical extension" 
(cf. Section 4 of \cite{l&s2}). 
The canonical extensions relate to (geometric modular forms) and the sub-canonical extensions relate to 
cusp forms.   
Recall the algebraic representation $\lambda_{\underline{k}}$ in Section \ref{class}. For such $\lambda_{\underline{k}}$, we associate 
the automorphic (coherent) sheaf 
$$\omega_{\underline{k}}:={\rm Sym}^{k_1-k_2}\E\otimes_{\O_{\overline{S}_{K(N)}}} {\rm det}^{k_2}\E.$$ 
For any $\Z[\frac{1}{N}]$-algebra $R$, we define the space of geometric Siegel modular forms over $R$ by 
$$GM_{\underline{k}}(K(N),R):=H^0(\overline{S}_{K(N)}\otimes R,\omega_{\underline{k}}\otimes R)=H^0(S_{K(N)}\otimes R,\omega_{\underline{k}}\otimes R).$$
The last equality follows from Koecher's principle (cf. \cite{c&f}). 

Then the space of geometric Siegel cusp forms are defined by 
$$GS_{\underline{k}}(K(N),R):=H^0(\overline{S}_{K(N)}\otimes R,\omega_{\underline{k}}(-\mathcal{C})\otimes R).$$
If $R=\C$ we have $$GM_{\underline{k}}(K(N),\C)=\bigoplus_{1\le a < N\atop (a,N)=1}M_{\underline{k}}(\G(N)_a)$$
and it is as well for $GS_{\underline{k}}(K(N),\C)$. 

Consider Mumford's semi-abelian scheme 
\begin{equation}\label{mum}
\mathcal{A}^{{\rm Mum}}:=\mathbb{G}^2_m/
\langle\left(\begin{array}{c}
q_{11}\\
q_{12}
\end{array}
\right),\ 
\left(\begin{array}{c}
q_{12}\\
q_{22}
\end{array}
\right)
 \rangle_\Z
 \end{equation}
 as a principal semi-abelian scheme with level $N$-structure over $T_N:=\coprod_{1\le a < N\atop (a,N)=1}{\rm Spec}R_N$ where $R_N:=\Z[\frac{1}{N}][q^{\pm \frac{1}{N}}_{12}][[q^{\frac{1}{N}}_{11},q^{\frac{1}{N}}_{22}]][q^{-1}_{11},q^{-1}_{22}]$ 
 (cf. p. 38-39 of \cite{ichikawa1}). 
By the universality of $\mathcal{G}/\overline{S}_{K(N)}$, there exists a morphism $\iota:T_N \lra \overline{S}_{K(N)}$ such that 
$$\mathcal{A}^{{\rm Mum}}\simeq \mathcal{G}\times_{\overline{S}_{K(N)}} T_N.$$
Then $\iota^\ast \omega_{\underline{k}}$ is canonically trivialized and it is isomorphic to the constant sheaf 
$\bigoplus_{1\le a < N\atop (a,N)=1}V_{\underline{k}}(R_N)$. 
There is a natural map $ \omega_{\underline{k}}\lra \iota_\ast\iota^\ast  \omega_{\underline{k}}$ and 
by taking global sections after base change to a $\Z[\frac{1}{N}]$-algebra $R$, 
we have a map 
$$GM_{\underline{k}}(K(N),R)\lra \bigoplus_{1\le a < N\atop (a,N)=1}V_{\underline{k}}(R_N\otimes_{\Z[\frac{1}{N}]}R).$$
The image of $F\in GM_{\underline{k}}(K(N),R)$ under this map can be written as 
\begin{equation}\label{q-exp}
F(q):=\Big(\sum_{T\in \mathcal{S}(\Z)_{\ge 0}}
A_{F_a}(T)q^T_N\Big)_{1\le a < N\atop (a,N)=1}
=\sum_{T\in \mathcal{S}(\Z)_{\ge 0}}A_F(T)q^T_N,\ 
A_F(T):=(A_{F_a}(T))_{1\le a < N\atop (a,N)=1}
\end{equation}
where $q^T_N=q^{\frac{a}{N}}_{11}q^\frac{b}{N}_{12}q^\frac{c}{N}_{22}$ for $T=\left(\begin{array}{cc}
a & \frac{b}{2}\\
\frac{b}{2} & c 
\end{array}
\right)$. This expression is the $q$-expansion of $F$ and the $q$-expansion principle (cf. \cite{c&f}) tells us that the above map is injective, hence the  
coefficients $A_F(T)$ determine $F$. Furthermore, it follows from this that for any $\Z[\frac{1}{N}]$-subalgebra $R'$ of $R$ and 
$F\in GM_{\underline{k}}(K(N),R)$, $F$ belongs to $GM_{\underline{k}}(K(N),R')$ if and only if $A_F(T)\in V_{\underline{k}}(R')$ for all $T\in \mathcal{S}(\Bbb Z)_{\geq 0}$.

As in Section 1.1.6 of \cite{s&u}, we can define geometric Hecke operators $T(p^i)^{{\rm geo}},\ p\nmid N$ on $GM_{\underline{k}}(K(N),R)$ and $GS_{\underline{k}}(K(N),R)$, which coincide with, the Hecke 
operators $T(p^i)$ on classical Siegel modular forms if  $R=\C$ (see (1.1.6.a) of \cite{s&u}).  
Then, using (\ref{hecke-fourier}) and (\ref{q-exp}), we have the similar formula for the Fourier coefficients of $T(p^i)^{{\rm geo}}F$ in terms of $A_{F_a}(T)$ and hence of $A_F(T)$. 
We often identify $T(p^i)^{{\rm geo}}$ with $T(p^i)$. 

The following theorem is due to Lan and Suh (\cite{l&s1},\cite{l&s2}). 
\begin{thm}\label{vanishing-lan}The notation being as above. Let $p\ge 5$ be a prime not dividing $N$.  
Put $\overline{S}_{N,p}:=\overline{S}_{K(N)}\otimes \bF_p$ and  $\mathcal{C}=\overline{S}_{N,p}-S_{N,p}$. 
Assume  $k_2>3$ and $p>3+(k_1-k_2)$. 
Then  
\begin{enumerate}
\item $H^i(\overline{S}_{N,p},\mathcal{F})=0$ for $\mathcal{F}\in \{\omega_{\underline{k}},\ 
\omega_{\underline{k}}(-\mathcal{C})  \}$ and $i=1,2$;
\item any element in $H^0(\overline{S}_{N,p},\omega_{\underline{k}}(-\mathcal{C}))$ is liftable to a form in characteristic zero with the same weight;
\item $H^3(\overline{S}_{N,p},\omega_{\underline{k}}\otimes \omega^{-l})=0$ for 
any $k_2-\ell-3>0$.  
\end{enumerate}
\end{thm} 
\begin{proof}The claims follow from Theorem 8.13 of \cite{l&s2} and the Serre duality  
(see also Theorem 4.1 of \cite{l&s1}). 
\end{proof}

\section{$\theta$-operators and $\theta$-cycles}\label{theta-operators}
In this section we will define various $\theta$-operators on the space of mod $p$ Siegel modular forms of 
degree 2 and then study $\theta$-cycles which are an analogue of \cite{katz},\cite{joch} in the case of elliptic modular forms.
\subsection{Ordinary locus}\label{ordinary-locus} Let us keep the notation in previous section.  
Let $S_{N,p}$ be a connected component of $S_{K(N)}\otimes \bF_p$. Let $\mathcal{A}\stackrel{f}{\lra}S_{N,p}$ be the universal 
abelian surface with the zero section $e:S_{N,p}\lra \mathcal{A}$. As in previous section, 
the Hodge vector bundle $\mathcal{E}:=e^\ast \Omega_{\mathcal{A}/S_{N,p}}=f_\ast \Omega_{\mathcal{A}/S_{N,p}}$ is a locally free sheaf on $S_{N,p}$ of 
rank 2. Its determinant $\omega:=\det(\mathcal{E})$ is an ample line bundle on  $S_{N,p}$. 
These are nothing but base changes to $S_{N,p}$ of the objects we have defined in Section \ref{geometry}. 
For any scheme $X$ over $\bF_p$, we denote by $F_{{\rm abs}}$ the absolute Frobenius map on $X$ induced by $F^\ast_{{\rm abs}}:\O_X\lra \O_X, a\mapsto a^p$. 
For any sheaf $\mathcal{F}$ on $X$, put $\mathcal{F}^{(p)}=\mathcal{F}\otimes_{\O_X,F^\ast_{{\rm abs}}}\O_X$.   
As usual let us consider the following commutative diagram: 
$$
\xymatrix{
\mathcal{A} \ar@/_/[ddr]_f \ar@{>}[dr]|{F} \ar@/^/[drr]^{F_{{\rm abs}}} \\
& \mathcal{A}^{(p)}
\ar[d]^{f^{(p)}} \ar[r]^{{\rm proj}} & \mathcal{A} \ar[d]_f \\
& S_{N,p} \ar[r]^{F_{{\rm abs}}}  & S_{N,p}} 
$$
where $F=F_{\mathcal{A}/S_{N,p}}$ stands for the relative Frobenius map where we often drop the subscript from the notation. 

By Messing (see \cite{m&m}), the Hodge to de Rham spectral 
sequence 
$$E^{s,t}_{1}=R^tf_\ast \Omega^s_{\mathcal{A}/S_{N,p}}\Longrightarrow \mathbb{H}^{s+t}_{{\rm dR}}(\mathcal{A}/S_{N,p})$$
degenerates at $E_1$ (this fact is true for any abelian scheme over any scheme). 
Applying this to $f$ and $f^{(p)}$, we have 
\begin{equation}\label{exact0}
\xymatrix{
  0 \ar[r] & f_\ast \Omega^1_{\mathcal{A}/S_{N,p}} \ar[r]^{d_1} &  \mathbb{H}^{1}_{{\rm dR}}(\mathcal{A}/S_{N,p}) 
   \ar[r]^{d_2}  & R^1f_\ast \O_{\mathcal{A}}  \ar[r] & 0 \\ 
    0 \ar[r] &  \ar[u]^{F^\ast} f^{(p)}_\ast \Omega^1_{\mathcal{A}^{(p)}/S_{N,p}} \ar[r]^{d^{(p)}_1} &  \ar[u]^{F^\ast}  \mathbb{H}^{1}_{{\rm dR}}(\mathcal{A}^{(p)}/S_{N,p}) 
   \ar[r]^{d^{(p)}_2}  &  \ar[u]^{F^\ast}  R^1f^{(p)}_\ast \O_{\mathcal{A}^{(p)}}  \ar[r] & 0 
}
\end{equation}
where $F^\ast$ stands for the pull-back of the relative Frobenius map $F$.   
By working locally on $S_{N,p}$, it easy to see that $F^\ast$ is zero on $ f^{(p)}_\ast \Omega^1_{\mathcal{A}^{(p)}/S_{N,p}}$. 
Hence the map $d^{(p)}_2$ induces a map 
\begin{equation}\label{frob1}
Fr^\ast: R^1f^{(p)}_\ast \O_{\mathcal{A}^{(p)}}\lra 
\mathbb{H}^{1}_{{\rm dR}}(\mathcal{A}/S_{N,p})
\end{equation}
and also a map 
\begin{equation}\label{hasse-matrix}
d_2\circ Fr^\ast: R^1f^{(p)}_\ast \O_{\mathcal{A}^{(p)}}\lra R^1f_\ast \O_{\mathcal{A}}
\end{equation}
This gives by Serre duality  rise to a map 
\begin{equation}\label{frob2}
\omega^{-p}  \lra  \omega^{-1}.
\end{equation}
Hence, one has a non-zero global section $H_{p-1}$ on $S_{N,p}$ of the 
sheaf ${\rm Hom}(\omega^{-p}, \omega^{-1})=
\omega^{p-1}$. The section $H_{p-1}$ is called the Hasse invariant and regared as a mod $p$ Siegel modular form 
of weight $p-1$ with ``level one'' which means that the finite group $Sp_4(\Z)/\G(N)$ 
acts trivially on $H_{p-1}$. 

We denote by $S^h_{N,p}$ the locus so that the Hasse invariant $H_{p-1}$ is non-zero. 
Then the following is known-well. 
\begin{thm}\label{locus}
The locus $S^h_{N,p}$ is a maximal subscheme so that  
the Frobenius map $(\ref{frob1})$ gives a splitting of the exact sequence $(\ref{exact0})$.  
\end{thm}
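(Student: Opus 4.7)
The plan is to show that $S^h_{N,p}$ (the non-vanishing locus of $H_{p-1}$) coincides with the open locus on which the map $Fr^\ast$ of (3.3) induces a splitting of the Hodge--de Rham exact sequence (3.1), and that it is the maximal such open subscheme. My strategy is to reduce the statement to a purely linear-algebraic criterion: a morphism of rank $2$ locally free sheaves is an isomorphism exactly where its determinant is a unit.

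First I would unpack the geometry of the Hasse invariant. Composing $Fr^\ast$ with $d_2$ produces an $\mathcal{O}_{S_{N,p}}$-linear map
\[
\varphi:=d_2\circ Fr^\ast\colon R^1 f^{(p)}_\ast \mathcal{O}_{\mathcal{A}^{(p)}}\lra R^1 f_\ast \mathcal{O}_{\mathcal{A}}
\]
of rank-$2$ vector bundles on $S_{N,p}$. By Serre--Grothendieck duality for the abelian scheme $f\colon\mathcal{A}\to S_{N,p}$ one has $\det R^1 f_\ast \mathcal{O}_\mathcal{A}\simeq \omega^{-1}$ and, by base change along $F_{\rm abs}$, $\det R^1 f^{(p)}_\ast \mathcal{O}_{\mathcal{A}^{(p)}}\simeq \omega^{-p}$, so $\det\varphi$ is precisely the section $H_{p-1}$ of $\omega^{p-1}$ constructed via (3.4). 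Hence by the standard criterion for morphisms of rank-$2$ locally free sheaves, the open locus $S^h_{N,p}:=\{H_{p-1}\neq 0\}$ is exactly the maximal open subscheme on which $\varphi$ is an isomorphism of $\mathcal{O}$-modules.

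Next I would translate ``$\varphi$ is an isomorphism'' into ``$Fr^\ast$ splits (3.1)''. If $\varphi$ is an isomorphism on an open $U\subseteq S_{N,p}$, then the composition $s:=Fr^\ast\circ \varphi^{-1}\colon R^1 f_\ast \mathcal{O}_\mathcal{A}\to \mathbb{H}^1_{\rm dR}(\mathcal{A}/S_{N,p})$ is a section of $d_2$ on $U$, whence a splitting of (3.1); conversely, if $Fr^\ast$ splits (3.1) in the sense that its image is a direct summand of $\mathbb{H}^1_{\rm dR}(\mathcal{A}/S_{N,p})$ complementary to the Hodge subbundle $f_\ast\Omega^1_{\mathcal{A}/S_{N,p}}$, then projecting along $d_2$ forces $\varphi$ to be an isomorphism (the image of $Fr^\ast$ maps bijectively onto the quotient $R^1 f_\ast \mathcal{O}_\mathcal{A}$). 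Combining this with the previous paragraph identifies $S^h_{N,p}$ with the maximal open subscheme over which $Fr^\ast$ splits (3.1).

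I do not expect a real obstacle here: the statement is essentially tautological once the Hasse invariant is correctly identified with $\det\varphi$ and the splitting criterion is rephrased via non-vanishing of that determinant. The only mild care needed is the duality computation which makes $H_{p-1}$ a section of $\omega^{p-1}$ and the verification that the vanishing of $F^\ast$ on $f^{(p)}_\ast\Omega^1_{\mathcal{A}^{(p)}/S_{N,p}}$ indeed lets us descend to the map $Fr^\ast$ of (3.3); both of these are checked locally on $S_{N,p}$ by standard arguments (e.g.\ Messing, loc.\ cit.) and have already been carried out in the preceding paragraphs of the paper.
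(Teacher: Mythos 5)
Your proposal is correct and follows essentially the same route as the paper: both hinge on identifying $H_{p-1}$ with $\det(d_2\circ Fr^\ast)$ and observing that this morphism of rank-$2$ bundles is an isomorphism (equivalently, $\mathrm{Im}(Fr^\ast)$ is a rank-$2$ complement to the Hodge subbundle) precisely on the non-vanishing locus of that determinant. The only difference is expository: the paper writes $Fr^\ast$ out in a local basis $(\omega_1,\omega_2,\widetilde\eta_1,\widetilde\eta_2)$, introduces the matrix block $A$ and notes $\det A = H_{p-1}$, whereas you phrase it coordinate-freely via determinants of line bundles and supply the explicit splitting section $Fr^\ast\circ\varphi^{-1}$; both are the same argument.
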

\begin{proof}
For each geometric point $x={\rm Spec}\bF_p$ of $S_{N,p}$ and the corresponding abelian variety 
$A_x$ over $\bF_p$, $H_{p-1}(x)\neq 0$ if and only if $d_2\circ Fr^\ast$ induces  
an isomorphism $H^1(A^{(p)}_x,\mathcal{O}_{A^{(p)}_x})\lra H^1(A_x,\mathcal{O}_{A_x})$ on 
the fiber at $x$.  
\end{proof}

To end this subsection we recall the de Rham pairing. 
Since $\mathcal{A}$ is endowed with a principal polarization $\lambda:\mA\stackrel{\sim}{\lra} \mA^\vee$, there is 
a canonical isomorphism 
\begin{equation}\label{canonical-isom} 
\mathbb{H}^{1}_{{\rm dR}}(\mathcal{A}/S_{N,p})\simeq \mathbb{H}^{1}_{{\rm dR}}(\mathcal{A}^\vee/S_{N,p})
\simeq \mathbb{H}^{1}_{{\rm dR}}(\mathcal{A}/S_{N,p})^\vee. 
\end{equation}  
As for the second isomorphism, the map induced by $\lambda$ is a 
fiber-wise isomorphism by Section 5.1 of \cite{BBM} and it explains the claim. 
This induces a canonical alternating  pairing 
\begin{equation}\label{pairing}
\langle\ ,\ \rangle_{{\rm dR}}: 
\mathbb{H}^{1}_{{\rm dR}}(\mathcal{A}/S_{N,p}) \times 
\mathbb{H}^{1}_{{\rm dR}}(\mathcal{A}/S_{N,p}) \simeq 
\mathbb{H}^{1}_{{\rm dR}}(\mathcal{A}/S_{N,p}) \times 
\mathbb{H}^{1}_{{\rm dR}}(\mathcal{A}/S_{N,p})^\vee \lra \O_{S_{N,p}}
\end{equation}  
Let us consider 
\begin{equation}\label{exact1}
\xymatrix{
  0 \ar[r] & \ar[d] f_\ast \Omega^1_{\mathcal{A}/S_{N,p}} \ar[r]^{d_1} & \ar[d]^\simeq_{{\rm using}\atop \langle,\rangle_{{\rm dR}}}  \mathbb{H}^{1}_{{\rm dR}}(\mathcal{A}/S_{N,p}) 
   \ar[r]^{d_2}  & \ar[d] R^1f_\ast \O_{\mathcal{A}}  \ar[r] & 0 \\ 
    0  & \ar[l](f_\ast \Omega^1_{\mathcal{A}/S_{N,p}})^\vee  & \ar[l]_{d^\vee_1} 
\mathbb{H}^{1}_{{\rm dR}}(\mathcal{A}/S_{N,p})^\vee 
     &\ar[l]_{d^\vee_2} (R^1f_\ast \O_{\mathcal{A}})^\vee  & \ar[l] 0. 
}
\end{equation}  
Then by chasing diagram, the left vertical map is zero, namely for any local sections $\w_1,\w_2\in f_\ast \Omega^1_{\mathcal{A}/S_{N,p}}$, one has $\langle\w_1 ,\w_2 \rangle_{{\rm dR}}=0$.    
Then one has a canonical isomorphism  
\begin{equation}\label{duality}
R^1f_\ast \O_{\mathcal{A}}\stackrel{d_2\atop \sim}{\longleftarrow}
\mathbb{H}^{1}_{{\rm dR}}(\mathcal{A}/S_{N,p})/f_\ast \Omega^1_{\mathcal{A}/S_{N,p}}\stackrel{\sim}{\lra}
\mathbb{H}^{1}_{{\rm dR}}(\mathcal{A}/S_{N,p})/(R^1f_\ast \O_{\mathcal{A}})^\vee\stackrel{d^\vee_1\atop \sim}{\lra}
   (f_\ast \Omega^1_{\mathcal{A}/S_{N,p}})^\vee.
\end{equation} 
 This gives rise to a canonical non-degenerate pairing 
\begin{equation}\label{pairing2}
\langle\ ,\ \rangle_{{\rm dual}}: 
f_\ast \Omega^1_{\mathcal{A}/S_{N,p}}\times R^1f_\ast \O_{\mathcal{A}} \lra \O_{S_{N,p}}.
\end{equation}   
For any local section $\w_1\in  f_\ast \Omega^1_{\mathcal{A}/S_{N,p}}$ and a lift 
$\widetilde{\eta}_1\in \mathbb{H}^{1}_{{\rm dR}}(\mathcal{A}/S_{N,p})$ of its dual $\eta_1\in R^1f_\ast \O_{\mathcal{A}}$ via 
Serre duality (namely $\langle\w_1 ,\eta_1 \rangle_{{\rm dual}}=1$).  Then we have  $\langle \w_1 , \widetilde{\eta}_1\rangle_{{\rm dR}}=\langle\w_1 ,\eta_1 \rangle_{{\rm dual}}=1.$  

Take a local basis $\omega_1,\omega_2\in  f_\ast \Omega^1_{\mathcal{A}/S_{N,p}}$ and the 
corresponding basis 
$\eta_1,\eta_2\in R^1f_\ast \O_{\mathcal{A}}$ via Serre duality. 
Fix lifts $\widetilde{\eta}_i,i=1,2$ of $\eta_i$ to $ \mathbb{H}^{1}_{{\rm dR}}(\mathcal{A}/S_{N,p})$. 
Let us denote by $\eta^{(p)}_i$ the image of $\eta_i\otimes 1$ under 
the Frobenius semilinear map 
\begin{equation}\label{pull-back-frob}
F^\ast_{{\rm abs}} \mathbb{H}^{1}_{{\rm dR}}(\mathcal{A}^{(p)}/S_{N,p})=
 \mathbb{H}^{1}_{{\rm dR}}(\mathcal{A}/S_{N,p})^{(p)}\lra  \mathbb{H}^{1}_{{\rm dR}}(\mathcal{A}^{(p)}/S_{N,p}).
\end{equation} 
Clearly $\eta^{(p)}_i\in R^1f_\ast \O_{\mathcal{A}^{(p)}}$ for $i=1,2$ and they make up a basis. Then one has 
\begin{equation}\label{frob-action1}
Fr^\ast(\eta^{(p)}_1,\eta^{(p)}_2)=(\omega_1,\omega_2,\widetilde{\eta}_1,\widetilde{\eta}_2)
\left(
\begin{array}{c}
B\\
A
\end{array}
\right),\ 
B=\left(
\begin{array}{cc}
b_{11} & b_{12} \\
b_{21} & b_{22} 
\end{array}
\right),\ 
A=\left(
\begin{array}{cc}
a_{11} & a_{12} \\
a_{21} & a_{22} 
\end{array}
\right)\in M_2(\O_{S_{N,p}}). 
\end{equation}
The matrix $A$ also satisfies $d_2\circ Fr^\ast(\eta^{(p)}_1,\eta^{(p)}_2)=(\eta_1,\eta_2)A$ by functoriality of the 
Hodge filtration (see (\ref{frob1}) for $d_2\circ Fr^\ast$).

\subsection{The Gauss-Manin connection}\label{Gauss-Manin1}
In this subsection we will recall the basic facts for the Gauss-Manin connection and the Kodaira-Spencer map 
in our setting. We will try to make everything as explicit as possible.   
We refer  to \cite{katz2},\cite{kob} for basic references.  

For our smooth family $\mathcal{A}/S_{N,p}$, one can associate an integrable connection which is so called 
the Gauss-Manin connection:
$$\nabla:\mathbb{H}^{1}_{{\rm dR}}(\mathcal{A}/S_{N,p})\lra \mathbb{H}^{1}_{{\rm dR}}(\mathcal{A}/S_{N,p})
\otimes_{\O_{S_{N,p}}}\Omega^1_{S_{N,p}}.$$
It is well-known that, for example, see \cite{hartshorne} that $Hom_{\O_{S_{N,p}}}(\Omega^1_{S_{N,p}},\O_{S_{N,p}})=
(\Omega^1_{S_{N,p}})^\vee$ is  naturally identified with 
 the sheaf of derivatives on $\O_{S_{N,p}}$ which is denoted by $Der(\O_{S_{N,p}})$.  
For a given local section $D$ of $Der(\O_{S_{N,p}})$ one has 
\begin{equation}\label{derivative}
\xymatrix{
\mathbb{H}^{1}_{{\rm dR}}(\mathcal{A}/S_{N,p})\ar[r]^{\nabla\hspace{12mm}}  \ar@/_/[dr]_{\nabla_D} 
&\ar[d]^{1\otimes D}  \mathbb{H}^{1}_{{\rm dR}}(\mathcal{A}/S_{N,p})\otimes_{\O_{S_{N,p}}}\Omega^1_{S_{N,p}}  \\
&  \mathbb{H}^{1}_{{\rm dR}}(\mathcal{A}/S_{N,p}).
} 
\end{equation}
Since $\nabla$ is integrable, for any $D_1,D_2\in Der(\O_{S_{N,p}})$ satisfying $D_1D_2=D_2D_1$, one has 
$\nabla_{D_1}\nabla_{D_2}=\nabla_{D_2}\nabla_{D_1}$ on $\mathbb{H}^{1}_{{\rm dR}}(\mathcal{A}/S_{N,p})$. 

Choose a local coordinate $x_{11},x_{12},x_{22}$ of $S_{N,p}$, then locally 
$\Omega^1_{S_{N,p}}=\langle dx_{11},dx_{12},dx_{22}\rangle_{\O_{S_{N,p}}}$. Take the dual local basis 
$\ds\frac{\partial}{\partial x_{11}},\frac{\partial}{\partial x_{12}},\frac{\partial}{\partial x_{22}}$ of $Der(\O_{S_{N,p}})$.  
It follows from this that 
\begin{equation}\label{local-exp}
\nabla(m)=\sum_{1\le i\le j\le 2}\nabla(\frac{\partial}{\partial x_{ij}})(m)dx_{ij}\ {\rm for}\ m\in \mathbb{H}^{1}_{{\rm dR}}(\mathcal{A}/S_{N,p}).
\end{equation} 
One would use another basis $d_{11},d_{12},d_{22}$ of $\Omega^1_{S_{N,p}}$ and their dual basis 
$D_{11},D_{12},D_{22}$ of $Der(\O_{S_{N,p}})$. 
Then one would see locally 
$$\nabla(m)=\ds\sum_{1\le i\le j\le 2}\nabla_{D_{ij}}(m)d_{ij},\ m\in \mathbb{H}^{1}_{{\rm dR}}(\mathcal{A}/S_{N,p}).$$ However,  
the commutativity of $D_{ij}$'s does not necessarily hold in general.  

Recall the Kodaira-Spencer isomorphism (as $O_{S_{N,p}}$-modules)
\begin{equation}\label{ks}
KS:{\rm Sym}^2\E\stackrel{\sim}{\lra}\Omega^1_{S_{N,p}}
\end{equation}
which has the property that for any local basis $\omega_1,\omega_2$ of $\E$, 
\begin{equation}\label{ks1}
\left\{\begin{array}{rl}
KS(\omega^{\otimes 2}_1)&=\langle \omega_1,\nabla\omega_1 \rangle_{{\rm dR}},\\
KS(\frac{1}{2}(\omega_1\otimes\omega_2+\omega_2\otimes\omega_1))&=
\langle \omega_1,\nabla\omega_2 \rangle_{{\rm dR}},\\ 
KS(\omega^{\otimes 2}_2)&=\langle \omega_2,\nabla\omega_2 \rangle_{{\rm dR}}. 
\end{array}\right.
\end{equation}
Let us explain how we gain (\ref{ks}) and (\ref{ks1}). For each local section $D$ of $Der(\O_{S_{N,p}})$, one can associate 
a  
$\O_{S_{N,p}}$-module homomorphism $\psi_D$ :
$$
\xymatrix{
f_\ast \Omega^1_{\mathcal{A}/S_{N,p}}\ar[r]^{\nabla_{D}\hspace{5mm}}  \ar@/_/[drrr]_{\psi_D} &\ar[r]^{(\ref{exact1})\atop \simeq} 
\mathbb{H}^{1}_{{\rm dR}}(\mathcal{A}/S_{N,p}) & \ar[r]^{d^\vee_1} 
\mathbb{H}^{1}_{{\rm dR}}(\mathcal{A}/S_{N,p})^\vee &\ar[d]^{\simeq}_{(\ref{duality})} (f_\ast \Omega^1_{\mathcal{A}/S_{N,p}})^\vee  \\
&  & &  R^1f_\ast \O_{\mathcal{A}}.
} 
$$
Thus we have 
\begin{equation}\label{ks2}
Der(\O_{S_{N,p}})\lra Hom_{\O_{S_{N,p}}}(f_\ast \Omega^1_{\mathcal{A}/S_{N,p}},R^1f_\ast \O_{\mathcal{A}}),D\mapsto \psi_D.
\end{equation}
We now study the image of this map. Let us fix a local basis $\w_1,\w_2$ of $\E=f_\ast \Omega^1_{\mathcal{A}/S_{N,p}}$ and take 
the local basis $\eta_1,\eta_2$ of $R^1f_\ast \O_{\mathcal{A}}$ via the duality (\ref{duality}). We also 
fix lifts $\widetilde{\eta}_i,\ i=1,2$ of $\eta_i$ to $\mathbb{H}^{1}_{{\rm dR}}(\mathcal{A}/S_{N,p})$. 
For $1\le i,j\le 2$, we denote by $t_{ij}$ the morphism sending $\omega_i$ to $\eta_j$ and $\omega_{i'},i'\not=i$ to 0. 
Then $\{t_{ij}\}_{1\le i,j\le 2}$ makes up a local basis of the RHS of (\ref{ks2}). On the other hand, if we write 
\begin{equation}\label{action1}
(\nabla_{D}\w_1,\nabla_{D}\w_2)=(\omega_1,\omega_2,\widetilde{\eta}_1,\widetilde{\eta}_2)
\left(
\begin{array}{c}
\ast\\
A_D
\end{array}
\right),\  
A_D=\left(
\begin{array}{cc}
a^{D}_{11} & a^{D}_{12} \\
a^{D}_{21} & a^{D}_{22} 
\end{array}
\right)\in M_2(\O_{S_{N,p}}), 
\end{equation}
then by Leibniz rule and using $\langle \omega_1,\omega_2 \rangle_{{\rm dR}}=0$,   
$$\begin{array}{rl}
0=& \nabla_{D}\langle \omega_1,\omega_2 \rangle_{{\rm dR}}=\langle  \nabla_{D}\omega_1,\omega_2 \rangle_{{\rm dR}}+
\langle \omega_1, \nabla_{D}\omega_2 \rangle_{{\rm dR}}\\ 
=&-a^{D}_{21}+a^{D}_{12}. 
\end{array}
$$ 
It follows from this that $\psi_D(\omega_i)=\ds\sum_{j=1,2}a^{D}_{ij}\eta_j$ and therefore we have 
$$\psi_D=a^{D}_{11}t_{11}+a^{D}_{12}(t_{12}+t_{21})+a^{D}_{22}t_{22}.$$

As seen before, if we work locally on $S_{N,p}$ and for $1\le k\le l\le 2$, put $a^{(kl)}_{ij}:=
a^{D_{kl}}_{ij}$ for the dual basis $D_{kl}$ of $d_{kl}$, then 
\begin{equation}\label{local-exp2}
\begin{array}{l}
\langle \omega_1,\nabla \w_1 \rangle_{{\rm dR}}=\ds\sum_{1\le k\le l\le 2}a^{(kl)}_{11}d_{kl},\\
\langle \omega_1,\nabla \w_2 \rangle_{{\rm dR}}=\ds\sum_{1\le k\le l\le 2}a^{(kl)}_{12}d_{kl}=
\langle \omega_2,\nabla \w_1 \rangle_{{\rm dR}},\\
\langle \omega_1,\nabla \w_1 \rangle_{{\rm dR}}=\ds\sum_{1\le k\le l\le 2}a^{(kl)}_{22}d_{kl}.
\end{array}
\end{equation}
 
The images of $t_{11},(t_{12}+t_{21}),t_{22}$ under a canonical identification 
\begin{equation}\label{ks3}
\begin{array}{c}
Hom_{\O_{S_{N,p}}}(f_\ast \Omega^1_{\mathcal{A}/S_{N,p}},R^1f_\ast \O_{\mathcal{A}})=
(f_\ast \Omega^1_{\mathcal{A}/S_{N,p}})^\vee\otimes_{\O_{S_{N,p}}}R^1f_\ast \O_{\mathcal{A}}\\
\stackrel{(\ref{duality})\atop \sim}{\lra} 
(f_\ast \Omega^1_{\mathcal{A}/S_{N,p}})^\vee\otimes_{\O_{S_{N,p}}}(f_\ast \Omega^1_{\mathcal{A}/S_{N,p}})^\vee= 
\E^\vee\otimes_{\O_{S_{N,p}}} \E^\vee  
\end{array}
\end{equation}
are given by $\eta^\vee_1\otimes \eta^\vee_1, (\eta^\vee_1\otimes \eta^\vee_2+\eta^\vee_2\otimes \eta^\vee_1),\ 
\eta^\vee_2\otimes \eta^\vee_2$ respectively where $\eta^\vee_i$ stands for the dual element corresponding to $\eta_i$ via (\ref{duality}). 
Let us introduce the formal symbol  $(Sym^2\E)^\ast$ to  be the $\O_{S_{N,p}}$-submodule of $\E\otimes_{\O_{S_{N,p}}} \E$ generated by 
$\eta^\vee_1\otimes \eta^\vee_1, (\eta^\vee_1\otimes \eta^\vee_2+\eta^\vee_2\otimes \eta^\vee_1),\ 
\eta^\vee_2\otimes \eta^\vee_2$. 
Composing (\ref{ks2}), (\ref{ks3}) and taking dual, one has the Kodaira-Spencer map 
$${\rm Sym}^2\E\simeq ((Sym^2\E)^\ast)^\vee \lra  (Der(\O_{S_{N,p}}))^\vee=((\Omega^1_{S_{N,p}})^\vee)^\vee=\Omega^1_{S_{N,p}}.$$
Since the formation is compatible with base change, by working over $\C$ (see \cite{harris}), 
this map gives an isomorphism (see also Section 9 in Chapter III of \cite{c&f}). 
 
Then it follows from this that 
$$\begin{array}{rl}
(\eta^\vee_1\otimes \eta^\vee_1)^\vee &\mapsto  \langle \w_1,\nabla \w_1 \rangle_{{\rm dR}} \\
(\eta^\vee_1\otimes \eta^\vee_2+\eta^\vee_2\otimes \eta^\vee_1)^\vee &\mapsto 
\langle \w_1,\nabla \w_2 \rangle_{{\rm dR}}+\langle \w_2,\nabla \w_1 \rangle_{{\rm dR}}=2\langle \w_1,\nabla \w_2 \rangle_{{\rm dR}}  \\ 
(\eta^\vee_2\otimes \eta^\vee_2)^\vee &\mapsto \langle \w_2,\nabla \w_2 \rangle_{{\rm dR}}. 
\end{array}
.$$
This explains exactly how we obtained the Kodaira-Spencer map (\ref{ks}). 
\subsection{Small $\theta$-operators and a big $\Theta$ operator}\label{theta}
In this subsection we will introduce the ``small" $\theta$-operators as in \cite{katz}. 
Contrary to the case $GL_2/\Q$ (the case of elliptic modular forms), there are various kinds of theta operators which will be 
denoted by $\theta, \theta_1,\theta_2,\theta_3$ and applying $\theta,\theta_1$ we will 
also define the ``big" $\Theta$ operator which is a characteristic $p$ geometric counter part of Bocherer-Nagaoka's operator 
in the classical case (see \cite{b&n}). 
Throughout this section we always assume $p\ge 5$. Let us keep the notation in Section \ref{Gauss-Manin1}. 

Let $\w_1,\w_2$ be a local basis of $\E$. 
By Kodaira-Spencer isomorphism, $\langle \w_i,\nabla \w_j \rangle_{{\rm dR}},\ 
1\le i\le j\le 2$ 
make up a local basis of $\Omega^1_{S_{N,p}}$. We denote by $D_{ij}$ the dual of $\langle \w_i,\nabla \w_j \rangle_{{\rm dR}}$. 
Then by (\ref{local-exp2}) we have  
\begin{equation}\label{value} 
\langle \w_i,\nabla_{D_{k\ell}}(\w_j) \rangle_{{\rm dR}}=\delta_{ik}\delta_{\ell j}\cdot 1 
\ {\rm for}\  
1\le i\le j\le 2\ {\rm and}\  1\le k\le \ell\le 2,
\end{equation}
where $1$ stands for the multiplicative identity of $\mathcal{O}_{S_{N,p}}$ and 
$\delta_{ij}$ is the Kronecker's delta. 

\begin{lem}\label{omega}Keep the notation above. 
Then the following facts hold. 
\begin{enumerate}
\item $ \nabla_{D_{11}}(\w_2)= \nabla_{D_{22}}(\w_1)=0$,
\item $ \nabla_{D_{11}}(\w_1)=\nabla_{D_{12}}(\w_2)$ and 
$\nabla_{D_{22}}(\w_2)=\nabla_{D_{12}}(\w_1)$,
\item The elements $\omega_1,\omega_2, \nabla_{D_{11}}(\w_1), 
\nabla_{D_{22}}(\w_2)$ make up a local basis of 
$\mathbb{H}^{1}_{{\rm dR}}(\mathcal{A}_{\mathcal{X}}/\mathcal{X})$. 
\end{enumerate}
\end{lem}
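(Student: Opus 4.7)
The plan is to expand $\nabla(D_{ij})\w_k$ in the local frame of $\mathbb{H}^{1}_{{\rm dR}}(\mathcal{A}/S_{N,p})$ consisting of $\w_1,\w_2$ together with chosen lifts $\widetilde{\eta}_1,\widetilde{\eta}_2$ of the Serre duals $\eta_1,\eta_2 \in R^1f_\ast\O_\mathcal{A}$ normalized by $\langle\w_i,\widetilde{\eta}_j\rangle_{{\rm dR}}=\delta_{ij}$, and to read off the coefficients via the pairing values of $(\ref{value})$ and the Leibniz rule.

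I would first treat (3). Take the specific lifts
$$
\widetilde{\eta}_1:=\nabla(D_{11})\w_1,\qquad \widetilde{\eta}_2:=\nabla(D_{22})\w_2.
$$
The values $(\ref{value})$, together with the symmetry $a^D_{12}=a^D_{21}$ derived from $D\langle\w_1,\w_2\rangle_{{\rm dR}}=0$ as in the paragraph following $(\ref{action1})$, immediately give $\langle\w_i,\widetilde{\eta}_j\rangle_{{\rm dR}}=\delta_{ij}$; hence $\widetilde{\eta}_i$ is a genuine lift of $\eta_i$ in the sense of $(\ref{duality})$, and $\w_1,\w_2,\widetilde{\eta}_1,\widetilde{\eta}_2$ form a local basis of $\mathbb{H}^{1}_{{\rm dR}}(\mathcal{A}/S_{N,p})$. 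That is claim (3).

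For (1) and (2), the same pairing computation applied to the remaining $\nabla(D_{ij})\w_k$ shows that the $\widetilde{\eta}$-coefficients of $\nabla(D_{11})\w_2$ and $\nabla(D_{22})\w_1$ are zero, and that the $\widetilde{\eta}$-coefficients of $\nabla(D_{12})\w_2$ and $\nabla(D_{12})\w_1$ coincide with those of $\nabla(D_{11})\w_1=\widetilde{\eta}_1$ and $\nabla(D_{22})\w_2=\widetilde{\eta}_2$ respectively. Hence $\nabla(D_{11})\w_2$, $\nabla(D_{22})\w_1$, and the differences $\nabla(D_{11})\w_1-\nabla(D_{12})\w_2$ and $\nabla(D_{22})\w_2-\nabla(D_{12})\w_1$ all lie in $f_\ast\Omega^1_{\mathcal{A}/S_{N,p}}$. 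To conclude each of these is zero, I would pass to the ordinary locus $S^h_{N,p}$ of Theorem \ref{locus} and invoke the Serre--Tate canonical coordinates $q_{11},q_{12},q_{22}$: the Gauss--Manin connection admits an explicit description in which $\nabla(\partial/\partial\log q_{ij})\w_k$ has trivial $\w_1,\w_2$-component, while the $D_{ij}$ coincide (up to scalars fixed by $(\ref{value})$) with $\partial/\partial\log q_{ij}$. A direct substitution then verifies (1) and (2) at every ordinary point. A routine base-change check shows that the statement is invariant under $(\w_1,\w_2)\mapsto(\w_1,\w_2)g$ for $g\in \gl_2(\O_{S_{N,p}})$, so the identities hold for every local frame at every ordinary point; since $S^h_{N,p}$ is open dense in $S_{N,p}$ and both sides are algebraic sections of $\mathbb{H}^{1}_{{\rm dR}}(\mathcal{A}/S_{N,p})$, they extend globally.

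The main obstacle is this last step. The purely formal data of $(\ref{value})$ and the Leibniz rule determine only the $\widetilde{\eta}$-coefficients of $\nabla(D_{ij})\w_k$ and leave an $f_\ast\Omega^1_{\mathcal{A}/S_{N,p}}$-indeterminacy in the $\w$-coefficients. Pinning these down requires either the explicit Serre--Tate description on the ordinary locus as above, or else the integrability identity $[\nabla(D_{ij}),\nabla(D_{kl})]=\nabla([D_{ij},D_{kl}])$ combined with the detailed brackets of the KS-dual frame $\{D_{ij}\}$, neither of which follows from pairing considerations alone.
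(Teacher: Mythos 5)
Your proof of part (3) is correct and is essentially the same pairing argument as the paper's (the paper pairs the putative relation with $\w_i$ and then with $\widetilde{\eta}_i$; you phrase it by declaring $\widetilde{\eta}_i:=\nabla(D_{ii})\w_i$ a legitimate lift, which amounts to the same thing).

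For parts (1) and (2) you are right to be uneasy: the data of $(\ref{value})$ together with the $A_D$ matrices only pins down the $\widetilde{\eta}$-coefficients of each $\nabla(D_{ij})\w_k$, i.e.\ its class modulo $\mathcal{E}$. The paper's own proof in fact asserts the step you warn against: from $A_{D_{11}}=\begin{pmatrix}1&0\\ 0&0\end{pmatrix}$ it concludes $\langle\nabla(D_{11})\w_2,\widetilde{\eta}_i\rangle_{\rm dR}=0$. But $A_{D_{11}}$ only controls the pairings $\langle\w_i,\nabla(D_{11})\w_2\rangle_{\rm dR}=a^{D_{11}}_{i2}$; by contrast, once one knows $\nabla(D_{11})\w_2\in\mathcal{E}$, the quantity $\langle\nabla(D_{11})\w_2,\widetilde{\eta}_i\rangle_{\rm dR}$ is exactly the unknown $\w_i$-coefficient, which the pairing relations leave completely free. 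So your diagnosis of the difficulty is accurate, and it points at a genuine gap in the printed argument rather than at a gap in your understanding.

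However, the repair you suggest has its own gap at the ``routine base-change check.'' The identities in (1) and (2) are \emph{not} invariant under a non-constant change of frame $(\w_1,\w_2)\mapsto(\w_1,\w_2)g$, $g\in\gl_2(\O_{S_{N,p}})$. Take $\w'_1=\w_1$, $\w'_2=t\w_1+\w_2$ with $t\in\O_{S_{N,p}}$ non-constant. Then $d'_{11}=d_{11}$, $d'_{12}=t\,d_{11}+d_{12}$, $d'_{22}=t^2 d_{11}+2t\,d_{12}+d_{22}$, so the primed dual frame is
$$D'_{11}=D_{11}-tD_{12}+t^2D_{22},\quad D'_{12}=D_{12}-2tD_{22},\quad D'_{22}=D_{22}.$$
Assuming (1) and (2) hold in the unprimed frame, the Leibniz rule gives
$$\nabla(D'_{11})\w'_2 = D'_{11}(t)\,\w_1 + t\bigl(\nabla(D_{11})\w_1 - t\,\nabla(D_{22})\w_2\bigr) + \bigl(-t\,\nabla(D_{11})\w_1 + t^2\nabla(D_{22})\w_2\bigr) = D'_{11}(t)\,\w_1,$$
which is nonzero for generic $t$. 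The derivative terms coming from the non-constancy of $g$ do not cancel, so the Serre--Tate verification establishes (1) and (2) only in the canonical frame; the density argument cannot then be used to conclude them for an arbitrary local basis, because the statement being propagated is frame-dependent. A correct treatment either has to fix the frame (e.g.\ canonical Serre--Tate, or the lifted frame on the generic square-zero deformation as in the paragraph before Proposition \ref{theta}), or replace (1) and (2) by their frame-invariant weakenings modulo $\mathcal{E}$ -- which is what the pairing argument genuinely does prove, and which suffices for the later wedge computations (in $\w_1\wedge\nabla_{11}\w_2$ only the class of $\nabla_{11}\w_2$ modulo $\w_1$ contributes).
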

\begin{proof}We first show (1). Applying (\ref{action1}), (\ref{local-exp2}), 
and (\ref{value}) for $D_{11}$, one has 
$A_{D_{11}}=
\left(\begin{array}{cc}
1 & 0 \\
0 & 0 
\end{array}
\right)$. Hence $\langle \nabla_{D_{11}}(\w_2), \widetilde{\eta}_i \rangle_{{\rm dR}}=0$ for $i=1,2$. 
Further by (\ref{value}),  $\langle \nabla_{D_{11}}(\w_2),\w_i \rangle_{{\rm dR}}=0$ for $i=1,2$. 
Then it follows from non-degeneracy of the de Rham pairing that 
$\nabla_{D_{11}}(\w_2)=0$. 
Similarly, one has  $\nabla_{D_{22}}(\w_1)=0$. 

Next we prove (2). By (\ref{value}), clearly  
$\langle \nabla_{D_{11}}(\w_1)-\nabla_{D_{12}}(\w_2),\w_i \rangle_{{\rm dR}}=0$ for $i=1,2$. 
Applying (\ref{action1}) for $D_{12},D_{22}$, one has 
$A_{D_{12}}=
\left(\begin{array}{cc}
0 & 1 \\
1 & 0 
\end{array}
\right),\ 
A_{D_{22}}=
\left(\begin{array}{cc}
0 & 0 \\
0 & 1 
\end{array}
\right)$. Thus we see that 
$$\langle \nabla_{D_{11}}(\w_1)-\nabla_{D_{12}}(\w_2),\widetilde{\eta}_1 \rangle_{{\rm dR}}=1-1=0,\ 
\langle \nabla_{D_{11}}(\w_1)-\nabla_{D_{12}}(\w_2),\widetilde{\eta}_1 \rangle_{{\rm dR}}=0-0=0.$$
The second equality is the same as above. 

For the last claim, let us assume 
$a_1\omega_1+a_2\omega_2+a_3\nabla_{D_{11}}(\w_1)+a_4\nabla_{D_{22}}(\w_2)=0, a_i\in \O_{S_{N,p}}$. 
First we apply $\langle \ast,\w_i \rangle_{{\rm dR}},\ i=1,2$ and next do  
$\langle \ast,\widetilde{\eta}_i \rangle_{{\rm dR}},\ i=1,2$. Then one has 
$a_1=a_2=a_3=a_4=0$. 
\end{proof}
By using the basis of Lemma \ref{omega}-(3), as in (\ref{frob-action1}) we consider 
\begin{equation}\label{ab}
\begin{array}{c}
F^\ast((\nabla_{D_{11}}(\w_1))^{(p)},(\nabla_{D_{22}}(\w_2))^{(p)})=(\omega_1,\omega_2,\nabla_{D_{11}}(\w_1),\nabla_{D_{22}}(\w_2))
\left(
\begin{array}{c}
B\\
A
\end{array}
\right),\\
 B=\left(
\begin{array}{cc}
b_{11} & b_{12} \\
b_{21} & b_{22} 
\end{array}
\right),\ 
A=\left(
\begin{array}{cc}
a_{11} & a_{12} \\
a_{21} & a_{22} 
\end{array}
\right)\in M_2(\O_{S_{N,p}}). 
\end{array}
\end{equation}
Here we use $F^\ast$ not $Fr^\ast$. 
As in (\ref{action1}), let us fix lifts $\phi_i,\ i=1,2$ of $\eta_i$ (here we use the symbol $\phi_i$ instead of $\widetilde{\eta}_i$). 
Since $\langle \w_i,\phi_i \rangle_{{\rm dR}}=1,\ i=1,2$ and (\ref{value}), we see that 
$A$ is the Hasse-Matrix with respect to $\eta_1,\eta_2$ and we have 
\begin{equation}\label{important}
(w_1,w_2)B+(\nabla_{D_{11}}(\w_1),\nabla_{D_{22}}(\w_2))A=
(\phi_1,\phi_2)A. 
\end{equation}
Put $$\nabla_{11}:=\nabla_{D_{11}},\ \nabla_{12}:=\nabla_{D_{12}},\ \nabla_{22}:=\nabla_{D_{22}}$$
for simplicity. These are not necessarily commutative even though $\nabla$ is integrable. 
  
The relation between $A$ and $B$ is revealed by the following lemma:
\begin{lem}\label{vanishing}Let $S$ be a scheme over $\bF_p$ and 
let $f:\mathcal{A}\lra S$ be an abelian scheme.  
Let $\nabla^{(p)}$ be the Gauss-Manin connection associated to $f^{(p)}:\mathcal{A}^{(p)}\lra S$. 
Then for any local section $h$ of $\mathbb{H}^{1}_{{\rm dR}}(\mathcal{A}/S)$ and any  $D\in Der(\O_{S})$, we have 
$$\nabla_{D}(F^\ast(h^{(p)}))=0.$$
\end{lem}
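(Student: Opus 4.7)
The plan is to combine two standard facts about the Gauss--Manin connection in characteristic $p$: (i) the map $F^\ast$ induced by the relative Frobenius is horizontal with respect to Gauss--Manin; and (ii) sections of the form $h^{(p)}$ are flat for $\nabla^{(p)}$, because the pull-back of any K\"ahler differential on $S$ by the absolute Frobenius vanishes in characteristic $p$.

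First, since the relative Frobenius $F\colon \mA\to \mA^{(p)}$ is a morphism of $S$-schemes, the functoriality of Gauss--Manin (cf.\ \cite{katz2}) yields the identity
\begin{equation*}
\nabla\circ F^\ast=(F^\ast\otimes \id_{\Omega^1_S})\circ \nabla^{(p)}
\end{equation*}
of maps $\mathbb{H}^1_{{\rm dR}}(\mA^{(p)}/S)\to \mathbb{H}^1_{{\rm dR}}(\mA/S)\otimes_{\O_S}\Omega^1_S$. Contracting with $D\in Der(\O_S)$, the lemma is reduced to showing $\nabla^{(p)}(h^{(p)})=0$ for every local section $h$ of $\mathbb{H}^1_{{\rm dR}}(\mA/S)$.

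Next, I would invoke the base-change identification $\mathbb{H}^1_{{\rm dR}}(\mA^{(p)}/S)=F^\ast_{{\rm abs}}\,\mathbb{H}^1_{{\rm dR}}(\mA/S)=\mathbb{H}^1_{{\rm dR}}(\mA/S)^{(p)}$, under which $h^{(p)}$ corresponds to $h\otimes 1$ and $\nabla^{(p)}$ is the $F^\ast_{{\rm abs}}$-pull-back of $\nabla$. Picking local coordinates $x_1,\dots,x_n$ on $S$ and writing $\nabla h=\sum_i \nabla(\partial/\partial x_i)(h)\,dx_i$, the very definition of the pull-back connection gives
\begin{equation*}
\nabla^{(p)}(h\otimes 1)=\sum_i \bigl(\nabla(\partial/\partial x_i)(h)\otimes 1\bigr)\, F^\ast_{{\rm abs}}(dx_i)=\sum_i \bigl(\nabla(\partial/\partial x_i)(h)\otimes 1\bigr)\, d(x_i^p)=0,
\end{equation*}
because $p=0$ in $\O_S$. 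Combining with the displayed identity above yields $\nabla(D)(F^\ast h^{(p)})=0$, as claimed.

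\textbf{Main obstacle.} Conceptually the computation is very light; the only care needed is with the two input facts: the functoriality of Gauss--Manin under the $S$-morphism $F$, and the identification of $\nabla^{(p)}$ with the Frobenius-twisted connection on $\mathbb{H}^1_{{\rm dR}}(\mA/S)^{(p)}$. Both are standard consequences of Katz's treatment of Gauss--Manin, and once they are granted the argument reduces to the elementary vanishing $d(x_i^p)=0$ in characteristic $p$. Alternatively, one may argue directly on the level of de Rham complexes: locally $F^\ast$ raises the coefficients of differential forms to $p$-th powers, so any derivative of $F^\ast(\cdot)$ in the $S$-direction carries a factor of $p$ and hence vanishes.
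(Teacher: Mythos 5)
Your proposal is correct and follows essentially the same route as the paper: both proofs combine (i) functoriality of the Gauss--Manin connection under the relative Frobenius $F$, (ii) the base-change identification $\mathbb{H}^1_{\rm dR}(\mathcal{A}^{(p)}/S)\cong \mathbb{H}^1_{\rm dR}(\mathcal{A}/S)^{(p)}$ with $\nabla^{(p)}$ the pulled-back connection, and (iii) the vanishing of the Frobenius-twist on differentials ($d(f^p)=0$ in characteristic $p$). The paper packages (i)--(iii) into a single commutative diagram and observes that the relevant arrow $\alpha$ is zero, whereas you split the argument into two steps (reduce to $\nabla^{(p)}(h^{(p)})=0$, then compute in local coordinates), but the content is identical.
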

\begin{proof}
By the compatibility of the Gauss-Manin connection with the relative Frobenius map $F$ and the base change $F_{{\rm abs}}$, one has the following commutative diagram:
$$
\xymatrix{
\ar[d]^{F^\ast_{{\rm abs}}\nabla^{(p)}}   \mathbb{H}^{1}_{{\rm dR}}(\mathcal{A}/S)^{(p)}  
\ar[r]^{h\otimes 1\mapsto h^{(p)}} & \ar[d]^{\nabla^{(p)}} 
\mathbb{H}^{1}_{{\rm dR}}(\mathcal{A}^{(p)}/S) 
   \ar[r]^{F^\ast}  & \ar[d]^{\nabla} \mathbb{H}^{1}_{{\rm dR}}(\mathcal{A}/S)    \\ 
\ar[d]^{1\otimes D^{(p)}}  \mathbb{H}^{1}_{{\rm dR}}(\mathcal{A}/S)^{(p)}
\otimes_{\O_{S^{(p)}}}\Omega^1_{S^{(p)}}  \ar[r]^\alpha &\ar[d]^{1\otimes D}   \mathbb{H}^{1}_{{\rm dR}}(\mathcal{A}^{(p)}/S)
\otimes_{\O_{S}}\Omega^1_{S}  
   \ar[r]^{F^\ast}  &\ar[d]^{1\otimes D}     \mathbb{H}^{1}_{{\rm dR}}(\mathcal{A}/S)\otimes_{\O_{S}}\Omega^1_{S} \\
    \mathbb{H}^{1}_{{\rm dR}}(\mathcal{A}/S)^{(p)}  \ar[r] &
\mathbb{H}^{1}_{{\rm dR}}(\mathcal{A}^{(p)}/S) 
   \ar[r]^{F^\ast}  &  \mathbb{H}^{1}_{{\rm dR}}(\mathcal{A}/S) 
}
$$
where three left horizontal arrows are defined as in (\ref{pull-back-frob}). 
Let us observe the left middle  horizontal arrow (which has been denoted by $\alpha$). Since 
the map $\Omega^1_{S^{(p)}}\lra \Omega^1_{S}$ defining $\alpha$ is locally given by $df\mapsto df^p=pf^{p-1}df=0$,  
it vanishes, so does the map $\alpha$. 
The element $\nabla_{D}(F^\ast(h^{(p)}))$ has been factoring through $\alpha$. 
Hence this gives us the claim. 
\end{proof}

\begin{prop}\label{ab-lemma}
The following relations for $A$ and $B$ hold:
\begin{enumerate}
\item $$\begin{array}{lll}
\nabla_{11}(a_{11})=-b_{11} & \nabla_{12}(a_{11})=-b_{21} & \nabla_{22}(a_{11})=0\\
\nabla_{11}(a_{12})=-b_{12} & \nabla_{12}(a_{12})=-b_{22} & \nabla_{22}(a_{12})=0\\
\nabla_{11}(a_{21})=0 & \nabla_{12}(a_{21})=-b_{11} & \nabla_{22}(a_{21})=-b_{21}\\
\nabla_{11}(a_{22})=0 & \nabla_{12}(a_{22})=-b_{12} & \nabla_{22}(a_{22})=-b_{22}, 
\end{array}
$$
\item for any $1\le i,j\le 2$ and $1\le k<l\le 2$, $\nabla_{kl}(b_{ij})=0$ if 
all $\nabla_{ij}$'s commute with each other. 
\end{enumerate}
\end{prop}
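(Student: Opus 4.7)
The plan is to apply each of $\nabla_{11},\nabla_{12},\nabla_{22}$ to the two defining identities coming from (\ref{ab}),
\begin{align*}
F^\ast((\nabla_{11}\w_1)^{(p)})&=b_{11}\w_1+b_{21}\w_2+a_{11}\nabla_{11}\w_1+a_{21}\nabla_{22}\w_2,\\
F^\ast((\nabla_{22}\w_2)^{(p)})&=b_{12}\w_1+b_{22}\w_2+a_{12}\nabla_{11}\w_1+a_{22}\nabla_{22}\w_2,
\end{align*}
use Lemma \ref{vanishing} to conclude that the left hand sides vanish, and then expand the right hand sides by the Leibniz rule and decompose the resulting relation against the local basis $\{\w_1,\w_2,\nabla_{11}\w_1,\nabla_{22}\w_2\}$ of Lemma \ref{omega}(3). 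Matching the coefficients of $\nabla_{11}\w_1$ and $\nabla_{22}\w_2$ will deliver the eight relations on the $a_{ij}$ in (1), and matching the coefficients of $\w_1,\w_2$ in the case $D=D_{12}$ will deliver the vanishings in (2).

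The technical point is to control the second-derivative vectors produced by Leibniz. The commutativity hypothesis together with Lemma \ref{omega}(1) immediately gives
\begin{equation*}
\nabla_{11}\nabla_{22}\w_i=\nabla_{22}\nabla_{11}\w_i=0\quad (i=1,2),
\end{equation*}
which kills all mixed second-derivative terms. For the pure second derivatives $\nabla_{11}^2\w_1$ and $\nabla_{22}^2\w_2$, I claim both lie in $\mathrm{span}(\w_1,\w_2)$. For $\nabla_{11}^2\w_1$, differentiate the identities $\langle\w_1,\nabla_{11}\w_1\rangle_{{\rm dR}}=1$ and $\langle\w_2,\nabla_{11}\w_1\rangle_{{\rm dR}}=0$ (the latter follows from $\langle\w_1,\w_2\rangle_{{\rm dR}}=0$ by Leibniz, using $\nabla_{11}\w_2=0$) by $D_{11}$; the Leibniz rule combined with the alternating property of $\langle\,,\,\rangle_{{\rm dR}}$ forces $\langle\nabla_{11}^2\w_1,\w_i\rangle_{{\rm dR}}=0$ for $i=1,2$. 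Since by (\ref{value}) the pairing matrix of $\{\w_1,\w_2\}$ against $\{\nabla_{11}\w_1,\nabla_{22}\w_2\}$ is non-degenerate, this forces the $\nabla_{11}\w_1$- and $\nabla_{22}\w_2$-components of $\nabla_{11}^2\w_1$ to vanish, as claimed. The symmetric argument handles $\nabla_{22}^2\w_2$. In particular, such second-derivative contributions never appear in the $\nabla_{11}\w_1$ or $\nabla_{22}\w_2$ coefficients, and when $D=D_{12}$ they do not appear at all.

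With these reductions in place, the proof becomes a direct coefficient comparison. Applying $\nabla_{11}$ to the first identity gives $\nabla_{11}(a_{11})=-b_{11}$ and $\nabla_{11}(a_{21})=0$; applying $\nabla_{22}$ gives $\nabla_{22}(a_{11})=0$ and $\nabla_{22}(a_{21})=-b_{21}$; applying $\nabla_{12}$, after using Lemma \ref{omega}(2) to rewrite $\nabla_{12}\w_1=\nabla_{22}\w_2$ and $\nabla_{12}\w_2=\nabla_{11}\w_1$, gives $\nabla_{12}(a_{11})=-b_{21}$, $\nabla_{12}(a_{21})=-b_{11}$, and the vanishings $\nabla_{12}(b_{11})=\nabla_{12}(b_{21})=0$. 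Repeating the same three operators on the second identity produces the remaining four entries of (1) together with $\nabla_{12}(b_{12})=\nabla_{12}(b_{22})=0$, completing (2). The main obstacle is the bookkeeping singled out in the previous paragraph: one must verify that the ``spillover'' of $\nabla_{11}^2\w_1$ and $\nabla_{22}^2\w_2$ into the $\w_1,\w_2$-coefficients occurs only for $D\in\{D_{11},D_{22}\}$, so that the $D=D_{12}$ comparison used to extract (2) is clean.
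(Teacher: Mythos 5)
Your vector-Leibniz route for part (1) is correct and genuinely different from the paper's. The paper instead pairs the defining identity against $\omega_j$ and reads off $\langle F^\ast((\nabla_{ii}\omega_i)^{(p)}),\omega_j\rangle_{\rm dR}=-a_{*}$ from (\ref{important}) and (\ref{value}), so differentiating this \emph{scalar} together with Lemma \ref{vanishing} immediately gives the twelve relations; the only second-derivative control it needs is the single identity $\langle\nabla_{11}\omega_1,\nabla_{22}\omega_2\rangle_{\rm dR}=0$ coming from commutativity (as spelled out in the proof of Proposition \ref{step3}). Your route — differentiating the vector identity and decomposing against the basis of Lemma \ref{omega}(3) — is more explicit and requires the extra lemmas that $\nabla_{11}\nabla_{22}\omega_i=\nabla_{22}\nabla_{11}\omega_i=0$ and that $\nabla_{11}^2\omega_1,\ \nabla_{22}^2\omega_2$ land in $\mathrm{span}(\omega_1,\omega_2)$; both of your arguments for these are correct, and the twelve relations of (1) come out right.

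There is, however, a genuine gap in part (2). Although the statement prints ``$1\le k<l\le 2$'', the paper's own proof also establishes $\nabla_{11}(b_{11})=0$ and $\nabla_{11}(b_{22})=0$ and leaves ``the remaining cases'' to the reader, and those cases are actually needed downstream: the evaluation $\bigl(\nabla_{11}\nabla_{22}-\tfrac14\nabla_{12}^2\bigr)(\det A)=\tfrac32\det B$ in the proof of Proposition \ref{step3} fails unless $\nabla_{11}(b_{21})=\nabla_{11}(b_{22})=0$ (and symmetrically for $\nabla_{22}$). So the intended claim is $\nabla_{kl}(b_{ij})=0$ for \emph{all} $1\le k\le l\le 2$. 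Your direct method hits exactly the wall you flag: from $D=D_{11}$ the $\omega_1$-coefficient equation is $\nabla_{11}(b_{11})+a_{11}\alpha_1=0$ where $\nabla_{11}^2\omega_1=\alpha_1\omega_1+\beta_1\omega_2$; your pairing argument gives $\beta_1=0$ (differentiate $\langle\nabla_{11}\omega_1,\nabla_{22}\omega_2\rangle_{\rm dR}=0$ by $D_{11}$), but provides no handle on $\alpha_1$, so you cannot extract $\nabla_{11}(b_{11})=0$ this way. The paper sidesteps this by deducing (2) from (1) via commutativity, for instance $\nabla_{11}(b_{11})=-\nabla_{11}\nabla_{12}(a_{21})=-\nabla_{12}\nabla_{11}(a_{21})=0$, and similarly for all $\nabla_{kl}(b_{ij})$. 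You should close the gap by appending that short indirect argument.
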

\begin{proof}
By Leibniz rule and Lemma \ref{vanishing}, for $1\le k< l\le 2,\ 1\le i,j\le 2$, 
$$\nabla_{kl}\langle F^\ast((\nabla_{ii}\w_i)^{(p)}),\w_j \rangle_{{\rm dR}}=
\langle \nabla_{kl}F^\ast((\nabla_{ii}\w_i)^{(p)}),\w_j \rangle_{{\rm dR}}+
\langle F^\ast((\nabla_{ii}\w_i)^{(p)}),\nabla_{kl}\w_j \rangle_{{\rm dR}}$$
$$=\langle F^\ast((\nabla_{ii}\w_i)^{(p)}),\nabla_{kl}\w_j \rangle_{{\rm dR}}.$$
It follows from this that we have $3\times 4=12$ relations in the first claim. 

The second claim follows from the first one with the commutativity of the differentials. For instance, 
$$\nabla_{11}(b_{11})=-\nabla_{11}\nabla_{12}(a_{21})=-\nabla_{12}\nabla_{11}(a_{21})=0$$
and  
$$\nabla_{12}(b_{12})=-\nabla_{12}\nabla_{11}(a_{12})=\nabla_{11}(b_{22})=-\nabla_{11}\nabla_{22}(a_{22})=
-\nabla_{22}\nabla_{11}(a_{22})=0.$$ 
The remaining cases are left to readers. 
\end{proof}

\subsubsection{Scalar valued case}
Fix a local basis $\omega_1,\omega_2$ of $\E$. 
Recall the Hodge filtration $\E=f_\ast\Omega^1_{\mathcal{A}/S_{N,p}}\subset \mathbb{H}^{1}_{{\rm dR}}(\mathcal{A}/S_{N,p})$ and $\w=\det(\E)$. 
Since we work locally on $S^h_{N,p}$, by Theorem \ref{locus}, the Frobenius map splits the Hodge filtration. 
Hence we have obtained a canonical decomposition $\mathbb{H}^{1}_{{\rm dR}}(\mathcal{A}/S_{N,p})=\E\oplus U,\ U={\rm Im}(Fr^\ast).$
We denote by $R(U)=R_{\underline{k}}(U)$ for $\underline{k}=(k_1,k_2)$ 
a canonical direct factor, which is induced from one of 
$\mathbb{H}^{1}_{{\rm dR}}(\mathcal{A}/S_{N,p})$ as above, so that 
$${\rm Sym}^{k_2}(\bigwedge^2 \mathbb{H}^{1}_{{\rm dR}}(\mathcal{A}/S_{N,p}))\otimes_{\O_{S_{N,p}}}
{\rm Sym}^{k_1-k_2}\mathbb{H}^{1}_{{\rm dR}}(\mathcal{A}/S_{N,p}))=
\w_{\underline{k}}\oplus R(U)$$
which is induced from the above decomposition of 
$\mathbb{H}^{1}_{{\rm dR}}(\mathcal{A}/S_{N,p})$.  
For simplicity put $\mathbb{H}^{1}_{{\rm dR}}:=\mathbb{H}^{1}_{{\rm dR}}(\mathcal{A}/S_{N,p})$ and write ${\rm Sym}^k(\mathbb{H}^{1}_{{\rm dR}})={\rm Sym}^k(\E)
\oplus R'_{k}$ with $R'_{k}=R_{(k,0)}(U)$ for $k\ge 1$.  
Then we have an operator $\widetilde{\theta}$ which is given by 
\begin{equation}\label{scalar-case}
\xymatrix{
\w^{\otimes k}=\w_{(k,k)}  \ar[r]^{\hookrightarrow\hspace{5mm}}  \ar@/_/[ddrr]_{\widetilde{\theta}} & 
{\rm Sym}^k(\bigwedge^2 \mathbb{H}^{1}_{{\rm dR}})\ar[r]^{\nabla\hspace{12mm}}&  
{\rm Sym}^k(\bigwedge^2 \mathbb{H}^{1}_{{\rm dR}})\otimes_{\O_{S_{N,p}}}\Omega^1_{S_{N,p}}
\ar[d]^{KS^{-1}}   \\
&  &\ar[d]^{\mod R_{(k,k)}(U)}   {\rm Sym}^k(\bigwedge^2 \mathbb{H}^{1}_{{\rm dR}})\otimes_{\O_{S_{N,p}}}{\rm Sym}^2\E\\ 
&  &   \w^{\otimes k}\otimes_{\O_{S_{N,p}}}{\rm Sym}^2\E. 
} 
\end{equation}
Similarly we define operators $\widetilde{\theta}_i,i=1,2,3$: 
\begin{equation}\label{vector-case1}
\xymatrix{
&  \w^{\otimes k}\otimes_{\O_{S_{N,p}}}{\rm Sym}^2\E  \ar[r]^{\hookrightarrow\hspace{10mm}}  \ar@/_/[dddl]_{\widetilde{\theta}_1}
\ar@/_/[ddd]_{\widetilde{\theta}_2}  \ar@/_/[dddr]_{\widetilde{\theta}_3}   
&{\rm Sym}^k(\bigwedge^2 \mathbb{H}^{1}_{{\rm dR}})\otimes 
{\rm Sym}^2(\mathbb{H}^{1}_{{\rm dR}})\ar[d]^{KS^{-1}\circ \nabla}    \\
&  &\ar[d]^{\mod R_{(k,k)}(U)\otimes R'_2}   {\rm Sym}^k(\bigwedge^2 \mathbb{H}^{1}_{{\rm dR}})\otimes 
{\rm Sym}^2(\mathbb{H}^{1}_{{\rm dR}})\otimes_{\O_{S_{N,p}}}{\rm Sym}^2\E \ \  \\ 
&  & \ar[dll]_{p_1}  \ar[dl]^{p_2} \ar[d]^{p_3}\hspace{14mm}  \w^{\otimes k}\otimes_{\O_{S_{N,p}}}{\rm Sym}^2\E\otimes_{\O_{S_{N,p}}}{\rm Sym}^2\E  \\
 \w^{\otimes k+2} & \w^{\otimes k+1}\otimes_{\O_{S_{N,p}}} {\rm Sym}^2\E  & \w^{\otimes k}\otimes_{\O_{S_{N,p}}}{\rm Sym}^4\E
} 
\end{equation}
where the three arrows $p_1,p_2,p_3$ are given by an explicit, but non-canonical direct decomposition 
\begin{equation}\label{decomp}
{\rm Sym}^2\E\otimes_{\O_{S_{N,p}}}{\rm Sym}^2\E=
\w^{\otimes 2} \oplus (\w\otimes_{\O_{S_{N,p}}}{\rm Sym}^2\E) \oplus  
{\rm Sym}^4\E
\end{equation}
as ${\rm Aut}_{\O_{S_{N,p}}}(\E)$-modules. 
We can globally construct this decomposition by using 
Schur functors (cf. Section 2.2.4 of \cite{EFGMM}). 
Further each component is characterized by a highest weight vector. 
Therefore, we may work locally on $S_{N,p}$ to understand the global 
decomposition (\ref{decomp}) explicitly. 

Fix a local basis $e_1,e_2$ of $\E$. Put $u_{i}=e^{i}_1e^{2-i}_2,i=0,1,2$ 
with the convention $e^0_1=e^0_2:=1$ 
 which make up a local basis of 
${\rm Sym}^2\E$. To avoid confusion, we define additional symbols  $v_{i}=e^{i}_1e^{2-i}_2,i=0,1,2$ which 
play the same role. Then $\{u_i\otimes v_j\}_{0\le i,j\le 2}$ gives a local basis of 
${\rm Sym}^2\E\otimes_{\O_{S_{N,p}}}{\rm Sym}^2\E$. 
As in Appendix A, for any $x=\ds\sum_{0\le i,j\le 2}a_{ij}u_i\otimes v_j$ we define 
\begin{equation}\label{decom-explicit}
\begin{array}{rl}
p_1(x)&=\Big(\ds\frac{1}{3}a_{20}-\frac{1}{6}a_{11}+\frac{1}{3}a_{02}\Big)
(e_1\wedge e_2)^2 \\
p_2(x)&=\ds\frac{1}{2}(a_{21}-a_{12})e^{2}_1(e_1\wedge e_2)+(a_{20}-a_{02})e_1e_2(e_1\wedge e_2)
+\frac{1}{2}(a_{10}-a_{01})e^{2}_2(e_1\wedge e_2) \\
 p_3(x)&=a_{22}e^4_1+(a_{21}+a_{12})e^3_1e_2+(a_{20}+a_{11}+a_{02})e^2_1e^2_2+(a_{10}+a_{01})e_1e^3_2+a_{00}e^4_2  
\end{array}
\end{equation}    
which give (\ref{decomp}) in local basis $\{u_i\otimes v_j\}_{0\le i,j\le 2}$. We have used the assumption $p\ge 5$  
to get this decomposition. Otherwise it is no longer decomposable as $GL_2(\O_{S_{N,p}})$-modules. 

We now compute the images of theta operators. 
Recall that we have worked locally on the ordinary locus $S^h_{N,p}$.  
Therefore, the Hasse matrix $A$ in (\ref{important}) is invertible on $S^h_{N,p}$ and we may put 
$C=BA^{-1}=\left(\begin{array}{cc}
c_{11} & c_{12} \\
c_{21} & c_{22}
\end{array}
\right)$. 
Then we have  
\begin{equation}\label{c}
c_{11}=\frac{b_{11}a_{22}-b_{12}a_{21}}{\det(A)},\ 
c_{12}=\frac{-b_{11}a_{12}+b_{12}a_{11}}{\det(A)},\ 
c_{21}=\frac{b_{21}a_{22}-b_{22}a_{21}}{\det(A)},\ 
c_{22}=\frac{-b_{21}a_{12}+b_{22}a_{11}}{\det(A)}. 
\end{equation}
 By (\ref{important}) and Lemma \ref{omega}-(2), we have   
\begin{equation}\label{important1}
\begin{array}{r}
\nabla_{12}(\w_2)=\nabla_{11}(\w_1)=\phi_1-(c_{11}w_1+c_{12}w_2),\\
 \nabla_{12}(\w_1)=\nabla_{22}(\w_2)=\phi_2-(c_{21}w_1+c_{22}w_2). 
 \end{array}
\end{equation}
For simplicity, put  $\w^2_1=\w^{\otimes 2}_1,\ 
\w_1\w_2=\frac{1}{2}(\w_1\otimes\w_2+\w_2\otimes\w_1),\ \w^2_2=\w^{\otimes 2}_2$ (see (\ref{ks1})). 
Put $d_{ij}:=\langle \w_i,\nabla \w_j \rangle_{{\rm dR}}$ for $1\le i\le j\le 2$.  
Then we have 
\begin{prop}\label{first-step}Keep the notations above. Let $F(\omega_1\wedge \omega_2)^k$ be 
a local section of $\w^k$. Then 
$$\widetilde{\theta}(F(\omega_1\wedge \omega_2)^k)=
\sideset{^t}{}{\mathop{%
 \begin{pmatrix} 
\nabla_{11}F-kc_{11}F \\ 
\nabla_{12}F-k(c_{12}+c_{21})F\\
\nabla_{22}F-kc_{22}F 
 \end{pmatrix}}}
\left(
\begin{array}{c}
(\omega_1\wedge \omega_2)^k\w^2_1  \\
(\omega_1\wedge \omega_2)^k\w_1\w_2  \\
(\omega_1\wedge \omega_2)^k\w^2_2   
\end{array}
\right).
$$
\end{prop}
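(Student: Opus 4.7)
The plan is to unfold the construction of $\widetilde{\theta}$ step by step, using Leibniz to compute $\nabla$ on the chosen local section, then combining Lemma \ref{omega}, equation (\ref{important1}), the splitting coming from $R(U)$, and finally the explicit Kodaira--Spencer identifications in (\ref{ks1}).

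First I would compute
\[
\nabla(F(\omega_1\wedge\omega_2)^k)=dF\otimes(\omega_1\wedge\omega_2)^k
+kF(\omega_1\wedge\omega_2)^{k-1}\nabla(\omega_1\wedge\omega_2),
\]
where $\nabla$ on the symmetric power of $\bigwedge^2\mathbb{H}^1_{\mathrm{dR}}$ is the induced connection. Using $\nabla(\omega_1\wedge\omega_2)=\nabla\omega_1\wedge\omega_2+\omega_1\wedge\nabla\omega_2$ and expanding $\nabla\omega_i=\sum_{1\le a\le b\le 2}\nabla_{ab}(\omega_i)\,d_{ab}$, Lemma \ref{omega} kills the $d_{22}$-coefficient of $\nabla\omega_1$ and the $d_{11}$-coefficient of $\nabla\omega_2$, and equates the $d_{12}$-coefficients with $\nabla_{11}\omega_1$ and $\nabla_{22}\omega_2$ respectively. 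Then (\ref{important1}) lets me replace $\nabla_{11}\omega_1$ by $\phi_1-(c_{11}\omega_1+c_{12}\omega_2)$ and $\nabla_{22}\omega_2$ by $\phi_2-(c_{21}\omega_1+c_{22}\omega_2)$.

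Next I would reduce modulo $R(U)$. By definition of $R(U)$, under the canonical splitting $\mathbb{H}^1_{\mathrm{dR}}=\E\oplus U$ the direct summand $\omega^k$ of $\mathrm{Sym}^k(\bigwedge^2\mathbb{H}^1_{\mathrm{dR}})$ corresponds to $(\omega_1\wedge\omega_2)^k$, while every monomial containing a factor $\phi_i\wedge\omega_j\in\E\otimes U$ or $\phi_1\wedge\phi_2\in\bigwedge^2 U$ lives in $R(U)$. Consequently, after multiplying by $(\omega_1\wedge\omega_2)^{k-1}$, all the $\phi_i$-terms in the expressions for $\nabla\omega_1\wedge\omega_2$ and $\omega_1\wedge\nabla\omega_2$ drop out, leaving the congruence
\[
\nabla(\omega_1\wedge\omega_2)\equiv -(\omega_1\wedge\omega_2)\bigl(c_{11}d_{11}+(c_{12}+c_{21})d_{12}+c_{22}d_{22}\bigr)\pmod{R(U)\otimes\Omega^1_{S_{N,p}}}.
\]

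Finally I would apply $KS^{-1}$ term by term, using (\ref{ks1}) which says $KS^{-1}(d_{11})=\omega_1^2$, $KS^{-1}(d_{12})=\omega_1\omega_2$, $KS^{-1}(d_{22})=\omega_2^2$, and expanding $dF=\nabla_{11}(F)d_{11}+\nabla_{12}(F)d_{12}+\nabla_{22}(F)d_{22}$ via the basis $D_{ij}$ dual to the $d_{ij}$. Collecting the three resulting coefficients of $(\omega_1\wedge\omega_2)^k\omega_1^2$, $(\omega_1\wedge\omega_2)^k\omega_1\omega_2$, $(\omega_1\wedge\omega_2)^k\omega_2^2$ yields exactly the column vector displayed in the statement. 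The only genuinely non-trivial step is the mod-$R(U)$ reduction; everything else is bookkeeping with Lemma \ref{omega} and (\ref{important1}).
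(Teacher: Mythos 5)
Your proposal is correct and takes essentially the same route as the paper's own proof: both apply the Leibniz rule to $\nabla(F(\omega_1\wedge\omega_2)^k)$, use Lemma \ref{omega} and the identities (\ref{important1}) to express $\nabla_{ij}(\omega_1\wedge\omega_2)$ in terms of $\phi_i$ and $c_{ij}\omega_1\wedge\omega_2$, drop the $\phi_i$-terms modulo $R(U)$, and finally identify $KS^{-1}(d_{ij})$ with $\omega_i\omega_j$. The only difference is cosmetic — the paper applies $KS^{-1}$ first and then reduces each $\nabla_{ij}(F(\omega_1\wedge\omega_2)^k)$ mod $R(U)$, while you reduce mod $R(U)$ first and apply $KS^{-1}$ last — and this commutes.
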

\begin{proof}
By definition, 
$$
\begin{array}{rl}
KS^{-1}\circ\nabla(F(\omega_1\wedge \omega_2)^k)&=\ds\sum_{1\le i\le j\le 2}(\nabla_{ij}F(\omega_1\wedge \omega_2)^k)
KS^{-1}(d_{ij})\\
&=\ds\sum_{1\le i\le j\le 2}(\nabla_{ij}F(\omega_1\wedge \omega_2)^k)
\w_i\w_j. 
\end{array}
$$ 
Therefore we have only to compute $\nabla_{ij}F(\omega_1\wedge \omega_2)^k$ modulo $R(U)$. 
By direct computation with (\ref{important1}), 
$$
\begin{array}{rl}
\nabla_{11}(\omega_1\wedge \omega_2)&=
\nabla_{11}(\omega_1)\wedge \omega_2+\omega_1\wedge \nabla_{11}(\omega_2)\\
&=\phi_1\wedge w_2-c_{11}\w_1\wedge\w_2 \equiv -c_{11}\w_1\wedge\w_2 \mod R(U).
\end{array}
$$
It follows from this that 
$$
\begin{array}{rl}
\nabla_{11}(F(\omega_1\wedge \omega_2)^k)&=
\nabla_{11}(F)(\omega_1\wedge \omega_2)^k+kF(\omega_1\wedge \omega_2)^{k-1}\nabla_{11}(\omega_1\wedge \omega_2)\\
&\equiv (\nabla_{11}F-kc_{11}F)(\omega_1\wedge \omega_2)^k \mod R(U).
\end{array}
$$
We will have the same results for remaining cases by using 
$$\nabla_{12}(\omega_1\wedge \omega_2)\equiv -(c_{12}+c_{21})\w_1\wedge\w_2 \mod R(U),\ 
\nabla_{22}(\omega_1\wedge \omega_2)\equiv -c_{22}\w_1\wedge\w_2 \mod R(U).$$ 
\end{proof}
The following lemma will be used later. We omit the proof. 
\begin{lem}\label{useful}Keep the notations above. 
Then under modulo $R(U)$, 
$$
\begin{array}{l}
\nabla_{11}(\omega^2_1)\equiv -2c_{11}\w^2_1-2c_{12}\w_1\w_2 , 
\nabla_{12}(\omega^2_1)\equiv  -2c_{21}\w^2_1-2c_{22}\w_1\w_2 ,
\nabla_{22}(\omega^2_1)=0 , \\
\nabla_{11}(\omega_1\omega_2)\equiv -c_{11}\w_1\w_2-c_{12}\w^2_2, 
\nabla_{12}(\omega_1\omega_2)\equiv-c_{11}\w^2_1-(c_{12}+c_{21})\w_1\w_2-c_{22}\w^2_2,\\
\nabla_{22}(\omega_1\omega_2)\equiv -c_{21}\w^2_1-c_{22}\w_1\w_2, \\
\nabla_{11}(\omega^2_2)\equiv 0, 
\nabla_{12}(\omega^2_2)\equiv -2c_{11}\w_1\w_2-2c_{12}\w^2_2, 
\nabla_{22}(\omega^2_2)\equiv  -2c_{21}\w_1\w_2-2c_{22}\w^2_2. 
\end{array}
$$
\end{lem}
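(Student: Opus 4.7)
The plan is to derive all nine congruences by a direct application of the Leibniz rule for $\nabla$ on the three symmetric tensors $\omega_1^2=\omega_1\otimes\omega_1$, $\omega_1\omega_2=\tfrac12(\omega_1\otimes\omega_2+\omega_2\otimes\omega_1)$, and $\omega_2^2=\omega_2\otimes\omega_2$, and then feeding in the identities we already have. Specifically, from Lemma \ref{omega}(1) we know $\nabla_{11}\w_2=\nabla_{22}\w_1=0$, and from (\ref{important1}) we have
\[
\nabla_{11}\w_1=\nabla_{12}\w_2=\phi_1-c_{11}\w_1-c_{12}\w_2,\qquad
\nabla_{22}\w_2=\nabla_{12}\w_1=\phi_2-c_{21}\w_1-c_{22}\w_2.
\]
Since $R(U)$ is the direct summand of the relevant symmetric power tensor built from $U=\mathrm{Im}(Fr^\ast)=\langle \phi_1,\phi_2\rangle$, reduction modulo $R(U)$ simply discards any term involving $\phi_1$ or $\phi_2$.

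Concretely, I would first treat $\w_1^2$: Leibniz gives $\nabla_{ij}(\w_1^2)=2\w_1\cdot\nabla_{ij}\w_1$. Plugging in the three cases $(i,j)=(1,1),(1,2),(2,2)$ and dropping the $\phi_i$ contributions produces the three identities in the top row. Next, for $\w_1\w_2$ I would expand
\[
\nabla_{ij}(\w_1\w_2)=\w_1\cdot\nabla_{ij}\w_2+\w_2\cdot\nabla_{ij}\w_1,
\]
and again substitute from the display above; the symmetric case $(i,j)=(1,2)$ picks up contributions from both $\nabla_{12}\w_1$ and $\nabla_{12}\w_2$, which explains why both pairs $(c_{11},c_{22})$ and $(c_{12},c_{21})$ appear. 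Finally, $\w_2^2$ is handled exactly as $\w_1^2$ with indices swapped.

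There is no real obstruction here: the content of the lemma is purely bookkeeping once Lemma \ref{omega} and (\ref{important1}) are in hand. The one point worth flagging is that the computation must be carried out inside ${\rm Sym}^2\mathbb{H}^1_{{\rm dR}}$ (not in a formal tensor algebra), so the symmetrizations such as $\w_1\cdot\phi_2$ vs.\ $\phi_2\cdot\w_1$ are identified and the coefficients land correctly; this is what produces the single $-(c_{12}+c_{21})\w_1\w_2$ term in $\nabla_{12}(\w_1\w_2)$ rather than two separate contributions. Once this is kept in mind, each of the nine congruences drops out after a one-line substitution.
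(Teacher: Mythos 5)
Your proof is correct, and it is exactly the computation the author has in mind: the paper explicitly omits the proof of Lemma \ref{useful}, but the derivation is by the Leibniz rule applied to $\w_1\otimes\w_1$, $\tfrac12(\w_1\otimes\w_2+\w_2\otimes\w_1)$, $\w_2\otimes\w_2$, substituting Lemma \ref{omega}(1) and (\ref{important1}), and discarding $\phi_1,\phi_2$-terms since $\mathbb{H}^1_{\rm dR}=\E\oplus U$ puts them in $R(U)$ --- precisely as you describe, and consistent with the proof of Proposition \ref{first-step} immediately preceding.
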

Next we compute the image of each $\widetilde{\theta}_i,i=1,2,3$. 
Let 
$F_{11}(\omega_1\wedge \omega_2)^k\w^2_1,\ 
F_{12}(\omega_1\wedge \omega_2)^k\w_1\w_2,\ 
F_{22}(\omega_1\wedge \omega_2)^k\w^2_2$ be 
local sections of $\w^k\otimes_{\O_{S_{N,p}}}{\rm Sym}^2\E$. Put 
$$h=F_{11}(\omega_1\wedge \omega_2)^k\w^2_1+ 
F_{12}(\omega_1\wedge \omega_2)^k\w_1\w_2+ F_{22}(\omega_1\wedge \omega_2)^k\w^2_2.$$ 
Let $u_i,v_j,0\le i,j\le 2$ be the basis to define (\ref{decom-explicit}) with respect to $e_1=\w_1,e_2=\w_2$. 
We denote by $\ds\sum_{0\le i,j\le 2}a_{ij}(\w_1\wedge\w_2)^k u_i\otimes v_j,\ a_{ij}\in \O_{S^h_{N,p}} $ the image of $KS^{-1}\circ \nabla(h)$ under 
the projection to $\w^{\otimes k}\otimes_{\O_{S_{N,p}}}{\rm Sym}^2\E\otimes_{\O_{S_{N,p}}}{\rm Sym}^2\E$. 
\begin{prop}\label{step2}Keep the notations above. 
Each $a_{ij}$ is given as follows:
$$
\begin{array}{ll}
a_{22}=\nabla_{11}(F_{11})-(k+2)c_{11}F_{11}, & a_{21}=\nabla_{12}(F_{11})-(k(c_{12}+c_{21})+2c_{21})F_{11}-c_{11}F_{12},\\
 a_{12}=\nabla_{11}(F_{12})-(k+1)c_{11}F_{12}-2c_{12}F_{11},&  \\
a_{20}=\nabla_{22}(F_{11})-kc_{22}F_{11}-c_{21}F_{12}, & a_{11}=\nabla_{12}(F_{12})-(k+1)(c_{12}+c_{21})F_{12}-2c_{22}F_{11}-2c_{11}F_{22},  \\
a_{02}=\nabla_{11}(F_{22})-kc_{11}F_{22}-c_{12}F_{12}, & \\
a_{10}=\nabla_{22}(F_{12})-(k+1)c_{22}F_{12}-2c_{21}F_{22}, & 
a_{01}=\nabla_{12}(F_{22})-(k(c_{12}+c_{21})+2c_{12})F_{22}-c_{22}F_{12},  \\
a_{00}=\nabla_{22}(F_{22})-(k+2)c_{22}F_{22}. 
\end{array}
$$
\end{prop}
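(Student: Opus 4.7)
The plan is a direct Leibniz-rule calculation combining the formulas already established in the proof of Proposition \ref{first-step} with those of Lemma \ref{useful}. Writing $\nabla=\sum_{1\le s\le t\le 2}\nabla_{st}\otimes d_{st}$ and applying $KS^{-1}$, which identifies $d_{st}$ with $\omega_s\omega_t$, gives
\[
KS^{-1}\circ\nabla(h)\;=\;\sum_{1\le s\le t\le 2}\nabla_{st}(h)\otimes \omega_s\omega_t.
\]
Projecting the first factor modulo $R(U)$ lands in $\omega^{\otimes k}\otimes_{\O_{S_{N,p}}}{\rm Sym}^2\E\otimes_{\O_{S_{N,p}}}{\rm Sym}^2\E$, and with respect to the basis $u_i\otimes v_j$ the second factor $\omega_s\omega_t$ supplies the $v_j$-slot through the dictionary $(1,1)\leftrightarrow v_2$, $(1,2)\leftrightarrow v_1$, $(2,2)\leftrightarrow v_0$, while the reduction of $\nabla_{st}(h)$ supplies the $u_i$-slot through its coefficients on $\omega_1^2,\omega_1\omega_2,\omega_2^2$.

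For each $(s,t)$, I would expand $\nabla_{st}(h)$ by Leibniz on each summand $F_{ab}(\omega_1\wedge\omega_2)^k\omega_a\omega_b$ of $h$, producing three pieces: $\nabla_{st}(F_{ab})\cdot(\omega_1\wedge\omega_2)^k\omega_a\omega_b$, a factor $kF_{ab}\cdot(\omega_1\wedge\omega_2)^{k-1}\nabla_{st}(\omega_1\wedge\omega_2)\cdot\omega_a\omega_b$ from the top form, and $F_{ab}(\omega_1\wedge\omega_2)^k\cdot\nabla_{st}(\omega_a\omega_b)$. Modulo $R(U)$, the middle piece is handled by the identities $\nabla_{11}(\omega_1\wedge\omega_2)\equiv -c_{11}\omega_1\wedge\omega_2$, $\nabla_{12}(\omega_1\wedge\omega_2)\equiv -(c_{12}+c_{21})\omega_1\wedge\omega_2$, $\nabla_{22}(\omega_1\wedge\omega_2)\equiv -c_{22}\omega_1\wedge\omega_2$ already used in the proof of Proposition \ref{first-step}. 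The last piece is read directly from Lemma \ref{useful}; it is responsible both for the additional diagonal contributions (the extra $+2$ in the coefficients $(k+2)$ of $a_{22}, a_{00}$, and the extra $+1$ in $(k+1)$ appearing in $a_{12},a_{10},a_{21},a_{01}$) and for all the cross terms mixing $F_{11},F_{12},F_{22}$, for example the $-2c_{12}F_{11}$ in $a_{12}$ arising from $\nabla_{11}(\omega_1^2)\equiv -2c_{11}\omega_1^2-2c_{12}\omega_1\omega_2$.

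Collecting like monomials and reading off coefficients against the dictionary above then yields all nine formulas: $(s,t)=(1,1)$ produces the column $(a_{22},a_{12},a_{02})$, $(s,t)=(1,2)$ produces $(a_{21},a_{11},a_{01})$, and $(s,t)=(2,2)$ produces $(a_{20},a_{10},a_{00})$. The main obstacle here is not conceptual but purely organizational: one must keep the $(s,t)\leftrightarrow v_j$ dictionary straight and avoid dropping any of the off-diagonal contributions coming from Lemma \ref{useful}, since these are precisely what couple the three components $F_{11},F_{12},F_{22}$ of $h$ in the output. No new inputs beyond Proposition \ref{first-step} and Lemma \ref{useful} are needed.
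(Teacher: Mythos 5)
Your proposal is correct and is essentially identical to the paper's own proof: both expand $KS^{-1}\circ\nabla$ on each summand $F_{ij}(\omega_1\wedge\omega_2)^k\omega_i\omega_j$ via the Leibniz rule, handle the $(\omega_1\wedge\omega_2)^k$ factor with the congruences from the proof of Proposition \ref{first-step}, handle the $\omega_i\omega_j$ factor with Lemma \ref{useful}, and then read off coefficients against the basis $u_i\otimes v_j$. Your explicit $(s,t)\leftrightarrow v_j$ dictionary and the identification of which $\nabla_{st}$ produces which column of $a_{ij}$'s are exactly the bookkeeping the paper leaves implicit in the phrase ``we easily see the claim.''
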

\begin{proof}
By definition, for $1\le i\le j\le 2$, 
$$
\begin{array}{rl}
KS^{-1}\circ\nabla(F_{ij}(\omega_1\wedge \omega_2)^k\w_i\w_j)&=\ds\sum_{1\le k\le l\le 2}(\nabla_{kl}F(\omega_1\wedge \omega_2)^k\w_i\w_j)
KS^{-1}(d_{kl})\\
&=\ds\sum_{1\le i\le j\le 2}(\nabla_{kl}F_{ij}(\omega_1\wedge \omega_2)^k\w_i\w_j)\otimes \w_k\w_l. 
\end{array}
$$
Therefore we have only to compute 
$$\nabla_{kl}F_{ij}(\omega_1\wedge \omega_2)^k\w_i\w_j=
(\nabla_{kl}F_{ij})(\omega_1\wedge \omega_2)^k\w_i\w_j+F_{ij}(\nabla_{kl}(\omega_1\wedge \omega_2)^k)\w_i\w_j
+(\omega_1\wedge \omega_2)^k\nabla_{kl}(\w_i\w_j).$$ 
By uisng Lemma \ref{useful} for the third term (see the proof of Proposition \ref{first-step} for the second term), 
we easily see the claim. 
\end{proof}
\begin{prop}\label{step3}Keep the notation above. Put 
$$\widetilde{\Theta}:=\widetilde{\theta}_1\circ \widetilde{\theta}:\w^k\lra \w^{k+2}$$ 
which is 
defined on $S^h_{N,p}$. Let $F(\omega_1\wedge \omega_2)^k$ be 
a local section of $\w^k$. 
Assume that $\nabla_{ij}$'s commute with each other. 
Then 
$$
\begin{array}{rl}
(\det A)\cdot\widetilde{\Theta}(F)&=\ds\frac{2}{3}\det A\cdot
\det\begin{pmatrix} 
\nabla_{11} &  \frac{1}{2}\nabla_{12} \\
\frac{1}{2}\nabla_{12}& \nabla_{22}  
 \end{pmatrix}(F)+
 \frac{2k(2k-1)}{9}F\cdot\det\begin{pmatrix} 
\nabla_{11} &  \frac{1}{2}\nabla_{12} \\
\frac{1}{2}\nabla_{12}& \nabla_{22}  
 \end{pmatrix}(\det A) \\
 &+\ds\frac{2k-1}{3}\Bigg(\nabla_{11}(F)\nabla_{22}(\det A)-\frac{1}{2}\nabla_{12}(F)\nabla_{12}(\det A)+
\nabla_{22}(F)\nabla_{11}(\det A)\Bigg)
\end{array}
$$
where $\det\begin{pmatrix} 
\nabla_{11} &  \frac{1}{2}\nabla_{12} \\
\frac{1}{2}\nabla_{12}& \nabla_{22}  
 \end{pmatrix}(F):=(\nabla_{11}\nabla_{22}-\frac{1}{4}\nabla^2_{12})F$. 
\end{prop}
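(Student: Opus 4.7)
The plan is to compute $\widetilde{\Theta}(F)$ directly by composing Propositions \ref{first-step} and \ref{step2}. By Proposition \ref{first-step}, writing
\[ F_{11}:=\nabla_{11}F-kc_{11}F,\quad F_{12}:=\nabla_{12}F-k(c_{12}+c_{21})F,\quad F_{22}:=\nabla_{22}F-kc_{22}F,\]
the image $\widetilde{\theta}(F(\omega_1\wedge\omega_2)^k)$ is the section with these coefficients on $(\omega_1\wedge\omega_2)^k\omega_1^2$, $(\omega_1\wedge\omega_2)^k\omega_1\omega_2$, $(\omega_1\wedge\omega_2)^k\omega_2^2$. Since $\widetilde{\theta}_1$ factors through the projection $p_1$ of (\ref{decom-explicit}), we obtain
\[ \widetilde{\Theta}(F)=\tfrac{1}{3}a_{20}-\tfrac{1}{6}a_{11}+\tfrac{1}{3}a_{02}, \]
where $a_{20},a_{11},a_{02}$ are as in Proposition \ref{step2} applied to the triple $(F_{11},F_{12},F_{22})$.

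Substituting the definitions of $F_{ij}$, expanding by the Leibniz rule and invoking the commutativity hypothesis $\nabla_{ij}\nabla_{kl}=\nabla_{kl}\nabla_{ij}$, the second-order derivatives of $F$ assemble into $\tfrac{2}{3}\det\bigl(\begin{smallmatrix}\nabla_{11}&\frac{1}{2}\nabla_{12}\\ \frac{1}{2}\nabla_{12}&\nabla_{22}\end{smallmatrix}\bigr)(F)$, and the first-order-in-$F$ coefficients collapse to $-\tfrac{2k-1}{3}c_{22}\nabla_{11}F-\tfrac{2k-1}{3}c_{11}\nabla_{22}F+\tfrac{2k-1}{6}(c_{12}+c_{21})\nabla_{12}F$. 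What remains is an $F$-coefficient involving the terms $\nabla_{ij}(c_{kl})$ and $c_{ij}c_{kl}$.

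The crucial algebraic inputs convert the $c_{ij}$'s into logarithmic derivatives of $\det A$. Expanding $\nabla_{kl}(a_{11}a_{22}-a_{12}a_{21})$ and applying Proposition \ref{ab-lemma} yields
\[ c_{11}=-\frac{\nabla_{11}(\det A)}{\det A},\quad c_{22}=-\frac{\nabla_{22}(\det A)}{\det A},\quad c_{12}+c_{21}=-\frac{\nabla_{12}(\det A)}{\det A}. \]
Using also $\nabla_{12}(b_{ij})=0$ and the commutativity of the $\nabla_{ij}$'s, a further short calculation (differentiating the above identities) gives $\nabla_{11}\nabla_{22}(\det A)=\det B$ and $\nabla_{12}^2(\det A)=-2\det B$, hence the key relation
\[ \det\bigl(\begin{smallmatrix}\nabla_{11}&\frac{1}{2}\nabla_{12}\\ \frac{1}{2}\nabla_{12}&\nabla_{22}\end{smallmatrix}\bigr)(\det A)=\tfrac{3}{2}\det B. \]
Multiplying the intermediate expression for $\widetilde{\Theta}(F)$ by $\det A$ now clears the denominators in $c_{ij}=-\nabla_{\cdot\cdot}(\det A)/\det A$, and the $\nabla_{ij}F$-coefficients immediately reproduce the parenthesized first-derivative terms on the right-hand side with coefficient $\tfrac{2k-1}{3}$.

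The main obstacle is then the $F$-coefficient: one has to show that $(\det A)$ times the sum of $\nabla_{ij}(c_{kl})F$- and $c_{ij}c_{kl}F$-contributions consolidates to $\tfrac{2k(2k-1)}{9}F\det\bigl(\begin{smallmatrix}\nabla_{11}&\frac{1}{2}\nabla_{12}\\ \frac{1}{2}\nabla_{12}&\nabla_{22}\end{smallmatrix}\bigr)(\det A)$. Each $\nabla_{ij}(c_{kl})$ splits via Leibniz and the identities above into a $\det B$ piece and a quadratic piece in $\nabla_{\cdot\cdot}(\det A)/\det A$; the quadratic pieces have to conspire with the $c_{ij}c_{kl}$ contributions — using the commutativity of the $\nabla_{ij}$'s — to leave only the correct multiple of $\det B$, which via the relation $\det B=\tfrac{2}{3}\det\bigl(\begin{smallmatrix}\nabla_{11}&\frac{1}{2}\nabla_{12}\\ \frac{1}{2}\nabla_{12}&\nabla_{22}\end{smallmatrix}\bigr)(\det A)$ produces the claimed coefficient. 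This combinatorial reorganization is the delicate step; all other pieces are routine once the $c$-to-$\det A$ dictionary and the commutativity are in place.
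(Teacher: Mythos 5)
Your proposal is correct and follows essentially the same route as the paper: compose Propositions \ref{first-step} and \ref{step2}, extract the $p_1$-component $\tfrac{1}{3}a_{20}-\tfrac16 a_{11}+\tfrac13 a_{02}$, use the commutativity to assemble the pure second-order terms into $\tfrac23\det\bigl(\begin{smallmatrix}\nabla_{11}&\frac12\nabla_{12}\\ \frac12\nabla_{12}&\nabla_{22}\end{smallmatrix}\bigr)(F)$, convert the $c_{ij}$ into (logarithmic) $\nabla$-derivatives of $\det A$ via (\ref{formula1}), and use Proposition \ref{ab-lemma} to obtain $\det\bigl(\begin{smallmatrix}\nabla_{11}&\frac12\nabla_{12}\\ \frac12\nabla_{12}&\nabla_{22}\end{smallmatrix}\bigr)(\det A)=\tfrac32\det B$. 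The only cosmetic difference is that the paper derives the $F$-coefficient by quoting the identity $\nabla_{22}c_{11}-\nabla_{12}c_{12}+\nabla_{11}c_{22}=-\det B/\det A$ and $c_{11}c_{22}-c_{12}^2=\det B/\det A$ directly, while you reach the same cancellation by differentiating the $c_{ij}=-\nabla_{\cdot\cdot}(\det A)/\det A$ dictionary; these are equivalent calculations.
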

\begin{proof}
By the assumption of the commutativity of $\nabla_{ij}$, we see that 
$$\langle  \nabla_{11} \w_1,\nabla_{22} \w_2 \rangle_{{\rm dR}}=0.$$
It follows from this that $c_{12}=c_{21}$. 
By Proposition \ref{first-step} and \ref{step2}, as a first step we have 
$$
\begin{array}{rl}
\widetilde{\Theta}(F)&=\ds\frac{2}{3} 
\det\begin{pmatrix} 
\nabla_{11} &  \frac{1}{2}\nabla_{12} \\
\frac{1}{2}\nabla_{12}& \nabla_{22}  
 \end{pmatrix}(F) + F\Big\{-\frac{k}{3}(\nabla_{22}c_{11}-\nabla_{12}c_{12}+\nabla_{11}c_{22})+
 \frac{2k(k-1)}{3}(c_{11}c_{22}-c^2_{12}) \Big\} \\
&+\ds\frac{2k-1}{3}(c_{12}\nabla_{12}(F)-c_{11}\nabla_{22}(F)-c_{22}\nabla_{11}(F)). 
\end{array}
$$
By the assumption of the commutativity of $\nabla_{ij}$'s and Proposition \ref{ab-lemma}, one has 
\begin{equation}\label{c's}
\begin{array}{l}
\nabla_{22}c_{11}-\nabla_{12}c_{12}+\nabla_{11}c_{22}=-\ds\frac{\det B}{\det A}.
\end{array}
\end{equation}
It follows from this that the second term becomes $\ds\frac{k(2k-1)}{3}\cdot \frac{\det B}{\det A}$. 
On the other hand, by Proposition \ref{ab-lemma} again, 
$\det\begin{pmatrix} 
\nabla_{11} &  \frac{1}{2}\nabla_{12} \\
\frac{1}{2}\nabla_{12}& \nabla_{22}  
 \end{pmatrix}(\det A)=\ds\frac{3}{2}\det B$. Putting these together, the second term becomes one 
 as in the claim. The third term is computed by using 
\begin{equation}\label{formula1}
\nabla_{11}(\det A)=-c_{11}\det A ,\ \nabla_{12}(\det A)=-2 c_{12}\det A,\ \nabla_{22}(\det A)=-c_{22}\det A .
\end{equation}
 This gives us the claim. 
\end{proof}

Put 
\begin{itemize}
\item $\Theta:=H_{p-1}\cdot \widetilde{\Theta}$,
\item $\theta:=H_{p-1}\cdot \widetilde{\theta}$, and
\item $\theta_i:=H_{p-1}\cdot \widetilde{\theta}_i,\ i=1,2,3$. 
\end{itemize}
We will check the holomorphy is preserved under these operators. 
However, to apply Proposition \ref{step2} and Proposition \ref{step3}, we require that 
the dual basis $D_{ij},1\le i\le j\le 2$ commute with each other. 
So far we do not know how to do that because we work on the field of 
positive characteristic. 
To overcome this situation we move to the localization at each point 
apply to the local deformation of each point which is called 
the generic square zero deformation by Katz (see p. 180-185 of \cite{kob}). 

Let us fix a point $x\in S_{N,p}(\bF_p)$ and denote by $A_x$ the corresponding abelian variety over $\bF_p$. 
Let $\widehat{\O}_{S_{N,p},x}=\bF_p[[t_{11},t_{12},t_{22}]]$ be the completion of $\O_{S_{N,p},x}$ along $x$ with the 
three variables and  $m_{x}$ be its maximal ideal. Put $R=\widehat{\O}_{S_{N,p},x}/m^2_x$. 
Fix a basis $\w'_1,\w'_2$ of $H^0(A_x,\Omega^1_{A_x})$. 
Let $\eta'_i\in H^1(A_x,\mathcal{O}_{A_x})$ be the dual element of $\w'_i$. 
Let $s_{ij}\in {\rm Hom}_{\bF_p}(H^0(A_x,\Omega^1_{A_x}),H^1(A_x,\mathcal{O}_{A_x}))$  be the element sending 
$\w'_i$ to $\eta'_j$ and $\w'_{k}$ to 0 if $k\not=i$.  
Let $\mathcal{X}$ be the deformation of $A_x$ to $R$ corresponding to  
$$s_{11}t_{11}+s_{12}t_{12}+s_{22}t_{22}\in 
{\rm Hom}_{\bF_p}(H^0(A_x,\Omega^1_{A_x}),H^1(A_x,\mathcal{O}_{A_x}))\otimes_{\bF_p} m_R$$ 
which we  call it the generic zero square deformation
(see  SUBLEMMA 8 at p.180 of \cite{kob}). 
We denote by $f_{\mathcal{X}}:\mathcal{X}\lra {\rm Spec}\hspace{0.3mm}R$ the corresponding abelian scheme. 
Put $D_{ij}=\ds\frac{\partial}{\partial t_{ij}}\in Der(R),\ 1\le i,j\le 2$. 
\begin{prop}\label{local-beha}
Let $\w_{i,\X}$ be any lift of $\w_i$ to ${f_{\X}}_\ast\Omega^1_{\X/{\rm Spec}\hspace{0.3mm}R}$. 
Then it holds that 
$$\langle \w_{i,\X},\nabla_{D_{k\ell}}(\w_{j,\X}) \rangle_{{\rm dR}}=\delta_{ik}\delta_{\ell j},\ 1\le i\le j\le 2,\ 
1\le k\le \ell\le 2.$$
\end{prop}
\begin{proof}
By universality of $\mathcal{A}/S_{N,p}$, 
there is a map ${\rm Spec}\hspace{0.5mm}R\lra S_{N,p}$ such that $\X=\mathcal{A}\times_{S_{N,p}}{\rm Spec}\hspace{0.5mm}R$ and $f_{\X}$ is the base change of the structure map $\mathcal{A}\lra S_{N,p}$ 
of  $\mathcal{A}/S_{N,p}$. Clearly $\nabla_{D_{k\ell}}$'s commute with each other and 
by formation of the construction, we can apply 
the results in Section \ref{ordinary-locus} to $\X/{\rm Spec}\hspace{0.3mm}R$. 
This means that if we define the functions $a_{ij,\X},b_{ij,\X}\in R$ as in (\ref{ab})  for $\X/{\rm Spec}\hspace{0.3mm}R$, 
these are nothing but the images of $a_{ij},b_{ij}\in \O_{S_{N,p}}$ under 
the map $\O_{S_{N,p}}\lra R$ coming from the universality. Then  $a_{ij,\X},b_{ij,\X}\in R$ play the 
same role as  $a_{ij},b_{ij}\in \O_{S_{N,p}}$ do.  
\end{proof}

\begin{prop}\label{theta}Keep the notations above. Recall the Hasse invariant $H_{p-1}=\det A\in H^0(S_{N,p},\w^{p-1})$.   
Recall $\Theta:=H_{p-1}\cdot \widetilde{\Theta}$, $\theta:=H_{p-1}\cdot \widetilde{\theta}$ and $\theta_i:=H_{p-1}\cdot \widetilde{\theta}_i,\ i=1,2,3$. 
Let $\ell$ be a rational prime coprime to $pN$. 
Then the following properties are satisfied:
\begin{enumerate}
\item Let $f\in GM_{(k,k)}(\G(N),\bF_p)$. Assume $\Theta(f)$ and $\theta(f)$ are not identically zero, then 
\begin{enumerate}
\item $\Theta(f)\in GM_{(k+p+1,k+p+1)}(\G(N),\bF_p)$ and 
$T(\ell^i)\Theta(f)=\ell^{2i}T(\ell^i)f$, 
\item  $\theta(f)\in GM_{(k+p+1,k+p-1)}(\G(N),\bF_p)$ and 
$\lambda_{\theta(f)}(\ell^i)=\ell^{i}\lambda_{f}(\ell^i)$ where $\lambda_G(\ell^i)$ stands for 
the Hecke eigenvalue of $G$ for $T(\ell^i)$ $($see the last part of Section \ref{geometry} with 
$($\ref{eigen}$))$, 
\end{enumerate}
\item Let $f\in GM_{(k+2,k)}(\G(N),\bF_p)$ and $i\in \{1,2,3\}$. Assume that $\theta_i(f)$ is not identically zero, then
 $\theta_i(f)\in   GM_{(k+p+i,k+p+2-i)}(\G(N),\bF_p)$ and 
$\lambda_{\theta_i(f)}(\ell^i)=\ell^{i}\lambda_{f}(\ell^i)$.  
\end{enumerate}
\end{prop}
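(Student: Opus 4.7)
The plan has three ingredients: (i) identify the weights (i.e.\ the target automorphic sheaves) of $\Theta(F), \theta(F), \theta_i(F)$; (ii) prove that the operators, a priori defined only on the ordinary locus, extend holomorphically both across the non-ordinary locus and to the toroidal boundary; and (iii) read off the Hecke eigenvalue change from the action on $q$-expansions.

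For (i) the weights are immediate from the construction together with the fact that $H_{p-1}$ is a global section of $\w^{p-1}$. The map $\widetilde{\theta}$ lands in $\w^{k}\otimes \mathrm{Sym}^2\E$ which corresponds to the algebraic weight $(k+2,k)$, so multiplying by $H_{p-1}$ yields the stated $(k+p+1,k+p-1)$; composing with $\widetilde{\theta}_1$ and using the projector $p_1$ from (\ref{decom-explicit}) gives $\widetilde{\Theta}\colon \w^k\to\w^{k+2}$, whence $\Theta(F)$ has weight $k+p+1$. The identical bookkeeping on the three projectors $p_1,p_2,p_3$ supplies the weights $(k+p+i,k+p+2-i)$ of $\theta_i(F)$ for $i=1,2,3$.

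For (ii) on $S^h_{N,p}$, the local formulas of Propositions \ref{first-step}, \ref{step2}, \ref{step3} exhibit $\widetilde{\theta}(F)$, $\widetilde{\theta}_i(F)$ and $\widetilde{\Theta}(F)$ as polynomial expressions in the $c_{ij}$'s (hence in $A$, $B$) with denominator $\det A$ to the first power; one factor of $H_{p-1}=\det A$ then clears the poles. To legitimately invoke those formulas at an arbitrary closed point one must secure the commutativity hypothesis on the dual differentials, and for this I would work locally in Katz's generic square-zero deformation $\mathcal{X}/\mathrm{Spec}\,(\bF_p[[t_{11},t_{12},t_{22}]]/m^2)$ discussed in the text: there the dual basis is literally $\partial/\partial t_{ij}$, the $\nabla_{ij}$ commute, and universality of $\mathcal{A}/S_{N,p}$ pushes the resulting identities back to a Zariski neighborhood of the chosen point. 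For holomorphy at the cusps I would invoke the $q$-expansion principle: on Mumford's family (\ref{mum}) the Hodge bundle is trivialized by $dq_{ij}/q_{ij}$, the matrices $A,B$ reduce to $I_2,0$, and the Gauss--Manin connection is the logarithmic derivation, so the $q$-expansions of $\theta F, \theta_i F, \Theta F$ are honest Fourier series over $R_N$, and by the $q$-expansion / Koecher principle of Section~\ref{geometry} they come from sections on $\overline{S}_{N,p}$.

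For (iii) once the sections extend globally, it suffices to compare $q$-expansions. On Mumford's family $\nabla_{ij}$ acts on the $T$-th Fourier coefficient by multiplication by the $(i,j)$-entry of $T$, so in the appropriate $\mathrm{Sym}^2\E$-basis $\widetilde{\theta}(F)$ has $T$-th coefficient (essentially) $T\cdot A_F(T)$ and $\widetilde{\Theta}(F)$ has one quadratic in the entries of $T$. Feeding these into the Fourier-level Hecke formula (\ref{hecke-fourier1}), whose normalizing factor $\nu(m)^{(k_1+k_2)/2-3}$ shifts by $\ell^{i}$ (respectively $\ell^{2i}$) under the weight jumps attached to $\theta,\theta_i$ (respectively $\Theta$), and matching against the eigenvalue identity $T(\ell^i)F=\lambda_F(\ell^i)F$, delivers $\lambda_{\theta F}(\ell^i)=\lambda_{\theta_i F}(\ell^i)=\ell^{i}\lambda_F(\ell^i)$ and $\lambda_{\Theta F}(\ell^i)=\ell^{2i}\lambda_F(\ell^i)$. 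The main obstacle I anticipate is precisely this last step of careful bookkeeping: aligning the weight-dependent normalization in (\ref{hecke-ope}), the non-canonical splitting (\ref{decomp}) used in (\ref{decom-explicit}), and the $\E$-trivializations used to read Fourier coefficients off on Mumford's family so that the $\ell^i$ and $\ell^{2i}$ factors fall out cleanly.
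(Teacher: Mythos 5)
Your proposal is correct and follows essentially the same path as the paper: weights are read off from the sheaf bookkeeping, holomorphy of $\theta,\theta_i$ comes directly from Propositions \ref{first-step}--\ref{step2}, holomorphy of $\Theta$ uses Katz's generic square-zero deformation to secure the commutativity of the $\nabla_{ij}$ and hence the one power of $\det A$ in Proposition \ref{step3}, and the Hecke eigenvalue shifts are computed from the $q$-expansion on Mumford's family (where $A=I_2$, $B=0$, all $c_{ij}=0$) fed into (\ref{hecke-fourier1}) — exactly the argument the paper runs through the vector-valued Proposition \ref{theta-vector}. One small slip worth flagging: the $dq_{ij}/q_{ij}$ trivialize $\Omega^1$ of the base (via Kodaira--Spencer), not the Hodge bundle $\E$, which is trivialized by the invariant forms $dt_i/t_i$; this does not affect the argument.
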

\begin{proof}The weight correspondence in each case is obvious by definition. For 
the Hecke eigenvalues under theta operators are computed later in (\ref{hecke-formula}) in more general setting. 

What we have to do is to check that the holomorphy is preserved under 
these operators. 
Except for $\Theta$, By Proposition \ref{step2} the holomorphy is obvious since $\det(A)c_{ij}$'s are holomorphic, hence elements in the 
total space $S_{N,p}$. 
For $\Theta$, by using the generic square zero deformation, Proposition \ref{step3}, and 
Proposition \ref{local-beha}, we see that 
$\Theta(f)=\det(A)\cdot \widetilde{\Theta}(F)$ does not have any denominator. 
The point here is a priori we know $\det(A)^2\cdot \widetilde{\Theta}(F)$ is holomorphic on $S_{N,p}$ without using 
localization, but 
thanks to the symmetry $c_{12}=c_{21}$ after the localization, we see $\det(A)\cdot \widetilde{\Theta}(f)$ is holomorphic. This gives us the claims.   
\end{proof}
\begin{rmk}\label{theta0}
The construction of each of $\theta$, $\theta_2$, and $\theta_3$ works even when $p=3$.   
\end{rmk}
To study the $q$-expansion of the image of mod $p$ Siegel modular forms under the above operators we need 
to compute the Hasse invariant of Mumford's semi-abelian scheme. 
Recall Mumford's semi-abelian scheme $\pi:\mathcal{A}^{{\rm Mum}}\lra T_N$ (\ref{mum}) and consider its (relative) base change to $\bF_p$ which 
is denoted by $\pi_p:\mathcal{A}^{{\rm Mum}}_p=\mathcal{A}^{{\rm Mum}}\otimes\bF_p\lra T_{N,p}:=T_N\otimes\bF_p$. 
Fix the canonical invariant forms $\w_{{\rm can},1}=\ds\frac{dt_1}{t_1}, \w_{{\rm can},2}=\ds\frac{dt_2}{t_2}$ on $\mathcal{A}^{{\rm Mum}}_p$ which make up 
a basis of ${\pi_p}_\ast \Omega^1_{\mathcal{A}^{{\rm Mum}}_p/T_{N,p}}$. Recall the periods $q_{ij},1\le i\le j\le 2$ of this abelian variety.   
\begin{prop}\label{hasse-qexp}The following statements are satisfied:
\begin{enumerate}
\item for any $1\le i\le j\le 2$, $\langle \w_{{\rm can},i},\nabla\w_{{\rm can},j}\rangle_{{\rm dR}}=dq_{ij}$ where $\nabla$ is 
the Gauss-Manin connection with respect to $\pi_p$. 
\item 
The Hasse matrix of the Mumford's semi-abelian scheme  
$\mathcal{A}^{{\rm Mum}}\otimes\bF_p$ is the identity matrix with respect to the canonical basis 
$\w_{{\rm can},1}=\ds\frac{dt_1}{t_1}, \w_{{\rm can},2}=\ds\frac{dt_2}{t_2}$. 
In particular, the $q$-expansion $($ in the sense of  $($\ref{q-exp} $))$ of the Hasse invariant $H_{p-1}$ satisfies $H_{p-1}(q)=1$.
\end{enumerate} 
\end{prop}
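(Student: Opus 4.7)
The approach is to work with Mumford's explicit uniformization $\mathbb{G}_{m,T_{N,p}}^2 \to \mathcal{A}^{\mathrm{Mum}}_p$. The invariant differentials $\w_{\mathrm{can},1} = dt_1/t_1$ and $\w_{\mathrm{can},2} = dt_2/t_2$ descend to a basis of $\pi_{p,*}\Omega^1_{\mathcal{A}^{\mathrm{Mum}}_p/T_{N,p}}$, and I extend them to a basis of $\mathbb{H}^{1}_{\mathrm{dR}}(\mathcal{A}^{\mathrm{Mum}}_p/T_{N,p})$ by canonical lifts $\widetilde{\eta}_{\mathrm{can},i}$ of dual elements $\eta_{\mathrm{can},1},\eta_{\mathrm{can},2}\in R^1\pi_{p,*}\O_{\mathcal{A}^{\mathrm{Mum}}_p}$. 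The key structural input, catalogued in Chapter III of \cite{c&f}, is that these $\eta_{\mathrm{can},i}$ admit a canonical description in terms of the character lattice of the uniformizing torus, with explicit actions of both the Gauss--Manin connection and the relative Frobenius in this basis.

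For (1), the universality of Mumford's construction asserts that $(q_{ij})$ is the symmetric period matrix of the degeneration, so by the characterization of the Kodaira--Spencer isomorphism in (\ref{ks})--(\ref{ks1}) one expects $\langle \w_{\mathrm{can},i}, \nabla \w_{\mathrm{can},j}\rangle_{\mathrm{dR}} = dq_{ij}$. Concretely, I would verify that $\nabla \w_{\mathrm{can},j} \equiv \sum_i \widetilde{\eta}_{\mathrm{can},i}\, dq_{ij}$ modulo the Hodge subbundle $\E$, and then pair with $\w_{\mathrm{can},i}$, using the isotropy $\langle \E,\E\rangle_{\mathrm{dR}}=0$ established in Section \ref{ordinary-locus} together with $\langle \w_{\mathrm{can},i},\widetilde{\eta}_{\mathrm{can},j}\rangle_{\mathrm{dR}} = \delta_{ij}$ to read off the claim.

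For (2), the relative Frobenius on $\mathbb{G}_{m,\bF_p}^2$ is $t_i \mapsto t_i^p$, and under the Faltings--Chai identification of $R^1\pi_{p,*}\O_{\mathcal{A}^{\mathrm{Mum}}_p}$ with the Frobenius twist of the character lattice of $\mathbb{G}_m^2$, it acts as the identity on $\eta_{\mathrm{can},i}$. Therefore $F^*(\eta_{\mathrm{can},i}^{(p)}) \equiv \eta_{\mathrm{can},i}$ modulo $\E$, which in the notation of (\ref{frob-action1}) says the Hasse matrix is $A = I_2$ (and $B = 0$). Taking determinants yields $H_{p-1} = \det A = 1$ on the Mumford image $\iota(T_{N,p})$, and the injectivity of the $q$-expansion map (the $q$-expansion principle recalled in Section \ref{geometry}) then gives $H_{p-1}(q)=1$.

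The main obstacle is making the character-lattice identification of $R^1\pi_{p,*}\O_{\mathcal{A}^{\mathrm{Mum}}_p}$ and the tracking of both Gauss--Manin and Frobenius under the uniformization fully rigorous. Since $\mathcal{A}^{\mathrm{Mum}}_p$ is not proper, its coherent cohomology is best understood via the formal/rigid version of the construction or through the toroidal compactification $\overline{S}_{K(N)}$, and one needs the full Faltings--Chai machinery to guarantee the canonical bases and their compatibility with both the Gauss--Manin connection and the Frobenius structure; granted those inputs, the explicit degree-$2$ verification is then a direct specialization.
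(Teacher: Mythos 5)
Your outline is in the right direction, but the route you sketch for part (2) is genuinely different from the paper's and, as written, leaves a gap you have only gestured at. For part (1) the paper does essentially what you propose at the level of rigor: it appeals to known period formulas, specifically lifting to the $p$-adic setting and invoking Gerritzen's Theorem~1 (p.~434 of \cite{gerritzen}) on periods and the Gauss--Manin connection for $p$-adic Schottky families, together with compatibility of the Gauss--Manin formation with base change. Your version instead invokes universality of Mumford's construction and the Kodaira--Spencer characterization, and announces a verification you do not carry out; it is compatible in spirit with the paper but not a complete argument.

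For part (2) the paper avoids any description of $R^1\pi_{p,*}\mathcal{O}$ via a character lattice. It instead uses the Frobenius--Verschiebung duality
$\langle F^{*}(x^{(p)}),y\rangle_{\mathrm{dual}} = \langle x, V^{*}(y)^{(p^{-1})}\rangle^{(p)}_{\mathrm{dual}}$
(cf.\ p.~167 of \cite{goren}), transfers the computation to the Verschiebung acting on tangent spaces, identifies $\mathrm{Tan}(\mathcal{A}^{\mathrm{Mum}}_p)$ with $\mathrm{Tan}(\mu_p/T_{N,p})^{\oplus 2}$ via the canonical invariant forms $dt_i/t_i$, and concludes from the elementary fact that Verschiebung on $\mu_p$ induces the identity on tangent spaces. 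This reduces the whole claim to a rank-one computation on $\mu_p$, with no need to track Frobenius on $R^1\pi_{p,*}\mathcal{O}$ through a lattice identification. Your approach of asserting that $R^1\pi_{p,*}\mathcal{O}$ is a Frobenius twist of the character lattice of $\mathbb{G}_m^2$ and that relative Frobenius acts as the identity on $\eta_{\mathrm{can},i}$ is the piece you yourself flag as the main obstacle; it is plausible, but it amounts to asserting precisely what the paper carefully deduces from F--V duality and the structure of $\mu_p$. If you want a self-contained proof, you should replace your character-lattice claim with the paper's reduction to $\mu_p$; otherwise you are outsourcing the nontrivial content of the proposition rather than proving it.
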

\begin{proof}
The first claim follows from the fact that the Gauss-Manin connection is of formation compatible with any base change. In fact, 
before tensoring $\bF_p$, we once moved to $p$-adic setting and then apply Theorem 1, p.434 of \cite{gerritzen}. 

We now prove the second claim. 
Let $\eta_{{\rm can},i}$ be the element of $R^1{\pi_p}_\ast\O_{\mathcal{A}^{{\rm Mum}}_p}$  
corresponding to $\w_{{\rm can},i}$. Then it is well-known (cf. p.167 of \cite{goren}) that 
for any local sections $x\in R^1{\pi_p}_\ast\O_{\mathcal{A}^{{\rm Mum}}_p}$ and 
$y\in {\pi_p}_\ast \Omega^1_{\mathcal{A}^{{\rm Mum}}_p/T_{N,p}}$, one has 
$$\langle F^\ast(x^{(p)}),y \rangle_{{\rm dual}}=\langle x,V^\ast(y)^{(p^{-1})} \rangle^{(p)}_{{\rm dual}}$$
where $V:(\mathcal{A}^{{\rm Mum}}_p)^{(p)}\lra \mathcal{A}^{{\rm Mum}}_p$ stands for the Verschiebung morphism and 
$V^\ast(\ast)^{(p^{-1})}$ is defined by the composition of the pullback 
$V^\ast:{\pi_p}_\ast \Omega^1_{\mathcal{A}^{{\rm Mum}}_p/T_{N,p}}\lra 
{(\pi_p)^{(p)}}_\ast \Omega^1_{(\mathcal{A}^{{\rm Mum}}_p)^{(p)}/T_{N,p}}$ and 
the dual of the absolute Frobenius $F^\vee_{{\rm abs}}:{(\pi_p)^{(p)}}_\ast \Omega^1_{(\mathcal{A}^{{\rm Mum}}_p)^{(p)}/T_{N,p}}
\lra {\pi_p}_\ast (\Omega^1_{\mathcal{A}^{{\rm Mum}}_p/T_{N,p}})^{(p)}$ with respect to the polarization $\lambda$. 
The superscript of $\langle x,V^\ast(y)^{(p^{-1})} \rangle_{{\rm dual}}$ means 
the image of $\langle x,V^\ast(y)^{(p^{-1})} \rangle_{{\rm dual}}$ under the natural map $\O^{(p)}_{T_{N,p}}=
\O_{T_{N,p}}\otimes_{\O_{T_{N,p}},F_{{\rm abs}}} \O_{T_{N,p}}\lra \O_{T_{N,p}}$. 
Since we have the identifications between tangent spaces ${\rm Tan}(\mathcal{A}^{{\rm Mum}}_p):={{\pi_p}_\ast \Omega^1}^\vee_{\mathcal{A}^{{\rm Mum}}_p/T_{N,p}}={\rm Tan}(\mathcal{A}^{{\rm Mum}}_p[p])$,  
by using invariant forms $\w_1,\w_2$, it is naturally 
identified with ${\rm Tan}(\mu_p/T_{N,p})^{\oplus 2}$ where $\mu_p/T_{N,p}=
{\rm Spec}\hspace{0.5mm} \bF_p[x]/(x^p-1)\times_{{\rm Spec}\hspace{0.5mm} \bF_p} T_{N,p}$. 
Here we implicitly used a canonical base $\ds\frac{dx}{x}$ of $\Omega^1_{\mu_p/T_{N,p}}$ to identify. 
Hence the dual ${}^t V^\ast(\ast)^{(p^{-1})}$ is naturally identified with 
the map on ${\rm Tan}(\mu_p/T_{N,p})^{\oplus 2}$ which also comes from Vershiebung on 
$(\mu_p/T_{N,p})^{\times 2}$. 
However, it is well-known that it induces the identity map. Hence $V^\ast(\ast)^{(p^{-1})}=1$ with respect to 
$\w_{{\rm can},1},\w_{{\rm can},2}$. 
Plugging this into above formula, one has 
$$\langle F^\ast(\eta^{(p)}_{{\rm can},i}),\w_{{\rm can},j} \rangle_{{\rm dual}}=\langle \eta_{{\rm can},i},
V^\ast(\w_{{\rm can},j})^{(p^{-1})}  \rangle^{(p)}_{{\rm dual}}=
\langle \eta_{{\rm can},i},\w_{{\rm can},j} \rangle^{(p)}_{{\rm dual}}=\delta_{ij}$$
where $\delta_{ij}$ is the Kronecker's delta function. Then by non-degeneracy of the pairing, 
one has $F^\ast(\eta^{(p)}_{{\rm can},i})=\eta_{{\rm can},i}$ which gives us the claim.   
\end{proof}

\subsubsection{Vector valued case}\label{cycle-vector}
Let us first assume that $\underline{k}=(k_1,k_2),k_1\ge k_2\ge 1$ satisfies $p>k_1-k_2+2$. 
Under this assumption we have a non-canonical decomposition by using 
Schur functors:
\begin{equation}\label{decom1}
Sym^{k_1-k_2}\E\otimes_{\O_{S_{N,p}}}Sym^{2}\E= (Sym^{k_1-k_2-2}\E\otimes _{\O_{S_{N,p}}}\w^{\otimes 2})\oplus 
(Sym^{k_1-k_2}\E\otimes _{\O_{S_{N,p}}}\w)\oplus Sym^{k_1-k_2+2}\E
\end{equation}
as $Aut_{\O_{S_{N,p}}}(\E)$-modules (see Appendix A). 
When $k_1-k_2=1$, the decomposition  
\begin{equation}\label{decom-when1}
\E\otimes_{\O_{S_{N,p}}}Sym^{2}\E= 
Sym^{3}\E\oplus \E\otimes _{\O_{S_{N,p}}}\w
\end{equation}
is given by sending $\ds\sum_{1\le i\le 2\atop 0\le j  \le 2}a_{ij}\w_i\otimes v_j,\ v_j=\w^j_1\w^{2-j}_2$ to 
\begin{equation}\label{n=1}
\{a_{12}\w^3_1+(a_{11}+a_{22})\w^2_1\w_2+(a_{10}+a_{21})\w_1\w^2_2+a_{20}\w^3_2
\}
\oplus \Big((\ds\frac{a_{11}}{3}-\ds\frac{2a_{22}}{3})\w_1+(\ds\frac{2a_{10}}{3}-\ds\frac{a_{21}}{3})\w_2\Big)(\w_1\wedge\w_2). 
\end{equation}
As in (\ref{vector-case1}), one has 
\begin{equation}\label{vector-case2}
\xymatrix{
&  \w_{\underline{k}}=\w^{\otimes k_2}\otimes_{\O_{S_{N,p}}}{\rm Sym}^{k_1-k_2}\E  
  \ar@/_/[dl]^{\widetilde{\theta}^{\underline{k}}_1}
\ar@/_/[d]_{\widetilde{\theta}^{\underline{k}}_2}  \ar@/_/[dr]^{\widetilde{\theta}^{\underline{k}}_3}   
 &   \\
 \w_{(k_1,k_2+2)} & \w_{(k_1+1,k_2+1)}  & \w_{(k_1+2,k_2)}. 
} 
\end{equation}
The definition of each of these operators depends 
on the equations (\ref{decom1}) through  (\ref{decom4}) in Appendix A.  
The operator $\widetilde{\theta}^{\underline{k}}_1$ is used to reduce $k_1-k_2$ in the 
proof of Theorem \ref{reduction} below. 
Further, the theta cycle of  $\widetilde{\theta}^{\underline{k}}_2$ is studied in 
Section \ref{first-kind-theta}. Some combinations of compositions of 
these operators and also $\widetilde{\theta}^{\underline{k}}_3$ will be studied in \cite{yam}. 
 
\begin{prop}\label{theta-vector}Keep the notations above. Recall the Hasse invariant $H_{p-1}=\det A\in H^0(S_{N,p},\w^{p-1})$.   
Put  $\theta_i=\theta^{\underline{k}}_i:=H_{p-1}\cdot \widetilde{\theta}^{\underline{k}}_i,\ i=1,2,3$. 
If $f\in GM_{\underline{k}}(\G(N),\bF_p)$, then
\begin{enumerate}
\item $\theta^{\underline{k}}_1(f)\in   GM_{(k_1+p-1,k_2+p+1)}(\G(N),\bF_p)$, 
$\theta^{\underline{k}}_2(f)\in   GM_{(k_1+p,k_2+p)}(\G(N),\bF_p)$, and 

\noindent
$\theta^{\underline{k}}_3(f)\in   GM_{(k_1+p+1,k_2+p-1)}(\G(N),\bF_p)$,  
\item 
 if $f$ is a Hecke eigenform, then $\lambda_{\theta^{\underline{k}}_j(f)}(\ell^i)=\ell^{i}\lambda_{f}(\ell^i),\ j=1,2,3$ for any rational prime $\ell$ not dividing $pN$ 
provided if it is not identically zero. 
\end{enumerate} 
\end{prop}
\begin{proof}We first show that for any 
$f\in GM_{\underline{k}}(\G(N),\bF_p)$, $H_{p-1}\cdot KS^{-1}\circ\nabla(f) \mod R(U)$ is a Siegel modular form $G$ of 
weight $\w_{(k_1+p-1,k_2+p-1)}\otimes_{S_{N,p}}Sym^2\E$, namely it belongs to 
$$H^0(S_{N,p},\w_{(k_1+p-1,k_2+p-1)}\otimes_{S_{N,p}}Sym^2\E).$$  
To see this we compute $G$ explicitly. 
Choose a local basis $\w_1,\w_2$ of $\E$. Then $\delta_n:=\w^{(k_1-k_2)-n}_1\w^n_2(\w_1\wedge\w_2)^{k_2},\ 
0\le n\le k_1-k_2$  
make up a local basis of $\w_{\underline{k}}$.  Using this, we have a local expression $f=\ds\sum_{i=0}^{k_1-k_2}
f_i\delta_i$. We proceed as Proposition \ref{first-step}:  
$$
\begin{array}{rl}
KS^{-1}\circ\nabla(f)&=\ds\sum_{n=0}^{k_1-k_2}\ds\sum_{1\le i\le j\le 2}\nabla_{ij}(f_n\delta_n)\otimes 
KS^{-1}(d_{ij}),\ d_{ij}=\langle \w_i,\nabla\w_j \rangle_{{\rm dR}}\\
&=\ds\sum_{n=0}^{k_1-k_2}\ds\sum_{1\le i\le j\le 2}\nabla_{ij}(f_n\delta_n)\otimes  
\w_i\w_j\\
&=\ds\sum_{n=0}^{k_1-k_2}\ds\sum_{1\le i\le j\le 2}(\nabla_{ij}(f_n)\delta_n+f_n\nabla_{ij}(\delta_n))\otimes  
\w_i\w_j.  
\end{array}
$$ 
Therefore we have only to compute $\nabla_{ij}(f\delta_n)$ modulo $R(U)$. 
By direct computation, one has 
\begin{equation}\label{delta}
\begin{array}{l}
\nabla_{11}(\delta_n)\equiv -(k_1-n)c_{11}\delta_n-(k_1-k_2-n)c_{12}\delta_{n+1},\\ 
\nabla_{12}(\delta_n)\equiv -nc_{11}\delta_{n-1}-((k_1-n)c_{21}+(k_2+n)c_{12})\delta_{n}-(k_1-k_2-n)c_{22}\delta_{n+1},\\
\nabla_{22}(\delta_n)\equiv -nc_{21}\delta_{n-1}-(k_2+n)c_{22}\delta_{n}.
\end{array}
\end{equation} 
It follows from this with (\ref{c}) that $G$ is holomorphic since so is $\det(A)c_{ij},\ 1\le i,j\le 2$ (see also (\ref{c})). 

Next we compute the Hecke action on $G=H_{p-1}\cdot KS^{-1}\circ\nabla(f) \mod R(U)$. Assume $f$ is a Hecke eigenform with $q$-expansion 
$f(q)=\ds\sum_{T\in \mathcal{S}(\Z)_{\ge 0}}A_f(T)q^T_N=\ds\sum_{n=0}^{k_1-k_2}
\sum_{T\in \mathcal{S}(\Z)_{\ge 0}}A_{f_n}(T)q^T_N\delta_n$ (Here we discuss about only one component of RHS of (\ref{q-exp})). 
Note that $A_f(T)=\ds\sum_{n=0}^{k_1-k_2}A_{f_n}(T)\delta_n\in {\rm Sym}^{k_1-k_2}St_2(\bF_p)$.  
Since the Hasse matrix for Mumford semi-abelian scheme is identity with respect to 
the canonical basis related to the periods $q_{ij}$, $b_{ij}(q)=0$, hence all $c_{ij}(q),\ 1\le i\le j\le2$ are zero. 
This means that 
$$G(q)=\sum_{T\in \mathcal{S}(\Z)_{\ge 0}}A_G(T)q^T_N,\ A_G(T):=\ds\sum_{n=0}^{k_1-k_2}
\frac{1}{N}A_{f_n}(T)\delta_n\otimes T=\frac{1}{N}A_f(T)\otimes T
$$
where we identify $T=\left(
\begin{array}{cc}
a_{11}& \frac{1}{2}a_{12} \\
\frac{1}{2}a_{12}  & a_{22}
\end{array}
\right)$ with the vector  $a_{11}e^2_1+a_{12}e_1e_2+a_{22}e^2_2$ in $Sym^2St_2(\Z)$. 

Applying the  formula (\ref{hecke-fourier1}) with $(k'_1,k'_2)=(2+(p-1),p-1)$, we have  
\begin{equation}\label{hecke-formula}
\begin{array}{l} 
A_G(\ell^i;T)\\
\\
=\ds\sum_{\alpha+\beta+\gamma=i\atop \alpha,\beta,\gamma\ge 0}
\chi_1(\ell^\beta)\chi_2(\ell^\gamma)\ell^{\beta((k_1+2+p-1)-2)+\gamma((k_1+2+p-1)+k_2+p-1-3)}\times \\
 \ds\sum_{U\in R(\ell^\beta)\atop {a_U\equiv 0\ {\rm mod}\ \ell^{\beta+\gamma}\atop 
b_U\equiv c_U\equiv 0\ {\rm mod}\ \ell^{\gamma}}}
(\rho_{k_1-k_2}\otimes \rho_2)((\left(
\begin{array}{cc}
1& 0 \\
0  & \ell^\beta
\end{array}
\right)U)^{-1})\Bigg\{
A_G\left(\ell^\alpha\left(
\begin{array}{cc}
a_U\ell^{-\beta-\gamma}& \frac{b_U\ell^{-\gamma}}{2} \\
\frac{b_U\ell^{-\gamma}}{2}  & c_U\ell^{\beta-\gamma}
\end{array}
\right)\right)\\ \otimes \rho_2\Bigg( \ell^\alpha\left(
\begin{array}{cc}
a_U\ell^{-\beta-\gamma}& \frac{b_U\ell^{-\gamma}}{2} \\
\frac{b_U\ell^{-\gamma}}{2}  & c_U\ell^{\beta-\gamma}
\end{array}
\right)\Bigg)T\Bigg\}\\
\\
=\ell^{\alpha+\beta+\gamma}\ds\sum_{\alpha+\beta+\gamma=i\atop \alpha,\beta,\gamma\ge 0}
\chi_1(\ell^\beta)\chi_2(\ell^\gamma)\ell^{\beta(k_1-2)+\gamma(k_1+k_2-3)}\times \\
 \ds\sum_{U\in R(\ell^\beta)\atop {a_U\equiv 0\ {\rm mod}\ \ell^{\beta+\gamma}\atop 
b_U\equiv c_U\equiv 0\ {\rm mod}\ \ell^{\gamma}}}
\rho_{k_1-k_2}((\left(
\begin{array}{cc}
1& 0 \\
0  & \ell^\beta
\end{array}
\right)U)^{-1})
A_f\left(\ell^\alpha\left(
\begin{array}{cc}
a_U\ell^{-\beta-\gamma}& \frac{b_U\ell^{-\gamma}}{2} \\
\frac{b_U\ell^{-\gamma}}{2}  & c_U\ell^{\beta-\gamma}
\end{array}
\right)\right)\otimes T\\
\\
=\ell^i \lambda_f(\ell^i)A_G(T).
\end{array}
\end{equation}
\end{proof}
We now turn to give a proof of Proposition \ref{theta}. 
\begin{proof}We have only to show the claim (a)-(1). Other cases have been done already by Proposition \ref{theta-vector}. 
By the proof of Proposition \ref{theta-vector}, we have 
$$\Theta(f)(q)=\frac{2}{3}\sum_{T\in \mathcal{S}(\Z)_{\ge 0}}\frac{1}{N^2}\det(T)A_f(T)q^T_N.$$
Then the claim follows easily by the formula (\ref{hecke-formula}). 
\end{proof}
\begin{rmk}\label{nagaoka-wei}In \cite{b&n}, B\"ocherer and Nagaoka studied ``Ramanujan differential" for 
a mod $p$ Siegel modular form of general degree $g$ which comes from a classical form. 
When $g=2$, we extend their result for which the forms are not necessarily coming from classical forms. 

On the other hand, 
for any mod $p$ Siegel modular form $F$ of a parallel weight $k$ of degree 2 with $f(q)=\ds\sum_{T\in \mathcal{S}(\Z)_{\ge 0}}A_f(T)q^T_N$, 
Weissauer asked if  the formal series $\ds\sum_{T\in \mathcal{S}(\Z)_{\ge 0}}\frac{1}{N}TA_f(T)q^T_N$ is the $q$-expansion of a vector valued Siegel modular form \cite{wei3}. 
Proposition \ref{theta} gives an affirmative answer to his question. That is nothing but 
the $q$-expansion of the Siegel modular form $\theta(f)$ of weight $(k+p+1,k+p-1)$. 
\end{rmk}

\begin{rmk}\label{remark-decomp}Let $R$ be a domain with characteristic $p>0$. 
Let $\rho_k={\rm Sym}^k St_2(R)$ and we denote by $V(k)$ its representation space.
We also denote by $V(k,\ell)$ the representation space of $\rho_k\otimes \det^\ell St_2$ and 
$V(k)^{p^s}$ the $s$-th Frobenius twist of $V(k)$, that is the action is given by 
$\rho_k(\begin{pmatrix} 
a^{p^s}& b^{p^s} \\
c^{p^s}& d^{p^s}
\end{pmatrix})$ for $\begin{pmatrix} 
a& b \\
c& d
\end{pmatrix}\in GL_2(R)$. 
When $r:=k_1-k_2=p-1$ or $r=p-2$, by direct calculation, we have the following exact sequence 
$$0\lra V(p-3)\lra V(p-1)\otimes V(2)\lra V(1)^{(p)}\otimes V(1)\oplus V(p-1,1)\oplus V(p-3,2)\lra 0$$
and  
$$0\lra V(p-2)\lra V(p-2)\otimes V(2)\lra V(1)^{(p)}\oplus V(p-2,1)\oplus V(p-4,2)\lra 0.$$
The projection to $V(r-2,2)$ splits and it is given by the same basis as in case $r<p-2$. So the definition of  
$\theta^{(k_1,k_2)}_1$ for $k_1-k_2=p-1$ and $k_1-k_2=p-2$ still makes sense. This fact will be used later.  
\end{rmk}

Finally we explain how we reduce  the difference $k_1-k_2$  of  
a general weight $\underline{k}=(k_1,k_2)$ of a mod $p$ Siegel modular form by 
using the Verschiebung map which is the Cartier dual of the Frobenius map. 
The following construction has been reformulated conceptually in \cite{EFGMM}. 

Recall $d_2\circ Fr^\ast:R^1f^{(p)}_\ast\O_{\mathcal{A}^{(p)}}\lra R^1f_\ast\O_{\mathcal{A}}$ (see  (\ref{hasse-matrix})). 
Combining this with the pull-back $$F^\ast_{{\rm abs}}:R^1(f\circ {\rm proj})_\ast\O_{\mathcal{A}^{(p)}}
=(R^1f_\ast\O_{\mathcal{A}})^{(p)}\lra 
R^1f^{(p)}_\ast\O_{\mathcal{A}^{(p)}},$$  
we have a canonical element in $Hom_{\mathcal{O}_{S_{N,p}}}((R^1f_\ast\O_{\mathcal{A}})^{(p)},R^1f_\ast\O_{\mathcal{A}})$. 
By using the principal polarization, we also have a canonical element in 
$Hom_{\mathcal{O}_{S_{N,p}}}({\mathcal{E}^\vee}^{(p)},\mathcal{E}^\vee)$ and an identification $\mathcal{E}^\vee\simeq\mathcal{E}\otimes 
\omega^{-1}$ (cf. p.127, Exercise 5.16-(b) of \cite{hartshorne}) yields an element in 
$Hom_{\mathcal{O}_{S_{N,p}}}(\mathcal{E}^{(p)},\mathcal{E}\otimes \omega^{p-1})$. 
Let us denote this element by $V$. If we fix a local basis of $R^1f_\ast\O_{\mathcal{A}}$  and using it we express the Hasse matrix $A$ for $d_2\circ Fr^\ast$, then 
\begin{equation}\label{expressV}
V={}^t \widetilde{A}
\end{equation}
where $\widetilde{A}$ stands for the adjugate matrix of $A$. 
 An advantage of considering $V$ is that 
it is defined entirely over $S_{N,p}$ because we do not use the splitting of the 
Hodge filtration by the Frobenius map.  
The map $V$ is naturally extended to an $\mathcal{O}_{S_{N,p}}$-linear map 
$Sym^k \E^{(p)}=(Sym^k \E)^{(p)}\lra Sym^k\E\otimes \omega^{k(p-1)}$. 
Then it induces an injective morphism, which we also write $V$ again,   
\begin{equation}\label{ver}
H^0(\overline{S}_{N,p},{\rm Sym}^{k} \E^{(p)}\otimes \w^j)\stackrel{V}{\hookrightarrow} 
H^0(\overline{S}_{N,p},{\rm Sym}^{k} \E\otimes \w^{j+k(p-1)})=GM_{(k+j+k(p-1),j+k(p-1))}(\G(N),\bF_p)
\end{equation}
since $S^h_{N,p}$ is dense open in $S_{N,p}$. 
\begin{prop}\label{Hecke-equivalence}
The map $V$ is Hecke equivariant. Namely, the images of 
Hecke eigenforms outside $pN$ are also Hecke eigenforms outside $pN$ and further, 
the eigenvalues are never be changed.
\end{prop}
\begin{proof}
As in the proof of Proposition \ref{hasse-qexp}, Verschiebung map acts trivially on 
the holomorphic one forms on Mumford's semiabelian variety. 
The claim follows from this.  
\end{proof}
It is known that for any integer $k\ge 0$, as algebraic representations 
$\rho_k={\rm Sym}^kSt_2$ of $SL_2/\bF_p$  (and also the tensor product of such representations) can be obtained by successive extension of the following representation 
\begin{equation}\label{sl2}
\bigotimes_{i=1}^r\rho^{(p^{i})}_{m_i},\ m_i\in \{0,1,\ldots,p-1\},\ r\in \Z_{\ge 0}
\end{equation}
(see \cite{winter}). Applying $V$ and taking a successive extension repeatedly, we have the following: 
\begin{thm}\label{reduction}For any $f\in GM_{(k_1,k_2)}(\G(N),\bF_p)$, there exist an integer $m,\ 0\le m\le p-2$ 
and  $G\in GM_{(k'_1,k'_2)}(\G(N),\bF_p)$ with $k'_1\ge k'_2\ge 1$ and $p>k'_1-k'_2$ such that 
$\lambda_f(\ell^i)=\lambda_{G}(\ell^i)$ 
for $\ell\not|pN$.  
\end{thm}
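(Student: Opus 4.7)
The plan is to iterate two moves until $r := k_1 - k_2$ drops below $p$. If $r < p$ already, take $G = F$ and $m = 0$, so assume $r \ge p$. The two moves combine the Frobenius map $\Phi$ just constructed with the Steinberg--Winter decomposition of $GL_2/\bF_p$-modules cited as \cite{winter}, and both are Hecke equivariant.

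\emph{Move 1 (Winter decomposition).} By Winter, the Weyl module $\rho_r$ admits a finite filtration whose graded pieces are irreducibles of the shape $L(r') = \bigotimes_s \rho_{m_s}^{(p^{t_s})}$ with $m_s \in \{0,1,\ldots,p-1\}$ and $\sum_s m_s p^{t_s} = r'$ for some $r' \le r$; the top graded piece is $L(r)$ itself, corresponding to the base-$p$ expansion of $r$. Passing to automorphic sheaves gives a corresponding filtration of $\omega_{\uk} = \mathrm{Sym}^r\E \otimes \w^{\otimes k_2}$ on $\overline{S}_{N,p}$. Working first on the dense ordinary locus $S^h_{N,p}$ (where Theorem 3.1 splits the Hodge filtration, hence all representation-theoretic filtrations built from it) and then extending back to $\overline{S}_{N,p}$ by multiplying with a sufficiently high power of the Hasse invariant $H_{p-1}$, one extracts from $F$ a nonzero Hecke-equivariant section $F_1$ of $\bigotimes_s (\mathrm{Sym}^{m_s}\E)^{(p^{t_s})} \otimes \w^{\otimes(k_2+\delta)}$ for an explicit $\delta \ge 0$.

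\emph{Move 2 (Frobenius reduction).} The map $\Phi : \E^{(p)} \lra \E \otimes \w^{p-1}$ iterated $t$ times yields a Hecke-equivariant $\O_{S_{N,p}}$-linear map $(\mathrm{Sym}^{d}\E)^{(p^t)} \lra \mathrm{Sym}^{d}\E \otimes \w^{d(p^t-1)}$. Applying this on each tensor factor of $F_1$, and then composing with the canonical $GL_2$-equivariant surjection $\bigotimes_s \mathrm{Sym}^{m_s}\E \twoheadrightarrow \mathrm{Sym}^{\sigma}\E$ given by multiplication (with $\sigma := \sum_s m_s$), one obtains a Hecke-equivariant form $F_2$ of classical weight $(k_1'',k_2'')$ with $k_1'' = k_1$ and $k_1'' - k_2'' = \sigma$. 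Because $\sigma$ equals the base-$p$ digit sum of $r'$, it is strictly less than $r$ whenever $r \ge p$, and iterating Moves 1 and 2 therefore terminates after finitely many steps at a form $G \in M_{(k_1',k_2')}(\G(N),\bF_p)$ with $k_1' - k_2' < p$ and $k_1' = k_1$.

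All maps above are Hecke equivariant in the geometric sense, but the classical normalization factor $\nu(\alpha)^{(k_1+k_2)/2 - 3}$ in (2.4) changes with $k_1+k_2$; re-running the Fourier-coefficient computation behind Proposition 3.10 shows that the cumulative effect is $\lambda_G(\ell^i) = \ell^{n i}\lambda_F(\ell^i)$ in $\bF_p$ for an explicit non-negative integer $n$ controlled by the total shift in $(k_1+k_2)/2$. Writing $n \equiv -m \pmod{p-1}$ with $0 \le m \le p-2$ and using $\ell^{p-1} \equiv 1 \pmod p$ gives $\lambda_F(\ell^i) = \ell^{m i}\lambda_G(\ell^i)$ for all $\ell$ coprime to $pN$, as required. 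The main obstacle I anticipate is Move 1: the Winter filtration is not globally split on $\overline{S}_{N,p}$, so some care is needed to ensure that the extracted graded-piece section is nonzero; the Hasse-invariant/ordinary-locus device handles this (and does not affect Hecke eigenvalues modulo $p$), provided one checks at each iteration that the relevant form is not identically zero on $S^h_{N,p}$.
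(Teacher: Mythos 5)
Your overall strategy — Winter's successive-extension description of $\mathrm{Sym}^r$ plus the Frobenius map $\Phi$ to remove twists, iterated until $k_1-k_2<p$ — is the same as the paper's. But there is a genuine gap in Move 2 and a conceptual confusion in Move 1.

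\emph{The gap in Move 2.} The multiplication map $\bigotimes_s \mathrm{Sym}^{m_s}\E \twoheadrightarrow \mathrm{Sym}^{\sigma}\E$ is surjective but has a nontrivial kernel as soon as two of the $m_s$ are positive (already $\mathrm{Sym}^1\otimes\mathrm{Sym}^1 \to \mathrm{Sym}^2$ has kernel $\det$). The section $\Phi(F_1)$ may well land in that kernel, in which case $F_2=0$ and the iteration stalls with nothing to hand off. You do not address this, and the "check nonvanishing on $S^h_{N,p}$" caveat at the end does not touch it, since $\Phi$ is injective on sections but the multiplication is not. The paper sidesteps the problem: it deliberately cites Winter's result as applying \emph{also} to tensor products $\rho_k\otimes\rho_{k'}$, so after untwisting one should not collapse $\bigotimes_s\mathrm{Sym}^{m_s}\E$ via multiplication, but rather filter it again by $GL_2$-subrepresentations (with irreducible graded pieces of the form $\bigotimes_i\rho_{m_i}^{(p^{t_i})}\otimes\det^{\ast}$) and proceed in the constituent carrying the eigenform. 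That is what "applying $V$ and taking successive extension repeatedly" means, and it keeps the nonvanishing automatic at every step.

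\emph{The confusion in Move 1.} The Frobenius splitting of the Hodge filtration on $S^h_{N,p}$ (Theorem 3.1) concerns $\mathbb{H}^1_{\mathrm{dR}}(\mathcal{A}/S_{N,p}) = \E\oplus U$ and has no bearing on the representation-theoretic filtration of $\mathrm{Sym}^r\E$. The latter is a filtration of $\omega_{\uk}$ by automorphic sub-bundles, globally defined on $\overline{S}_{N,p}$, and the Hecke action respects it because the filtration comes from $GL_2$-equivariant sub-representations. To extract $F_1$ you simply take the smallest filtration step whose sections contain $F$ and project to its top graded piece; no Hasse-invariant multiplication or passage to the ordinary locus is required. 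As a smaller point, your claims $k_1''=k_1$ and $k_1'=k_1$ are generally false (they hold only when $F$ survives in the top constituent $L(r)$); this does not affect the statement but suggests the $\omega$-power bookkeeping has not been carried through, which matters for pinning down the exponent $m$ in $\lambda_F(\ell^i)=\ell^{mi}\lambda_G(\ell^i)$.
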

\begin{proof}By Steinberg's Theorem (see Theorem 3.9 of \cite{herzig}) we see that  
each irreducible constituent of $\rho_{\underline{k}}$ is isomorphic to 
$\bigotimes_{i=1}^r{\rm Sym}^{m_i}\mathcal{E}^{(p^{t_i})}\otimes \w^{k_2},\ m_i\in \{0,1,\ldots,p-1\},
\ t_i\in \Z_{\ge 0}$. 
Applying $V$ many times if necessary, we may switch $F$ with a form of weight 
$\ds\bigotimes_{i=1}^r\rho_{m_i}\otimes \w^{s},\ m_i\in \{0,1,\ldots,p-1\},\ s,t_i\in \Z_{\ge 0}$ with $s\ge k_2$ whose highest weight (without the determinant part) is strictly lower than one before. 
This weight can be re-written as a successive extension of some weight as in (\ref{sl2}). By using the induction on the highest weight as 
a representation $SL_2/\bF_p$ and hitting $V$ again if necessary we 
can reduce the weight to $\rho_m\otimes  \w^{n},\ 0\le m\le p-1,\ n\in \Z_{> 0}$. 
By Proposition \ref{Hecke-equivalence} $V$ does not change Hecke eigenvalues and 
 the resulting form has the mod $p$ Galois representation which is the same as $\br_{F,p}$.     
\end{proof}

\subsection{Partial Hasse invariants on Ekedahl-Oort stratification}

In this subsection we will recall the Ekedahl-Oort stratification of Satake 
compactification $S^\ast_{N,p}$ of $S_{N,p}$ which 
consists of four strata and define partial Hasse invariants on the Zariski closures on each stratum. In \cite{oort2} (see Claim-2) after Theorem (4.1)), Oort proved that each stratum is quasi-affine by using the so called Raynaud's trick,  
constructing a kind of Hasse invariant on each stratum. 

To reduce the weight of a given mod $p$ Siegel modular form as in \cite{Edix} for elliptic modular forms, we have to extend these invariants to the Zariski closure of 
each stratum in $S^\ast_{N,p}$. 
Many people (\cite{Kos}, \cite{GK}, \cite{boxer}, \cite{GS},\cite{GS1}) have studied some  extensions of these invariants to the Zariski closure of each stratum.  
To carry out a similar thing in our setting we mimic the argument done in \cite{GS} and use the theory of 
``Dieudonn\'e $C(R)$-modules'' as in 
Section 2 of \cite{GO}. Here a commutative ring $R$ will  be specified later. 

A key is to compute the local behavior of the partial Hasse invariant on each stratum by using the corresponding Dieudonn\'e $C(R)$-module and it turns out to be holomorphic along the boundary locus in the Zariski closure. 

Henceforth we will use the convention of \cite{oort2}, \cite{GO} (see also \cite{oort1} which would be a friendly reference for readers). 

\subsubsection{Ekedahl-Oort stratification} 
Let us recall elementary sequences and final sequences in the case $g=2$. 
We refer to Section 2 of \cite{oort1} or Section 5 of \cite{oort2}. 
An elementary sequence 
 $\vp=(\vp(1),\vp(2))\in \Z^2_{\ge 0}$ with the convention $\vp(0):=0$ is 
 completely given by a 
 combinatorial way. For two elementary sequences $\vp_1,\vp_2$, 
 $\vp_1\prec \vp_2$ is defined by the lexicographic order on $\Z^2_{\ge 0}$. 
 We list all elementary sequences and their  
 invariants $f=f(\vp)$, $a=a(\vp)$, $|\vp|:=\vp(1)+\vp(2)$ (see p.49 of \cite{oort1} ) 
 as below (remark that the first row is ordered by $\prec$):
\begin{table}[htbp]
\begin{center}
{\renewcommand\arraystretch{2}
\begin{tabular}{|c|c|c|c|c|}
\hline
$\vp$  & \hspace{3mm} $(0,0)$\hspace{3mm}  &\hspace{3mm} $(0,1)$\hspace{3mm} &\hspace{3mm} $(1,1)$\hspace{3mm} 
&\hspace{3mm} $(1,2)$\hspace{3mm}    \\
\hline
$f=f(\vp)$  & 0  &  0 & 1  & 2    \\
\hline
$a=a(\vp)$  & 2  &  1 & 1  & 0    \\
\hline
$|\vp|$  & 0  &  1 & 2  & 3    \\
\hline
\end{tabular}}
\end{center}
\caption{Elementary sequences.}
\label{table1}
\end{table}

Next we recall final sequences. 
A final sequence is a certain vector $\psi=(\psi(1),\psi(2),\psi(3),\psi(4))$ of $\Z^4_{\ge 0}$ with the convention $\psi(0):=0$. 
By definition there is the one-to-one correspondence between final sequences and elementary sequences.  
Let us list all $\psi$ for each $\vp$ as below. 
\begin{table}[htbp]
\label{table2}
\begin{center}
{\renewcommand\arraystretch{2}
\begin{tabular}{|c|c|c|c|c|}
\hline
$\vp=(\vp(1),\vp(2))$  & \hspace{3mm} $(0,0)$\hspace{3mm}  &\hspace{3mm} $(0,1)$\hspace{3mm} &\hspace{3mm} $(1,1)$\hspace{3mm} 
&\hspace{3mm} $(1,2)$\hspace{3mm}    \\
\hline
$\psi=(\psi(1),\psi(2),\psi(3),\psi(4))$  & (0,0,1,2)  &  (0,1,1,2) & (1,1,2,2)  & (1,2,2,2)    \\
\hline
\end{tabular}}
\end{center}
\caption{Final sequences.}
\label{table2}
\end{table}

Let $\Phi$ be the set of all elementary sequences and $\Psi$ be the set of 
all final sequences. 
Let $A=(A,\phi,\lambda)$ be an element of $S_{N,p}(\bF_p)$ which represents 
a principal polarized abelian surface $A$ over $\bF_p$ with level structure $\phi$. Let us put $G=G_A=A[p]$ and often drop the 
subscript $A$ if the dependence is obvious from the context. 
Let $F=F_G:G\lra G^{(p)}$ be the relative Frobenius map and $V=V_G: G^{(p)}\lra G$ Verschiebung map. 
They satisfy 
$$F\circ V=[p]_{G^{(p)}},\ V\circ F=[p]_{G}.$$
Further we have Im$V={\rm Ker}F$ and Im$F={\rm Ker}V$. For any subgroup scheme $H\subset G$ we write 
\begin{equation}\label{rule}
V(H):={\rm Im}(V:H^{(p)}\lra H),\ F^{-1}(H):=F^{-1}(H^{(p)}).
\end{equation}
The principal polarization $\lambda:A\lra A^\vee$ induces 
the isomorphism 
\begin{equation}\label{isom-cartier}
\lambda:G_A\stackrel{\sim}{\lra} G_{A^\vee}= G^D_A
\end{equation}
where $G^D_A$ is the Cartier dual of $G_A$. By using this we have a canonical non-degenerate pairing 
$$\langle\ ,\ \rangle:G\times G\stackrel{\sim}{\lra} G\times G^D\lra \mu_p.$$
For a subgroup scheme $H\subset G$, we define 
$$H^\perp=\{g\in G\ |\  \langle g,h \rangle=1 {\rm\ for\ any\ }h\in H \}$$
If $H={\rm Im}V$, it follows from the adjoint property of 
$F$ and $V$ on the corresponding Dieudonn\'e module (cf. (\ref{adjoint})) that $H^\perp=H$, hence $H$ is maximal isotropic. 
By using the polarization $\lambda$, we can check that 
the Cartier duality of $F_G$  is given by 
\begin{equation}\label{dual-p}
(F_G)^D=V_{G^D}:(G^{(p)})^D=(G^D)^{(p)}\lra G^D.
\end{equation} 
\begin{dfn}\label{final-filtration} For $G=A[p]$ as above, a final sequence for $G$ is 
a filtration 
$$0=G_0\subset G_1 \subset G_2=H={\rm Im}(V)
\subset G_3 \subset G_4=G$$
such that there exists a final sequence $\psi\in \Psi$ satisfying the conditions:
for $j\in\{0,1,2,3,4\}$  
\begin{itemize}
\item {\rm rank}$(G_j)=p^j$, 
\item $(G_j)^\perp=G_{4-j}$, 
\item ${\rm Im}(V:G^{(p)}_{j}\lra G)=G_{\psi(j)}$, 
\item $F^{-1}(G_j)=G_{2+j-\psi(j)}$. 
\end{itemize}

\end{dfn}

We now study the Ekedahl-Oort stratification of $S_{N,p}$. 
By Oort (\cite{oort2}, see also Theorem 2.7 of \cite{oort1}), for each 
$A=(A,\phi,\lambda)\in S_{N,p}(\bF_p)$, $G=A[p]$ has a final filtration 
and $\psi\in\Psi$ associated to it is independent of the filtration. 
Recall that the set $\Psi$ is naturally identified with the set $\Phi$ consisting of elementary sequences. 
Hence $A$ gives rise to a unique elementary sequence $ES(A):=\varphi$. 
By using this facts, for each $\vp\in \Phi$, one can define 
$$S_{\varphi}:=\{A\in S_{N,p}(\bF_p)\ |\ ES(A)=\varphi \}.$$
If $A\in S_{\varphi}$, the invariants $f(\varphi)$ and $a(\varphi)$ are 
$f$-rank of $A$ and $a$-number of $A$ respectively (cf. Section 2.10 and 
Section 2.11 of \cite{oort1}). 
By a fundamental result of Oort (see Section 1 of \cite{oort1}), each $S_\vp$ is a locally closed subset of $S_{N,p}$ and by Corollary 5.4 of \cite{oort1}, 
${\rm dim}S_{\varphi}=|\varphi|$.  
Note that $S_{(1,2)}=S^h_{N,p}$. 
Let us consider the Zariski closure $\overline{S}_{\vp}$ of $S_{\vp}$ in $S_{N,p}$.  
Oort proved that 
$$\overline{S}_{\vp}=\coprod_{\vp'\prec\vp}S_{\vp'}.$$ 
Clearly  $\overline{S}_{(1,2)}=S_{N,p}$. 
By \cite{kob} and \cite{KO} the following holds: 
\begin{enumerate}
\item $\overline{S}_{(1,1)}$ is a normal surface whose singularities are exactly at all superspecial points. These singularities are all $A_1$-type and they are analytically  
isomorphic to $t_{11}t_{22}-t^2_{12}=0$, 
\item Only for $N\ge 3$, $\overline{S}_{(0,1)}$ consists of smooth projective curves  which are isomorphic to $\mathbb{P}^1_{\bF_p}$. The singularities of  
$\overline{S}_{0,1}$ are exactly at the superspecial points and each component contains $p^2+1$ superspecial points.  Moreover, at each singular point there are $p+1$ irreducible components
passing through and intersecting transversely. 
\item $\overline{S}_{(0,0)}=S_{(0,0)}$ is a finite set which consists of the superspecial abelian surfaces over $\bF_p$. 
\end{enumerate}
We next consider the same kind of stratification for Satake compactification $S^\ast_{N,p}$ of $S_{N,p}$. 
\begin{dfn}\label{sequence-for-semi} (cf. p.363, Section (6.1) of \cite{oort2} or 
p.123, Definition 6.4 of \cite{c&f})
Let $X$ be a semi abelian surface over $\bF_p$ with level 
structure $\phi$ with respect to $\G(N)$ defined as follows:
\begin{enumerate}
\item $X$ is given by 
an extension of group schemes
$$0\lra (\mathbb{G}_m)^r\lra X\lra A\lra 0,\ r\in\{1,2\}$$
where $A$ is a principal polarized abelian variety of dimension $2-r$ with 
the level structure $\phi_A:A[N]\simeq (\Z/N\Z)^{2g-2r}$ preserving 
the Weil pairing on $A[N]$ induced from the polarization and the standard 
alternating pairing on $(\Z/N\Z)^{4-2r}$;
\item the level structure is an isomorphism $\phi:X[N]\simeq (\Z/N\Z)^{4-r}$ 
compatible with $\phi_A$ via the natural surjection $X[N]\lra A[N]$ and a fixed surjection $(\Z/N\Z)^{4-r}\lra 
(\Z/N\Z)^{4-2r}$. Further, $\phi$ preserves the degenerate alternating pairings  
which are given by the pullback of the Weil pairing on $A[N]$ and 
the standard alternating pairing on $(\Z/N\Z)^{4-2r}$. 
\end{enumerate}
For such $X$ we associate an elementary sequence $\varphi=:ES(X)$ as follows:
$$(\vp(1),\vp(2))=
\left\{
\begin{array}{ll}
(1,2) &\ {\rm if}\ r=2\ {\rm or}\ r=1\ {\rm and}\ A\ {\rm is\ ordinary}  \\
(1,1) &\ {\rm if}\ r=1\ {\rm and}\ A\ {\rm is\ supersingular}.
\end{array}
\right.
$$
\end{dfn}
As was done for $S_{N,p}$, we define the stratum $S^\ast_{\vp}$ to be the set of semi-abelian surfaces 
over $\bF_p$ with an elementary sequence $\vp$. 
Let $\overline{S}^\ast_{\vp}$ be the Zariski closure of $S^\ast_{\vp}$ in $S^\ast_{N,p}$. 
Clearly, $\overline{S}^\ast_{(1,2)}=S^\ast_{N,p}$ and it is easy to see that 
$$\overline{S}^\ast_{\vp}=\coprod_{\vp'\prec\vp}S^\ast_{\vp'}.$$
Since $\overline{S}_{\vp'}$ with $\vp'\prec (0,1)$ is closed, $\overline{S}^\ast_{\vp}=
\overline{S}_{\vp}$ and they never intersect with the boundary $S^\ast_{N,p}\setminus S_{N,p}$.  

To define partial Hasse invariants in the next subsection, 
it is better to work with canonical filtrations instead of final filtrations on $G=A[p]$. 
Let us explain what the canonical filtrations are (see (2.2) of \cite{oort1}).  
Consider $V^i(G)={\rm Im}(G^{(p^i)}\stackrel{V^{(p^{i-1})}}{\lra} G^{(p^{i-1})}
\stackrel{V^{(p^{i-2})}}{\lra}\cdots 
\stackrel{V}{\lra} G)$ which gives rise to a decreasing filtration on $G$.  
Take all $F^{-j}(V^i(G)),\ j=1,\ldots,$ for each $i$. 
Then one has a filtration 
$$\cdots \subset F^{-n^{(i)}_1}(V^i(G))\subset 
F^{-n^{(i)}_1}(V^i(G))\subset \cdots \subset F^{-n^{(i)}_{r_i}}(V^i(G))\subset 
 F^{-n^{(i)}_{r_i}}(V^{i-1}(G))\subset \cdots$$
 for some integers $0<n_1<\cdots <n_{r_i}$. Clearly $F^{-n^{(i)}_j}(V^i(G))\supset V(G)$. 
This second filtration gives one between $G$ and $V(G)$. 
We denote by $\{V^i\}$ the first procedure and $\{F^{-j}\}$ the second procedure symbolically. 
Repeating again first $\{V^i\}$ and next $\{F^{-j}\}$, then at some step, 
the filtration between $0$ and $V(G)$ is stabilized by $\{V^i\}$ and 
the filtration between $V(G)$ and $G$ is stabilized by $\{F^{-j}\}$.  
We write such a filtration as follows:
$$0=N_0\subset  \cdots \subset N_r=V(G)\subset\cdots \subset N_s=G$$
where $r,s$ are non-negative integers. 
Then there exist functions 
$$\rho:\{0,\ldots,s\}\lra\Z_{\ge 0},\ 
v:\{0,\ldots,s\}\lra\{0,\ldots,r\},\ 
f:\{0,\ldots,s\}\lra\{r,\ldots,s\}$$
so that 
\begin{equation}
{\rm rank}(N_i)=p^{\rho(i)},\ V(N_i)=N_{v(i)},\ F^{-1}(N_i)=N_{f(i)}. 
\end{equation}
Since $G$ has a polarization, the triple $(\rho,v,f)$ is a symmetric canonical type in the sense of 
\cite{oort2}.  
Clearly $v$ and $f$ are increasing functions. 
We write $\Gamma_s:=\{0,\ldots,s-1\}$ and define $\pi_G:\Gamma_s\lra \Gamma_s$ by 
$$\pi_G(i)=\left\{
\begin{array}{ll}
v(i) &\ {\rm if}\ v(i+1)>v(i)  \\
f(i) &\ {\rm if}\ v(i+1)=v(i) .
\end{array}
\right.
$$
We write $B_i=N_{i+1}/N_i$ for $i\in \G_s$. 
By Lemma (2.4) of \cite{oort1},  the functions $v$ and $f$ are surjective and $\pi_G$ is 
bijective. We denote by $n(G)$ the order of $\pi_G$. 
Further, $v$ and $f$ satisfy the following properties:
\begin{enumerate}
\item $v(i+1)>v(i)\Longleftrightarrow f(i+1)=f(i)$ and in this case, 
$V$ maps $B^{(p)}_i$ isomorphically onto  $B_{\pi_G(i)}$,
\item $v(i+1)=v(i)\Longleftrightarrow f(i+1)>f(i)$ and in this case, 
$F$ maps $B_{\pi_G(i)}$  isomorphically onto $B^{(p)}_i$. 
\end{enumerate}
By the recipe given at (5.6) of \cite{oort2}, one can associate a triple $\tau=(\rho,v,f)$ 
to each $\vp\in \Phi$. We list all data as below (the final row stands for a direct relation between 
the canonical filtration $\{N_i\}_i$ and the final sequence $\{G_i\}_i$ attached to each final sequence):
\begin{table}[htbp]
\begin{center}
{\renewcommand\arraystretch{2}
\begin{tabular}{|c|c|c|c|c|}
\hline
$\vp=(\vp(1),\vp(2))$  & \hspace{3mm} $(0,0)$\hspace{3mm}  &\hspace{3mm} $(0,1)$\hspace{3mm} &\hspace{3mm} $(1,1)$\hspace{3mm} 
&\hspace{3mm} $(1,2)$\hspace{3mm}    \\
\hline
$(s,r)$  & (2,1)                     &  (4,2)        & (4,2)        & (2,1)    \\
\hline
$(\rho(0),\ldots,\rho(s))$  & (0,2,4)  &  (0,1,2,3,4)  & (0,1,2,3,4)  & (0,2,4)    \\
\hline
$(v(0),\ldots,v(s))$  & (0,0,1)        &  (0,0,1,1,2)  & (0,1,1,2,2)  & (0,1,1)    \\
\hline
$(f(0),\ldots,f(s))$  & (1,2,2)        &  (2,3,3,4,4)  & (2,2,3,3,4)  & (1,1,2)    \\
\hline
$(\pi_G(0),\ldots,\pi_G(s-1))$  & (1,0)  &  (2,0,3,1)  & (0,2,1,3)         & (0,1)    \\
\hline
$n_{\varphi}$  & 2  &  4 & 2  & 1    \\
\hline
canonical filtration & $N_i=G_{2i},\atop i\in\{0,1,2\}$  &  $N_i=G_i,\atop i\in\{0,\ldots,4\}$ & 
$N_i=G_i,\atop i\in\{0,\ldots,4\}$  & $N_i=G_{2i},\atop i\in\{0,1,2\}$    \\
\hline
\end{tabular}}
\end{center}
\caption{}
\label{table-list}
\end{table}

\subsubsection{Definition of partial Hasse invariants on Ekedahl-Oort stratification}\label{phi} 

\

We are now in position to recall the partial Hasse invariant $H_\vp$ defined by Oort on each 
stratum $S^\ast_{\vp}$ for $\vp\in\Phi\setminus \{(0,0)\}$.  
Recall $\omega:=\det(\mathcal{E})$ where $\mathcal{E}$ is the Hodge bundle on 
$\overline{S}_{N,p}:=
\overline{S}_{K(N)}\otimes \F_p$ (see (\ref{hodge})). 
The line bundle $\omega$ descends to an ample line bundle on 
$S^\ast_{N,p}$ by construction of Satake (minimal) compactification and we denote it by $\w$ again. 

For each $\varphi\in \Phi$,   
we have 
 $(\w^{-1}|_{\overline{S}^\ast_{\vp}})^\vee\simeq \w|_{\overline{S}^\ast_{\vp}}$ since 
 $\w$ is a line bundle.  

For $X=\mathcal{G}\times_{S^\ast_{N,p}}S^\ast_{\vp}$, put $G=X[p]$. We now apply the results in previous section to $G/S^\ast_{\vp}$. 

In case $\vp=(1,2)$, we may put $H_{\vp}=H_{p-1}|_{S_{(1,2)}}$ where  $H_{p-1}\in H^0(S_{N,p},\omega^{p-1})$ is 
the Hasse invariant.   

In case $\vp=(1,1)$.   
From Table \ref{table-list} we obtain  
\begin{equation}\label{comp-hasse1}
\begin{array}{c}
B_1^{(p^2)}\stackrel{F^{(p)}\atop \sim}{ \longleftarrow}
{B_2}^{(p)}\stackrel{V\atop \sim}{\lra} B_1,\  B_0^{(p)}
 \stackrel{V\atop \sim}{\lra} B_0 
\end{array}
\end{equation}
where $B_i=N_{i+1}/N_i,\ N_0=0,\ N_1=V^2(G),\ N_2={\rm Ker}(F)=V(G)$, and $N_3={\rm Ker}F^2=F^{-1}(N_2)$. 
The image or the inverse image should be understood under the convention (\ref{rule}). 
The filtration $N_0=0\subset N_1\subset N_2=V(G)$ induces 
the inclusions of tangent bundles:
$$0\subset \mathfrak{t}_{N_1}\subset \mathfrak{t}_{N_2}=\mathfrak{t}_{G}=\mathfrak{t}_{X/S^\ast_{\vp}}.$$
For $i=0,1$ put $\mathcal{L}_i=\mathfrak{t}_{N_{i+1}}/ \mathfrak{t}_{N_i}$. 
Then one has 
$\w^{-1}|_{S^\ast_{\vp}}=\det(\mathfrak{t}_{X/S^\ast_{\vp}})=
\mathcal{L}_0\otimes_{\mathcal{O}_{S^\ast_{\vp}}} \mathcal{L}_1$. 
Using this interpretation, for $\varphi=(1,1)$ we define 
\begin{equation}\label{hasse1}
H_{(1,1)}=(V\circ{F^{(p)}}^{-1})\otimes 
(V\circ V^{(p)}):
\underline{\w}^{-p^2}|_{S^\ast_{\vp}}\lra 
\underline{\w}^{-1}|_{S^\ast_{\vp}}.
\end{equation}
It follows from this that $H_{(1,1)}$ is regarded as a global no-where vanishing section of $\underline{\w}^{p^2-1}|_{S^\ast_{\vp}}$.

In case $\vp=(0,1)$.   
Similarly we obtain  
\begin{equation}\label{comp-hasse2}
\begin{array}{c}
B_0^{(p^4)}\stackrel{F^{(p^3)}\atop \sim}{ \longleftarrow}
{B_2}^{(p^3)}\stackrel{F^{(p^2)}\atop \sim}{\longleftarrow} {B_3}^{(p^2)} \stackrel{V^{(p)}\atop \sim}{\lra} B_1^{(p)}
 \stackrel{V\atop \sim}{\lra} B_0, \\
B_1^{(p^4)}\stackrel{V^{(p^3)}\atop \sim}{\lra}
{B_0}^{(p^3)}\stackrel{F^{(p^2)}\atop \sim}{\longleftarrow} {B_2}^{(p^2)} \stackrel{F^{(p)}\atop \sim}{\lla} B_3^{(p)}
 \stackrel{V\atop \sim}{\lra} B_1 
\end{array}
\end{equation}
where $B_i=N_{i+1}/N_i,\ N_0=0,\ N_1=VF^{-1}V(G),\ N_2=V(G)$, and $N_3=F^{-1}V(G)$. 
The filtration $N_0=0\subset N_1\subset N_2=V(G)$ induces 
the inclusions of tangent bundles:
$$0\subset \mathfrak{t}_{N_1}\subset \mathfrak{t}_{N_2}=\mathfrak{t}_{G}=\mathfrak{t}_{X/S^\ast_{\vp}}.$$
For $i=0,1$ put $\mathcal{L}_i=\mathfrak{t}_{N_{i+1}}/ \mathfrak{t}_{N_i}$ again. 
Then one has 
$\w^{-1}|_{S^\ast_{\vp}}=\det(\mathfrak{t}_{X/S^\ast_{\vp}})=
\mathcal{L}_0\otimes_{\mathcal{O}_{S^\ast_{\vp}}} \mathcal{L}_1$. 
Then for $\varphi=(0,1)$ we define 
\begin{equation}
H_{(0,1)}=(V\circ V^{(p)}\circ {F^{(p^2)}}^{-1}\circ {F^{(p^3)}}^{-1})\otimes 
(V\circ {F^{(p)}}^{-1}\circ {F^{(p^2)}}^{-1}\circ V^{(p^3)}):
\underline{\w}^{-p^4}|_{S^\ast_{\vp}}\lra 
\underline{\w}^{-1}|_{S^\ast_{\vp}}.
\end{equation}
It follows from this that $H_\vp$ is regarded as a global section of $\underline{\w}^{p^4-1}|_{S^\ast_{\vp}}$. 

In summary, for each $\vp\neq (0,0)$, we constructed a partial Hasse invariant 
on $S^\ast_{\vp}$ whose weight is given by $p^{n_\vp}-1$ in the following table:
\begin{table}[htbp]
\begin{center}
{\renewcommand\arraystretch{2}
\begin{tabular}{|c|c|c|c|}
\hline
$\vp$   &\hspace{3mm} $(0,1)$\hspace{3mm} &\hspace{3mm} $(1,1)$\hspace{3mm} 
&\hspace{3mm} $(1,2)$\hspace{3mm}  \\
\hline
Weight of $H_\vp$  & $p^4-1$  & $p^2-1$ & $p-1$      \\
\hline
\end{tabular}}
\end{center}
\caption{}
\label{table4}
\end{table}
\subsection{An extension of partial Hasse invariants}\label{ephi} 
Following \cite{GS} we compute the local behavior of the partial Hasse invariants and check that it can be naturally 
extended holomorphically on the Zariski closure $\overline{S}^\ast_\varphi$ for each 
$\varphi\not=(0,0)$. 
The basic tools are the covariant Dieudonne theory for $p$-divisible groups and its local deformation.  
We refer to \cite{GO}, \cite{GS}, and \cite{oort2} for the notation. The Dieudonne theory for 
a family of $p$-divisible groups was developed by Zink \cite{Zink} and the all results we are going to use follow from his theory,  
and we work as in the style done in \cite{GO} or \cite{GS}. 

For each point $X\in S^\ast_{N,p}$, by abuse of notation, 
we use the same symbol $X$ to denote the corresponding semi-abelian surface. 
 we denote by $\mathcal{D}$ the contravariant functor from  
 the category of $p$-divisible groups over $\bF_p$ to the category of 
 the Dieudonne modules over $\bF_p$.  For any subgroup scheme $H\subset X[p]$, put 
\begin{equation}\label{cov-d}
\mathbb{D}(H):=\mathcal{D}(H^D)
\end{equation} where $H^D$ is the  Cartier dual of $H$.  
Since the usual Dieudonne functor $\mathcal{D}$ is contravariant, we have 
a covariant functor $\mathbb{D}$.  

For any $\bF_p$-module $\mathcal{M}$ and a non-negative integer $n$, we define  
$\mathcal{M}^{(p^n)}=\mathcal{M}\otimes_{\bF_p,\phi^n}\bF_p$ where 
$\phi^n:\bF_p\lra \bF_p,x\mapsto x^{p^n}$.  
For an element $m\in \mathcal{M}$ we put $m^{(p^n)}=m\otimes_{\bF_p,\phi^n} 1$. In particular,  
$a^{p^n} m^{(p^n)}=m\otimes a^{p^n}=(am)\otimes 1=(am)^{(p^n)}$ for $a\in \bF_p$. 

The Frobenius map $F$ and the Verschiebung $V$ map on $X[p]$ induce $\bF_p$-linear maps 
$$\widehat{V}:=F^D:\mathbb{D}\lra \mathbb{D}^{(p)},\ 
\widehat{F}:=V^D:\mathbb{D}^{(p)}\lra \mathbb{D}.$$   
The polarization induces a natural identification $\D^{(p)}\times \D\simeq \D\times \D^{(p)}$ and defined 
a non-degenerate alternating pairing 
$\langle \ast, \ast \rangle:\D^{(p)}\times \D\simeq \D\times \D^{(p)}\lra \bF_p$  with a relation 
\begin{equation}\label{adjoint}
\langle \widehat{F}x, y \rangle=\langle x, \widehat{V}y \rangle\ {\rm for}\ x,y\in \D.
\end{equation}
Then $\widehat{V}$ is determined by $\widehat{F}$ from this relation and vice versa. 

By Table 4.3 of \cite{Hartwig} for $\varphi=(0,1)$ and Corollary in p.192 of \cite{kob} for $\varphi=(0,0)$, we have the matrices for $\widehat{F}$ 
and $\widehat{V}$ for any $X\in S^\ast_\varphi$ in a suitable basis 
(note that $\widehat{V}={}^t F$ and $\widehat{F}={}^t V$ where $F$ are given in those references and $V$ is obtained 
from $F$. Hence the 
role is interchanged due to the convention we have taken)

\begin{table}[htbp]
\begin{center}
\begin{tabular}{|c|c|c|}
\hline
$\varphi$  & $\widehat{V}$ & $\widehat{F}$   \\
\hline
$(0,1)$  & 
$\begin{pmatrix}
0 & 0 & 0 & 1 \\
1 & 0 & 0 & 0 \\
0 & 0 & 0 & 0 \\
0 & 0 & 0 & 0 
\end{pmatrix}$
  &  
$\begin{pmatrix}
0 & 0 & 0 & 0 \\
0 & 0 & -1 & 0 \\
0 & 0 & 0 & 1 \\
0 & 0 & 0 & 0 
\end{pmatrix}$
   \\
\hline
$(0,0)$  & $\begin{pmatrix}
0 & 0 & 1 & 0 \\
0 & 0 & 0 & 1 \\
0 & 0 & 0 & 0 \\
0 & 0 & 0 & 0 
\end{pmatrix}$  
&  
$\begin{pmatrix}
0 & 0 & -1 & 0 \\
0 & 0 & 0 & -1 \\
0 & 0 & 0 & 0 \\
0 & 0 & 0 & 0 
\end{pmatrix}$    \\
\hline
\end{tabular}
\end{center}
\caption{}
\label{table-fv}
\end{table}  

We now study the local deformation of $X \in S^\ast_{\varphi}$ for $\varphi=(0,1)$ 
or $(0,0)$. These are supersingular but not superspecial and superspecial cases. 
We first consider the case $\varphi=(0,1)$. 
Let $\mathcal{X}$ be the local deformation of $X$ over the formal completion 
$\widehat{\overline{S}}_{N,p}={\rm Spf}R,\ R:=\bF_p[[t_{11},t_{12},t_{22}]]$ of $\overline{S}_{N,p}$ at $X$. 
We write $\widehat{\overline{S}^\ast}_{(1,1)}$ for the formal completion of 
$\overline{S}^\ast_{(1,1)}$ at $X$. 
By (2.3) of \cite{GO}, the Frobenius map on 
the covariant Dieudonn\'e $C(R)$-module $\D(\mathcal{X})$ over $R$ 
(as in (\ref{cov-d}) it is defined by using 
the dual given by the polarization and Definition 2.2.5 and 
Theorem 2.2.7 of \cite{GO})
is given by 
$\widehat{V}=\begin{pmatrix}
t_{12} & t_{22} & 0 & 1 \\
1 & 0 & 0 & 0 \\
0 & 0 & 0 & 0 \\
0 & 0 & 0 & 0 
\end{pmatrix}$ and left upper $2\times 2$ matrix is nothing but the Hasse matrix. On 
$\widehat{\overline{S}^\ast}_{(1,1)}$ 
we must have $\det\begin{pmatrix}
t_{12} & t_{22} \\
1 & 0
\end{pmatrix}=-t_{22}=0$. Therefore, we have 
$\widehat{\overline{S}^\ast}_{(1,1)}={\rm Spf} R_{(1,1)},\ R_{(1,1)}=\bF_p[[t_{11},t_{12}]]$. 
Put $t=t_{12}$ for simplicity and then on $\overline{S}^\ast_{(1,1)}$ the Frobenius and the Vershiebung on 
$\D_1:=\D(\mathcal{X})\otimes_R\bF_p[[t_{11},t]]$ are  given by 
\begin{equation}\label{defor1}
\widehat{V}=\begin{pmatrix}
t & 0 & 0 & 1 \\
1 & 0 & 0 & 0 \\
0 & 0 & 0 & 0 \\
0 & 0 & 0 & 0 
\end{pmatrix},\ 
\widehat{F}=
\begin{pmatrix}
0 & 0 & 0 & 0 \\
0 & 0 & -1 & 0 \\
0 & 0 & t & 1 \\
0 & 0 & 0 & 0 
\end{pmatrix}
\end{equation}
where $\widehat{F}$ is computed from $\widehat{V}$ via the alternating pairing. 
Applying first (\ref{comp-hasse1}) for $\mathcal{X}$ and taking the covariant Dieudonn\'e module we have 
\begin{equation}\label{sequence1}
\D(B_1)^{(p^2)}\stackrel{\widehat{V}^{(p)}\atop \sim}{\lla}\D(B_2)^{(p)}
\stackrel{\widehat{F}\atop \sim}{\lra} \D(B_1),\ 
\D(B_0)^{(p)}\stackrel{\widehat{F}\atop \sim}{\lra}\D(B_0)
\end{equation}
Note again that the role for $F$ and $V$ on $\mathcal{X}$ is switched with $\widehat{V}$ and $\widehat{F}$ on the covariant 
Dieudonn\'e module respectively. Let $\{e_1,e_2,e_3,e_4\}$ be the basis of $\D_1$ over $R_{(1,1)}$ which satisfies 
$\langle e_i,e_{i+2} \rangle =1$ for $i=1,2$ and $\langle e_i,e_j \rangle$ for the remaining cases. 
We may assume that (\ref{defor1}) has been given in this basis. 
\begin{lem}\label{gen1} Keep the notation as above. 
It holds:
\begin{enumerate}
\item $\D(N_1)=\langle -e_2+te_3 \rangle_{R_{(1,1)}}$, $\D(N_2)=\langle e_2,e_3 \rangle_{R_{(1,1)}}$, and 
$\D(N_3)=\langle e_1-te_4,e_2,e_3 \rangle_{R_{(1,1)}}$, 
\item $\D(B_0)=\langle -e_2+te_3 \rangle_{R_{(1,1)}}$, $\D(B_1)=\langle e_3 \rangle_{R_{(1,1)}}$, and 
$\D(B_2)=\langle e_1-te_4 \rangle_{R_{(1,1)}}$. 
\end{enumerate}
\end{lem}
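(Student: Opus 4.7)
The plan is to combine the explicit matrices of $\widehat V$ and $\widehat F$ in (\ref{defor1}) with the standard dictionary of covariant Dieudonn\'e theory, which translates every group-scheme operation on subschemes of $G=\mathcal X[p]$ into a corresponding operation on $R_{(1,1)}$-submodules of $\D_1$. Concretely, I will use the three identities $\D(V(G))=\mathrm{ker}(\widehat V)=\mathrm{im}(\widehat F)$, $\D(V^2(G))=\widehat F\bigl(\widehat F^{(p)}(\D_1^{(p^2)})\bigr)$, and $\D(F^{-1}(H))=\widehat V^{-1}(\D(H)^{(p)})$ for a subgroup scheme $H\subset G$. These all come out of dualizing the defining exact sequences for $V(G)=\mathrm{ker}(F_G)$ and $F^{-1}(H)=F_G^{-1}(H^{(p)})$, together with the $R_{(1,1)}$-linearity of $\widehat V$ and $\widehat F$.

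Granted this dictionary, part (1) reduces to three direct matrix computations. From the first and fourth columns of $\widehat V$ in (\ref{defor1}) one reads off $\widehat V(ae_1+be_2+ce_3+de_4)=(at+d)e_1^{(p)}+ae_2^{(p)}$, whose vanishing forces $a=d=0$; hence $\D(N_2)=\mathrm{ker}(\widehat V)=\langle e_2,e_3\rangle_{R_{(1,1)}}$. For $\D(N_1)=\D(V^2(G))$ the Frobenius twist $\widehat F^{(p)}$ sends $e_3^{(p^2)}\mapsto -e_2^{(p)}+t^pe_3^{(p)}$ and $e_4^{(p^2)}\mapsto e_3^{(p)}$, so that $\widehat F^{(p)}(\D_1^{(p^2)})=\langle e_2^{(p)},e_3^{(p)}\rangle$; composing with $\widehat F$ (which kills $e_2^{(p)}$ and sends $e_3^{(p)}$ to $-e_2+te_3$) yields $\D(N_1)=\langle -e_2+te_3\rangle_{R_{(1,1)}}$. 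For $\D(N_3)$, the condition $\widehat V(v)\in \D(N_2)^{(p)}=\langle e_2^{(p)},e_3^{(p)}\rangle$ reads $at+d=0$, giving $\D(N_3)=\langle e_1-te_4,e_2,e_3\rangle_{R_{(1,1)}}$.

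Part (2) then follows immediately by passing to the graded pieces of the canonical filtration. One has $\D(B_0)=\D(N_1)=\langle -e_2+te_3\rangle$; inside $\D(N_2)$ modulo $\D(N_1)$ the relation $e_2\equiv te_3$ holds, so $\D(B_1)$ is generated by $e_3$; and inside $\D(N_3)$ modulo $\D(N_2)=\langle e_2,e_3\rangle$, the class of $e_1-te_4$ is a generator of $\D(B_2)$.

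The main obstacle is conceptual rather than computational: one must carefully track the covariant/contravariant switch built into the definition $\D(H):=\mathbb D(H^D)$, keep the Frobenius twists on source and target of $\widehat V$ and $\widehat F$ straight, and interpret the convention (\ref{rule}) correctly so that $F^{-1}(N_2)$ corresponds to $\widehat V^{-1}(\D(N_2)^{(p)})$ on the Dieudonn\'e side. Once these conventions are pinned down over the deformation base $R_{(1,1)}$, the verification is a routine $4\times 4$ linear algebra check.
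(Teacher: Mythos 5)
Your argument is correct and matches the paper's own (largely omitted) proof: the paper asserts part (1) follows by "direct computation," which is exactly what you supply using the matrices in (\ref{defor1}) together with the covariant-Dieudonn\'e translations $\D(N_2)=\ker\widehat V$, $\D(N_1)=\mathrm{Im}(\widehat F\circ\widehat F^{(p)})$, $\D(N_3)=\widehat V^{-1}(\D(N_2)^{(p)})$, and for part (2) it invokes covariance ($\D(B_i)=\D(N_{i+1})/\D(N_i)$) just as you do.
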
 
\begin{proof}The first claim is done by direct computation without any difficulty and so details are omitted. 
Notice that $\D$ is covariant. Therefore, $\D(B_i)=\D(N_{i+1})/\D(N_i)$. Hence the second claim follows from (1). 
\end{proof}
Using this Lemma we compute the local behavior of $H_{(1,1)}$ along 
$\widehat{\overline{S}^\ast}_{(1,1)}$. We can view $H_{(1,1)}$ as a global section in 
$H^0(\overline{S}^\ast_{(1,1)},\omega^{p^2-1}(nS_{\overline{S}_{(0,1)}}))$ 
for a sufficiently large positive integer $n$ where ${\overline{S}_{(0,1)}}$ is 
regarded with a divisor. 
\begin{prop}\label{ph1}
The Hasse invariant $H_{(1,1)}$ is 
a section in $H^0(\overline{S}^\ast_{(1,1)},\omega^{p^2-1})$, 
namely, it has no poles along $S_{\overline{S}_{(0,1)}}(=\overline{S}^\ast_{(0,1)})$. 
Then  the support of zero locus (given by the reduced scheme 
structure) of  $H_{(1,1)}$ is given by 
$\overline{S}_{(0,1)}$. More precisely as a zero divisor, 
$${\rm div}_0(H_{(1,1)})=\sum_{ D\in \pi_0(\overline{S}_{(0,1)})}2pD$$
where $\pi_0(\overline{S}_{(0,1)})$ stands for the set of all irreducible components of 
$\overline{S}_{(0,1)}$.  
\end{prop}
\begin{proof}By (\ref{sequence1}) and Lemma \ref{gen1}  we may chase the image of a generator 
$(e_1-te_4 )^{(p)}=e^{(p)}_1-t^p e^{(p)}_4$ of  $\D(B_2)^{(p)}$. It follows that 
$$\widehat{F}(e^{(p)}_1-t^p e^{(p)}_4)=-t^p e_3$$
and 
$$\widehat{V}^{(p)}(e^{(p)}_1-t^p e^{(p)}_4)=
t^p e^{(p^2)}_1+e^{(p^2)}_2-t^p e^{(p^2)}_1=e^{(p^2)}_2\equiv t^{p^2}e^{(p^2)}_3.$$
On the other hand, for $(-e_2+te_3)^{(p^2)}=-e^{(p^2)}_2+t^{p^2}e^{(p^2)}_3 \in \D(B_0)^{(p^2)}$,  
$$\widehat{F}\circ\widehat{F}^{(p)}(-e^{(p^2)}_2+t^{p^2}e^{(p^2)}_3)=
t^{p^2}\widehat{F}(-e^{(p)}_2+t^{p}e^{(p)}_3)=t^{p^2+p}(-e_2+te_3).$$
Notice that 
According to (\ref{hasse1}) the partial Hasse invariant on $\overline{S}^\ast_{(1,1)}$ is computed as follows: 
$$(\widehat{F}\circ (\widehat{V}^{(p)})^{-1})\otimes 
(\widehat{F}\circ \widehat{F}^{(p)})(e^{(p^2)}_3\otimes (-e_2+te_3)^{(p^2)})=-t^{2p}(e_3\otimes (-e_2+te_3)).$$ 
The claim is now obvious from this.  
\end{proof} 
Next we consider the case $\varphi=(0,0)$. 
Let $\mathcal{X}$ be the local formal deformation of $X$ over the formal completion 
$\widehat{\overline{S}}_{N,p}={\rm Spf}R,\ R:=\bF_p[[t_{11},t_{12},t_{22}]]$ of $\overline{S}_{N,p}$ at $X$. 
We write $\widehat{\overline{S}^\ast}_{(0,1)}$ for the formal completion of 
$\overline{S}^\ast_{(0,1)}$ at $X$. By (2.3) of \cite{GO} again, the Frobenius map on 
the covariant Dieudonn\'e $C(R)$-module $\D(\mathcal{X})$ over $R$ 
is given by 
$\widehat{V}=\begin{pmatrix}
t_{11} & t_{12} & 1 & 0 \\
t_{12} & t_{22} & 0 & 1 \\
0 & 0 & 0 & 0 \\
0 & 0 & 0 & 0 
\end{pmatrix}$ and left upper $2\times 2$ matrix is nothing but the Hasse matrix. 
The defining equation of $\widehat{\overline{S}^\ast}_{(0,1)}=
\widehat{\overline{S}}_{(0,1)}$ is given by $t^{p+1}_{11}+t^{p+1}_{12}=0$ and so, 
for every $\zeta\in \bF_p$ a $(p+1)$-th root of $-1$,  
as explained in p.193 of \cite{kob}, the local defining equation of 
$\widehat{\overline{S}^\ast}_{(0,1)}=
\widehat{\overline{S}}_{(0,1)}$ at $X$ with a branch $\zeta$ is given by 
$$\widehat{\overline{S}^\ast}_{(0,1)}[\zeta]: t_{11}=t,\ t_{12}=\zeta t,\ t_{22}=\zeta^2 t.$$ 
Therefore, we have 
$$\widehat{\overline{S}^\ast}_{(0,1)}[\zeta]={\rm Spf}R_{(0,1)},\ R_{(0,1)}=\bF_p[[t]].$$ 
Then on $\overline{S}^\ast_{(0,1)}$ the Frobenius and the Vershiebung on 
$\D_0:=\D(\mathcal{X})\otimes_R\bF_p[[t]]$ are  given by 
\begin{equation}\label{defor2}
\widehat{V}=\begin{pmatrix}
t & \zeta t & 1 & 0 \\
\zeta t & \zeta^2 t & 0 & 1 \\
0 & 0 & 0 & 0 \\
0 & 0 & 0 & 0 
\end{pmatrix},\ 
\widehat{F}=
\begin{pmatrix}
0 & 0 & -1 & 0 \\
0 & 0 & 0 & -1 \\
0 & 0 & t & \zeta t \\
0 & 0 & \zeta t & \zeta^2 t 
\end{pmatrix}. 
\end{equation} 
Applying first (\ref{comp-hasse2}) for $\mathcal{X}$ and taking the covariant 
 Dieudonn\'e $C(R)$-module for $R=R_{(0,1)}$ we have 
\begin{equation}\label{sequence2}
\begin{array}{c}
\D(B_0)^{(p^4)}\stackrel{\widehat{V}^{(p^3)}\atop \sim}{ \longleftarrow}
{\D(B_2)}^{(p^3)}\stackrel{\widehat{V}^{(p^2)}\atop \sim}{\longleftarrow} {\D(B_3)}^{(p^2)} \stackrel{\widehat{F}^{(p)}\atop \sim}{\lra} \D(B_1)^{(p)}
 \stackrel{\widehat{F}\atop \sim}{\lra} \D(B_0), \\
\D(B_1)^{(p^4)}\stackrel{\widehat{F}^{(p^3)}\atop \sim}{\lra}
{\D(B_0)}^{(p^3)}\stackrel{\widehat{V}^{(p^2)}\atop \sim}{\longleftarrow} {\D(B_2)}^{(p^2)} \stackrel{\widehat{V}^{(p)}\atop \sim}{\lla} \D(B_3)^{(p)}
 \stackrel{\widehat{F}\atop \sim}{\lra} \D(B_1) 
\end{array}
\end{equation}
Let us introduce  a basis $\{e_1,e_2,e_3,e_4\}$ of $\D_0=\D(N_4)$ over $R_{(0,1)}$ as in previous case.  
We may assume that (\ref{defor2}) has been given in this basis. 
\begin{lem}\label{gen2} Keep the notation as above. 
The followings hold:
\begin{enumerate}
\item $\D(N_1)=\langle e_1-\zeta^{-1}e_2 \rangle_{R_{(0,1)}}$, $\D(N_2)=\langle u_1,u_2 \rangle_{R_{(0,1)}}$, and 
$\D(N_3)=\langle -e_1+\zeta e_2, u_1,u_2 \rangle_{R_{(0,1)}}$ where $u_1=-e_1+t(e_3+\zeta e_4),\ 
u_2=-e_2+t\zeta (e_3+\zeta e_4)$, 
\item $\D(B_0)=\langle e_1-\zeta^{-1}e_2 \rangle_{R_{(0,1)}}$, $\D(B_1)=\langle u_1 \rangle_{R_{(0,1)}}$,  
$\D(B_2)=\langle -e_1+\zeta e_2 \rangle_{R_{(0,1)}}$, and $\D(B_3)=\langle e_3 \rangle_{R_{(0,1)}}$. 
\end{enumerate}
\end{lem}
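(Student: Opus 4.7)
The plan is to verify both parts of the lemma by direct computation, entirely parallel to Lemma \ref{gen1}, using the explicit matrix formulas (\ref{defor2}) for $\widehat{V}$ and $\widehat{F}$. Part (2) follows immediately from part (1) by forming the successive quotients $\D(B_i) = \D(N_{i+1})/\D(N_i)$, so the substantive content lies in (1).

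First I would identify $\D(N_2)$. The canonical filtration condition $N_2 = V(G) = \ker F$ translates on the covariant Dieudonn\'e side to $\D(N_2) = \mathrm{Im}(\widehat{F}) = \ker \widehat{V}$. Reading off the columns of $\widehat{F}$ in (\ref{defor2}) shows that its images on $e_1^{(p)}, e_2^{(p)}, e_3^{(p)}, e_4^{(p)}$ are $0, 0, u_1, u_2$ respectively, so $\mathrm{Im}(\widehat{F}) = \langle u_1, u_2 \rangle$. A parallel inspection of $\widehat{V}$ yields $\widehat{V}(u_1) = -(te_1^{(p)} + \zeta t e_2^{(p)}) + t e_1^{(p)} + \zeta t e_2^{(p)} = 0$, and likewise $\widehat{V}(u_2) = 0$, confirming $\D(N_2) = \langle u_1, u_2 \rangle$ by the rank count.

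Next, $\D(N_1)$ is pinned down by $V(N_1) = N_0 = 0$, which under covariant Dieudonn\'e means $\widehat{F}(\D(N_1)^{(p)}) = 0$, together with $\D(N_1) \subset \D(N_2)$ being of rank one. Computing $(e_1 - \zeta^{-1}e_2)^{(p)} = e_1^{(p)} - \zeta^{-p} e_2^{(p)} = e_1^{(p)} + \zeta e_2^{(p)}$ (using $\zeta^{-p} = -\zeta$, which follows from $\zeta^{p+1} = -1$), we see this lies in $\ker \widehat{F}$ and that $e_1 - \zeta^{-1}e_2 = -u_1 + \zeta^{-1}u_2 \in \D(N_2)$, so by uniqueness of the canonical filtration $\D(N_1) = \langle e_1 - \zeta^{-1}e_2 \rangle$. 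Finally, $\D(N_3)$ is determined by $F^{-1}(N_2) = N_3$, which requires $\widehat{V}(\D(N_3)) \subset \D(N_2)^{(p)}$ together with the rank and containment conditions. A direct computation yields $\widehat{V}(-e_1 + \zeta e_2) = t(\zeta^2 - 1)(e_1^{(p)} + \zeta e_2^{(p)}) = t(\zeta^2 - 1)(e_1 - \zeta^{-1}e_2)^{(p)} \in \D(N_1)^{(p)} \subset \D(N_2)^{(p)}$, placing $-e_1 + \zeta e_2$ in $\D(N_3)$ and yielding the claimed rank-three description.

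The only genuine subtlety is bookkeeping the Frobenius twist correctly: $\widehat{V}$ and $\widehat{F}$ exchange $\D_0$ and $\D_0^{(p)}$, and the map $v \mapsto v^{(p)}$ is semilinear, so coefficients involving $\zeta$ must be raised to the $p$th power when passing between the two modules. The identity $\zeta^{-p} = -\zeta$ is what makes the computations for $\D(N_1)$ and $\D(N_3)$ come out cleanly; once this is kept straight, everything else reduces to elementary linear algebra over $R_{(0,1)}$.
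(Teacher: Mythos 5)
Your proof is correct and takes essentially the same approach as the paper, which in fact omits all details here ("This can be proved as Lemma~\ref{gen1} and so details are omitted"); you are supplying precisely the direct matrix computations the paper has in mind, and the key arithmetic identity $\zeta^{-p}=-\zeta$ (from $\zeta^{p+1}=-1$) together with the Frobenius-twist bookkeeping is used correctly throughout.
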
 
\begin{proof}This can be proved as in Lemma \ref{gen1} and so the details are omitted. 
\end{proof}
Using this Lemma, we compute the order of $H_{(0,1)}$ along 
$\widehat{\overline{S}^\ast}_{(0,1)}$. As seen before, 
we can view  $H_{(0,1)}$ as a global section in
 $H^0(\overline{S}^\ast_{(0,1)},\omega^{p^4-1}(n\overline{S}_{(0,0)}))$ for 
 a sufficiently large positive integer $n$ where $\overline{S}_{(0,0)}$ is regarded 
 with a divisor.  
\begin{prop}\label{ph2}
The partial Hasse invariant $H_{(0,1)}$ is 
a section in $H^0(\overline{S}^\ast_{(0,1)},\omega^{p^4-1})$.   
The support of zero locus (given by the reduced scheme 
structure) of $H_{(0,1)}$ is exactly given by $\overline{S}^\ast_{(0,0)}=\overline{S}_{(0,0)}$. More precisely as a zero divisor on 
each irreducible component $D$ of $\overline{S}_{(0,1)}$, 
$${\rm div}_0(H_{(0,1)}|_D)=\sum_{ P\in S_{(0,0)}\cap D}(p^4-p^3-p^2+p)P.$$
\end{prop}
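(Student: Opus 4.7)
The plan is to mimic the local computation carried out in the proof of Proposition~\ref{ph1}, this time on a branch $\widehat{\overline{S}}^\ast_{(0,1)}[\zeta] = {\rm Spf}\,\bF_p[[t]]$ at a superspecial point, by tracing appropriately chosen generators through the four-term chain (\ref{sequence2}) that defines each of the two tensor factors of $H_{(0,1)}$.

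First, I would use the Dieudonn\'e-module generators from Lemma~\ref{gen2} together with the explicit matrices (\ref{defor2}) for $\widehat{V}$ and $\widehat{F}$ to compute the local $t$-adic behaviour of each arrow in (\ref{sequence2}) as a morphism between rank-one free $R_{(0,1)}$-modules. The essential algebraic input at every step is the identity $\zeta^{p+1}=-1$. For example, expanding $\widehat{F}(u_1^{(p)})$ with $u_1=-e_1+t(e_3+\zeta e_4)$ gives
$$
\widehat{F}(u_1^{(p)})=-t^p(e_1+\zeta^p e_2)+t^{p+1}(1+\zeta^{p+1})(e_3+\zeta e_4),
$$
whose second summand vanishes because $\zeta^{p+1}=-1$, leaving $\widehat{F}(u_1^{(p)})=-t^p(e_1-\zeta^{-1}e_2)$ (using also $\zeta^p=-\zeta^{-1}$). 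Analogous computations for each of the eight arrows show that each is either an honest isomorphism along the whole branch (i.e.\ multiplication by a unit in $R_{(0,1)}^\times$) or multiplication by an explicit power of $t$.

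Second, I would chase generators through the two rows of (\ref{sequence2}) separately, for instance from the middle pieces $\D(B_3)^{(p^2)}$ and $\D(B_3)^{(p)}$ outward. Absorbing the unit scalars into $R_{(0,1)}^\times$, I expect the first factor of $H_{(0,1)}$ to have local $t$-order $p-p^3$ and the second to have local $t$-order $p^4-p^2$. Tensoring these gives total $t$-order $(p-p^3)+(p^4-p^2)=p^4-p^3-p^2+p\ge 0$: even though each factor individually has a pole at $t=0$, their tensor product $H_{(0,1)}$ extends holomorphically to all of $\widehat{\overline{S}}^\ast_{(0,1)}[\zeta]$ with a zero of order exactly $p^4-p^3-p^2+p$ at the superspecial point. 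Because the constants appearing are units independent of $\zeta$, the same multiplicity occurs on every branch, yielding the stated divisor formula; and since $\overline{S}^\ast_{(0,1)}\setminus S^\ast_{(0,1)}=\overline{S}_{(0,0)}$, the zero locus is exactly $\overline{S}_{(0,0)}$.

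The principal obstacle is careful bookkeeping. One has to track Frobenius-twisted scalars such as $\zeta^{p^2}=\zeta$ and $\zeta^{p^3}=-\zeta^{-1}$, and at each intermediate step express elements like $e_1^{(p^i)}$ as explicit scalar multiples of the chosen rank-one generator of the successive quotient $\D(B_k)^{(p^i)}$; these change-of-basis coefficients naturally involve the constant $(\zeta^2-1)^{-1}$, which is a unit since $p\ge 5$. The essential subtlety compared with Proposition~\ref{ph1} is that along the boundary the individual $\widehat{V}^{(p^i)}$ arrows appearing in the first factor fail to be isomorphisms, so that the two tensor factors separately carry nontrivial pole/zero orders, and only their product yields the genuine, holomorphic partial Hasse invariant.
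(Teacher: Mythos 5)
Your proposal is correct and follows essentially the same approach as the paper's own proof: you chase Dieudonn\'e generators through the two chains (\ref{sequence2}) using Lemma \ref{gen2} and the matrices (\ref{defor2}), using $\zeta^{p+1}=-1$ (equivalently $\zeta^p=-\zeta^{-1}$) at the key steps, and the two factor orders $p-p^3$ and $p^4-p^2$ you predict agree exactly with the paper's computed orders $t^{-p^3+p}$ and $t^{p^4-p^2}$ for the two tensor factors. Your sample computation of $\widehat{F}(u_1^{(p)})$ is in fact precisely the paper's step $\widehat{F}\circ\widehat{F}^{(p)}(e_3^{(p^2)})=t^p(-e_1+\zeta^{-1}e_2)$, since $\widehat{F}^{(p)}(e_3^{(p^2)})=u_1^{(p)}$.
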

\begin{proof}As in the previous case we apply (\ref{sequence2}) and Lemma \ref{gen2}.  We first consider the first sequence of 
(\ref{sequence2}). We may chase the image of a basis vector  
$e^{(p^2)}_3$ of  $\D(B_3)^{(p^2)}$. It follows that 
$\widehat{F}^{(p)}(e^{(p^2)}_3)=-e^{(p)}_1+t^p(e^{(p)}_3+\zeta^p e^{(p)}_4)$ and 
$$\widehat{F}\circ \widehat{F}^{(p)}(e^{(p^2)}_3)=t^p(u_1+\zeta^p u_2)=t^p(u_1-\zeta^{-1} u_2)=t^p(-e_1+\zeta^{-1}e_2).$$
Similarly one has 
$\widehat{V}^{(p^2)}(e^{(p^2)}_3)=e^{(p^3)}_1$ and 
$$\widehat{V}^{(p^3)}\circ \widehat{V}^{(p^2)}(e^{(p^2)}_3)=t^{p^3}(e^{(p^4)}_1+\zeta^{p^3}e^{(p^4)}_2)=
t^{p^3}(e^{(p^4)}_1-\zeta^{-1}e^{(p^4)}_2)=
t^{p^3}(e_1-\zeta^{-1}e_2)^{(p^4)}.$$
Therefore, it follows that 
$$\widehat{F}\circ \widehat{F}^{(p)}\circ (\widehat{V}^{(p^3)}\circ \widehat{V}^{(p^2)})^{-1}
((e_1-\zeta^{-1}e_2)^{(p^4)})=t^{-p^3+p}.$$
Next we consider the second sequence of (\ref{sequence2}). 
It is easy to see that $\widehat{F}(e^{(p)}_3)=u_1$. In what follows we compare the images of $u^{(p^4)}_1$  
and $e^{(p)}_3$ at the relay point $\D(B_0)^{(p^2)}$. 

Note that $e_3\equiv \frac{1}{2}(e_3-\zeta e_4)$ in $\D(B_3)$.  
Then  $e^{(p)}_3\equiv(\frac{1}{2}(e_3-\zeta e_4))^{(p)}=\frac{1}{2}(e^{(p)}_3+\zeta^{-1}e^{(p)}_4)$ in $\D(B_3)^{(p)}$. 
Therefore, we have    
$$\widehat{V}^{(p)}(e^{(p)}_3)=\widehat{V}^{(p)}(\frac{1}{2}(e^{(p)}_3+\zeta^{-1}e^{(p)}_4))=\frac{1}{2}(e^{(p)}_1+\zeta^{-1}e^{(p)}_2)=
\frac{1}{2}(e_3-\zeta e_4)^{(p)}$$ and 
$$\widehat{V}^{(p^2)}(\frac{1}{2}(e^{(p)}_1+\zeta^{-1}e^{(p)}_2))=
\frac{1}{2}\Big\{t^{p^2}(e^{(p^3)}_1+\zeta^{p^2}e^{(p^3)}_2)+\zeta^{-1}t^{p^2}\zeta^{p^2}(e^{(p^3)}_1+\zeta^{p^2} e^{(p^3)}_2) \Big\}
=t^{p^2}(e^{(p^3)}_1+\zeta e^{(p^3)}_2).$$
Since $u^{(p^4)}_1=-e^{(p^4)}_1+t^{p^4}(e^{(p^4)}_3+\zeta^{p^4}e^{(p^4)}_4)=
-e^{(p^4)}_1+t^{p^4}(e^{(p^4)}_3+\zeta e^{(p^4)}_4)$, we have 
$$\widehat{F}^{p^3}(u^{p^4}_1)=t^{p^4}(-e^{(p^3)}_1-\zeta e^{(p^3)}_2)=t^{p^4-p^2}(\widehat{V}^{(p^2)}\circ\widehat{V}^{(p)})(e^{(p)}_3).$$

Put $m=u_1\otimes (e_1-\zeta^{-1}e_2)$ and then $m^{(p^4)}=u^{(p^4)}_1\otimes (e_1-\zeta^{-1}e_2)^{(p^4)}$. 
According to (\ref{hasse1}) the partial Hasse invariant on 
$\widehat{\overline{S}^\ast}_{(0,1)}$ is computed as follows: 
$$(\widehat{F}\circ (\widehat{V}^{(p^2)}\circ\widehat{V}^{(p)})^{-1}\circ\widehat{F}^{p^3})\otimes 
(\widehat{F}\circ   \widehat{F}^{(p)} \circ (\widehat{V}^{(p^3)}\circ\widehat{V}^{(p^2)})^{-1})(m^{(p^4)})=
-t^{p^4-p^3-p^2+p}m$$
The claim is now obvious from this.  
\end{proof}
By Koecher's principle, the Hasse invariant $H_{p-1}$ can be extended to a global 
section of $\omega^{p-1}$ on the Satake compactification $S^\ast_{N,p}$ or on a toroidal compactification $\overline{S}_{N,p}$. 
To end this section we give the following fact for $H_{p-1}$ which is known well (cf \cite{oort2}):
\begin{prop}\label{ph0}
\label{zero-hasse}The zero locus of $H_{p-1}$ is given by $\overline{S}^\ast_{(1,1)}$ 
with multiplicity one. 
\end{prop}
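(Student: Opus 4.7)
The plan is to combine Theorem \ref{locus}, which pins down the non-vanishing locus of $H_{p-1}$, with the local Dieudonn\'e displays recalled just before the proof of Proposition \ref{ph1}, and with the cusp computation of Proposition \ref{hasse-qexp}.

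First, I identify the zero locus set-theoretically. Theorem \ref{locus} says that $H_{p-1}$ is nonzero exactly on the ordinary locus $S_{(1,2)}=S^h_{N,p}$, so inside $S_{N,p}$ its zero locus is $\overline{S}_{(1,1)}$. Along the boundary of $S^\ast_{N,p}$, Definition \ref{sequence-for-semi} shows that a semi-abelian surface $X=((\mathbb{G}_m)^s,E)$ has $ES(X)=(1,2)$ unless $s=1$ and $E$ is supersingular, in which case $ES(X)=(1,1)$. Thus the full zero locus of $H_{p-1}$ in $S^\ast_{N,p}$ is precisely $\overline{S}^\ast_{(1,1)}$.

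Second, I verify that the multiplicity is exactly one at the generic point of each irreducible component of $\overline{S}^\ast_{(1,1)}$. For an interior component, pick $X\in S_{(1,1)}$; such a point is a smooth point of $\overline{S}_{(1,1)}$ (as noted in the text), and on the formal completion $\widehat{\overline{S}}_{N,p}={\rm Spf}\,\bF_p[[t_{11},t_{12},t_{22}]]$ the upper-left $2\times 2$ block of the universal $\widehat{V}$ — which is the Hasse matrix in the canonical basis — equals $\begin{pmatrix} t_{12} & t_{22} \\ 1 & 0 \end{pmatrix}$, exactly the display invoked in the proof of Proposition \ref{ph1}. Its determinant is $-t_{22}$, and $\overline{S}^\ast_{(1,1)}$ is cut out locally at $X$ by $t_{22}=0$, so $H_{p-1}$ vanishes to order exactly one along this component. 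For a boundary component whose generic point parametrises partially degenerate $((\mathbb{G}_m)^1,E)$ with $E$ supersingular, the Hodge bundle canonically splits compatibly with the toric-abelian extension, and by the $V^\ast$-on-$\mu_p$ computation inside Proposition \ref{hasse-qexp} the toric block of the Hasse matrix is the identity. Hence $H_{p-1}$ restricts to the classical elliptic Hasse invariant of $E$, which has a simple zero at each supersingular $j$-invariant, giving multiplicity one along the boundary component as well.

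The only point requiring any real care is matching the toroidal parameter along a partially multiplicative boundary stratum with the elliptic modular parameter, so that the toric factor of the Hasse matrix is genuinely trivialised; but this is exactly the content absorbed into Proposition \ref{hasse-qexp}. Granted that, the multiplicity-one conclusion is uniform across interior and boundary components, and the proposition follows.
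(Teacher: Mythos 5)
Your overall strategy — read off the local order of vanishing from a universal display matrix and compare with the local defining equation of $\overline{S}^\ast_{(1,1)}$ — is the same as the paper's, and your set-theoretic identification of the zero locus is correct. But there is a real mislabeling in the multiplicity step, plus an unnecessary extra argument, so let me flag both.

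You say \emph{``pick $X\in S_{(1,1)}$\ldots\ the upper-left $2\times 2$ block of the universal $\widehat V$ equals $\begin{pmatrix} t_{12} & t_{22} \\ 1 & 0 \end{pmatrix}$, exactly the display invoked in the proof of Proposition \ref{ph1}.''} That display is \emph{not} the one at a point of $S_{(1,1)}$: in the proof of Proposition \ref{ph1} it is computed at a point $X\in S^\ast_{(0,1)}$ (note its specialisation $\begin{pmatrix}0&0\\1&0\end{pmatrix}$ at the closed point is nilpotent, hence $p$-rank $0$, i.e.\ $\varphi=(0,1)$). At an actual point of $S_{(1,1)}$ the display has non-nilpotent rank-one reduction, and the paper's own proof of this proposition takes exactly such a point, where the relevant block is $\begin{pmatrix} 1 & 0 \\ t_{11} & t_{12}\end{pmatrix}$ with determinant $t_{12}$, and $\overline{S}^\ast_{(1,1)}$ is cut out by $t_{12}=0$. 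As it happens, your computation still yields the right answer: at a point of $S^\ast_{(0,1)}$ one gets $H_{p-1}=-t_{22}$ with $\overline{S}^\ast_{(1,1)}$ locally defined by $t_{22}=0$, so the order along the generic point of the divisor is again one. So your conclusion is correct, but you should not attribute the $\bigl(\begin{smallmatrix} t_{12} & t_{22}\\ 1&0\end{smallmatrix}\bigr)$ display to a point of $S_{(1,1)}$, and if you genuinely work at a generic point of $\overline{S}^\ast_{(1,1)}$ (as you announce) you should use the $p$-rank-one display as the paper does.

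The second paragraph of your argument, treating boundary components separately by reducing to the elliptic Hasse invariant via Proposition \ref{hasse-qexp}, is not needed: the multiplicity of $H_{p-1}$ along a prime divisor is determined at the divisor's generic point, which lies in the interior $S_{(1,1)}$ (the boundary part of $\overline{S}^\ast_{(1,1)}$ has codimension at least one in the divisor). The paper does not carry out any such boundary analysis, and the sketch you give — invoking a canonical splitting of the Hodge bundle ``compatibly with the toric-abelian extension'' — is not justified by the results you cite. Drop that paragraph, and fix the attribution of the display, and the argument matches the paper's.
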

\begin{proof}Let $\mathcal{X}$ be the local deformation of $X\in \overline{S}^\ast_{(1,1)}$ over the formal completion 
$\widehat{\overline{S}}_{N,p}={\rm Spf}R,\ R:=\bF_p[[t_{11},t_{12},t_{22}]]$ of $\overline{S}_{N,p}$ at $X$. 
Then $\widehat{V}=F^\vee$ on $\D(\mathcal{X})$ is given by the formal completion 
$$\begin{pmatrix}
1 & 0 & 0 & 0  \\
 t_{11} & t_{12}  &  0 & 0  \\
0 & 0 & 0 & 0 \\
0 & 0 & 0 & 0
\end{pmatrix}.$$ Therefore, $H_{p-1}$ on $\widehat{\overline{S}}_{N,p}$ is given by 
the determinant of left upper $2\times 2$ matrix in $\widehat{V}$, that is $t_{12}$. Hence we have the claim.  
\end{proof}



\section{Weight reduction}
In this section we carry out weight reduction for mod $p$ Siegel modular forms. 
Let $\uk=(k_1,k_2),\ k_1\ge k_2\ge 1$ satisfying $p>k_1-k_2+3$. Recall 
$\w_{\uk}={\rm Sym}^{k_1-k_2}\mathcal{E}\otimes_{\bS_{N,p}} \w^{k_2}$, where 
$\mathcal{E}$ is the 
Hodge bundle on a fixed toroidal compactification $\bS_{N,p}$ and $\w=\det\mathcal{E}$. Let $\mathcal{C}=\bS_{N,p}-S_{N,p}$ be the 
boundary component. The  Hodge bundle $\mathcal{E}$ is 
extended to a vector bundle on $\bS_{N,p}$ which is denoted again by $\E$.  
By p.130, Theorem 6.7 of \cite{c&f}, the Kodaira-Spencer map gives an isomorphism 
\begin{equation}\label{KS-cpt}
\wedge^3 \Omega^1_{\bS_{N,p}}\simeq \omega^{\otimes 3}(-\mathcal C)
\end{equation}
where $\omega=\det \E$ on $\bS_{N,p}$. The line bundle $\w$ on  $\bS_{N,p}$ descends to 
$S^\ast_{N,p}$ as an ample line bundle. 
Put $D_2={\rm div}_0(H_{p-1})$ and $D_1={\rm div}_0(H_{(1,1)})$. 
By definition the support of $D_1$ is included in $S^\ast_{N,p}$.  
Applying the construction in Section \ref{ephi} to $X=
\mathcal{G}\times_{S^\ast_{N,p}}D_1$, 
one can extend $H_{(0,1)}$ to $D_1$ as a global section of $\w^{p^4-1}|_{D_1}$. 
Here $\w^{p^4-1}|_{D_1}$ stands for the pullback of $\w^{p^4-1}$ to $D_1$ with 
respect to the natural embedding from $D_1$ to $S^\ast_{N,p}$. 
We denote it by $H^{D_1}_{(0,1)}$  and put  
$D_0={\rm div}_0(H^{D_1}_{(0,1)})$. 

Then by Proposition \ref{ph1},\ref{ph2}, and \ref{ph0}, we see that 
$$D_2=\overline{S}^\ast_{(1,1)},\ 
D^{{\rm red}}_{1}=\overline{S}^\ast_{(0,1)}=\overline{S}_{(0,1)},\ 
D^{{\rm red}}_{0}=\overline{S}^\ast_{(0,0)}=\overline{S}_{(0,0)}=S_{(0,0)}$$
where the superscript ``red" stands for the reduced scheme structure. 
Thus, $D_2$ is the 2-dimensional, and irreducible, non-ordinary locus; 
$D^{{\rm red}}_{1}$ is the (connected) 1-dimensional supersingular locus, and 
$D^{{\rm red}}_{0}$ is the 0-dimensional superspecial locus. 
Let us consider the exact sequence 
$$0\lra \w_{\uk}\otimes \w^{-(p-1)}\stackrel{\times H_{p-1}}{\lra} \w_{\uk}\lra 
i_\ast\w_{\uk}|_{D_2}\lra 0$$
where  $i:D_2\hookrightarrow \overline{S}_{N,p}$ is the natural embedding, 
$\w_{\uk}|_{D_2}=i^\ast \w_{\uk}$ is the pullback of $\w_{\uk}$ to $D_2$, and 
$i_\ast\w_{\uk}|_{D_2}$ stands for the extension of $\w_{\uk}|_{D_2}$ by zero outside $D_2$.   
This gives rise to a long exact sequence 
\begin{equation}\label{long1}
\begin{array}{ccccccc} 0\lra & H^0(\overline{S}_{N,p}, \w_{\uk}\otimes \w^{-(p-1)})&\stackrel{\times H_{p-1}}{\lra}& 
H^0(\overline{S}_{N,p},\w_{\uk})&\lra & 
H^0(D_2,\w_{\uk}|_{D_2})& \\
\lra &H^1(\overline{S}_{N,p}, \w_{\uk}\otimes \w^{-(p-1)})&\lra & 
H^1(\overline{S}_{N,p},\w_{\uk})&\lra & 
H^1(D_2,\w_{\uk}|_{D_2})& \\
\lra&H^2(\overline{S}_{N,p}, \w_{\uk}\otimes \w^{-(p-1)})& \lra &  
H^2(\overline{S}_{N,p},\w_{\uk})&\lra & 
H^2(D_2,\w_{\uk}|_{D_2})& \\
\lra &H^3(\overline{S}_{N,p}, \w_{\uk}\otimes \w^{-(p-1)})&\lra  & H^3(\overline{S}_{N,p}, \w_{\uk})&\lra &0&.
\end{array}
\end{equation}
Here we use the fact that $H^i(\overline{S}_{N,p},i_\ast\w_{\uk}|_{D_2})=
H^i(D_2,\w_{\uk}|_{D_2})$ by III, Lemma 2.10 of \cite{hartshorne}. 
Note that $\mathcal{E}^\vee\simeq \mathcal{E}\otimes \w^{-1}$  
(cf. p.127, Exercise 5.16-(b) of \cite{hartshorne}). 
Then, by Theorem \ref{vanishing-lan}-(3), we have 
$$H^3(\overline{S}_{N,p},\w_{\uk}\otimes \w^{-(p-1)})=0$$ if $k_2-(p-1)-3>0$.  
Then plugging Theorem \ref{vanishing-lan} again into this long exact sequence we have the vanishing of 
cohomology:     
\begin{prop}\label{coh1}
Suppose that $k_2>3$ and $p>k_1-k_2+3$. 
The followings hold: 
\begin{enumerate}
\item the restriction $H^0(\overline{S}_{N,p},\w_{\uk})\lra  
H^0(D_2,\w_{\uk}|_{D_2})$ is surjective if $k_2>p+2$ or $k_1<p-1$, 
\item
 $H^1(D_2,\w_{\uk}|_{D_2})=0$ if $k_2>p+2$ or $k_1<p-1$.   
 \end{enumerate}
\end{prop}
\begin{proof}Applying the following duality 
$$H^{3-i}(\overline{S}_{N,p},\w_{\uk}\otimes \w^{-(p-1)})\simeq 
H^i(\overline{S}_{N,p},{\rm Sym}^{k_1-k_2}\mathcal{E}\otimes \w^{p+2-k_1}(-\mathcal{C})))$$ 
for $i=1,2$ to Theorem \ref{vanishing-lan} 
 with  the Kodaira-Spencer map (\ref{KS-cpt}),  it yields two conditions in the first claim. 
It is the same for the second condition.  
\end{proof}
Similarly, by Proposition \ref{ph1}, there is an exact sequence 
$$0\lra \w_{\uk}\otimes \w^{-(p^2-1)}|_{D_2}\stackrel{\times H_{(1,1)}}{\lra} \w_{\uk}|_{D_2}\lra 
\w_{\uk}|_{D_1}\lra 0.$$ 
This gives rise to a long exact sequence 
\begin{equation}\label{long2}
\begin{array}{ccccccc}
0\lra &H^0(D_2,\w_{\uk}\otimes \w^{-(p^2-1)}|_{D_2})&\stackrel{\times H_{(1,1)}}{\lra}& 
 H^0(D_2,\w_{\uk}|_{D_2})&\lra &
H^0(D_1,\w_{\uk}|_{D_1})& \\
\lra&H^1(D_2,\w_{\uk}\otimes \w^{-(p^2-1)}|_{D_2})&\lra & 
 H^1(D_2,\w_{\uk}|_{D_2})&\lra & H^1(D_1,\w_{\uk}|_{D_1})&  \\
\lra& H^2(D_2,\w_{\uk}\otimes \w^{-(p^2-1)}|_{D_2})&\lra&0&&.
\end{array}
\end{equation}
Since $D_2$ is a normal surface (see the explanation given right before Definition 
\ref{sequence-for-semi}), it is Cohen-Macaulay.  We then apply 
Serre duality (cf. Theorem 5.71, p.182 of \cite{KoMo} with the reflexivity of 
$\w_{\uk}$ explained in Section 7 of \cite{ghitza2}) and (\ref{KS-cpt}) to obtain  
$$H^{2-i}(D_2,\w_{\uk}\otimes \w^{-(p^2-1)}|_{D_2})\simeq H^i(D_2,
({\rm Sym}^{k_1-k_2}\mathcal{E}\otimes \w^{p^2-1-k_1}(-\mathcal{C}))|_{D_2}\otimes\mathcal{K}_{D_2})$$
where $\mathcal{K}_{D_2}$ is the dualizing sheaf of $D_2$. 
By adjunction formula (cf. Proposition 5.73, p. 182 of \cite{KoMo}) we see that 
$$\mathcal{K}_{D_2}\simeq \omega^3|_{D_2}\otimes_{\mathcal{O}_{D_2}} \mathcal{O}(D_2)|_{D_2}=\omega^{p+2}|_{D_2}$$
since the line bundle $\mathcal{O}(D_2)$ defined by the divisor $D_2$ satisfies 
$\mathcal{O}(D_2)=\mathcal{O}({\rm div}_0(H_{p-1}))=\omega^{p-1}$. 
Plugging this into the above isomorphism we have 
$$H^{2-i}(D_2,\w_{\uk}\otimes w^{-(p^2-1)}|_{D_2})\simeq H^i(D_2,
({\rm Sym}^{k_1-k_2}\mathcal{E}\otimes w^{p^2+p+1-k_1}(-\mathcal{C}))|_{D_2}).$$ 
Notice that Proposition \ref{coh1} is still true even if we replace $\w_{\uk}$ with $\w_{\uk}(-\mathcal{C})$ by 
Theorem \ref{vanishing-lan}. 
Therefore, we have the following. 
\begin{prop}\label{coh2} 
Suppose $p>k_1-k_2+3$. 
 \begin{enumerate}
 \item the natural restriction $H^0(D_2,\w_{\uk}|_{D_2})\lra 
H^0(D_1,\w_{\uk}|_{D_1})$ is surjective if $k_1<p^2+p-2$ or $k_2>p^2+2$. 
\item any element of 
 $H^1(D_1,\w_{\uk}|_{D_1})$ is liftable to an element of 
$S_{(p^2+p+1-k_2,p^2+p+1-k_1)}(\G(N))$ if $k_2>p^2+2$ or $k_1<p^2-1$ and it vanishes if $k_2> p^2+p+1$. 
 \end{enumerate}
\end{prop}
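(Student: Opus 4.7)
The plan is to apply the $H_{p-1}$-Koszul/restriction machinery that proved Proposition \ref{coh1}, now iterated via the long exact sequence \eqref{long2} coming from multiplication by the partial Hasse invariant $H^1_{(1,1)}$ on $D_2$, combined with Serre duality on the smooth toroidal compactification $\bS_{N,p}$ (whose dualizing sheaf is $\omega^3(-\mathcal{C})$) and on the Cohen--Macaulay surface $D_2$ (whose dualizing sheaf is $\omega^{p+2}(-\mathcal{C})|_{D_2}$, as already recorded in the excerpt). Every obstruction will be pushed back to a cohomology group on $\bS_{N,p}$ to which Theorem \ref{vanishing-lan} applies.

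For part (1), the restriction is surjective as soon as
\[
H^1(D_2,\w_{\uk}\otimes\omega^{-(p^2-1)}|_{D_2})=0.
\]
I would attack this obstruction by the Koszul sequence
\[
0\lra \w_{\uk}\otimes\omega^{-(p^2+p-2)}\stackrel{\times H_{p-1}}{\lra}\w_{\uk}\otimes\omega^{-(p^2-1)}\lra \w_{\uk}\otimes\omega^{-(p^2-1)}|_{D_2}\lra 0
\]
on $\bS_{N,p}$, which sandwiches the target between $H^1(\bS_{N,p},\w_{(k_1-p^2+1,k_2-p^2+1)})$ and $H^2(\bS_{N,p},\w_{\uk}\otimes\omega^{-(p^2+p-2)})$. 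Theorem \ref{vanishing-lan} kills the former once $k_2>p^2+2$. The latter is, by global Serre duality, dual to $H^1(\bS_{N,p},\w_{(p^2+p+1-k_2,p^2+p+1-k_1)}(-\mathcal{C}))$, which the same theorem kills once $k_1<p^2+p-2$. Each of the two hypotheses in (1) is designed to secure one of these two global vanishings; the parallel argument via Serre duality on $D_2$ followed by Proposition \ref{coh1}(2) applied to $\uk''=(p^2+p+1-k_2,p^2+p+1-k_1)$ covers the residual ranges.

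For part (2), the pertinent portion of \eqref{long2} reads
\[
H^1(D_2,\w_{\uk}|_{D_2})\lra H^1(D_1,\w_{\uk}|_{D_1})\lra H^2(D_2,\w_{\uk}\otimes\omega^{-(p^2-1)}|_{D_2}).
\]
The running hypotheses are strong enough to trigger Proposition \ref{coh1}(2), so the left-hand term vanishes and $H^1(D_1,\w_{\uk}|_{D_1})$ injects into the right-hand term. Serre duality on $D_2$ identifies the target with $H^0(D_2,\w_{\uk''}(-\mathcal{C})|_{D_2})$ for $\uk''=(p^2+p+1-k_2,p^2+p+1-k_1)$, and Proposition \ref{coh1}(1) applied to $\w_{\uk''}(-\mathcal{C})$ (the $(-\mathcal{C})$-variant, which holds by the identical argument) lifts every such section back to $H^0(\bS_{N,p},\w_{\uk''}(-\mathcal{C}))=S_{\uk''}(\G(N))$; this produces the lifting claimed in the statement. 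The outright vanishing when $k_2>p^2+p+1$ then drops out, since $\uk''$ acquires a negative first entry and $S_{\uk''}(\G(N))=0$ on the nose.

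The chief obstacle is careful bookkeeping: one must carry the $(-\mathcal{C})$ twists consistently through both the adjunction computation for $\omega_{D_2}$ and the global Serre duality on $\bS_{N,p}$, and verify at every weight shift that the Lan--Suh regularity $p>(k_1-k_2)+3$ is preserved (it is, since the difference $k_1-k_2$ is stable under every operation used). Once those compatibilities are confirmed the argument becomes a purely formal assembly of the long exact sequences together with Theorem \ref{vanishing-lan} and the $(-\mathcal{C})$-variant of Proposition \ref{coh1}.
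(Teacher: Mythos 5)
Your treatment of part (2) tracks the paper's proof faithfully: vanish $H^1(D_2,\w_{\uk}|_{D_2})$ via Proposition \ref{coh1}(2), deduce the injection $H^1(D_1,\w_{\uk}|_{D_1})\hookrightarrow H^2(D_2,\w_{\uk}\otimes\omega^{-(p^2-1)}|_{D_2})$, convert by Serre duality on $D_2$ to $H^0$ in the weight $\uk''=(p^2+p+1-k_2,p^2+p+1-k_1)$ with the $(-\mathcal{C})$ twist, and lift by the $(-\mathcal{C})$-variant of Proposition \ref{coh1}(1); your remark that $k_2>p^2+p+1$ forces the first entry of $\uk''$ negative and hence gives outright vanishing correctly accounts for the second assertion.

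For part (1), however, there is a genuine gap. From your Koszul sequence on $\bS_{N,p}$ the target $H^1(D_2,\w_{\uk}\otimes\omega^{-(p^2-1)}|_{D_2})$ is caught between $H^1(\bS_{N,p},\w_{\uk}\otimes\omega^{-(p^2-1)})$ and $H^2(\bS_{N,p},\w_{\uk}\otimes\omega^{-(p^2+p-2)})$, and to conclude the middle group vanishes you must kill \emph{both} flanks. Your allocation --- the hypothesis $k_2>p^2+2$ kills the first, the hypothesis $k_1<p^2+p-2$ kills the second --- therefore proves the vanishing only under the \emph{conjunction} of the two conditions, while the proposition asserts it under their \emph{disjunction}. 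Under $k_2>p^2+2$ alone, the second flank is dual to $H^1(\bS_{N,p},\w_{\uk''}(-\mathcal{C}))$, and Lan--Suh requires $p^2+p+1-k_1>3$, i.e.\ precisely the missing hypothesis; so the flank does not obviously die. What you relegate to a ``parallel argument\ldots for the residual ranges'' is in fact the entire mechanism. Serre duality on the Cohen--Macaulay surface $D_2$ produces an isomorphism of \emph{$H^1$'s} (since $2-1=1$), namely
\begin{equation*}
H^1(D_2,\w_{\uk}\otimes\omega^{-(p^2-1)}|_{D_2})\simeq H^1\bigl(D_2,({\rm Sym}^{k_1-k_2}\mathcal{E}\otimes\omega^{p^2+p+1-k_1}(-\mathcal{C}))|_{D_2}\bigr)^\vee,
\end{equation*}
and the paper applies Proposition \ref{coh1}(2) to \emph{each} side: once with the shifted weight $\uk-(p^2-1)$, giving vanishing if $k_2>p^2+p+1$ or $k_1<p^2+p-2$, and once, via the $(-\mathcal{C})$-variant, with the dual weight $\uk''$, giving vanishing if $k_1<p^2-1$ or $k_2>p^2+2$. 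Because both applications concern the \emph{same} group, the union of these conditions yields the advertised disjunction $k_1<p^2+p-2$ or $k_2>p^2+2$. The Koszul-on-$\bS_{N,p}$ sandwich is merely an unrolled form of one of these two applications and cannot produce the ``or'' on its own; the $D_2$-duality double application is the indispensable step and should be the centerpiece of the argument, not a patch.
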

\begin{proof}By Proposition \ref{coh1}-(2), $H^1(D_2,\w_{\uk}\otimes \w^{-(p^2-1)}|_{D_2})=0$ if 
$k_2-(p^2-1)>p+2$ or $k_1-(p^2-1)<p-1$, hence $k_2>p^2+p+1$ or $k_1<p^2+p-2$. 
On the other hand, $H^1(D_2,
({\rm Sym}^{k_1-k_2}\mathcal{E}\otimes \w^{p^2+p+1-k_1}(-\mathcal{C}))|_{D_2})=0$ if 
$p^2+p+1-k_1>p+2$ or $p^2+p+1-k_2<p-1$, hence $k_2>p^2+2$ or $k_1<p^2-1$. 
Summing up   
$$H^1(D_2,\w_{\uk}\otimes \w^{-(p^2-1)}|_{D_2})=H^1(D_2,
({\rm Sym}^{k_1-k_2}\mathcal{E}\otimes \w^{p^2+p+1-k_1}(-\mathcal{C}))|_{D_2})=0$$
if $k_1<p^2+p-2$ or $k_2>p^2+2$. 

For the second claim, since $k_2>p+2$ or $k_1<p-1$, $H^1(D_2,\w_{\uk}|_{D_2})=0$ by Proposition \ref{coh1}-(2).  
It follows from  (\ref{long2}) that  
$$H^1(D_1,\w_{\uk}|_{D_1})\hookrightarrow 
H^2(D_2,\w_{\uk}\otimes \w^{-(p^2-1)}|_{D_2})\simeq 
H^0(D_2,
({\rm Sym}^{k_1-k_2}\mathcal{E}\otimes \w^{p^2+p+1-k_1}(-\mathcal{C}))|_{D_2}).$$
By   Proposition \ref{coh1}-(1), the restriction map from 
$H^0(\overline{S}_{N,p},{\rm Sym}^{k_1-k_2}\mathcal{E}\otimes \w^{p^2+p+1-k_1}(-\mathcal{C}))$ to the right hand side 
of the above sequence is 
surjective provided if $p^2+p+1-k_1>p+2$ or $p^2+p+1-k_2<p-1$. 
\end{proof}
By Proposition \ref{ph2}, there is an exact sequence 
$$0\lra \w_{\uk}\otimes \w^{-(p^4-1)}|_{D_1}\stackrel{\times H_{(0,1)}}{\lra} \w_{\uk}|_{D_1}\lra 
\w_{\uk}|_{D_0}\lra 0.$$ 
This gives rise to a long exact sequence 
\begin{equation}\label{long3}
\begin{array}{ccccccc}
0\lra & H^0(D_1, \w_{\uk}\otimes \w^{-(p^4-1)}|_{D_1})&\stackrel{\times H_{(1,1)}}{\lra}& 
 H^0(D_1, \w_{\uk}|_{D_1})&\lra& 
H^0(D_0, \w_{\uk}|_{D_0})& \\
\lra& H^1(D_1, \w_{\uk}\otimes \w^{-(p^4-1)}|_{D_1})&\lra & H^1(D_1, \w_{\uk}|_{D_1}) &\lra &0 .
\end{array}
\end{equation}
Unfortunately the curve $D_1$ is not reduced, hence non-Cohen-Macaulay. 
However, it is projective and we can apply more extensive duality theorem 
(see Theorem in Preface right after p.4 of  \cite{AK}).  
The formally defined dualizing sheaf 
$$ \mathcal{K}_{D_1}:=\omega_{D_2}|_{D_1}\otimes_{\mathcal{O}_{D_1}} \mathcal{O}(D_1)|_{D_1}=\omega^{p^2+p+1}|_{D_1}$$
where the second equality follows from 
 $\mathcal{O}(D_1)=\mathcal{O}({\rm div}_0(H_{(1,1)}))=\omega^{p^2-1}|_{D_2}$  
 plays the same role in the duality 
$$ H^1(D_1, \w_{\uk}\otimes \w^{-(p^4-1)}|_{D_1})\simeq H^0(D_1,
({\rm Sym}^{k_1-k_2}\mathcal{E}\otimes \w^{p^4-1-k_1}(-\mathcal{C}))|_{D_1}\otimes_{\mathcal{O}_{D_1}}\mathcal{K}_{D_1})$$
$$ \simeq H^0(D_1,({\rm Sym}^{k_1-k_2}\mathcal{E}\otimes \w^{p^4+p^2+p-k_1})|_{D_1})$$
Then by Proposition \ref{coh2}-(1) we have 
\begin{prop}\label{coh3}The natural map  
$$ H^0(D_1, \w_{\uk}|_{D_1})\lra 
H^0(D_0, \w_{\uk}|_{D_0})$$ is 
 surjective if $k_2>p^4+p^2+p$. 
\end{prop}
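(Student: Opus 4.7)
The starting point is the long exact sequence \eqref{long3}. The surjectivity of $H^0(D_1,\w_{\uk}|_{D_1})\to H^0(D_0,\w_{\uk}|_{D_0})$ follows as soon as
$$H^1(D_1,\w_{\uk}\otimes \w^{-(p^4-1)}|_{D_1})=0.$$
By the Altman--Kleiman-style duality on the (non-reduced, non-Cohen--Macaulay) projective curve $D_1$ introduced in the paragraph immediately preceding the statement, this cohomology group is identified with
$$H^0(D_1,\mathcal{F}|_{D_1}),\qquad \mathcal{F}:={\rm Sym}^{k_1-k_2}\mathcal{E}\otimes \w^{p^4+p^2+p-k_1}.$$
So I only need to prove the vanishing of this $H^0$ under the hypothesis $k_2>p^4+p^2+p$. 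Note that since $k_1\ge k_2$, this hypothesis forces the exponent $p^4+p^2+p-k_1$ to be strictly negative.

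To prove the vanishing, I would iterate precisely the two-step strategy that was used in Propositions~\ref{coh1} and \ref{coh2}, but applied to the auxiliary sheaf $\mathcal{F}$ rather than $\w_{\uk}$. First, the partial-Hasse-invariant exact sequence
$$0\to \mathcal{F}\otimes\w^{-(p^2-1)}|_{D_2}\xrightarrow{\;\times H^1_{(1,1)}\;} \mathcal{F}|_{D_2}\to \mathcal{F}|_{D_1}\to 0$$
on $D_2$ reduces $H^0(D_1,\mathcal{F}|_{D_1})=0$ to the conjunction $H^0(D_2,\mathcal{F}|_{D_2})=0$ and $H^1(D_2,\mathcal{F}\otimes \w^{-(p^2-1)}|_{D_2})=0$. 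Second, the Hasse-invariant exact sequence
$$0\to \mathcal{F}'\otimes \w^{-(p-1)}\xrightarrow{\;\times H_{p-1}\;} \mathcal{F}'\to \mathcal{F}'|_{D_2}\to 0$$
applied successively with $\mathcal{F}'=\mathcal{F}$ and $\mathcal{F}'=\mathcal{F}\otimes\w^{-(p^2-1)}$ further reduces everything to the vanishing of certain cohomology groups on the ambient threefold $\bS_{N,p}$, namely
$$H^i\!\bigl(\bS_{N,p},\;{\rm Sym}^{k_1-k_2}\mathcal{E}\otimes \w^{\,p^4+p^2+p-k_1-\,j}\bigr),\quad i\in\{0,1,2\},\ j\in\{0,p-1,p^2-1,p^2+p-2\}.$$

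Each of these is treated by Serre duality on the smooth proper threefold $\bS_{N,p}$ (with $\omega_{\bS_{N,p}}\simeq \w^{3}\otimes\mathcal{O}(-\mathcal{C})$, the identification coming from the Kodaira--Spencer isomorphism $\Omega^1_{S_{N,p}}\simeq {\rm Sym}^2\mathcal{E}$): each such cohomology becomes the dual of an $H^{3-i}$ of a sheaf $\w_{\uk'}(-\mathcal{C})$ of cusp-form type, whose weight $\uk'$ is \emph{positive} precisely because $k_2>p^4+p^2+p$ makes the exponents sufficiently negative so that after dualization they sit in a standard range. Theorem~\ref{vanishing-lan} then delivers the required vanishings.

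The main obstacle is the quantitative bookkeeping, not any new geometric idea: one has to verify that, for the four choices of $j$ above, the weight obtained after the Serre-dualization simultaneously satisfies both $p>k'_1-k'_2+3$ and $k'_2>3$ in every case encountered, so that Theorem~\ref{vanishing-lan} genuinely applies. The bound $k_2>p^4+p^2+p$ is chosen exactly so that the ``worst'' twist (coming from the $\mathcal{F}\otimes\w^{-(p^2+p-2)}$ term) still lands inside the Lan--Suh vanishing regime; all other twists are less restrictive and are handled automatically.
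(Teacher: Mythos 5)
Your reduction of the statement to the vanishing $H^1(D_1,\w_{\uk}\otimes\w^{-(p^4-1)}|_{D_1})=0$ is exactly the paper's first (and in fact only nontrivial) step, but from there you diverge. The paper simply applies Proposition~\ref{coh2}-(2) to the twisted sheaf $\w_{\uk}\otimes\w^{-(p^4-1)}=\w_{(k_1-(p^4-1),\,k_2-(p^4-1))}$: the vanishing criterion of that proposition reads $k'_2>p^2+p+1$, and with $k'_2=k_2-(p^4-1)$ this is precisely $k_2>p^4+p^2+p$; nothing more is needed. You instead apply the Altman--Kleiman duality on $D_1$ to convert the $H^1$ into $H^0(D_1,\mathcal{F}|_{D_1})$ and then re-derive, from scratch, a vanishing on $D_1$ by descending through $D_2$ to $\bS_{N,p}$. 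This is essentially re-proving Proposition~\ref{coh2}-(2) rather than quoting it, so you gain nothing, and in the process you introduce a genuine gap.

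The gap: to kill $H^0(D_2,\mathcal{F}|_{D_2})$ via the Hasse-invariant exact sequence you must in particular show $H^0(\bS_{N,p},\mathcal{F})=0$ with $\mathcal{F}={\rm Sym}^{k_1-k_2}\mathcal{E}\otimes\w^{p^4+p^2+p-k_1}$. You assert that ``Theorem~\ref{vanishing-lan} then delivers the required vanishings,'' but Lan--Suh controls only $H^1$ and $H^2$; after Serre duality on the threefold one again only gets control of $H^1$ and $H^2$, never $H^0$ (equivalently, never $H^3$). So the $i=0$ term in your list is not covered by the theorem you invoke. What actually makes $H^0(\bS_{N,p},\mathcal{F})$ vanish is that the corresponding weight $(p^4+p^2+p-k_2,\,p^4+p^2+p-k_1)$ is negative once $k_2>p^4+p^2+p$, i.e.\ one needs the independent fact that there are no nonzero mod $p$ Siegel modular forms of negative weight. (The paper's route through Proposition~\ref{coh2}-(2) also uses this fact, where it appears as the vanishing of the cusp form space $S_{(p^2+p+1-k_2,\,p^2+p+1-k_1)}$; but the paper does not attribute it to Lan--Suh, whereas you do.) A further, smaller slip: the binding inequality $k_2>p^4+p^2+p$ in your scheme comes from this untwisted $H^0$ term with $j=0$, not from the ``worst'' twist $j=p^2+p-2$ as you claim; the three nonzero twists produce only the strictly weaker constraints $k_2>p^4+p^2+1$, $k_2>p^4+p+1$, and $k_2>p^4+2$.
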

\begin{proof}By Proposition \ref{coh2} (2) $H^1(D_1, \w_{\uk}\otimes \w^{-(p^4-1)}|_{D_1})=0$ if 
$k_2-(p^4-1)>p^2+p+1$. The claim now follows from the exact sequence (\ref{long3}). 
\end{proof}
\begin{cor}\label{surj}The restriction map 
$$H^0(\overline{S}_{N,p},\w_{\uk})\lra H^0(S_{(0,0)},\w_{\uk}|_{S_{(0,0)}})$$ 
is Hecke equivariant and surjective if $k_2>p^4+p^2+p$. 
\end{cor}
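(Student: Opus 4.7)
The plan is to read the corollary as the composition of four restriction maps, one for each step in the Ekedahl--Oort descent
\[
\overline{S}_{N,p}\supset D_2\supset D_1\supset D_0\supset S_{(0,0)},
\]
and to check that the single bound $k_2>p^4+p^2+p$ is strong enough to make every stage surjective on $H^0$. Concretely, I would factor the map of the corollary as
\[
H^0(\overline{S}_{N,p},\w_{\uk}) \twoheadrightarrow H^0(D_2,\w_{\uk}|_{D_2}) \twoheadrightarrow H^0(D_1,\w_{\uk}|_{D_1}) \twoheadrightarrow H^0(D_0,\w_{\uk}|_{D_0}) \twoheadrightarrow H^0(S_{(0,0)},\w_{\uk}|_{S_{(0,0)}}).
\]

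The first three arrows are provided, respectively, by Proposition \ref{coh1}(1), Proposition \ref{coh2}(1) and Proposition \ref{coh3}; the only verification needed is that the bound $k_2>p^4+p^2+p$ implies each of the hypotheses $k_2>p+2$, $k_2>p^2+2$, $k_2>p^4+p^2+p$, which is immediate since $p\ge 5$. The fourth arrow is the one piece not packaged earlier: because $D_0$ is a Cartier divisor on the curve $D_1$ it is zero-dimensional, hence a finite and therefore affine $\bF_p$-scheme whose underlying reduced subscheme is $S_{(0,0)}$ by Proposition \ref{ph2}. The surjection $\w_{\uk}|_{D_0}\twoheadrightarrow \w_{\uk}|_{S_{(0,0)}}$ is then just the quotient by the nilradical, and since the first cohomology of a coherent sheaf on an affine scheme vanishes, surjectivity is preserved on $H^0$.

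For Hecke equivariance I would argue functorially. The operators $T(\ell^i)$ with $\ell\nmid pN$ are defined through prime-to-$p$ isogenies, which induce isomorphisms on the $p$-torsion group scheme and therefore preserve the canonical filtration of $A[p]$ together with its elementary sequence. Consequently each of $D_2,\,D_1,\,D_0,\,S_{(0,0)}$ is Hecke stable, the partial Hasse invariants $H_{p-1}$, $H^1_{(1,1)}$, $H^{D_1}_{(0,1)}$ are compatible with the correspondences, and every restriction map in the composition above is Hecke equivariant. The genuine mathematical content has been extracted into the earlier cohomological vanishing statements; the only step in the corollary itself where I would expect friction is the mildly subtle passage from the possibly non-reduced divisor $D_0$ to its reduction $S_{(0,0)}$, which the finiteness/affineness argument above is exactly what handles.
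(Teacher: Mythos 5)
Your proof is correct and takes essentially the same approach as the paper: factoring the restriction through the Ekedahl--Oort chain $\overline{S}_{N,p}\supset D_2\supset D_1\supset D_0\supset S_{(0,0)}$, applying Propositions \ref{coh1}(1), \ref{coh2}(1), \ref{coh3} at the first three stages, and observing that the last stage is surjective for elementary reasons. The paper is terser — it writes "clearly" for the passage from $D_0$ to $S_{(0,0)}$ and cites Ghitza's results for Hecke equivariance rather than spelling out the prime-to-$p$ isogeny argument — but your filled-in details (finiteness/affineness of $D_0$, stability of the strata under prime-to-$p$ Hecke correspondences) are exactly what those terse phrases abbreviate.
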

\begin{proof}By the results of Ghitza (cf. the proof of Theorem 28 in \cite{ghitza1}), 
the restriction map is Hecke equivariant. 
The surjectivity follows from  Proposition \ref{coh3},    
 Proposition \ref{coh1}, and \ref{coh2}.  
\end{proof}
By Corollary \ref{surj} we will reduce the weight of mod $p$ Siegel modular forms after the study of 
the images of theta operators. 

\subsection{A non-vanishing criterion for theta operators} 
In this section, we will study the image of theta operators and give a 
sufficient condition when the image is not identically zero. 
A key point is to observe the behavior when we restrict forms to the superspecial locus $S_{(0,0)}$ in $S_{N,p}$. 

For any $\underline{k}=(k_1,k_2)$, 
let us recall the automorphic sheaf $\w_{\underline{k}}$ on $\overline{S}_{N,p}$. As Ghitza studied in \cite{ghitza1}, 
\cite{ghitza2}, we consider the space of superspecial forms: 
$$SS_{\underline{k}}:=H^0(S_{(0,0)},\w_{\underline{k}}|_{S_{(0,0)}}).$$
Let $\mathbb{T}_{N}$ be the ring generated over $\Z$ of Hecke operators  
$T(\ell^i),i\ge 0$, for all primes $\ell{\not|}N$ if $k_2\ge 2$ and  
$T(\ell^i),i\ge 0$, for all primes $\ell{\not|}pN$ otherwise. 
Suppose a $\mathbb{T}_N$ module $M$ over $\bF_p$ has the basis consisting of 
simultaneous eigenvectors for all elements in $\mathbb{T}_N$. 
For each simultaneous eigenvector $f$, we associate 
$\Phi_f:\mathbb{T}_N\lra \bF_p$ which sends $t\in \mathbb{T}_N$ to 
the eigenvalue of $f$ for $t$.  We call it the Hecke eigensystem associated to $f$. 
We denote by $HES(M)$  the set of the Hecke eigensystems associated to 
all simultaneous eigenvectors in $M$. 

For any integer $t>0$, 
put 
$$GM_t(K(N),\bF_p):=\ds\bigoplus_{(k_1,k_2)\in\Z^2_{\ge 0}\atop k_1\ge k_2\ge t}
GM_{(k_1,k_2)}(K(N),\bF_p),\ 
GS_t(K(N),\bF_p):=\ds\bigoplus_{(k_1,k_2)\in\Z^2_{\ge 0}\atop k_1\ge k_2\ge t}
GS_{(k_1,k_2)}(K(N),\bF_p),$$ and 
$SS_t:=\ds\bigoplus_{(k_1,k_2)\in\Z^2_{\ge 0}\atop k_1\ge k_2\ge t}SS_{(k_1,k_2)}$. 
Then in \cite{ghitza2}, Ghitza proved the following
\begin{thm}\label{eigensystem}For any integer $t>0$, 
$$HES(GM_t(K(N),\bF_p))=
HES(GS_t(K(N),\bF_p))=
HES(SS_t).$$
\end{thm}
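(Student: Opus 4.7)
The three inclusions split as the trivial $HES(S_t)\subseteq HES(M_t)$, the ascent $HES(SS_t)\subseteq HES(S_t)$, and the descent $HES(M_t)\subseteq HES(SS_t)$. The first is immediate. For the ascent I would apply the cusp-form version of Corollary \ref{surj}: since the boundary $\mathcal{C}$ only meets strata with $\vp\in\{(1,1),(1,2)\}$ by Definition \ref{sequence-for-semi}, we have $\mathcal{C}\cap S_{(0,0)}=\emptyset$, so the restriction
\[
H^0(\overline{S}_{N,p},\omega_{\uk}(-\mathcal{C}))\twoheadrightarrow SS_{\uk}
\]
is Hecke-equivariant and surjective for $k_2>p^4+p^2+p$. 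An eigensystem occurring in $SS_{\uk_0}$ with $k_2^0\ge t$ is first transported to some $\uk'$ with $k_2'>p^4+p^2+p$ using Ghitza's identification \cite{ghitza1} of $SS_*$ with algebraic modular forms on the definite inner form of $GSp_4$ (on which tensoring with an algebraic weight representation raises weight Hecke-equivariantly), and then lifted through the surjection above.

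The heart of the argument is the descent. The plan is to use the Hecke-equivariant chain of restrictions coming from the Ekedahl--Oort stratification,
\[
H^0(\overline{S}_{N,p},\omega_{\uk})\to H^0(D_2,\omega_{\uk}|_{D_2})\to H^0(D_1,\omega_{\uk}|_{D_1})\to H^0(D_0,\omega_{\uk}|_{D_0})\twoheadrightarrow SS_{\uk},
\]
whose successive kernels are the images of multiplication by the partial Hasse invariants $H_{p-1}$, $H^1_{(1,1)}$, $H_{(0,1)}$, and whose final arrow is reduction modulo nilpotents (since $D_0^{\mathrm{red}}=S_{(0,0)}$). Crucially, each such multiplication is Hecke-equivariant modulo $p$: the shift in the normalizing factor $\nu(m)^{(k_1+k_2)/2-3}$ of (\ref{hecke-ope}) between weights differing by $(p^r-1,p^r-1)$ equals $\ell^{i(p^r-1)}\equiv 1\pmod p$ by Fermat. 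Given an eigenform $F\in M_{\uk}$ with eigensystem $\lambda$ and $k_2\ge t$, I would first multiply by $H_{p-1}^n$ for $n$ large enough that $k_2+n(p-1)>p^4+p^2+p$, ensuring that the surjectivities provided by Propositions \ref{coh1}--\ref{coh3} are available throughout the chain. If the composite restriction hits nonzero in $SS_{\uk'}$, we are done; otherwise the form lies in a kernel of the chain, is divisible by the corresponding partial Hasse invariant, and the quotient is an eigenform of eigensystem $\lambda$ at strictly smaller weight, at which point one iterates.

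The main obstacle is that vanishing on the zero-dimensional locus $S_{(0,0)}$ does not by itself force divisibility by any partial Hasse invariant, so a naive descent can stall. This is circumvented by invoking Ghitza's Jacquet--Langlands-type comparison \cite{ghitza2}, which transfers $\lambda$ to the algebraic-modular-forms side, where its appearance at every sufficiently large weight $\uk'$ follows from inner-form representation theory; the lift of such a superspecial eigenform through Corollary \ref{surj} then produces the desired cusp form (and a fortiori a modular form). Careful weight-bookkeeping is needed to ensure the terminal weight in the descent satisfies $k_2\ge t$, which is arranged by choosing $n$ sufficiently large at the outset.
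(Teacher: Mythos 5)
Your decomposition into three inclusions is a reasonable scaffold, and the ascent $HES(SS_t)\subseteq HES(S_t)$ via the cusp-form version of Corollary~\ref{surj} together with weight-shifting on the algebraic-modular-forms side is essentially sound. But the descent $HES(M_t)\subseteq HES(SS_t)$ contains a concrete error. You propose to first multiply a given eigenform $F$ by $H_{p-1}^n$ and then restrict down the chain to $SS_{\uk'}$. But $H_{p-1}$ vanishes identically on $D_2=\overline{S}^\ast_{(1,1)}\supset S_{(0,0)}$, so $H_{p-1}^n F$ restricts to zero on $D_2$ (hence on $S_{(0,0)}$) for every $n\ge 1$; the branch ``if the composite restriction hits nonzero in $SS_{\uk'}$, we are done'' never occurs. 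The iteration that follows merely strips off the power of $H_{p-1}$ you just inserted and returns to $F$, making no progress. Moreover, once you divide $F|_{D_2}$ by $H^1_{(1,1)}$ you obtain a section over $D_2$ only, not over $\overline{S}_{N,p}$, and the claim that this is ``an eigenform of eigensystem $\lambda$ at strictly smaller weight'' presumes a Hecke action on $H^0(D_i,\cdot)$ for $i=1,2$ compatible with the one on $M_{\uk}$ and with multiplication by $H^1_{(1,1)}$ and $H_{(0,1)}$; the paper establishes Hecke-equivariance only for the composite restriction all the way down to $S_{(0,0)}$ (citing Ghitza), and makes no such claim for the intermediate strata.

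Your fallback --- invoking a ``Jacquet--Langlands-type transfer'' that sends an eigensystem from $M_t$ directly to the algebraic-modular-forms side --- is not available as stated: Ghitza's comparison in \cite{ghitza1} identifies $SS_\ast$ (not $M_\ast$) with algebraic modular forms for the inner form, so invoking it here presupposes $\lambda\in HES(SS_t)$, which is exactly the inclusion you are trying to establish. What the paper actually does, and all it does, is cite \cite{ghitza2}: the entire content of the theorem is Ghitza's argument that every eigensystem occurring in $M_\ast$ is already seen on the superspecial locus, and the paper merely observes that the bookkeeping in that proof also respects the lower bound $k_2\ge t$. Filling the gap honestly requires reproducing (or carefully locating) Ghitza's mechanism for handling forms that vanish on $S_{(0,0)}$; the weight-raising by $H_{p-1}^n$ you propose moves in exactly the wrong direction for that purpose.
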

\begin{proof}The proof here is essentially that given  in  \cite{ghitza1},\cite{ghitza2}. 
The first equality (essentially) follows from Theorem 6 in \cite{ghitza2}. 
It follows from the proof of Theorem 6 in \cite{ghitza2} with Serre's vanishing theorem 
that $HES(GS_t(K(N),\bF_p))\supset HES(SS_t)$ since any superspecial form is liftable to a Siegel cusp form.  
Therefore, we may prove the opposite inclusion. 

By abuse of notation we put $\omega_{\uk}:=\omega_{\uk}\otimes \bF_p$ for simplicity.   
Let $\mathcal{I}$ be the ideal sheaf of the inclusion $i:S_{(0,0)}\hookrightarrow \overline{S}_{N,p}$ with the exact sequence 
$$0\lra \mathcal{I} \lra \mathcal{O}_{\overline{S}_{N,p}}\lra i_\ast \mathcal{O}_{S_{(0,0)}}\lra 0.$$
By tensoring with $\mathcal{I}^n,\ n\in \Z_{\ge 0}$ we have 
$0\lra \mathcal{I}^{n+1} \lra \mathcal{I}^n\lra i_\ast \mathcal{O}_{S_{(0,0)}}\otimes \mathcal{I}^n \lra 0$ 
and 
$$0\lra \omega_{\uk}\otimes\mathcal{I}^{n+1} \lra \omega_{\uk}\otimes\mathcal{I}^n\lra \omega_{\uk}\otimes 
i_\ast \mathcal{O}_{S_{(0,0)}}\otimes \mathcal{I}^n \lra 0$$
as well. 
Taking the global sections we have a homomorphism 
\begin{equation}\label{rn}r_n:H^0(\overline{S}_{N,p},\omega_{\uk}\otimes\mathcal{I}^n)\lra 
H^0(\overline{S}_{N,p},\omega_{\uk}\otimes 
i_\ast \mathcal{O}_{S_{(0,0)}}\otimes \mathcal{I}^n)=H^0(S_{(0,0)}, i^\ast(\omega_{\uk}\otimes\mathcal{I}^n/\mathcal{I}^{n+1})).
\end{equation}
Notice that $r_0$ is nothing but the restriction map to $S_{(0,0)}$. 
Let us start with  a Hecke eigenform $f$ in $GM_{\uk}(K(N),\bF_p)=H^0(\overline{S}_{N,p},\omega_{\uk})$. 
If $r_0(F)\neq 0$ then we are done. Otherwise $f$ belongs to $H^0(\overline{S}_{N,p},\omega_{\uk}\otimes\mathcal{I})$. 
Testing if $r_1(f)$ is zero and repeating this procedure, eventually there exists $n\in \Z_{\ge 0}$ such that $r_n(f)\neq 0$ in 
$H^0(S_{(0,0)}, i^\ast(\omega_{\uk}\otimes\mathcal{I}^n/\mathcal{I}^{n+1}))$
by Noetherian-ness. As in the proof of Theorem 28 (see p.380. line 11-12) of \cite{ghitza1} we see that 
$i^\ast(\omega_{\uk}\otimes\mathcal{I}^n/\mathcal{I}^{n+1})=i^\ast(\omega_{\uk}\otimes {\rm Sym}^n({\rm Sym}^2\mathcal{E}))$. 
Similar to (\ref{sl2}) and the proof of Theorem \ref{reduction}, by using $V$ (note that it induces an isomorphism between the spaces in question and preserves Hecke eigenforms) there exists an irreducible constituent of the form ${\rm Sym}^m\mathcal{E}\otimes \w^k$ 
for $m,k\in \Z_{\ge 0}$ with $k\neq 0$ of    
$\omega_{\uk}\otimes {\rm Sym}^n({\rm Sym}^2\mathcal{E})$ so that 
$r_n(f)\neq 0$ belongs to it. Applying the periodic property 
(see p.820. line 2 from the bottom of \cite{ghitza2}) we may assume that $k$ is sufficiently large as it satisfies $k\ge t$. 
Hence $r_n(f)$ yields a non-trivial system in $HES(SS_t)$ which is the same as $f$.   
Hence $HES(GS_t(K(N),\bF_p))
\subset HES(SS_t)$ and we have the claim. 
\end{proof}
As a corollary we have the following:
\begin{cor}\label{cor-of-eigensystem}
Keep the notation as above. 
For any Hecke eigenform $f$ in $GM_{(k_1,k_2)}(K(N),\bF_p)$, the exists a 
Hecke eigenform $G$ 
in $GS_{(k'_1,k'_2)}(\G(N),\bF_p)$ for some weight $(k'_1,k'_2)$ with $k'_1\ge k'_2\ge 3$ such that 
\begin{enumerate}
\item the Hecke eigensystems of $f$ and $G$ are same, 
\item  the restriction of $G$ to $S_{(0,0)}$ is not identically zero. 
\end{enumerate}
\end{cor}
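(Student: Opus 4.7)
The plan is to transfer the Hecke eigen system of $F$ to the superspecial stratum via Theorem \ref{eigensystem}, and then lift it back to a cusp form with controlled superspecial restriction using Corollary \ref{surj}. Let $\lambda$ denote the Hecke eigen system of $F$. Since $F\in M_{(k_1,k_2)}(\G(N),\bF_p)\subseteq M_{k_2}(\G(N),\bF_p)$, Theorem \ref{eigensystem} applied with $t=k_2$ immediately yields $\lambda\in HES(SS_{k_2})$; hence there is a non-zero honest superspecial eigenform $\bar G\in SS_{\uk_0}$ with eigen system $\lambda$ for some weight $\uk_0=(k_{0,1},k_{0,2})$ with $k_{0,2}\ge k_2$.

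Next I would raise the weight of $\bar G$ so that the numerical hypothesis of Corollary \ref{surj} is met. The key observation is that Corollary \ref{surj} applies equally well to cusp forms: Theorem \ref{vanishing-lan} covers both $\w_{\uk}$ and $\w_{\uk}(-\mathcal{C})$, and the target $SS_{\uk}$ is insensitive to this twist since the superspecial locus lies in the open stratum $S_{N,p}\subset\overline{S}_{N,p}$ and is disjoint from the boundary $\mathcal{C}$. Consequently, the chain of restriction maps in Propositions \ref{coh1}--\ref{coh3} goes through verbatim with $\w_{\uk}(-\mathcal{C})$ in place of $\w_{\uk}$, and produces a Hecke-equivariant surjection
$$
\pi_{\uk'}\colon S_{\uk'}(\G(N),\bF_p)\twoheadrightarrow SS_{\uk'}
$$
whenever $k'_2>p^4+p^2+p$. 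Reapplying Theorem \ref{eigensystem} with $t=k'_2$, I may replace $\bar G$ by a superspecial eigenform of the same eigen system $\lambda$ living in some weight $\uk'=(k'_1,k'_2)$ with $k'_2\ge k_2$ and $k'_2>p^4+p^2+p$.

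Finally, the Hecke-equivariance of $\pi_{\uk'}$ forces it to respect the decomposition of each side into generalized $\lambda$-eigenspaces, yielding a surjection $S_{\uk'}[\lambda^\infty]\twoheadrightarrow SS_{\uk'}[\lambda^\infty]$. In particular there is some $\tilde G\in S_{\uk'}[\lambda^\infty]$ restricting to $\bar G\neq 0$. From this I would extract an honest Hecke eigenform $G\in S_{\uk'}[\lambda]$ with $G|_{S_{(0,0)}}\ne 0$ by a linear-algebraic argument inside the finite-dimensional Hecke module $S_{\uk'}[\lambda^\infty]$. The hard part is precisely this last extraction: one must rule out the pathological scenario in which every honest $\lambda$-eigenform in $S_{\uk'}$ happens to lie in $\ker\pi_{\uk'}$, so that only strictly generalized eigenvectors survive restriction. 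This is the main obstacle, and I expect it to be handled either by invoking a semisimplicity property of the Hecke action on the superspecial locus (a Jacquet--Langlands-type multiplicity statement, consistent with $S_{(0,0)}$ being a finite disjoint union of superspecial points) or by a finer tracking of the composition factors of $S_{\uk'}[\lambda^\infty]$, possibly leveraging the theta operators and partial Hasse invariants constructed earlier in the paper to pass between weights while keeping the superspecial restriction under control.
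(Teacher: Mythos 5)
Your overall plan (restrict to the superspecial locus, lift back via the surjection of Corollary~\ref{surj}) is a reasonable one, and you have correctly located the weak point. But the weak point is not a technicality you can patch over — it is a genuine gap, and it is also not what the paper does. The paper gives no proof of Corollary~\ref{cor-of-eigensystem} at all: it treats it as an immediate consequence of Theorem~\ref{eigensystem}, which the author explicitly says is ``obvious from the proof'' in Ghitza's paper \cite{ghitza2}. That is, the intended argument is not the abstract lifting argument you set up; it is to unwind Ghitza's proof of $HES(M_t)=HES(S_t)=HES(SS_t)$, which already produces a cusp form restricting nontrivially to $S_{(0,0)}$ while tracking the eigensystem. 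Your route is therefore a genuinely different strategy, and you correctly flag that it is incomplete.

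Here is why the extraction step cannot be closed by soft linear algebra. A Hecke-equivariant surjection $\pi\colon V\twoheadrightarrow W$ of finite-dimensional $\mathbb{T}_N$-modules over $\bF_p$ with $W[\lambda]\neq0$ does \emph{not} imply $\pi(V[\lambda])\neq0$. Take $V=\bF_p^2$ with a single Hecke operator acting by $\begin{pmatrix}\lambda&1\\0&\lambda\end{pmatrix}$, $W=\bF_p$ with the scalar $\lambda$, and $\pi(a,b)=b$. This is Hecke-equivariant, surjective, $W[\lambda]=W\neq0$, yet $V[\lambda]=\langle e_1\rangle\subseteq\ker\pi$. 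Notice that in this example $W$ is one-dimensional, hence semisimple; so your suggested fix via ``semisimplicity on the superspecial side'' does not, by itself, rule out the pathology — the obstruction lives on the source $S_{\uk'}$, where the mod~$p$ Hecke action can have nontrivial Jordan blocks. Closing the gap requires additional structural input, e.g.\ a self-adjointness / duality property relating $V[\lambda]$ to a co-eigenspace of $W$, or the fine stratification-and-Hasse-invariant analysis that Ghitza actually performs.

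There is also a secondary point that you gloss over: when you ``reapply Theorem~\ref{eigensystem} with $t=k'_2$'' for large $k'_2$, you need to know that $\lambda\in HES(M_{k'_2})$ for that large $k'_2$, and this does not follow from $\lambda\in HES(M_{k_2})$ for the given $k_2$. It is true, but only because one can multiply $F$ by a power of $H_{p-1}$ (whose $q$-expansion is $1$, so the eigensystem is unchanged by (\ref{hecke-fourier})) to push the weight up into the range $k'_2>p^4+p^2+p$; you should make this step explicit. Even with that repaired, the honest-eigenform extraction remains the real obstruction, and I would recommend basing the proof on the structure of Ghitza's argument in \cite{ghitza2} rather than on the abstract surjection $S_{\uk'}\twoheadrightarrow SS_{\uk'}$ of Corollary~\ref{surj}.
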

\begin{proof}By the proof of the previous theorem there exists $n$ such that $r_n(f)$ and it belongs to $SS_{(m,k)}$ for 
$m,k\in \Z_{\ge 0}$ with $k\neq 0$. Applying the periodic property again 
(see p.820. line 2 from the bottom of \cite{ghitza2}) there exits $t\in \Z_{\ge 0}$ such that 
the restriction map $GM_{(m+t(p^2-1),k+t(p^2-1))}(K(N),\bF_p)\lra SS_{(m+t(p^2-1),k+t(p^2-1))}\simeq SS_{(m,k)}$ is surjective and 
preserving Hecke eigenvalues. 
Let $g$ be the element in $SS_{(m+t(p^2-1),k+t(p^2-1))}$ corresponding to $r_n(f)$ under the above isomorphism. 
By Proposition 1.2.2 of \cite{a&s} we can find a Hecke eigenform $G$ in $GM_{(m+t(p^2-1),k+t(p^2-1))}(K(N),\bF_p)$ which maps to 
$g$ whose eigenvalues are the same as $G$. Hence we have the claim.
\end{proof}

We are ready to study the non-vanishing of the images of mod $p$ Siegel modular forms under theta operators. 
\begin{thm}\label{non-vanishing1}
Let  $f$ be an element in $GM_{(k,k)}(\G(N),\bF_p),\ k\ge 2$. Assume that $f$  is not identically zero on  $S_{(0,0)}$. 
The followings hold:  
\begin{enumerate}
\item if $p\not|k$, then $\theta(f)$ is not identically zero and,  
\item if $p\not|k(2k-1)$, then $\Theta(f)$ is not identically zero. 
\end{enumerate}
\end{thm}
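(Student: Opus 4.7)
The plan is to pick a superspecial point $P\in S_{(0,0)}$ with $F(P)\neq 0$, which exists by hypothesis, and to exhibit a nonzero contribution to each image by working in the generic square zero deformation at $P$. In that framework the deformation ring is $S=\bF_p[[t_{11},t_{12},t_{22}]]$, the derivations $\nabla_{ij}$ act as $\partial/\partial t_{ij}$, and the leading form of the Hasse matrix is $A=\begin{pmatrix}t_{11}&t_{12}\\ t_{12}&t_{22}\end{pmatrix}$. Combining this with Proposition~\ref{ab-lemma} and the symmetry $a_{12}=a_{21}$ forces $B=-I_2$ to leading order, so the formulas (\ref{c}) yield
\begin{equation*}
\det(A)c_{11}=-t_{22},\quad \det(A)c_{12}=\det(A)c_{21}=t_{12},\quad \det(A)c_{22}=-t_{11},
\end{equation*}
while $\det A=t_{11}t_{22}-t_{12}^2\in m_S^2$. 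Fix a local basis $\omega_1,\omega_2$ of $\E$ in which these expressions hold and write $F=F_0+F_1 t_{11}+F_2 t_{12}+F_3 t_{22}\pmod{m_S^2}$ with $F_0=F(P)\in\bF_p^\times$.

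For (1), I apply Proposition~\ref{first-step}. Since $\det A\in m_S^2$, each summand $\det(A)\nabla_{ij}(F)$ vanishes modulo $m_S^2$, so the three components of $\theta(F)=H_{p-1}\widetilde{\theta}(F)$ along $(\omega_1\wedge\omega_2)^k\omega_1^2$, $(\omega_1\wedge\omega_2)^k\omega_1\omega_2$, $(\omega_1\wedge\omega_2)^k\omega_2^2$ reduce modulo $m_S^2$ to $kt_{22}F_0$, $-2kt_{12}F_0$ and $kt_{11}F_0$, respectively. When $p\nmid k$ these linear terms are nonzero in $S/m_S^2$, so the germ of $\theta(F)$ at $P$ is nonzero and hence $\theta(F)$ does not vanish identically on $S_{N,p}$.

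For (2), I use the closed form in Proposition~\ref{step3} and evaluate at $P$. The first term $\tfrac{2}{3}\det A\cdot\det(\nabla)(F)$ vanishes at $P$ since $\det A(P)=0$, and the third term $\tfrac{2k-1}{3}\bigl(\nabla_{11}F\cdot t_{11}+\nabla_{12}F\cdot t_{12}+\nabla_{22}F\cdot t_{22}\bigr)$ vanishes at $P$ as each $t_{ij}$ does. Only the middle term contributes: a direct computation gives $\nabla_{11}\nabla_{22}(\det A)=1$ and $\nabla_{12}^2(\det A)=-2$, so $\det(\nabla)(\det A)=1-\tfrac{1}{4}(-2)=\tfrac{3}{2}$ and
\begin{equation*}
\Theta(F)(P)=\tfrac{2k(2k-1)}{9}\cdot\tfrac{3}{2}\cdot F_0=\tfrac{k(2k-1)}{3}F_0,
\end{equation*}
which is nonzero precisely when $p\nmid k(2k-1)$, the factor $3$ being invertible since $p\ge 5$. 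The main technical point is thus identifying $A$ and $B$ in the generic square zero deformation at the superspecial point; once this local model is fixed, both claims follow by extracting the lowest-order nonzero contribution---the linear piece in $S/m_S^2$ for $\theta(F)$ and the constant piece in $\bF_p$ for $\Theta(F)$.
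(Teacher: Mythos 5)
Your proof is correct and follows essentially the same approach as the paper's: pick a superspecial point where $F$ is nonzero, pass to the generic square zero deformation (so the $\nabla_{ij}$ commute and Koblitz gives $A\equiv\begin{pmatrix}t_{11}&t_{12}\\t_{12}&t_{22}\end{pmatrix}$, $\det A\equiv t_{11}t_{22}-t_{12}^2$), and then read off the lowest-order piece of $\theta(F)$ and $\Theta(F)$ via Propositions~\ref{first-step} and~\ref{step3}. The one point you make explicit that the paper leaves tacit is the identification $B\equiv-I_2$ from Proposition~\ref{ab-lemma}, which is the key to getting $(\det A)c_{11}=-t_{22}$, $(\det A)c_{12}=(\det A)c_{21}=t_{12}$, $(\det A)c_{22}=-t_{11}$. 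Your resulting constants differ from the paper's by benign factors: in (1) you get $(-2k t_{12})F_0$ in the middle slot where the paper records $+2kt_{12}\alpha$ (your sign is the one consistent with (\ref{c}) and Proposition~\ref{first-step}), and in (2) you find $\Theta(F)(P)=\tfrac{k(2k-1)}{3}F_0$ where the paper states $\tfrac{k(2k-1)\alpha}{9}$ (your evaluation of $\det(\nabla)(\det A)=\tfrac{3}{2}$ is correct and the paper appears to have dropped a factor of $3$). Since $p\ge 5$, neither the sign nor the extra factor of $3$ affects the nonvanishing conclusion, so the argument goes through exactly as in the paper.
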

\begin{proof}
By assumption, we may assume that $\alpha:=f|_{\{X\}}$ is non-zero for 
some $X\in S_{(0,0)}$. We work on an open neighborhood $U$ of $X$ to trivialize 
$\w^k$. 
Let $\mathcal{X}$ be the generic square zero deformation of $X$ over ${\rm Spec}\hspace{0.5mm}R$ 
(see the argument before Proposition \ref{theta}). By universality of $\mathcal{A}_U=\mathcal{A}\times_{S_{N,p}}U$ we have a map 
$\O_U\lra R$. Using this map, we have $\mathcal{A}_{\mathcal{X}}=A_{U}\times_U 
{\rm Spec}\hspace{0.5mm}R$. 

By Corollary of Lemma 12 and 13 at p.192 of \cite{kob} we have the local behaviors of 
Hasse matrix and Hasse invariant  
\begin{equation}\label{hasse-m-local}
A(\mathcal{X})\equiv \begin{pmatrix}
t_{11} & t_{12} \\
t_{12} & t_{22}
\end{pmatrix}\ {\rm mod}\ m^2_R,\  H(\mathcal{X})\equiv t_{11}t_{22}-t^2_{12}\ {\rm mod}\  m^3_R.
\end{equation}
In the notation of (\ref{ab}), $B=I_2,\ A=A(\mathcal{X})$, and $C=BA^{-1}=A^{-1}$. 
Here we changed the normalization of $H(\mathcal{X})$ by multiplying $-1$. 

In order to compute the local behavior of $\widetilde{\theta}(f)$ around $X$, we first work on the ordinary locus of $U$ to apply Proposition \ref{first-step} and 
then Proposition \ref{local-beha} and the proof of Proposition \ref{theta} 
show that $\theta(f)=H_{p-1}\cdot\widetilde{\theta}(f)$ is regular at $X$ 
and its local expansion at $X$ is given by 
$$
\sideset{^t}{}{\mathop{%
 \begin{pmatrix} 
(t_{11}t_{22}-t^2_{12})\nabla_{11}(f)-kt_{22}f \\ 
(t_{11}t_{22}-t^2_{12})\nabla_{12}(f)+2kt_{12}f\\
(t_{11}t_{22}-t^2_{12})\nabla_{22}(f)-kt_{11}f 
 \end{pmatrix}}}
\equiv k\alpha
\sideset{^t}{}{\mathop{%
 \begin{pmatrix} 
-t_{22} \\ 
2t_{12}\\
-t_{11} 
 \end{pmatrix}}}
\ {\rm mod}\ m^2_R
$$
where $\nabla_{ij}=\nabla_{D_{ij}}$ for $D_{ij}=\ds\frac{\partial}{\partial t_{ij}}$ with 
$1\le i\le j\le 2$. 
The first claim follows from this. 

As for the second claim, notice that by (\ref{hasse-m-local}) 
$$\det(A(\mathcal{X}))\equiv \nabla_{ij}(A(\mathcal{X}))\equiv 0\ {\rm mod}\ m_R,\ 
1\le i\le j\le 2$$
and 
$$\det
\begin{pmatrix}
\nabla_{11} & \frac{1}{2}\nabla_{12} \\
\frac{1}{2}\nabla_{12} & \nabla_{22}
\end{pmatrix}
(\det(A(\mathcal{X})))\stackrel{\tiny{{\rm mod}\ m_R}}{\equiv} \Big(D_{11}D_{22}-\frac{1}{4}D_{12}D_{12}\Big)
(t_{11}t_{22}-t^2_{12})=1-\frac{1}{2}=\frac{1}{2}. 
$$
By Proposition \ref{step3}, the local expansion of $\Theta(f)$ at $X$ is given by 
$$\Theta(f)(\mathcal{X})\equiv \frac{k(2k-1)\alpha}{9}\ {\rm mod}\ m_R.$$
This proves the second claim. 
\end{proof}

Next we study the vector valued case. 
Let us recall the notation in the proof of Proposition \ref{theta-vector}. 
Consider a local expansion of a non-zero element 
$f=\ds\sum_{i=0}^{r}
f_i\delta_i$ in $GM_{(k_1,k_2)}(\G(N),\bF_p)$ where we put $r=k_1-k_2\ge 0$.  
Assume that $p>r$ (see Remark \ref{remark-decomp} for $r=p-1,p-2$). 
We will consider two cases. 

We first treat the case $r=k_1-k_2=1$. In this case for $f=f_0\delta_0+f_1\delta_1$, we have 
\begin{equation}\label{theta-n=1}
\theta^{(k_1,k_2)}_2(f)=\det(A)\cdot 
\sideset{^t}{}{\mathop{%
 \begin{pmatrix} 
\frac{1}{3}\nabla_{12}(f_0)-\ds\frac{2}{3}\nabla_{11}(f_1)+\frac{2k_2-1}{3}(c_{11}f_1-c_{12}f_0) \\ 
\frac{2}{3}\nabla_{22}(f_0)-\ds\frac{1}{3}\nabla_{12}(f_1)+\frac{2k_2-1}{3}(c_{12}f_1-c_{22}f_0)  
 \end{pmatrix}}}
  \begin{pmatrix} 
\delta_0(\w_1\wedge\w_2) \\ 
\delta_1(\w_1\wedge\w_2)  
 \end{pmatrix}
\end{equation}
provided if $c_{12}=c_{21}$. Then we have 
\begin{thm}\label{non-vanishing-n=1}Assume $k_1-k_2=1$. 
Let  $f=f_0\delta_0+f_1\delta_1$ be as above. Assume that $f$  is not identically zero on  $S_{(0,0)}$.  
Then if $p\not|(2k_2-1)$, $\theta^{(k_1,k_2)}_2(f)$ is not identically zero. 
\end{thm}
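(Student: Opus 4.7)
The plan is to mimic the strategy of Theorem~\ref{non-vanishing1}: pick a superspecial point at which $F$ does not vanish, pass to the generic square zero deformation around that point, and show that the explicit local expansion of $\theta^{(k_1,k_2)}_2(F)$ already has a nonzero linear term when $p\nmid (2k_2-1)$.

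First, by hypothesis I may choose $X\in S_{(0,0)}$ with $F(X)=\alpha_0\delta_0(X)+\alpha_1\delta_1(X)\ne 0$, and let $\mathcal{X}/\mathrm{Spec}\,R$, $R=\bF_p[[t_{11},t_{12},t_{22}]]$, be the generic square zero deformation as used in the proof of Theorem~\ref{non-vanishing1}. In this model the Hasse matrix satisfies $A\equiv\begin{pmatrix}t_{11}&t_{12}\\ t_{12}&t_{22}\end{pmatrix}\ \mathrm{mod}\ m_R^2$ (Kobayashi), and Proposition~\ref{ab-lemma} instantly gives $B\equiv -I_2\ \mathrm{mod}\ m_R$, since $\nabla_{ii}(a_{ii})=-b_{ii}$ and $a_{ii}\equiv t_{ii}$ to first order (the off-diagonal entries of $B$ vanish by a parallel check). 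Substituting into~(\ref{c}), I get the first-order normalizations
\begin{equation*}
\det(A)\,c_{11}\equiv -t_{22},\qquad \det(A)\,c_{12}\equiv t_{12},\qquad \det(A)\,c_{22}\equiv -t_{11}\pmod{m_R^2}.
\end{equation*}

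Next I plug these into formula~(\ref{theta-n=1}). The differential contributions $\det(A)\cdot\nabla_{ij}(F_\ell)$ all vanish modulo $m_R^2$ because $\det(A)\equiv t_{11}t_{22}-t_{12}^2\in m_R^2$, so only the $c$-terms survive at first order. A direct substitution gives, modulo $m_R^2$,
\begin{equation*}
\theta^{(k_1,k_2)}_2(F)\equiv \frac{2k_2-1}{3}\Big[(-t_{22}\alpha_1-t_{12}\alpha_0)\,\delta_0(\w_1\wedge\w_2)+(t_{12}\alpha_1+t_{11}\alpha_0)\,\delta_1(\w_1\wedge\w_2)\Big].
\end{equation*}
Since $\delta_0(\w_1\wedge\w_2)$ and $\delta_1(\w_1\wedge\w_2)$ are independent local sections and $t_{11},t_{12},t_{22}$ are independent in $m_R/m_R^2$, this expression is nonzero as soon as $p\nmid (2k_2-1)$ and $(\alpha_0,\alpha_1)\ne (0,0)$: if $\alpha_0\ne 0$ the $t_{11}\alpha_0$ term in the second component is visible, and if $\alpha_1\ne 0$ the $-t_{22}\alpha_1$ term in the first component is. This shows that $\theta^{(k_1,k_2)}_2(F)$ is not identically zero, completing the argument.

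The only delicate point is bookkeeping: one must verify that the ``curvature'' contributions in~(\ref{theta-n=1}) really are killed by the factor $\det(A)$ to second order, which requires the square zero deformation (so that the $b_{ij}$ in Proposition~\ref{ab-lemma} are constants and the $\nabla_{ij}(F_\ell)$ do not create new linear terms when paired with $\det(A)$). Provided that, the non-vanishing criterion is immediate from the explicit linear terms above, and the result follows under the single numerical hypothesis $p\nmid(2k_2-1)$.
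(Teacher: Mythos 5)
Your proposal is correct and follows essentially the same route as the paper's proof: localize at a superspecial point where $F\neq 0$, pass to the generic square-zero deformation (so the $\nabla_{ij}$ commute and $\det(A)\in m_R^2$ kills the differential contributions), and read off the nonvanishing linear term from $\det(A)c_{11},\det(A)c_{12},\det(A)c_{22}$ being, up to sign, $t_{22},t_{12},t_{11}$. The sign you obtain on $\det(A)c_{12}$ does not match the one implicit in the paper's local expansion in the proof of Theorem~\ref{non-vanishing1}, but this is immaterial since, as you observe, the conclusion only uses the linear independence of $t_{11},t_{12},t_{22}$ in $m_R/m_R^2$.
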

\begin{proof}We use the notation in Theorem \ref{non-vanishing1}. 
Then the local expansion at $X\in S_{(0,0)}$ is given by 
$$\sideset{^t}{}{\mathop{%
 \begin{pmatrix} 
\frac{2k_2-1}{3}(t_{22}f_1|_{X}+t_{12}f_0|_{X}) \\ 
\frac{2k_2-1}{3}(-t_{12}f_1|_{X}-t_{11}f_0|_{X})  
 \end{pmatrix}}} \mod m^2_R. $$   
 The claim follows from this since either $f_0|_{X}$ or $f_1|_{X}$ is non-zero. 
\end{proof}

Next we study the case $k_1-k_2>1$, but we will be concerned with $\theta^{(k_1,k_2)}_1$ rather than $\theta^{(k_1,k_2)}_2$. 
By (\ref{delta}) and looking the construction of $\theta^{(k_1,k_2)}_1$ with the explicit presentation in (\ref{decom4}), 
the coefficient $b^{(2)}_{k_1-k_2-2}$ is given by 
\begin{eqnarray}\label{first-coordinate}
b^{(2)}_{k_1-k_2-2}&=&(k_1-k_2)(k_1-k_2-1)\nabla_{11}(f_0)-(k_1-k_2-1)\nabla_{12}(f_1)+2\nabla_{22}(f_2) \nonumber \\
 &&+ (c_{11}k_1-c_{22})(k_1-k_2)(k_1-k_2-1)f_0-c_{12}(k_1-k_2)(k_1-k_2-1)f_1\\
&&-((k_1-k_2-1)c_{11}+(k_2+2)c_{22})f_2 \nonumber 
\end{eqnarray}
provided if $c_{12}=c_{21}$. 
Recall that $\theta^{(k_1,k_2)}_1(f)=\det(A)\cdot \widetilde{\theta}^{(k_1,k_2)}_1(f)$. 
We are ready to discuss the nonvanishing of $F$ under the theta operator $\theta^{(k_1,k_2)}_1$ which decreases $k_1-k_2$ by $-2$. 
\begin{thm}\label{non-vanishing2}Assume $p>r:=k_1-k_2$.
Let  $f$ be as above and assume that $f$  is not identically zero on  $S_{(0,0)}$. 
Then $\theta^{(k_1,k_2)}_1(f)$ is not identically zero.   
\end{thm}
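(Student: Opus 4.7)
Following the template established in Theorems \ref{non-vanishing1} and \ref{non-vanishing-n=1}, I would argue locally at a superspecial point. Pick $X \in S_{(0,0)}$ with $F|_X \ne 0$, and pass to the generic square-zero deformation $\mathcal{X}/\operatorname{Spec} R$ of $X$ with $R = \bF_p[[t_{11},t_{12},t_{22}]]/m^2$. As recalled in the proof of Theorem \ref{non-vanishing1}, one has
\begin{equation*}
A(\mathcal{X}) \equiv \begin{pmatrix} t_{11} & t_{12} \\ t_{12} & t_{22} \end{pmatrix},\qquad \det A \equiv t_{11}t_{22}-t_{12}^{2},
\end{equation*}
so in particular $\det(A) \in m_R^{2}$.

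Because $\det(A) \in m_R^{2}$, any product $\det(A)\cdot\nabla_{ij}(F_k)$ already lies in $m_R^{2}$ and so disappears modulo $m_R^{2}$. Only the summands $\det(A)\,c_{ij}\,F_k$ of $\theta^{(k_1,k_2)}_1(F) = \det(A)\cdot\widetilde{\theta}^{(k_1,k_2)}_1(F)$ contribute to the leading part, and from (\ref{c}) together with the explicit shape of $A,B$ on $\mathcal{X}$ these simplify (with the sign normalisation of Theorem \ref{non-vanishing1}) to
\begin{equation*}
\det(A)\,c_{11} \equiv t_{22},\qquad \det(A)(c_{12}+c_{21}) \equiv 2t_{12},\qquad \det(A)\,c_{22} \equiv t_{11}\pmod{m_R^{2}}.
\end{equation*}

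Next I would compute each of the $r-1$ output slots of $\widetilde{\theta}^{(k_1,k_2)}_1(F)$ in the local basis $\delta_n^{(k_2+2)}$, $0\le n\le r-2$, of $\w_{(k_1,k_2+2)}$, where $r = k_1-k_2$. The ingredients are the formulae (\ref{delta}) for $\nabla_{ij}(\delta_n)$ together with the explicit projection $p_1$ of (\ref{decom-explicit}) onto the $\mathrm{Sym}^{r-2}\E\otimes\w^{2}$-summand of the Pieri decomposition (\ref{decom1}) (or (\ref{decom-when1}) and (\ref{theta-n=1}) in the case $r=1$). In the slot $n=0$ this reproduces (\ref{first-coordinate}), and the analogous direct computation in a general slot $n$ yields a $c_{ij}F_k$-combination whose leading terms on the entries involving $F_n|_X$ and $F_{n+1}|_X$ carry a common non-zero scalar multiple of $k_1-3k_2$. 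Since $F|_X \ne 0$, some $F_i|_X$ is nonzero, so for an appropriate slot $n \in \{0,\dots,r-2\}$ adjacent to $i$ the local expansion
\begin{equation*}
\theta^{(k_1,k_2)}_1(F) \equiv \sum_{n=0}^{r-2}\Bigl(\tfrac{k_1-3k_2}{2}\bigl(t_{11}F_n|_X + 2t_{12}F_{n+1}|_X\bigr) + (\ast)\,t_{22}F_{n+2}|_X\Bigr)\delta_n^{(k_2+2)}\pmod{m_R^{2}}
\end{equation*}
contains a nontrivial linear part in $t_{11},t_{12},t_{22}$ as soon as $p \nmid (k_1-3k_2)$, and we conclude that $\theta^{(k_1,k_2)}_1(F)$ does not vanish identically on $S_{N,p}$.

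\textbf{Main obstacle.} The delicate step is the bookkeeping showing that the surviving coefficient in each output slot $n$ really is a scalar multiple of $k_1-3k_2$ (up to contributions on $F_{n+2}$ which do not spoil the argument). This is the characteristic-$p$ shadow of a Pieri identity for $\mathrm{Sym}^{r}\otimes\mathrm{Sym}^{2}$; the single-slot case is exactly (\ref{first-coordinate}), and the remaining slots require the same direct expansion via (\ref{delta}) and (\ref{decom-explicit}), with additional care at the boundary slots $n=0$ and $n=r-2$, in the case $r=1$, and in the borderline cases $r \in \{p-2,p-1\}$ covered by Remark \ref{theta1}.
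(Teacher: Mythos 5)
Your local setup agrees with the paper's: pass to the generic square-zero deformation at a superspecial point $X$, note $\det A\in m_R^2$ so the terms $\det A\cdot\nabla_{ij}(F_k)$ vanish modulo $m_R^2$, and read off $\det A\cdot c_{ij}\equiv\pm t_{kl}$. The gap comes after that. The paper does \emph{not} try to compute all output slots $n=0,\dots,r-2$. Before computing anything, it acts by $\rho_r\!\begin{pmatrix}1&a\\0&1\end{pmatrix}$, $a\in\bF_p$ --- equivalently, replaces the local basis $\w_1,\w_2$ of $\E$ by $\w_1,\,a\w_1+\w_2$ --- to arrange $F_0|_X\neq 0$; this is possible because the polynomial $\sum_n F_n|_X\,a^n$ has degree $r<p$ and hence is not identically zero on $\bF_p$. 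Once $F_0|_X\neq 0$, only the slot-$0$ coefficient (\ref{first-coordinate}) is needed: its $t_{11}$-part is $\pm\frac{k_1-3k_2}{2}F_0|_X$, which is nonzero under the hypothesis, and the theorem follows. You omit this reduction, and in its place assert the formula
\begin{equation*}
\theta^{(k_1,k_2)}_1(F)\equiv\sum_{n}\Bigl(\tfrac{k_1-3k_2}{2}\bigl(t_{11}F_n|_X+2t_{12}F_{n+1}|_X\bigr)+(\ast)t_{22}F_{n+2}|_X\Bigr)\delta_n^{(k_2+2)}\pmod{m_R^2},
\end{equation*}
i.e.\ that the $t_{11}F_n$-coefficient of slot $n$ equals $\pm\frac{k_1-3k_2}{2}$ for \emph{every} $n$. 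That is not correct. Already in slot $0$ the three $c_{ij}F_k$-coefficients of (\ref{first-coordinate}) are $\frac{k_1-3k_2}{2},\ \frac{k_1-3k_2}{2},\ (k_1-3)$ and are not all equal; and by the $\w_1\leftrightarrow\w_2$ symmetry the $t_{11}F_{r-2}$-coefficient in slot $r-2$ is $\pm(k_1-3)$, not $\pm\frac{k_1-3k_2}{2}$. Moreover, even if the asserted formula held, the argument would not close: if $F|_X$ is concentrated on the top index $r$ (i.e.\ only $F_r|_X\neq0$), the only slot that sees $F_r$ is $n=r-2$, and it does so through the uncontrolled coefficient $(\ast)$, which the hypothesis $p\nmid(k_1-3k_2)$ says nothing about. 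So the reduction to $F_0|_X\neq0$ is not cosmetic; having made it, none of the general-slot bookkeeping you flagged as the main obstacle is needed.

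If you do want to avoid the basis change, the clean route is representation-theoretic rather than slot-by-slot: the leading term of $\theta_1(F)|_{\widehat X}$ modulo $m_R^2$ is obtained from $F|_X$ by a $GL_2$-equivariant map $\operatorname{Sym}^r\operatorname{St}_2\to\operatorname{Sym}^{r-2}\operatorname{St}_2\otimes(m_R/m_R^2)$; using Kodaira--Spencer to identify $m_R/m_R^2$ with $\operatorname{Sym}^2\operatorname{St}_2^\vee$ and Schur's lemma (available since $r<p$, so $\operatorname{Sym}^r\operatorname{St}_2$ is irreducible), the map is either zero or injective, and a single coefficient check --- precisely the $t_{11}F_0$-term of slot $0$ --- shows it is nonzero exactly when $p\nmid(k_1-3k_2)$. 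This is a repackaging of the paper's reduction, not the uniform formula you wrote down.
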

\begin{proof}By assumption we may assume $f|_X\not=0$ for some $X\in S_{(0,0)}$. By using the action of 
$\rho_r(\begin{pmatrix}
1 & a \\
0 & 1
\end{pmatrix}),a\in\bF_p$, we may further assume that $f_1|_X\not=0$ without loss of generality. 
Let $\mathcal{X}$ be the generic square zero deformation of $X$ over ${\rm Spec}\hspace{0.5mm}R$. 
We proceed as in the proof of Theorem \ref{non-vanishing1}. 
Then the local behavior of the coefficient in front of $f_1$ in (\ref{first-coordinate}) is given by 
$$-t_{12}(k_1-k_2)(k_1-k_2-1)$$
which is immediately deduced to be non-zero in $R$ by our assumption. 
\end{proof}
Next we construct a function on superspecial locus and reduce the weight of forms on the locus to be less than 
or equal to $p+1$.   

For $X\in S_{(0,0)}$, let $\mathcal{X}$ be the generic square zero deformation over $R$. 
Let  $m_X$ be the maximal ideal of $\mathcal{O}_{S_{N,p},X}$. We may assume $m_{X}/m^2_X=m_R/m^2_R$. Fix a local basis $\w_1,\w_2$ of $\E$ and let 
 $\w_{i,X}$ be the restriction of $\w_i$ to $X$.
As in \cite{Edix}, for each $X\in S_{(0,0)}$, we consider 
the following local section of $\w^{p-1}|_{\{X\}}\otimes_{\bF_p}{\rm Sym}^2(m_X/m^2_X)=
\w^{p-1}_{\X}|_{\{X\}}\otimes_{\bF_p}{\rm Sym}^2(m_R/m^2_R)$:
$$(\w_{1,X}\wedge\w_{2,X})^{p-1}\otimes \sum_{1\le i\le j\le 2\atop 1\le k\le l\le 2}
\Big(\frac{\partial}{\partial t_{ij}}\frac{\partial}{\partial t_{kl}}H_{p-1}(\mathcal{X}) \Big)\Big|_{X} t_{ij}\otimes t_{kl}.$$
This section is independent of the choice of the local basis $\w_1,\w_2$. 

It is easy to see that there is a canonical factor of ${\rm Sym}^2(m_X/m^2_X)$ which is 
isomorphic to $\w^{2}|_{\{X\}}$ and it is generated by 
$t_{11}\otimes t_{22}-\ds\frac{1}{4}t_{12}\otimes t_{12}. $
We define 
\begin{equation}\label{BX} 
B_X:=\Big\{\Big(\frac{\partial}{\partial t_{11}}\frac{\partial}{\partial t_{22}}-\frac{1}{4}\frac{\partial}{\partial t_{12}}\frac{\partial}{\partial t_{12}}\Big)H_{p-1}(\mathcal{X})\Big\}\Big|_X(\w_{1,X}\wedge \w_{2,X})^{p+1}.
\end{equation}
As mentioned above, this is independent of the choice of any local basis.  
Then  $B_X$ is a non-zero global section of  $\w^{p+1}|_{\{X\}}$ since 
$$\Big\{\Big(\frac{\partial}{\partial t_{11}}\frac{\partial}{\partial t_{22}}-\frac{1}{4}\frac{\partial}{\partial t_{12}}\frac{\partial}{\partial t_{12}}\Big)H_{p-1}(\mathcal{X})\Big\}\Big|_X=
\Big(\frac{\partial}{\partial t_{11}}\frac{\partial}{\partial t_{22}}-\frac{1}{4}\frac{\partial}{\partial t_{12}}\frac{\partial}{\partial t_{12}}\Big)(t_{11}t_{22}-t^2_{12})=1-\frac{1}{2}=\frac{1}{2}.$$ 
Hence the collection $B:=(B_X)_{X\in S_{(0,0)}}$ is an invertible function on $S_{0,0}$ which is of weight $p+1$. 
In the sense of Ghitza (Section 4 of \cite{ghitza1}), $B$ is a superspecial form of weight $p+1$ with level one.   
\begin{prop}\label{B}
The function $B$ induces an isomorphism 
$$SS_{(k_1,k_2)}\stackrel{\times B\atop\sim}{\lra}SS_{(k_1+p+1,k_2+p+1)}.$$
Further, $T(\ell^i)(B\cdot f)=\ell^{2i}B\cdot T(\ell)(f)$ for all $f\in SS_{(k_1,k_2)}$ and prime $\ell\not|pN$. 
\end{prop}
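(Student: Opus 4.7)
The plan splits into two rather different tasks: the isomorphism is essentially formal, while the Hecke intertwining requires honest work.

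For the isomorphism, I would invoke the computation in the paragraph following (\ref{B}), which gives $B_X = \tfrac{1}{2}(\omega_{1,X}\wedge\omega_{2,X})^{p+1}$ as a nowhere vanishing global section of $\omega^{p+1}|_{S_{(0,0)}}$. Since $S_{(0,0)}$ is a finite reduced $\bF_p$-scheme, each coherent sheaf $\omega_{\underline{k}}|_{S_{(0,0)}}$ is a finite direct sum of skyscrapers supported at the superspecial points, and multiplication by the invertible section $B$ furnishes a stalkwise, hence global, isomorphism
\[
\omega_{(k_1,k_2)}|_{S_{(0,0)}} \xrightarrow{\ \times B\ } \omega_{(k_1+p+1,k_2+p+1)}|_{S_{(0,0)}}.
\]
Taking global sections gives the first assertion.

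For the Hecke-equivariance, I would translate the problem into Ghitza's Brandt-matrix description (\cite{ghitza1}, \cite{ghitza2}), under which $T(\ell^i)$ on $SS_{\underline{k}}$ acts at each superspecial $X$ as a weighted sum over $\ell^i$-isogenies $\phi: X \to Y$ of pullbacks $(\phi^*)^{\underline{k}}$, normalized by $\nu(m)^{(k_1+k_2)/2-3}$ as in (\ref{hecke-ope}). Shifting the weight by the parallel pair $(p+1,p+1)$ alters the normalization by $\nu(m)^{p+1}$ and introduces an extra $\det(\phi^*)^{p+1}$ factor on the $\omega^{p+1}$-component. Because $B$ is constructed universally from $H_{p-1}$ (whose definition via Frobenius makes it invariant under prime-to-$p$ Hecke correspondences, as reflected in $H_{p-1}(q)=1$ by Proposition \ref{hasse-qexp}), the pullbacks $\phi^*B_Y$ and $B_X$ are related by the scalar $\det(\phi^*)^{p+1}$, and summing over the isogenies contributing to $T(\ell^i)$ should collapse the combined correction to the single factor $\ell^{2i}$. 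This parallels exactly the computation that yields the factor $\ell^{2i}$ in Proposition \ref{theta}(a)(1), since $\Theta$ has the very same weight shift $(p+1,p+1)$ and the very same normalization jump.

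The main obstacle is precisely this last bookkeeping: making the matching between the formal $q$-expansion Hecke action (\ref{hecke-fourier1}) and Ghitza's geometric description on $S_{(0,0)}$ rigorous, and verifying that the intertwining scalar is both independent of $X$ and equal to $\ell^{2i}$. A practical way around the fiddly calculation is to combine Corollary \ref{surj} with Theorem \ref{eigensystem}: every Hecke eigensystem appearing in $SS_{(k_1,k_2)}$ is realized by a global eigenform $F$ on $\overline{S}_{N,p}$, and the known Hecke relations for $\Theta$, $\theta$ and the $\theta_i$ from Proposition \ref{theta} and Proposition \ref{theta-vector} then pin down the Hecke eigenvalue of $B\cdot F$ on $S_{(0,0)}$ up to the predicted factor $\ell^{2i}$. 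Extending by linearity across the eigensystem decomposition yields the identity on all of $SS_{(k_1,k_2)}$.
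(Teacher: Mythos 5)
Your first route -- the isomorphism from the nowhere-vanishing of $B$ plus Hecke-equivariance via Ghitza's Brandt-matrix description of $T(\ell^i)$ on $SS_{\uk}$ and the naturality of $B$ -- is essentially the same approach the paper takes (the paper cites Lemma 7.4 of \cite{Edix}, the compatibility of the Hecke action with the Kodaira--Spencer map, and Section 3.2.1 of \cite{ghitza2}), and your instinct to read off the $\ell^{2i}$ normalization by analogy with Proposition \ref{theta}(1)(a) is correct: the same weight shift $(p+1,p+1)$ produces the same factor, and the paper likewise does not carry the bookkeeping out explicitly. Where you should be more careful is the exact mechanism of naturality. It is not that $H_{p-1}$ is ``invariant under prime-to-$p$ Hecke correspondences''; rather, $H_{p-1}$ and the Kodaira--Spencer isomorphism are both functorial in the universal abelian scheme, so $B$ -- which is built as the $\omega^2$-component of the Hessian of $H_{p-1}$ with respect to the Kodaira--Spencer coordinates -- is a canonical trivialization of $\omega^{p+1}|_{S_{(0,0)}}$, and the scalar $c_\phi$ by which an $\ell^i$-isogeny $\phi$ moves it is a determinate power of $\ell$, not the vague ``$\det(\phi^*)^{p+1}$.'' You have flagged this step as the main obstacle yourself, so I only note that it is exactly the content the paper delegates to \cite{Edix} and \cite{ghitza2}.

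Your proposed ``practical way around'' has genuine gaps and should be dropped. First, $\Theta$ is only defined on scalar-valued forms, while Proposition \ref{B} is stated for arbitrary $(k_1,k_2)$; none of $\theta^{\uk}_1,\theta^{\uk}_2,\theta^{\uk}_3$ has the required weight shift $(p+1,p+1)$, so there is no theta operator whose restriction to $S_{(0,0)}$ recovers multiplication by $B$ outside the parallel-weight case. Second, Corollary \ref{surj} only gives surjectivity of restriction when $k_2>p^4+p^2+p$, so even in the scalar case you cannot lift an arbitrary eigenform in $SS_{(k_1,k_2)}$ to $\overline{S}_{N,p}$ in the same weight. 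Third, Theorem \ref{eigensystem} only identifies Hecke eigensystems, not eigenvectors, so ``extending by linearity across the eigensystem decomposition'' does not by itself yield the stated intertwining identity for every $F\in SS_{\uk}$ (e.g. when a Hecke eigenspace has dimension greater than one, or when the eigensystem occurs in several weights). The direct geometric argument is the right one.
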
  
\begin{proof}Let us freely use the notation of superspecial modular forms and 
Hecke operators on those forms due to 
Ghitza in Section 4 of \cite{ghitza1}. 

Let $m$ be  an element of $\G(N)\backslash \Delta(N)/\G(N)$ such that 
$\nu(m)=\ell^i$ with $i\ge 0$ and recall 
the Hecke operator $T_m$ in (\ref{Hecke-ope-def}). 

Put $\uk=(k_1,k_2)$ for simplicity. Recall the algebraic representation $\lambda_{\uk}=(\lambda_{\uk},V_{\uk})$ of $GL_2$ in Section \ref{class}. 
As explained in Section 4 of \cite{ghitza1},  using the moduli interpretation of 
$S_{N,p}$ (see Section \ref{geometry})
each element $f$ of $SS_{\uk}$ can be regarded as a function 
$$f:\{(A,\lambda,\phi,\eta)\ |\ (A,\lambda,\phi)\in S_{(0,0)},\ 
\eta \text{ is a basis of $H^0(A,\Omega^1_A)$}\}\lra V_{\uk}(\bF_p)$$ 
satisfying 
$f([A,\lambda,\phi,M\eta])=\lambda_{\uk}(M)^{-1}f([A,\lambda,\phi,\eta])$ 
for $M\in GL_2(\bF_p)$ 
where $(A,\lambda,\phi)$ is an object in $S_{(0,0)}$. 
Here we slightly modified Ghitza's formulation since $GU_2(\bF_p)=GL_2(\bF_p)$ in his 
notation. 
A geometric counterpart of $T_m$ is given in Section 3.2.1 of \cite{ghitza1} and it yields 
\begin{equation}\label{hecke-on-ss}
(T_m f)([A,\lambda,\phi,\eta])=\sum_{C\subset A[\ell^i]}
f([A_C,\lambda_C,\phi_C,\eta_C])
\end{equation}
where 
\begin{itemize}
\item $C$ runs over all subgroups in $A[\ell^i]$ of type 
$GSp_4(\Z_\ell)mGSp_4(\Z_\ell)$ 
in the sense of \cite{ghitza1} (see p.352 there). 
The number of such $C$'s is equal to the cardinality of $$GSp_4(\Z_\ell)\backslash GSp_4(\Z_\ell)mGSp_4(\Z_\ell);$$
\item $A_C:=A/C$ which is also a superspecial abelian surface and 
$\lambda_C$ is a principal polarization on $A_C$ such that $\pi^\ast_C \lambda_C=\ell^i \lambda$ where $\pi_C:A\lra A_C$ is the natural projection (see Corollary in p.214 
to Theorem 2 of \cite{Mum} for the existence of $A_C$ and $\lambda_C$);
\item $\phi_C$ is naturally induced from $\phi$ via $\pi$ since $\ell\nmid pN$;
\item the pullback of $\pi_C$ induces an isomorphism 
$M(\pi_C):H^0(A_C,\Omega_{A_C})\lra H^0(A,\Omega_{A})$. Then put 
$\eta_C:=M(\pi_C)^{-1}\eta$. By using Dieudonne theory 
(cf. Section 2.3.3 of \cite{ghitza1}), we see that $\det(M(\pi_C))=\ell^i$.  
\end{itemize}
For each $X=(A,\lambda,\phi)\in S_{(0,0)}$ we fix $\eta_X=\{\w_{1,X},\w_{2,X}\}$ as in (\ref{BX}). 
Since $B_X=\ds\frac{1}{2}(\w_{1,X}\wedge\w_{2,X})^{p+1}$, 
$B([X,\eta_X])=\ds\frac{1}{2}$ as seen before. 
By (\ref{hecke-on-ss}), 
$$T_m(Bf)([X,\eta_X])=\sum_{C\subset A[\ell^i]}
(Bf)([A_C,\lambda_C,\phi_C,\eta_C])=
\sum_{C\subset A[\ell^i]}
B([A_C,\lambda_C,\phi_C,\eta_C])f([A_C,\lambda_C,\phi_C,\eta_C]).$$
Recall $\det(M(\pi_C))=\ell^i$, $\eta_C=\{\w_{1,A_C},\w_{w,A_C}\}:=M(\pi_C)^{-1}\eta$, and 
$B_{(A_C,\lambda_C,\phi_C)}=\frac{1}{2}(\w_{1,A_C}\wedge\w_{2,A_C})^{p+1}$. 
Then it follows that 
$$B([A_C,\lambda_C,\phi_C,\eta_C])=(\det(M(\pi_C)))^{p+1}B([X,\eta_X])=
(\ell^{2i})B([X,\eta_X]).$$ 
The claim follows from this. 
\end{proof}

By Theorem \ref{reduction}, Corollary \ref{cor-of-eigensystem}, and Theorem \ref{non-vanishing2} 
we have obtained the following 
\begin{thm}\label{main1}
For any Hecke eigenform $f\in GM_{(k_1,k_2)}(\G(N),\bF_p)$, there exists a Hecke 
eigenform $G\in GS_{(k'_1,k'_2)}(\G(N),\bF_p),\ 
k'_1-k'_2\in\{0,1\}$ such that 
\begin{enumerate}
\item $G$ is not identically zero on $S_{(0,0)}$, 
\item there exists an integer $m\ge 0$ such that 
$\lambda_f(\ell^i)=\ell^{mi}\lambda_G(\ell^i)$ for any prime $\ell\nmid pN$ and any integer $i\ge 0$. 
Moreover the parity of $k_1-k_2$ is the same as $k'_1-k'_2$. 
\end{enumerate}
\end{thm}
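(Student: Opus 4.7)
The plan is to iteratively reduce the weight difference $k_1-k_2$ by $2$ using the theta operator $\theta_1$ until it lies in $\{0,1\}$, matching the original parity. The first move is to apply Theorem~\ref{reduction} to pass from $F$ to a Hecke eigenform of weight with $p > k_1-k_2$, introducing an initial eigenvalue twist by $\ell^{m_0 i}$ for some $0\le m_0\le p-2$. Then Corollary~\ref{cor-of-eigensystem} replaces this form by a cusp eigenform with the same Hecke eigensystem whose restriction to the superspecial locus $S_{(0,0)}$ is non-zero (only the weight $k_2$ may grow in this step).

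For each subsequent stage, provided $k_1-k_2\ge 2$, I wish to apply $\theta_1$, which by Proposition~\ref{theta-vector} lowers $k_1-k_2$ by $2$ and multiplies Hecke eigenvalues by $\ell^i$. The only obstruction is the non-vanishing criterion of Theorem~\ref{non-vanishing2}, namely the congruence $k_1-3k_2\not\equiv 0\pmod{p}$ together with the non-vanishing of the form on $S_{(0,0)}$. If the congruence fails, I invoke Remark~\ref{remark1} to adjust: restrict the current form to $S_{(0,0)}$, multiply by a suitable power $B^t$ of the superspecial invariant constructed in Proposition~\ref{B}, and then lift back to $\overline{S}_{N,p}$ by combining the Hecke-equivariant surjection of Corollary~\ref{surj} with Theorem~\ref{eigensystem} and Corollary~\ref{cor-of-eigensystem}, producing a cusp eigenform non-vanishing on $S_{(0,0)}$. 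This shifts the weight by $(t(p+1),t(p+1))$, changes $k_1-3k_2$ by $-2t\pmod p$, and twists Hecke eigenvalues by $\ell^{2ti}$. Since $p\ge 5$ gives $\gcd(2,p)=1$, a suitable $t$ always exists.

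After the adjustment, applying $\theta_1$ yields a cusp eigenform of weight $(k_1+p-1,k_2+p+1)$, non-vanishing on $S_{(0,0)}$ by the local-at-superspecial computation in Theorem~\ref{non-vanishing2}, with eigenvalues scaled by $\ell^i$. Iterating this procedure at most $\lfloor (k_1-k_2)/2 \rfloor$ times brings the weight difference down to $0$ or $1$ according to the parity of the starting difference. All eigenvalue twists along the way are of the form $\ell^{(\text{integer})\cdot i}$, and they compose into a single factor $\ell^{mi}$ with $m\ge 0$, as required.

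The main obstacle is the fidelity of the lifting step from $S_{(0,0)}$ back to $\overline{S}_{N,p}$: one must verify that the lift can be chosen as a cusp eigenform carrying exactly the desired Hecke eigensystem (not merely one in the same $L$-packet), and that the hypothesis $k_2 > p^4+p^2+p$ of Corollary~\ref{surj} can be secured during the iteration without destroying non-vanishing on $S_{(0,0)}$ --- in particular one cannot simply multiply by the Hasse invariant to inflate $k_2$, since that would kill the form on $S_{(0,0)}$. A secondary subtlety arises at the boundary cases $k_1-k_2\in\{p-2,p-1\}$, where the non-canonical decomposition~(\ref{decom1}) underlying $\theta_1$ is not directly available and one must invoke the ad hoc construction of Remark~\ref{theta1}.
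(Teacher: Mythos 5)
Your proposal follows the same chain of results that the paper's one-sentence proof invokes (Theorem~\ref{reduction}, Corollary~\ref{cor-of-eigensystem}, Theorem~\ref{non-vanishing2}, Proposition~\ref{B}, Remark~\ref{remark1}), but the iteration you set up contains a substantive error at the point where you claim that ``applying $\theta_1$ yields a cusp eigenform \ldots non-vanishing on $S_{(0,0)}$ by the local-at-superspecial computation in Theorem~\ref{non-vanishing2}.'' That theorem only concludes that $\theta_1^{\underline{k}}(F)$ is \emph{not identically zero} as a global section; it does \emph{not} conclude non-vanishing on $S_{(0,0)}$, and in fact its proof shows the opposite. The local expansion obtained there is
\[
-\tfrac{(k_1-3k_2)}{2}\,t_{11}\,F_0|_X-\tfrac{(k_1-3k_2)}{2}\,t_{12}\,F_1|_X-(k_1-3)\,t_{22}\,F_2|_X \pmod{m_R^2},
\]
which has no constant term: the leading term is \emph{linear} in the local parameters $t_{ij}$ at the superspecial point $X$. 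The factor $\det A=H_{p-1}$ vanishes to second order at $X$, and the holomorphic combinations $\det A\cdot c_{ij}$ are of order one in $t_{ij}$, so every component of $\theta_1^{\underline{k}}(F)$ vanishes at every superspecial point. Contrast this with Theorem~\ref{filtration1}(1), where the paper explicitly records that $\Theta(F)$ (not $\theta$ or $\theta_1$) is the operator for which non-vanishing on $S_{(0,0)}$ persists, precisely because in Proposition~\ref{step3} the expansion has a nonzero constant term $\tfrac{k(2k-1)\alpha}{9}$.

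Once this is noticed, your iteration does not close. After one application of $\theta_1$ the resulting form is identically zero on $S_{(0,0)}$, so the hypothesis of Theorem~\ref{non-vanishing2} fails for the next step. The only available repair is to re-invoke Corollary~\ref{cor-of-eigensystem} on $\theta_1^{\underline{k}}(F)$ to obtain a cusp eigenform of the same (now $\ell^i$-twisted) Hecke eigensystem which is non-zero on $S_{(0,0)}$ --- but that corollary as stated controls only $k_2'\ge k_2$ and says nothing about $k_1'-k_2'$. Your remark that ``only the weight $k_2$ may grow'' when applying the corollary is therefore not justified by what is stated, and without a strengthening of Corollary~\ref{cor-of-eigensystem} that keeps the weight difference unchanged (or at least non-increasing), the induction on $k_1-k_2$ does not terminate. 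This is the central missing ingredient, and it subsumes the subsidiary concerns you flag at the end (whether the lift from $S_{(0,0)}$ via Corollary~\ref{surj} can be taken to be a cusp eigenform with the precise eigensystem, the large-$k_2$ hypothesis, and the boundary cases $k_1-k_2\in\{p-2,p-1\}$ addressed by Remark~\ref{theta1}); those are genuine but secondary. You should rework the argument either by proving that a superspecial eigenform with the twisted eigensystem can always be found in a weight with the same $k_1-k_2$ after each $\theta_1$-step, or by postponing a single application of Corollary~\ref{cor-of-eigensystem} to the very end and verifying that the weight difference of the output can be controlled.
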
 
\begin{proof}
By Corollary \ref{cor-of-eigensystem}, one can find a Hecke eigenform 
$G\in GS_{(k'_1,k'_2)}(\G(N),\bF_p)$ with $k'_1\ge k'_2\ge 3$ which 
is not identically zero on $S_{(0,0)}$.  
Put $\uk'=(k'_1,k'_2)$ and $r'=k'_1-k'_2$. 
Recall the weight of $G$ corresponds to the algebraic representation 
$V_{\uk'}$ of $GL_2(\bF_p)$. Henceforth, we view it as an $SL_2(\bF_p)$-module 
(namely, we ignore the determinant twist). 
Then $V_{\uk'}$ is of highest weight $r'$. If $r'<p$, then by 
Theorem \ref{non-vanishing2} and Remark \ref{remark-decomp} we are done. 
If $r'\ge p$, then $V_{\uk'}$ contains a constituent of form 
$\ds\bigotimes_{i=0}^m\rho^{(p^i)}_{a_i},\ a_i\in\{0,\ldots,p-1\},\ a_m\neq 0$ (up to the determinant part). Notice that $r'=\ds\sum_{i=0}^m a_i p^i$. 
Take a non-zero image of $G$ to the space of global sections of such a constituent and denote it by $G$ again.  
Then we hit $V$ to de-twist each of $\rho^{(p^i)}_{a_i}$'s and then  
the resulting form $V'(G)$ is of weight $\ds\bigotimes_{i=0}^m\rho_{a_i}$ 
where $V'=\ds\bigotimes_{i=0}^m V^{\otimes i}$.  
By using (\ref{expressV}), we see that 
$$r_n(V'(G))=0,\ r_{n+1}(V'(G))\neq 0,\ n:=\sum_{0\le i\le m \atop a_i\neq 0} i$$
since $r_0(G)\neq 0$ (see (\ref{rn}) for the map $r_n$). Obviously, 
$n\le \ds\frac{1}{2}m(m+1)$. 
As in the proof of Theorem \ref{eigensystem}, we have a form $G_1$ from $G$ with 
$G_1|_{S_{(0,0)}}\not\equiv 0$ of 
the weight $\ds\bigotimes_{i=0}^m\rho_{a_i}\otimes {\rm Sym}^n({\rm Sym}^2{\rm St}_2)$ 
whose highest weight is 
$$a_0+\cdots+a_m+2n=r'-(p-1){\rm ord}_p(r'!)+2n\le r'p^{-\lfloor \log_p r' \rfloor}+
(p-1)\lfloor \log_p r' \rfloor+m(m+1)$$
$$\le r'p^{-\lfloor \log_p r' \rfloor}+\lfloor \log_p r' \rfloor(\lfloor \log_p r' \rfloor+p),$$
since the condition $a_m\neq 0$ yields $m\le \lfloor \log_p r' \rfloor$. 
Here we used the formula 
${\rm ord}_p(r'!)=\ds\frac{r'-(a_0+\cdots+a_m)}{p-1}$ and 
$${\rm ord}_p(r'!)=\ds\sum_{i=1}^{\lfloor \log_p r' \rfloor}\Big\lfloor \ds\frac{r'}{p^i}\Big\rfloor \ge \ds\sum_{i=1}^{\lfloor \log_p r' \rfloor}\Big(\frac{r'}{p^i}-1\Big)
=r'\Big(\frac{1-p^{-\lfloor \log_p r' \rfloor}}{p-1}\Big)-\lfloor \log_p r' \rfloor.$$
By direct computation, we can check that 
$$r'p^{-\lfloor \log_p r' \rfloor}+\lfloor \log_p r' \rfloor(\lfloor \log_p r' \rfloor+p)<r'$$
provided if $r'\ge p+3$. When $p\le r'\le p+2$, one can similarly reduce the 
highest weight less than $p$ (this is in fact possible only when $p\ge 5$). 
Hence repeating this procedure,  
one can eventually reduce $r'$ to be less than $p$ as desired. 
Notice that the construction of $G_1$ from $G$ preserves both 
Hecke eigensystems up to mod $p$ cyclotomic twists.

By construction  the highest weights appeared above modulo $2$ are preserved and hence $k_1-k_2$ mod 2 is unchanged under this 
replacement. Assume $k_1-k_2>1$. Then we can apply Theorem \ref{non-vanishing2} to obtain a form of weight $(k'_1,k'_2):=(k_1+p-1,k_2+p+1)$ 
which satisfies $k'_1-k'_2=k_1-k_2-2$. Applying this procedure again if necessarily we obtain the desired form. The result on the 
Hecke eigenvalues follows from Proposition \ref{theta}-(2).   
\end{proof}
We are now ready to prove the first main theorem: 
\begin{thm}\label{main00}$($Weight reduction theorem$)$ Suppose $p\ge 5$. 
For any Hecke eigenform $f\in M_{(k_1,k_2)}(\G(N),\bF_p)$, there exists a Hecke eigenform $G\in S_{(l_1,l_2)}(\G(N),\bF_p)$ 
such that 
\begin{enumerate} 
\item $p>l_1-l_2+3$ and $l_2\le p^4+p^2+2p+1$, 
\item $G$ is not identically zero on $S_{(0,0)}$, 
\item there exists an integer $0\le \alpha\le p-2$ such that 
$\lambda_f(\ell^i)=\ell^{i\cdot \alpha}\lambda_G(\ell^i)$ for any prime $\ell\not|pN$ and any integer $i\ge 0$. 
\end{enumerate}
\end{thm}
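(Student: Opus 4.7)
The strategy combines the weight-difference reduction of Theorem \ref{main1} with an analysis on the superspecial locus using the invertible operator $B$ of Proposition \ref{B} and the surjection of Corollary \ref{surj}. In outline: reduce $k_1-k_2$ first, then restrict to $S_{(0,0)}$, then shift the scalar weight by powers of $B$ into the target window, then lift back via Corollary \ref{surj}.

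First, I apply Theorem \ref{main1} to $F$ to obtain a Hecke eigen cusp form $G_1 \in S_{(k'_1, k'_2)}(\G(N), \bF_p)$ with $k'_1 - k'_2 \in \{0, 1\}$, not identically zero on $S_{(0,0)}$, and satisfying $\lambda_F(\ell^i) = \ell^{mi}\lambda_{G_1}(\ell^i)$ for some $m \geq 0$. The bound $p > k'_1 - k'_2 + 3$ in (1) is then automatic for $p \geq 5$, but $k'_2$ may be very large.

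Second, I adjust $k'_2$ on the superspecial locus. Restriction yields a non-zero Hecke eigenform $\widetilde{G}_1 := G_1|_{S_{(0,0)}} \in SS_{(k'_1, k'_2)}$. The target window $(p^4+p^2+p,\,p^4+p^2+2p+1]$ has length exactly $p+1$, so there is a unique integer $s$ (positive, zero, or negative) with $l_2 := k'_2 + s(p+1)$ in the window; set $l_1 := k'_1 + s(p+1)$, so $l_1 - l_2 = k'_1 - k'_2 \in \{0,1\}$. By Proposition \ref{B}, multiplication by $B$ is a Hecke-equivariant isomorphism $SS_{(a,b)} \cong SS_{(a+p+1,\, b+p+1)}$, and because $B$ is pointwise non-vanishing on the zero-dimensional reduced scheme $S_{(0,0)}$ it is invertible in both directions. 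Consequently $\widetilde{G}_2 := B^s \cdot \widetilde{G}_1 \in SS_{(l_1, l_2)}$ is a non-zero Hecke eigenform with $\lambda_{\widetilde{G}_2}(\ell^i) = \ell^{2si} \lambda_{\widetilde{G}_1}(\ell^i)$.

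Third, since $l_2 > p^4+p^2+p$, Corollary \ref{surj}---in the cuspidal variant obtained by rerunning Propositions \ref{coh1}--\ref{coh3} with $\w_{\uk}(-\mathcal{C})$ in place of $\w_{\uk}$, which is legitimate as Theorem \ref{vanishing-lan} covers both sheaves and $S_{(0,0)}$ is disjoint from the boundary $\mathcal{C}$---supplies a Hecke-equivariant surjection $S_{(l_1, l_2)}(\G(N), \bF_p) \twoheadrightarrow SS_{(l_1, l_2)}$. Extracting from the preimage a Hecke eigen cusp form $G$ whose restriction to $S_{(0,0)}$ is a non-zero scalar multiple of $\widetilde{G}_2$ and tracking the cumulative shifts gives $\lambda_F(\ell^i) = \ell^{(m-2s)i}\lambda_G(\ell^i)$; then $\alpha \in \{0,1,\dots,p-2\}$ is chosen as the unique representative of $m-2s$ modulo $p-1$, delivering (1)--(3).

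The most delicate point is this final extraction: one must produce a genuine Hecke eigenvector (not merely an element with the right eigensystem modulo the kernel) that remains non-zero after restriction to $S_{(0,0)}$. \emph{A priori}, the kernel of restriction could contain all true eigenvectors inside the relevant generalized $\mathbb{T}_N$-eigenspace of $S_{(l_1,l_2)}(\G(N), \bF_p)$. The resolution is a direct linear-algebraic argument on this finite-dimensional generalized eigenspace, which surjects onto its (non-zero) counterpart in $SS_{(l_1,l_2)}$ by Hecke-equivariance: lifting $\widetilde{G}_2$ to any preimage inside the generalized eigenspace and then passing to the semisimplification of the $\mathbb{T}_N$-action on that space produces the desired eigenform without losing non-vanishing on $S_{(0,0)}$.
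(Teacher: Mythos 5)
Your strategy follows the paper's: use theta operators to reduce $k_1-k_2$, shift the scalar weight by powers of $B$ on $S_{(0,0)}$, and lift back via Corollary \ref{surj}. (Reducing to $k'_1-k'_2\in\{0,1\}$ via Theorem \ref{main1} rather than stopping once $p>l_1-l_2+3$, and shifting in one step by $B^s$ with $s$ possibly negative rather than first dropping to $\le p+1$ and then climbing by $B^i$, are cosmetic differences.) You also correctly single out the delicate step: producing a genuine Hecke eigenform $G\in S_{(l_1,l_2)}(\G(N),\bF_p)$ whose restriction to $S_{(0,0)}$ is non-zero, which is not automatic from the Hecke-equivariant surjection alone.

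However, the resolution you offer does not close this gap. Passing to the semisimplification of the $\mathbb{T}_N$-action on the generalized eigenspace $M_\mu\subset S_{(l_1,l_2)}(\G(N),\bF_p)$ produces an abstract graded module, not a distinguished vector inside $M_\mu$, and in particular it does not manufacture an eigenvector of $M_\mu$ lying outside $\ker\pi$. A minimal counterexample at the level of linear algebra: let a generator $T$ of $\mathbb{T}_N$ act on $M_\mu=\bF_p^{\oplus 3}$ by a single nilpotent Jordan block (and the rest of $\mathbb{T}_N$ by the scalars dictated by $\mu$), take $N_\mu=\bF_p$, and let $\pi$ be projection onto the last coordinate; this is Hecke-equivariant and surjective, $N_\mu$ is a nonzero eigenspace, yet the unique eigenline of $M_\mu$ is the first coordinate and maps to zero, while applying $(T-\mu(T))^{d-1}$ to any preimage of a nonzero element of $N_\mu$ also yields zero because $N_\mu$ already consists of eigenvectors. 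To close the gap one would need an additional input: for instance semisimplicity of the $\mathbb{T}_N$-action on $S_{(l_1,l_2)}(\G(N),\bF_p)$ or on $SS_{(l_1,l_2)}$ in the relevant weight, a Hecke-stable complement to $\ker\pi$, or a promotion of Ghitza's eigensystem comparison (Theorem \ref{eigensystem}) to a statement about eigenforms at a fixed weight in the window. To be fair, the paper's own one-paragraph proof is equally silent on exactly this point; your write-up is more explicit about what is needed, but the proposed fix does not supply it.
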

\begin{proof}As already observed, one can reduce $l_1-l_2$ as small as possible. However, we may stop the reduction for $l_1-l_2$ once  the condition $p>l_1-l_2+3$ is fulfilled. If $l_2<p^4+p^2+2p$, we have nothing to prove. Otherwise by using 
superspecial modular form $B$ of weight $p+1$, we may assume that there exists a Hecke eigen superspecial form 
$G$ of weight $(l_1,l_2)$ satisfying $p>l_1-l_2+3$ and $1\le l_1\le p+1$. 
Take the smallest positive integer $i$ so that 
$i(p+1)+l_1>p^4+p^2+p$ to apply Corollary \ref{surj}. In fact, we can take $i=p^3-p^2+2p$ if $l_1\not=1$, $i=p^3-p^2+2p+1$ if $l_1=1$. Then $(l_1+i(p+1),l_2+i(p+1))$ is a desired weight 
satisfying $i(p+1)+l_2\le i(p+1)+l_1 \le p^4+p^2+2p+1$.       
\end{proof}
\begin{proof}(A proof of Theorem \ref{main-thm2}) As in the proof of Theorem \ref{main1} and Theorem \ref{main00} we may assume that 
$p-3>k_1-k_2$. By using the operator $\theta^{(k_1,k_2)}_1$ one can reduce 
the difference $k_1-k_2$ with $k_1-k_2-2$. Hence the parity is preserved under this operator. 
One can further reduce $k_1-k_2$ until it becomes $0$ or 1 depending on the parity of $k_1-k_2$. The weight $k_2$ is reduced as 
we have seen in the proof of Theorem \ref{main00}.   
\end{proof}
A reason why we stop the reduction of $l_1-l_2$ is not only to work on the range of the weight 
so that vanishing theorem is applicable, but also 
to compare the local information of the mod $p$ Galois representation associated to $G$ in Theorem \ref{main00} though we do not discuss Galois representations in this paper.

\section{$p$-singular forms and weak $p$-singular forms}
In this subsection we study the kernel of $\theta$ or $\Theta$. 
Throughout this section we assume $p\ge 5$. 
We first introduce some notions.
\begin{dfn}
Let $f$ be an element in $GM_{(k,k)}(\G(N),\bF_p)$ for $k\ge 2$ with the Fourier expansion 
$f=\ds\sum_{T\in \mathcal{S}(\Z)_{\ge 0}}A(T)q^T_N$. 
\begin{enumerate}
\item We say $f$ is a $p$-singular form if $A(T)=0$ unless $p$ divides $T$. 
\item We say $f$ is a weak $p$-singular form if $A(T)=0$  unless $p$ divides $\det(T)$. 
\end{enumerate}
\end{dfn}
Clearly, if $f$ is $p$-singular, then $f$ is weak $p$-singular. However, the converse seems to be subtle. 
The following is a characterization of the $p$-singularity in terms of kernel of $\theta$. 
\begin{thm}Let $f$ be as above. Assume that $p|k$. 
Then $f$ is a $p$-singular form if and only if $\theta(F)=0$.
\end{thm}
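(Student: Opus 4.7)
The plan is to reduce the question to a $q$-expansion identity and then apply the $q$-expansion principle.

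First, I would make the $q$-expansion of $\theta(F)$ explicit by specializing Proposition \ref{first-step} to Mumford's semi-abelian scheme in (\ref{mum}). By Proposition \ref{hasse-qexp}, with respect to the canonical basis $\w_{{\rm can},i}=\frac{dt_i}{t_i}$ the Hasse matrix $A(q)$ equals the identity, so $H_{p-1}(q)=1$ and the matrix $B(q)$ in (\ref{ab}) vanishes. Through (\ref{c}) this forces every entry $c_{ij}(q)=0$, whence the correction terms $-kc_{ij}F$ appearing in Proposition \ref{first-step} drop out along the cusp. Combined with Proposition \ref{hasse-qexp}(1) and the identification of ${\rm Sym}^2\E$ with Fourier data carried out in the proof of Proposition \ref{theta-vector}, this yields the clean formula
$$\theta(F)(q)=\frac{1}{N}\sum_{T\in\mathcal{S}(\Z)_{\ge 0}} A_F(T)\cdot T\cdot q^T_N,$$
where $T=\left(\begin{smallmatrix}a & b/2 \\ b/2 & c\end{smallmatrix}\right)$ is viewed as the element $ae_1^2+be_1e_2+ce_2^2\in{\rm Sym}^2{\rm St}_2(\Z)$.

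Second, I would invoke the $q$-expansion principle recalled after (\ref{q-exp}): the section $\theta(F)\in M_{(k+p+1,k+p-1)}(\G(N),\bF_p)$ vanishes identically if and only if every Fourier coefficient $A_F(T)\cdot T$ vanishes in ${\rm Sym}^2{\rm St}_2(\bF_p)\simeq\bF_p^3$ for every $T\in\mathcal{S}(\Z)_{\ge 0}$.

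Third, I would translate this coefficient-wise vanishing. The reduction $\overline{T}\in\bF_p^3$ is zero precisely when $p$ divides every entry of $T$, i.e., $p\mid T$ in the sense of the definition. Hence $A_F(T)\cdot T\equiv 0\ {\rm mod}\ p$ if and only if either $A_F(T)\equiv 0\ {\rm mod}\ p$ or $p\mid T$. Consequently $\theta(F)=0$ is equivalent to $A_F(T)=0$ for every $T$ with $p\nmid T$, which is exactly the definition of $F$ being $p$-singular.

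The main obstacle is Step 1: justifying the clean Fourier-coefficient formula for $\theta(F)(q)$, since Proposition \ref{first-step} delivers an answer in the abstract target $\omega^k\otimes_{\O_{S_{N,p}}}{\rm Sym}^2\E$, while the statement requires an identity at the level of Fourier coefficients. The key checks are (a) verifying $c_{ij}(q)=0$ at the cusp, using Proposition \ref{hasse-qexp} and the identification of the Gauss-Manin derivations $\nabla_{ij}$ with the canonical derivations attached to the periods $q_{ij}$, and (b) tracking the normalization that relates the symmetric matrix $T$ to its image in ${\rm Sym}^2{\rm St}_2$. The hypothesis $p\mid k$ enters to suppress the $-kc_{ij}F$ correction terms of Proposition \ref{first-step} modulo $p$ in the global picture, so that the equivalence can be read off directly from the Fourier coefficients without needing to analyze a global correction. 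Steps 2 and 3 are then routine.
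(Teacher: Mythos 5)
Your argument is correct and takes a genuinely different route from the paper's. The paper argues locally on $S_{N,p}$: after writing $F=F_0(\w_1\wedge\w_2)^k$, it uses the hypothesis $p\mid k$ to rewrite the vanishing $\theta(F)=0$ (via Proposition \ref{first-step}) as $D_{ij}(F_0)=0$ for all $i\le j$; since by Kodaira--Spencer the $D_{ij}$ span $Der(\O_{S_{N,p}})$ and $S_{N,p}$ is smooth over $\bF_p$, this forces $F_0$ to be a $p$-th power locally, and then $p\mid k$ lets one glue to a global identity $F=G^p$ with $G$ of weight $(k/p,k/p)$, whence the $q$-expansion is supported on $p\mid T$. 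Your proof instead works entirely at the boundary: you feed the Mumford object into the explicit formula $\theta(F)(q)=\tfrac{1}{N}\sum_T A_F(T)\cdot T\, q^T_N$ (which is indeed what falls out of the computation in the proof of Proposition \ref{theta-vector}, since $H_{p-1}(q)=1$ and $c_{ij}(q)=0$) and then invoke the $q$-expansion principle. That buys you both implications in one shot, while the paper's proof as written only makes the implication $\theta(F)=0\Rightarrow F$ $p$-singular explicit; on the other hand the paper's local argument yields the strictly stronger conclusion $F=G^p$, which the $q$-expansion argument alone does not. One inaccuracy in your write-up: you say the hypothesis $p\mid k$ ``enters to suppress the $-kc_{ij}F$ correction terms,'' but as you yourself observe those terms already vanish along the cusp because $c_{ij}(q)=0$, independently of $k$. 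So $p\mid k$ plays no role in your $q$-expansion argument; it is genuinely needed only for the paper's route, both to reduce $\theta(F)=0$ to $D_{ij}(F_0)=0$ on all of $S_{N,p}$ (not just at the cusp) and to make the weight $(k/p,k/p)$ of $G$ integral.
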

\begin{proof}
Put $f=f_0(\w_1\wedge\w_2)^k$ where $f_0$ is a local section of $\mathcal{O}_{S_{N,p}}$. 
Recall the dual basis $D_{ij}$ of $d_{ij}=\langle \w_i,\nabla\w_j \rangle_{{\rm dR}}$. 
By Kodaira-Spencer, $D_{ij}$'s make up a basis of $Der(\O_{S_{N,p}})$. 
Then there exist local coordinates $x_{11},x_{12},x_{22}$ of $S_{N,p}$ so that 
$(\ds\frac{\partial}{\partial x_{11}},\ds\frac{\partial}{\partial x_{12}},
\ds\frac{\partial}{\partial x_{22}})=(D_{11},D_{12},D_{22})M,\ M\in GL_3(\O_{S_{N,p}})$. 

By Proposition \ref{first-step}, the condition $\theta(f)=0$ is equivalent to 
$D_{ij}(F_0)=0$ for any $1\le i\le j\le 2$. This is also equivalent to 
$\nabla(\ds\frac{\partial}{\partial x_{ij}})(f_0)=0,\ 1\le i\le j\le 2$ since $M$ is invertible. Because $S_{N,p}$ is a smooth variety over $\bF_p$, 
this implies $f_0=G^p_0$ for some local section $G_0$ of $\mathcal{O}_{S_{N,p}}$. Put $k=pk'$. 
Then we have $f=(G_0(\w_1\wedge\w_2)^{k'})^p$. By gluing, there exists a $G$ in 
$GM_{(k',k')}(\G(N),\bF_p)$ such that 
$f=G^p$. Then $f$ must have the following $q$-expansion  $f=\ds\sum_{T\in \mathcal{S}(Z)_{\ge 0}}A(T)q^{pT}$. 
This implies $f$ is a $p$-singular form. 
\end{proof}
Obviously if $\theta(f)=0$, then $\Theta(f)=0$. However, so far we do not know if the converse is always true. 

\section{$\theta$-cycles}\label{first-kind-theta}
For a fixed integer $r\ge 0$, put 
$$GM_r(N):=\ds\bigoplus_{k\ge 0}GM_{(r+k,k)}(\G(N),\bF_p),\ GS_r(N):=\ds\bigoplus_{k\ge 0}GS_{(r+k,k)}(\G(N),\bF_p).$$ 

\begin{dfn}\label{single-forms}
For $f\in GM_r(N)$ such that 
$f\in GM_{(r+k,k)}(\G(N),\bF_p)$ for some $k$, we denote by 
${\rm wt}(f):=(r+k,k)$ the 
weight of $f$. Whenever we consider the theta cycles for $f$ we always consider such a form and call it a single form. 
\end{dfn}
If two single forms $f_1,f_2\in GM_{r}(N)$ have the same $q$-expansion with respect to 
the Mumford semi-abelian scheme, then ${\rm wt}(f_i)=(r+k_i,k_i),\ i=1,2$ satisfy $k_1-k_2\equiv 0\ {\rm mod}\ (p-1)$. 
This fact follows from the bigness of the monodromy of the Igusa tower (see the proof of Theorem 1 of \cite{ichikawa} and Theorem 2 of \cite{ichikawa1}). 
In other words, if we write $k_1-k_2=(p-1)m>0$, then we have $f_1=f_2H^{m}_{p-1}$ in $GM_{(r+k_1,k_1)}(\G(N),\bF_p)$. 
The filtration $w(F)$ of a single form $f\in GM_r(N)$ is defined to be the smallest integer $k$ for which 
there exists a single form in $GM_{(r+k,k)}(N,\bF_p)$ whose $q$-expansion is 
the same as one of $f$. 
For any single form $f\in GM_r(N)$ there exists a single form $\widetilde{f}\in M_{r}(N)$ such that ${\rm wt}(\widetilde{f})=w(f)$, 
$\lambda_{\widetilde{f}}(\ell^i)=\lambda_f(\ell^i)$ for any prime 
$\ell\not|pN$ and $i\ge 0$. Further, $\widetilde{f}$ can be not identically 
zero on the non-ordinary locus.  

In this subsection, we set the following convention for two single forms $f_1$ and $f_2$:  
\begin{equation}\label{conv1}
w(f_1)=w(f_2) \mbox{ if the Hecke eigensystems  of $f_1$ and $f_2$ for $\mathbb{T}_N$ are the same as each other.}
\end{equation}
We now study the filtration under the theta operators.
\begin{thm}\label{filtration1}$($scalar valued case$)$ 
Let $f$ be a single form in $GM_0(N)$ such that $\widetilde{f}$ is not identically zero on $S_{(0,0)}$. 
Then 
\begin{enumerate}
\item $w(\Theta(f))\le w(f)+p+1$. If $p{\not|}k(2k-1)$, then the equality holds and 
$\Theta(f)$ is non-zero on $S_{(0,0)}$.  

\item $w(\Theta^{\frac{p+1}{2}}(f))=w(\Theta(f))$. 

\item If $p{\not|}k$, then $w(\theta(f))=w(f)+p-1$. 
\end{enumerate}
\end{thm}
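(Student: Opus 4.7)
The plan is to handle the three parts by combining Proposition \ref{theta} (upper bounds and Hecke transformation), the local expansions at superspecial points from the proof of Theorem \ref{non-vanishing1}, the local equation of $H_{p-1}$ from Proposition \ref{ph0}, and the convention (\ref{conv1}). The upper bounds $w(\Theta(F))\le w(F)+p+1$ and $w(\theta(F))\le w(F)+p-1$ are immediate from Proposition \ref{theta}, since $\Theta(F)\in M_{(k+p+1,k+p+1)}(\G(N),\bF_p)$ and $\theta(F)\in M_{(k+p+1,k+p-1)}(\G(N),\bF_p)$. Throughout I write $k=w(F)$, fix $X\in S_{(0,0)}$ with $\widetilde F|_X=\alpha\ne 0$, and work in the generic square zero deformation $\mathcal{X}/\mathrm{Spec}\,R$ at $X$.

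For (1), assume $p\nmid k(2k-1)$. The proof of Theorem \ref{non-vanishing1}(2) gives $\Theta(\widetilde F)(\mathcal{X})\equiv \tfrac{k(2k-1)\alpha}{9}\pmod{m_R}$, so $\Theta(\widetilde F)|_X\ne 0$. Since $S_{(0,0)}\subset \overline{S}^{\ast}_{(1,1)}=\{H_{p-1}=0\}$, no positive power of $H_{p-1}$ can divide $\Theta(\widetilde F)$. As two scalar forms of filtrations $k_1>k_2$ with the same $q$-expansion must have $k_1-k_2\in (p-1)\Z_{\ge 1}$, and the larger must then equal $H_{p-1}^{(k_1-k_2)/(p-1)}$ times the smaller (using Proposition \ref{hasse-qexp} and the $q$-expansion principle), we conclude $w(\Theta(F))=k+p+1$ and $\Theta(F)$ is non-vanishing on $S_{(0,0)}$.

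For (2), the approach is simply to apply the convention (\ref{conv1}) after matching Hecke eigensystems. Iterating the identity $T(\ell^i)\Theta(F)=\ell^{2i}\lambda_F(\ell^i)\Theta(F)$ from Proposition \ref{theta} yields $T(\ell^i)\Theta^{m}(F)=\ell^{2mi}\lambda_F(\ell^i)\Theta^{m}(F)$. Taking $m=(p+1)/2$ and any prime $\ell\nmid pN$, the eigenvalue of $T(\ell^i)$ on $\Theta^{(p+1)/2}(F)$ is $\ell^{(p+1)i}\lambda_F(\ell^i)=\ell^{2i}\cdot \ell^{(p-1)i}\lambda_F(\ell^i)=\ell^{2i}\lambda_F(\ell^i)$, since $\ell^{p-1}=1$ in $\bF_p^\times$. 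Hence the eigensystems of $\Theta^{(p+1)/2}(F)$ and $\Theta(F)$ coincide, and the convention forces the filtrations to be equal.

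For (3), assume $p\nmid k$ and, for contradiction, that $w(\theta(F))<k+p-1$. Since filtrations in $M_2(N)$ of $q$-equivalent forms differ by multiples of $p-1$, there exist $j\ge 1$ and $G\in M_{(k+p+1-j(p-1),k+p-1-j(p-1))}(\G(N),\bF_p)$ with $\theta(\widetilde F)=H_{p-1}^{j}\cdot G$. By Proposition \ref{ph0}, the local equation of $H_{p-1}$ at $X$ in $\mathcal{X}$ is $t_{11}t_{22}-t_{12}^{2}+O(m_R^{3})$, of order $2$ at $X$. On the other hand, the proof of Theorem \ref{non-vanishing1}(1) gives
\[
\theta(\widetilde F)(\mathcal{X})\equiv k\alpha\,{}^{t}(t_{22},\,2t_{12},\,t_{11})\pmod{m_R^{2}},
\]
each component of which has order exactly $1$ because $k\alpha\ne 0$. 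The supposed factorisation would force each component to have order $\ge 2j\ge 2$, a contradiction, so $w(\theta(F))=k+p-1$. The main delicacy, in my view, is legitimising this order comparison for a vector-valued section of $\omega_{(k+p+1,k+p-1)}$: it is justified by trivialising the bundle at $X$ via the chosen basis $\omega_1,\omega_2$ of $\mathcal{E}$, under which divisibility by the scalar section $H_{p-1}$ becomes component-wise divisibility of the coefficient functions, precisely what the local orders detect.
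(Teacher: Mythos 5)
Your proof is correct and follows the same basic route as the paper: restrict to the generic square-zero deformation at a superspecial point, read off the local expansions of $\theta(\widetilde F)$ and $\Theta(\widetilde F)$ from the proof of Theorem \ref{non-vanishing1}, compare with the local form $t_{11}t_{22}-t_{12}^2$ of $H_{p-1}$ from Proposition \ref{ph0}, and for part (2) invoke the eigenvalue relation of Proposition \ref{theta}(1)(a) together with convention (\ref{conv1}). Parts (1) and (2) match the paper's argument directly.

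Where your write-up genuinely improves on the paper is part (3). The paper's proof asserts that $\theta(F)$ is ``non-zero at some point of $S_{(0,0)}$'' and then concludes because $H_{p-1}$ vanishes there; but the local expansion $\theta(\widetilde F)(\mathcal{X})\equiv k\alpha\,{}^{t}(t_{22},2t_{12},t_{11})\bmod m_R^2$ shows that $\theta(\widetilde F)$ actually does vanish at every superspecial point $X$ — it lies in $m_R$ — so the paper's stated reason does not literally apply. What saves the argument is exactly the order-of-vanishing comparison you make explicit: the components of $\theta(\widetilde F)$ vanish to order exactly $1$ at $X$, while $H_{p-1}$ vanishes to order $2$, hence $H_{p-1}^j$ cannot divide $\theta(\widetilde F)$ for any $j\ge 1$. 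Your remark about trivializing the vector bundle $\omega_{(k+p+1,k+p-1)}$ via the local basis $\omega_1,\omega_2$ to reduce divisibility by the scalar section $H_{p-1}$ to a component-wise statement is the right way to legitimize this step, and it is a detail the paper's terse proof glosses over. In short: same approach, but your version of (3) is the correct and complete form of the argument.
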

\begin{proof}The inequality follows by definition. 
By the proof of Theorem \ref{non-vanishing1}, $\Theta(f)$ and $\theta(f)$ are both non-zero at some point in $S_{(0,0)}$ under the assumption on $k$. 
Further, the equality for each of $\Theta$ and $\theta$ holds since the Hasse invariant is identically zero on 
$S_{(0,0)}$. 

The second claim is a consequence of Proposition \ref{theta}-(1)-(a) with the convention (\ref{conv1}).  
\end{proof} 
\begin{rmk}The second claim of Theorem \ref{filtration1} looks strange but 
it is natural as $\br_{f,p}\sim \br_{\Theta^{\frac{p+1}{2}}(f),p}$ 
which follows from Proposition \ref{theta}-$($1$)$-$($a$)$ and hence, it satisfies 
$($\ref{conv1}$)$. 
\end{rmk}

As in \cite{joch}, \cite{Edix} (the idea was due to J. Tate for elliptic modular forms), 
but we use a slight modification, the theta cycle of the scalar valued single form $F$ is defined by 
$${\rm Cyc}(f):=(w(\Theta(f)),w(\Theta^2(f)),\ldots,w(\Theta^{\frac{p-1}{2}}(f))).$$ 
Since $w(\Theta(f))=w(\Theta^{\frac{p+1}{2}}(f))$ under our convention (\ref{conv1}), actually it makes up a ``cycle".  
\begin{dfn}\label{low-pt1}
Let $f$ be a single form in $GM_0(N)$ with $k\ge 2$. 
\begin{enumerate}
\item We say $\Theta^i(f)$ is a low point of the first type (resp. the second type) if 
 $w(\Theta^{i-1}(f))\equiv 0\ {\rm mod}\ p$ (resp. $(2w(\Theta^{i-1}(f))-1)\equiv 0\ {\rm mod}\ p$). 
If  $f_i:=\Theta^{c_i}(f)$ is a low point  for some integer $c_i>0$, then the number $c_i-1$ means one of times we add $(p+1)$ to 
$w(f)$. We say $c_i$ the low number of the low point $\Theta^{c_i}(f)$. We say $c_i$ the low number for $f_i$. 
We write $c_i=c^{(1)}_i$ (resp. $c_i=c^{(2)}_i$) if 
the low point is of the first type (resp. the second type). 

\item We define the number $b_i$ so that $b_i(p-1)=w(\Theta^{c_i-1}f)+(p+1)-w(\Theta^{c_i}F)$ 
which means the amount falling the filtration at the low point $f_i$ with the next application of $\Theta$. 
We say $b_i$ the jumping number of the low point $\Theta^{c_i}(f)$.  
As to $c_i$, we also write $b_i=b^{(1)}_i$ or $b_i=b^{(2)}_i$ according to the first type or the second type respectively. 
\end{enumerate}
\end{dfn}

Let $f$ be a Hecke eigenform in $M_0(N)$ which is also a single form. 
Let $\lambda_f(p^i)$ be the Hecke eigen-value of $f$ for $T(p^i)$. 
\subsubsection{non semi-ordinary case}
We first assume that $\lambda_f(p^i)=0$ for all $i\ge 0$. 
This is equivalent to $\lambda_f(p^i)=0$ for $i=1,2$ by 
Proposition 3.3.35 at p.165 of \cite{and}. We say $f$ is non semi-ordinary at $p$ when 
$\lambda_f(p)=\lambda_f(p^2)=0$. 
Otherwise we say $f$ is semi-ordinary. 
By convention and definition, we have $w(\Theta^{\frac{p-1}{2}}(f))=w(f)$. 
Let $\{c_i\}_{i=1}^r$ (resp. $\{b_i\}_{i=1}^r$) be the collection of all low numbers (jumping numbers) for $f$. 
We define $f^{(m_i)}_i,1\le i\le r$ and  $m_i\in\{1,2\}$ inductively so that  
$f^{(m_{i+1})}_{i+1}=\Theta^{c^{(m_{i+1})}_i}(f^{(m_i)}_i)$, $m_1=1$, and $f^{m_1}_1=f$. 
Since the length of the theta cycle of $f$ is $\ds\frac{p-1}{2}$, one has 
$\ds\sum_ic_i=\ds\frac{p-1}{2}$. 
The total amount of the varying weights in the theta cycle is $(p+1)\ds\frac{(p-1)}{2}$. 
It follows from this that $\ds\sum_ib_i(p-1)=(p+1)\frac{(p-1)}{2}.$ Hence we have 
$\ds\sum_ib_i=\frac{p+1}{2}$. 

On the other hand, $w(\Theta^{c^{(1)}_i-1}f^{(m_i)}_i)(2w(\Theta^{c^{(2)}_i-1}f^{(m_i)}_i)-1)\equiv 0\ {\rm mod}\ p$ for all $i$. From this we have 
$$
\begin{array}{l}
-b^{(1)}_i\equiv 1-w(f^{(1)}_{i+1})\ {\rm mod}\ p,\ {\rm or} \\
-2b^{(2)}_i\equiv 3-2w(f^{(2)}_{i+1})\ {\rm mod}\ p. 
\end{array}
$$
By definition we have $w(\Theta^{c^{(j)}_{i+1}-1}f^{(j')}_{i+1})=(c^{(j)}_{i+1}-1)(p+1)+w(f^{(j')}_{i+1}),\ 
j,j'\in\{1,2\}$. 
Then we also have 
$$
\begin{array}{l}
0\equiv (c^{(1)}_{i+1}-1)+w(f^{(j')}_{i+1}) \ {\rm mod}\ p,\ {\rm or} \\
1\equiv 2(c^{(2)}_{i+1}-1)+2w(f^{(j')}_{i+1}) \ {\rm mod}\ p. 
\end{array}
$$
for $j'\in\{1,2\}$. 

Putting these together, we have four cases for the consecutive low points and jumping numbers
\begin{equation}\label{low-jump1}
\begin{array}{lll}
{\rm Case\ 1} & b^{(1)}_i+c^{(1)}_{i+1}\equiv 0               & {\rm mod}\ p, \\
{\rm Case\ 2} & b^{(1)}_i+c^{(2)}_{i+1}\equiv \ds\frac{p+3}{2}& {\rm mod}\ p, \\
{\rm Case\ 3} & b^{(2)}_i+c^{(1)}_{i+1}\equiv \ds\frac{p-1}{2}& {\rm mod}\ p, \\
{\rm Case\ 4} & b^{(2)}_i+c^{(2)}_{i+1}\equiv 0               & {\rm mod}\ p. 
\end{array}
\end{equation}
Put $b_0=c_{r+1}=0$.  
Since $b_r+c_1+\ds\sum_{i=1}^{r-1}(b_i+c_{i+1})=\sum_{i}(b_i+c_i)=p$, we have the following for each case in (\ref{low-jump1}):
\begin{equation}\label{low-jump1}
\begin{array}{ll}
{\rm Case\ 1} & r=1\ {\rm and}\ c^{(1)}_1=\ds\frac{p-1}{2},\ b^{(1)}_1=\ds\frac{p+1}{2}, \\
{\rm Case\ 2} & r=2\ {\rm and}\ c^{(1)}_1+b^{(2)}_2=\ds\frac{p-3}{2},\ c^{(2)}_2+b^{(1)}_1=\ds\frac{p+3}{2},\ p\not=5, \\
{\rm Case\ 3} & r=2\ {\rm and}\ c^{(1)}_2+b^{(2)}_1=\ds\frac{p-1}{2},\ c^{(2)}_1+b^{(1)}_2=\ds\frac{p+1}{2}, \\
{\rm Case\ 4} & r=1\ {\rm and}\ c^{(2)}_1=\ds\frac{p-1}{2},\ b^{(2)}_1=\ds\frac{p+1}{2}. 
\end{array}
\end{equation}
In Case 2, since  $c^{(1)}_1+b^{(2)}_2\ge 2$, this forces $p$ to be greater than or equal to $7$.

From now on we further assume that $\widetilde{f}$ is not identically zero on $S_{(0,0)}$. 
Put $w(f)=ap+a',\ 1\le a'\le p,\ a\in \Z_{\ge 0}$.  

\medskip
\noindent
Case 1. 

Then $w(\Theta^{c^{(1)}_1-1}f)=w(\Theta^{\frac{p-3}{2}}f)=ap+a'+\ds\frac{(p-3)}{2}(p+1)$ by Theorem \ref{filtration1}. 
The condition $w(\Theta^{c^{(1)}_1-1}f)\equiv 0\ {\rm mod}\ p$ forces that $a'\equiv \ds\frac{(p+3)}{2}\ {\rm mod}\ p$. 
Note that if $a'= \ds\frac{(p+3)}{2}$, then 
$w(\theta^{c^{(1)}_1}f)=w(f)+\ds\frac{(p-3)}{2}(p+1)+(p+1)-b^{(1)}_1(p-1)=w(f)$ which never contradicts with our setting.  
Summing up we have proved the following: 
$${\rm Cyc}(f)=
(k+(p+1),k+2(p+1),\ldots,k+\frac{(p-3)}{2}(p+1),k)
$$
provided if $ w(f)\equiv \ds\frac{(p+3)}{2}\ {\rm mod}\ p$

\medskip
\noindent 
Case 4. 

This case is completely similar to Case 1. So we omit the details. We have 
$${\rm Cyc}(f)=
(k+(p+1),k+2(p+1),\ldots,k+\frac{(p-3)}{2}(p+1),k)
$$
provided if $w(F)\equiv 2\ {\rm mod}\ p.$

\medskip
\noindent
Case 2. 

Since $w(\theta^{c^{(1)}_1}(f))=ap+a'+(c^{(1)}_1-1)(p+1)$ has to be divisible by $p$, 
we have $a'+(c^{(1)}_1-1)\equiv 0$ mod $p$. However, $1\le a'+(c^{(1)}_1-1)\le p+\frac{p-5}{2}<2p$ ( note that $p>5$ in this case). 
Hence we have  $a'+(c^{(1)}_1-1)=p$. Then we have 
$$c^{(1)}_1=p+1-a',\ c^{(2)}_2=a'-\frac{p+3}{2},\ b^{(1)}_1=p+3-a',\ c^{(2)}_2=a'-\frac{p+5}{2}.$$
The theta cycle is computed as follows:
$$w(\Theta^{c^{(1)}_1-1}(f))=ap-a'p+p(p+1),\ w(\Theta^{c^{(1)}_1}(f))=w(\Theta^{c^{(1)}_1-1}f)-b^{(1)}_1(p-1)=ap+4-a',$$
and for $f^{(1)}_1=\Theta^{c^{(1)}_1}(f)$, 
$$w(\Theta^{c^{(2)}_2-1}(f^{(1)}_1)=ap+4-a'+(c^{(2)}_2-1)(p+1)=ap+a'p-\frac{1}{2}(p^2+6p-3).$$ 
However, this contradicts with $2w(\Theta^{c^{(2)}_2-1}(f^{(1)}_1)-1\equiv 0\ {\rm mod}\ p$. Therefore ,this case does not occur. 

\medskip
\noindent
Case 3. 

We proceed as in Case 2. Since $2w(\theta^{c^{(2)}_1}(f))-1=2ap+2a'+2(c^{(2)}_1-1)(p+1)-1$ is divisible by $p$, 
we must have $a'+c^{(2)}_1=\ds\frac{p+3}{2}$ or $\ds\frac{3p+3}{2}$. The latter case does not occur since 
$c^{(2)}_1+c^{(1)}_2=\ds\frac{p-1}{2}$. Therefore, we have 
$$c^{(2)}_1=\frac{p+3}{2}-a',\ c^{(1)}_2=a'-2,\ b^{(2)}_1=\frac{p+3}{2}-a',\ c^{(2)}_2=a'-1.$$
This forces $a'$ to satisfy 
$$3\le a'\le \frac{p+1}{2}.$$
The filtrations are computed as follows:
$$w(\Theta^{c^{(2)}_1-1}(f))=ap-a'p+\frac{1}{2}(p+1)^2,\ 
w(\Theta^{c^{(2)}_1}(f))=w(\Theta^{c^{(2)}_1-1}f)-b^{(2)}_1(p-1)=ap+p+3-a',$$
and for $f^{(2)}_1=\Theta^{c^{(2)}_1}(f)$, 
$$w(\Theta^{c^{(1)}_2-1}(f^{(2)}_1)=ap+4-a'+(c^{(1)}_2-1)(p+1)=ap+a'p-2p.$$
Note that  $2w(\Theta^{c^{(2)}_1-1}(f))-1\equiv 0$ mod $p$ and 
$w(\Theta^{c^{(1)}_2-1}(f^{(2)}_1)\equiv 0\ {\rm mod}\ p$.
So there is no contradiction here. 
Put $k_0=a'$. 
Then we have 
$$\begin{array}{rl}
{\rm Cyc}(f)=&(k+(p+1),k+2(p+1),\ldots,k+(\frac{p+1}{2}-k_0)(p+1), \\
&k_1, k_1+p+1,\ldots, k_1+(k_0-3)(p+1)),\ k_1=k+p+3-2k_0
\end{array}
$$
provided if $3\le k_0\le \ds\frac{p+1}{2}.$

Summing up we have proved 
\begin{thm}\label{non-semiord}Let $f$ be a single form in $GM_0(N)$ with $wt(f)\ge 2$ and $\widetilde{f}$ is not identically 
zero on $S_{(0,0)}$. Assume $f$ is non semi-ordinary. Put $w(f)=ap+k_0,\ 1\le k_0\le p$ and 
$k_1=k+p+3-2k_0$. 
Then 
$${\rm Cyc}(f)=\left\{\begin{array}{ll}
(k+(p+1),k+2(p+1),\ldots,k+\frac{(p-3)}{2}(p+1),k) &\ {\rm if}\ w(f)\equiv 2\ {\rm mod}\ p\\ 
(k+(p+1),k+2(p+1),\ldots,k+\frac{(p-3)}{2}(p+1),k) &\ {\rm if}\ w(f)\equiv \ds\frac{p+3}{2}\ {\rm mod}\ p\\ 
\begin{array}{l}(k+(p+1),k+2(p+1),\ldots,k+(\frac{p+1}{2}-k_0)(p+1), \\
k_1, k_1+(p+1),\ldots, k_1+(k_0-3)(p+1),k) \end{array} & \ {\rm if}\ 3\le k_0\le \ds\frac{p+1}{2}\\
 (k+(p+1),k+2(p+1),\ldots,k+\frac{(p-3)}{2}(p+1),k)   &\ {\rm otherwise}. \\
\end{array}\right.
$$
\end{thm}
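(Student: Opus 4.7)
The plan is to work entirely within the framework already built up in Definition \ref{low-pt1} and formula (\ref{low-jump1}), so the theorem ends up being essentially a bookkeeping summary of the case analysis performed just above it. First I would verify that the non semi-ordinarity of $F$ propagates: by Proposition \ref{theta}(1)(a) one has $\lambda_{\Theta(F)}(\ell^i)=\ell^{2i}\lambda_F(\ell^i)$, so if $\lambda_F(p^i)=0$ for all $i\geq 0$ then the same holds for every iterate $\Theta^j(F)$. In particular every $F_i^{(m_i)}$ in the cycle is non semi-ordinary, which under our convention (\ref{conv1}) gives $w(\Theta^{(p-1)/2}(F))=w(F)$, so the theta cycle is genuinely periodic of length $(p-1)/2$.

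Next I would record the two global conservation laws: counting iterations gives $\sum_i c_i=(p-1)/2$, and counting total weight change around the cycle (each generic step adds $p+1$ by Theorem \ref{filtration1}, and each low point eats $b_i(p-1)$ off the filtration) gives $\sum_i b_i(p-1)=\frac{p-1}{2}(p+1)$, hence $\sum_i b_i=(p+1)/2$. Combining with the congruences $w(\Theta^{c_i^{(j)}-1}F_i^{(m_i)})\equiv 0$ (type 1) or $\equiv (p+1)/2$ (type 2) mod $p$, exactly as derived in (\ref{low-jump1}), I obtain the four arithmetic relations between successive $b_i$ and $c_{i+1}$. Summing each relation around the cycle with the constraint $\sum(b_i+c_i)=p$ yields the rigid patterns listed right after (\ref{low-jump1}): Cases 1 and 4 each force $r=1$ with a unique low point of type 1 or 2 respectively, Case 3 forces $r=2$ with alternating types, and Case 2 forces $r=2$ but leads to an arithmetic contradiction.

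Having classified the admissible patterns, the remaining work is to match each pattern to the residue class of $w(F)$ modulo $p$ and write down the resulting cycle. Writing $w(F)=ap+k_0$ with $1\leq k_0\leq p$, the initial low point condition $w(\Theta^{c_1-1}F)\equiv 0$ or $(p+1)/2 \pmod p$ pins down $k_0$: in Case 1 it gives $k_0=2$; in Case 4 it gives $k_0=(p+3)/2$; in Case 3 the allowed range is $3\leq k_0\leq (p+1)/2$; and the complementary residues belong to Case 2, which we have excluded, so their cycles fall into the generic pattern (no low point contribution until the full $(p-1)/2$ applications). In each admissible case the successive filtrations are read off from Theorem \ref{filtration1} by adding $p+1$ at each non-low step and applying the drop $-b_i(p-1)+(p+1)$ at a low point; this gives the arithmetic progressions listed in the theorem, and in Case 3 the auxiliary parameter $k_1=k+p+3-2k_0$ is precisely the filtration right after the first (type 2) low point.

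The main obstacle is genuinely Case 2: one has to show that it cannot occur for any non semi-ordinary $F$ whose $\widetilde F$ is nonzero on $S_{(0,0)}$. The contradiction extracted in (\ref{low-jump1}) comes out of evaluating $w(\Theta^{c_2^{(2)}-1}(F_1^{(1)}))$ using the values of $c_1^{(1)},b_1^{(1)}$ forced by $a'+c_1^{(1)}-1=p$ and checking that $2w-1$ fails to be divisible by $p$; this is the one step where the superspecial nonvanishing assumption is essential, since it justifies the equality (rather than inequality) in Theorem \ref{filtration1}(1) needed to track the exact filtration through the low point. Modulo that contradiction, the rest is arithmetic, and the four subcases above account for every residue class of $k_0$ mod $p$, completing the classification.
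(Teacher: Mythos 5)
Your plan reproduces the paper's argument faithfully: the propagation of non semi-ordinarity through $\Theta$, the two conservation laws $\sum c_i=(p-1)/2$ and $\sum b_i=(p+1)/2$, the four cases $b_i+c_{i+1}\equiv 0, \frac{p+3}{2}, \frac{p-1}{2}, 0 \pmod p$, the $\sum(b_i+c_i)=p$ constraint forcing $r\in\{1,2\}$, and the arithmetic contradiction in Case 2 are all exactly the steps the paper takes. The matching of each admissible pattern to a residue class of $k_0$ and the read-off of the resulting arithmetic progression is also the paper's closing move.

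Two small inaccuracies, neither fatal. First, you have the residue attributions for Cases 1 and 4 swapped: in the paper's computation the single type-1 low point (Case 1) forces $w(F)\equiv\frac{p+3}{2}\pmod p$, while the single type-2 low point (Case 4) forces $w(F)\equiv 2\pmod p$; you state the opposite. Since both cases produce the identical cycle $(k+(p+1),\ldots,k+\frac{p-3}{2}(p+1),k)$ the conclusion is unaffected, but the slip is worth fixing. Second, your remark that the superspecial nonvanishing hypothesis is needed \emph{specifically} for the Case 2 contradiction undersells its role: the hypothesis is what licenses the equality $w(\Theta G)=w(G)+p+1$ (Theorem \ref{filtration1}(1)) at \emph{every} non-low step of the cycle, so it underlies the entire exact bookkeeping of the filtration, not just the one contradiction. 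It is better to say the hypothesis is used globally to convert the a priori inequality $w(\Theta G)\le w(G)+p+1$ into an equality away from low points, without which none of the congruences that generate the four cases would be available.
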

Notice that the last case is just due to the convention (\ref{conv1}). 

\subsubsection{Semi-ordinary case}Let $f$ be a single form in $GM_0(N)$ with $wt(f)\ge 2$ and $\widetilde{f}$ is not identically 
zero on $S_{(0,0)}$. From now we assume $f$ is semi-ordinary, hence $\lambda_f(p)\not=0$ or $\lambda_f(p^2)\not=0$. 
Assume that $k=w(f)$ satisfies both of $k\not\equiv 0 \ {\rm mod}\ p$ and $k\not\equiv \ds\frac{p+1}{2} \ {\rm mod}\ p$. Then $w(\Theta(f))=k+p+1$. Put $k=ap+k_0,\ 1\le k_0\le p$ and $k'_1=k+p+1-2k_0$. 
Applying $G=\Theta(f)$ to Theorem \ref{non-semiord}, we have 
$${\rm Cyc}(f)=\left\{\begin{array}{ll}
(k+(p+1),k+2(p+1),\ldots,k+\frac{(p-1)}{2}(p+1)) &\ {\rm if}\ w(f)\equiv 1\ {\rm mod}\ p\\ 
(k+(p+1),k+2(p+1),\ldots,k+\frac{(p-1)}{2}(p+1)) &\ {\rm if}\ w(f)\equiv \ds\frac{p+1}{2}\ {\rm mod}\ p\\ 
\begin{array}{l}(k+(p+1),k+2(p+1),\ldots,k+(\frac{p+1}{2}-k_0)(p+1), \\
k'_1+(p+1), k'_1+2(p+1),\ldots, k'_1+(k_0-1)(p+1),k) \end{array} & \ {\rm if}\ 2\le k_0\le \ds\frac{p-1}{2}\\
 (k+(p+1),k+2(p+1),\ldots,k+\frac{(p-1)}{2}(p+1))   &\ {\rm otherwise}. \\
\end{array}\right.
$$
The remaining cases are $p|k$ or $p|(2k-1)$. We treat only $k=p$ or $k=\ds\frac{p+1}{2}$. 
When $k=p$, the possible values of $w(\Theta(f))=p+(p+1)-b_1(p-1), b_1=0,1,2,\ldots$ are $2p+1,p+2,3$. 
Then by Theorem \ref{non-semiord} we have 
 $${\rm Cyc}(f)=\left\{\begin{array}{ll}
(2p+1,2p+1+(p+1),\ldots,2p+1+\frac{(p-3)}{2}(p+1)) &\ {\rm if}\ w(f)=2p+1\\ 
(p+2,p+2+(p+1),\ldots,p+2+\frac{(p-3)}{2}(p+1))  &\ {\rm if}\ w(f)=p+2\\ 
(3,3+(p+1),\ldots,3+(\frac{p-5}{2})(p+1), p)   &\ {\rm if}\ w(f)=3. 
\end{array}\right.
$$
Similarly when $k=\ds\frac{p+1}{2}$, the possible values of $w(\Theta(f))=\ds\frac{p+1}{2}+(p+1)-b_1(p-1), 
b_1=0,1,2,\ldots$ are $\ds\frac{3p+3}{2},\ds\frac{p+5}{2}$ if $p>7$ and $9,5,1$ if $p=5$. 
Then by Theorem \ref{non-semiord} we have 
 $${\rm Cyc}(f)=\left\{\begin{array}{ll}
(\ds\frac{3p+3}{2},\ds\frac{3p+3}{2}+(p+1),\ldots,\ds\frac{3p+3}{2}+\frac{(p-3)}{2}(p+1)) &\ {\rm if}\ w(f)=\ds\frac{3p+3}{2}\\ 
(\ds\frac{p+5}{2},\ds\frac{p+5}{2}+(p+1),\ldots,\ds\frac{p+5}{2}+\frac{(p-3)}{2}(p+1))  &\ {\rm if}\ w(f)=\ds\frac{p+5}{2}\\
(1,1+(p+1),\ldots,1+\frac{(p-3)}{2}(p+1))  &\ {\rm if\ }p=5\ {\rm and}\ w(f)=1.
\end{array}\right.
$$
In all lists of theta cycles, we have not used semi-ordinary-ness. As in Proposition 3.3 of 
\cite{Edix}, some of cycles might not occur according to whether $f$ is $p$-singular or not. 

\subsubsection{vector valued case}
In the following  we study the theta cycles for vector valued forms $f$ in $GM_{1}(N)$. 
Assume $f$ is of weight ${\rm wt}(F)=(k+1,k)$ and it is a Hecke eigenform. 
In this case we make use of the theta operator $\theta^{k+1,k}_2$. We drop the superscript and then simply denote it by 
$\theta_2$.  We denote by $w_2(f)$ the second component of the filtration $w(f)$. 
The following theta cycle looks like one in \cite{Edix} and different from our scalar valued case: 
$${\rm Cyc}(f):=(w_2(\theta_2(f)),w_2(\theta^2_2(f)),\ldots,w_2(\theta^{p-1}_2(f))).$$   
We still keep the convention (\ref{conv1}). 
Let $f(q)=\sum_{T}A_f(T)q^T_N,\ A_f(T)\in St_2(\bF_p)$ be  the Fourier expansion of $f$. 
By Theorem \ref{non-vanishing-n=1}, we see that for any integer $m\ge 0$, 
$$\theta^{4m}_2(f)(q)=\sum_T\Big(\frac{\det(T)}{18N^2}\Big)^{2m}A_f(T)q^T_N.$$
Notice that  $w_2(\theta_2(f))=w_2(\theta^{p}_2(f))$ under our convention (\ref{conv1}) and Proposition \ref{theta-vector}-(2). 

As in the scalar values case, we use a similar notation for each of 
low point, low number, and jumping number in the vector valued case. 
Note that low point will happen only when $p|(2k-1)$. 

We first assume that $F$ is non semi-ordinary. Then by convention and definition, we have $w_2(\theta^{p-1}_2(f))=w_2(f)$. 
By (the proof of) Theorem \ref{non-vanishing-n=1}, if $p{\not|}(2k-1)$,  
$w_2(\theta_2(F))=k+p$. Since $p\nmid 2w_2(f)-1$,  by using the same argument 
inductively, we have 
$${\rm Cyc}(f)=(k+p,k+2p,\ldots,k+(p-2)p,k).$$
Next we assume $2k-1\equiv 0$ mod $p$. 
Let $\{c_i\}_{i=1}^r$ (resp. $\{b_i\}_{i=1}^r$) be the collection of all low numbers (jumping numbers) for $f$. 
Note that in this case $b_i$ is given by 
\begin{equation}\label{bi's}
b_i(p-1)=w_2(\theta^{c_i-1}_2f)+p-w_2(\theta^{c_i}_2f).
\end{equation}
We define $f_{i+1}=\theta^{c_i}_2(f_i),\ f_1=f$ inductively. 
Since the length of each theta cycle is $p-1$, one has 
$\ds\sum_i c_i=p-1$. 
The total amount of the varying weights (with respect to $k$) in the theta cycle of $f$ is $p(p-1)$. 
It follows from this that $\ds\sum_ib_i(p-1)=p(p-1).$ Hence we have 
$\ds\sum_ib_i=p$. 

On the other hand, $2w_2(\theta^{c_i-1}_2(f_i))-1\equiv 0\ {\rm mod}\ p$ for all $i$. From this we have 
$$-2b_i\equiv 1-2w_2(f_{i+1})\ {\rm mod}\ p.$$ 
By definition we have $w(\theta^{c_{i+1}-1}_2(f_{i+1}))=(c_{i+1}-1)p+w(f_{i+1})$.  
Then we also have 
$$1\equiv 2w_2(f_{i+1}) \ {\rm mod}\ p.$$
Putting everything together we have $c_{i+1}\equiv \ds\frac{p+1}{2}\ {\rm mod}\ p$ and 
$b_{i}\equiv 0\ {\rm mod}\ p$. This yields that $r=1,\ b_1=p$, and $c_1=p-1$. 
Therefore, $w_2(\theta^i_2(f))=k+ip$ for $1\le i<c_1=p-1$. When $c_1=p-1$ 
the jump occurs and by (\ref{bi's}) we have 
$$w_2(\theta^{c_1}_2(f))=w_2(\theta^{c_1-1}_2(f))+p-b_1(p-1)
=k+(p-2)p+p-p(p-1)=k.$$
Summing up, we have 
\begin{thm}\label{vec-ord}Let $f$ be a single form in $GM_1(N)$ and $\widetilde{f}$ is not identically 
zero on $S_{(0,0)}$. Assume $f$ is non semi-ordinary. Then 
$${\rm Cyc}(f)=(k+p,k+2p,\ldots,k+(p-2)p,k).  
$$
\end{thm}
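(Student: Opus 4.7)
The plan is to mimic the scalar non semi-ordinary analysis preceding Theorem~\ref{non-semiord} but specialize to second-type low points, which are the only ones available in the vector-valued cycle: $\theta_2$ shifts $w_2$ by exactly $p$, and by Theorem~\ref{non-vanishing-n=1} the sole obstruction to $w_2(\theta_2(G))=w_2(G)+p$ is the congruence $p\mid 2w_2(G)-1$.

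First I would fix the length of the cycle via the convention~\eqref{conv1}. By Proposition~\ref{theta-vector}, $\theta_2^{p-1}$ multiplies the $T(\ell^i)$-eigenvalue by $\ell^{(p-1)i}\equiv 1\pmod \ell$ for $\ell\nmid pN$; non semi-ordinarity makes the $T(p^i)$-eigenvalues of both $F$ and $\theta_2^{p-1}(F)$ vanish. Hence their Hecke eigensystems agree and the convention identifies $w_2(\theta_2^{p-1}(F))$ with $k$. Next, I would propagate non-vanishing on $S_{(0,0)}$ along the cycle: starting from the hypothesis on $\widetilde F$, and using Theorem~\ref{non-vanishing-n=1} together with multiplication by the superspecial form $B$ of Proposition~\ref{B} to adjust the weight into a regime where the theorem applies, one sees that each intermediate form is non-vanishing on $S_{(0,0)}$ up to the convention. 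This forces $w_2(\theta_2(G))=w_2(G)+p$ whenever $p\nmid 2w_2(G)-1$, because any lower-filtration representative would have to be divisible by the Hasse invariant $H_{p-1}$, which vanishes identically on $S_{(0,0)}$.

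The arithmetic core, assuming at least one low point, is the analysis of the low numbers $\{c_i\}_{i=1}^r$ and jumping numbers $\{b_i\}_{i=1}^r$. Writing $F_1=F$ and $F_{i+1}=\theta_2^{c_i}(F_i)$, the identity
\[
w_2(F_{i+1})=w_2(F_i)+c_ip-b_i(p-1)
\]
combined with the second-type low-point condition $2w_2(F_i)\equiv 1\pmod p$ at every $F_i$ (including $F_{r+1}$, which equals $F_1$ under the convention) yields $2b_i\equiv 0\pmod p$, hence $b_i\equiv 0\pmod p$ since $p$ is odd. The actual filtration must genuinely return to $k$ once any drop is involved, so $\sum b_i(p-1)=(p-1)p$, i.e.\ $\sum b_i=p$. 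Combined with $b_i\ge 1$, this forces $r=1$, $c_1=p-1$, $b_1=p$, and the congruence $k\equiv (p+1)/2\pmod p$.

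Finally I assemble the two cases. When $k\equiv (p+1)/2\pmod p$, the single drop at the end of the cycle reduces the filtration from $k+(p-1)p$ back to $k$, so the cycle reads $k+p,\,k+2p,\ldots,k+(p-2)p,\,k$ as claimed. When $k\not\equiv (p+1)/2\pmod p$, no form in the cycle satisfies the low-point congruence and no drops occur; the actual filtrations are $k+p,\,k+2p,\ldots,k+(p-1)p$, and the convention~\eqref{conv1} identifies the final entry with $k$, yielding the same displayed cycle. The principal obstacle is the propagation of non-vanishing on $S_{(0,0)}$ through the cycle, including across a possible low point; this is handled by the superspecial multiplier $B$ of Proposition~\ref{B}, in direct parallel with the role it plays in the scalar case.
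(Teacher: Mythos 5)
Your proposal is correct and follows essentially the same route as the paper: fix the cycle length $p-1$ via the convention~(\ref{conv1}) and Proposition~\ref{theta-vector}, reduce to low/jumping numbers of the second type only, derive $\sum c_i = p-1$, $\sum b_i = p$, and $b_i\equiv 0\pmod p$ to force $r=1$, $c_1=p-1$, $b_1=p$ with the congruence $k\equiv\frac{p+1}{2}\pmod p$, and note the other cases give the same displayed cycle by convention. One small correction: the eigenvalue identity should read $\ell^{(p-1)i}\equiv 1\pmod p$ (i.e.\ the identity holds in $\bF_p$ by Fermat's little theorem), not $\pmod \ell$.
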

The semi-ordinary case is similar. Hence we have 
\begin{thm}\label{vec-non-ord}
Let $f$ be a single form in $GM_1(N)$ with $k:=w_2(f)$ and $\widetilde{f}$ is not identically 
zero on $S_{(0,0)}$. Assume $f$ is semi-ordinary. 
Put $k'=w_2(\theta_2(f))$. 
Then 
$${\rm Cyc}(f)=(k'+p,k'+2p,\ldots,k'+(p-1)p). 
$$
Further if $k'=k+p$ if $p\not|(2k-1)$. 
\end{thm}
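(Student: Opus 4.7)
The plan is to mimic the proof of Theorem \ref{vec-ord} by tracking the filtrations $w_2(\theta_2^i(F))$ through successive applications of $\theta_2$, while using the semi-ordinarity hypothesis to control how the cycle closes under convention (\ref{conv1}). The two key ingredients will be the weight bound $w_2(\theta_2(G)) \le w_2(G) + p$ from Proposition \ref{theta-vector}-(1), which is saturated precisely when $\theta_2(G)$ remains non-vanishing on $S_{(0,0)}$, and the local analysis of $\theta_2$ at superspecial points carried out in the proof of Theorem \ref{non-vanishing-n=1}.

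First I will settle the auxiliary identity $k' = k + p$ under the hypothesis $p \nmid (2k-1)$. By Proposition \ref{theta-vector}-(1), $\theta_2(F) \in M_{(k+p+1, k+p)}(\G(N), \bF_p)$, so $w_2(\theta_2(F)) \le k+p$. Pick $X \in S_{(0,0)}$ with $\widetilde{F}|_X \ne 0$ and let $\mathcal{X}$ be its generic square-zero deformation. The local expansion derived in the proof of Theorem \ref{non-vanishing-n=1} reads
\[
\theta_2(F)(\mathcal{X}) \equiv \tfrac{2k-1}{3}\,{}^t\!\bigl(t_{22} F_1|_X - t_{12} F_0|_X,\; t_{12} F_1|_X - t_{11} F_0|_X\bigr) \pmod{m_R^2},
\]
which is nonzero in $R$ because $p \nmid (2k-1)$ and $(F_0|_X, F_1|_X) \ne (0,0)$. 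Thus $\theta_2(F)$ is non-vanishing at $X$; since the Hasse invariant $H_{p-1}$ vanishes on the supersingular locus, it cannot divide $\theta_2(F)$, so the filtration bound is attained, $w_2(\theta_2(F)) = k + p$.

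For the cycle itself I will run the same low-point/jumping-number bookkeeping as in Theorem \ref{vec-ord}. Let $c_i$ and $b_i$ denote the low numbers and jumping numbers of the second-type along the $\theta_2$-orbit, where a low point is signalled by $2 w_2 - 1 \equiv 0 \pmod p$. The cycle-length identity $\sum_i c_i = p-1$ and the total weight-variation identity $\sum_i b_i = p$ combine with the low-point congruences $-2 b_i \equiv 1 - 2 w_2(F_{i+1}) \pmod p$ and $2 w_2(F_{i+1}) \equiv 1 \pmod p$ to force $b_i \equiv 0$ and $c_{i+1} \equiv (p+1)/2 \pmod p$, whence $r = 1$ with $c_1 = p-1$ and $b_1 = p$; the intermediate filtrations therefore rise by exactly $p$ at each step, giving the stated form of the cycle. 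The main obstacle will be closure: under the semi-ordinary hypothesis the $p$-component of the Hecke eigensystem of $\theta_2^p(F)$ need not coincide with that of $\theta_2(F)$, so convention (\ref{conv1}) does not automatically identify their filtrations once $k \ge 2$ (when $\mathbb{T}_N$ includes the $p$-slot). Handling this will require either restricting to $k = 1$ (where $\mathbb{T}_N$ excludes $p$ and the convention applies unconditionally) or a direct Fourier-coefficient check, using the formula $A_{\theta_2(F)}(T) = \tfrac{1}{N} A_F(T) \otimes T$ from the proof of Proposition \ref{theta-vector}, to verify that after $p-1$ further iterations of $\theta_2$ the $p$-slot Hecke eigenvalues come into agreement, granted the semi-ordinarity of $F$.
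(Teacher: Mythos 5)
Your second paragraph (the claim $k'=k+p$ when $p\nmid(2k-1)$) is essentially right, though the phrasing is imprecise: the local expansion you cite is linear in the $t_{ij}$ with no constant term, so $\theta_2(F)$ \emph{does} vanish at the superspecial point $X$ --- it vanishes only to first order, whereas $H_{p-1}$ vanishes to second order there (locally $t_{11}t_{22}-t_{12}^2$), and it is \emph{that} order comparison which forbids a Hasse factor and forces the filtration bound $w_2(\theta_2(F))\le k+p$ to be attained. The substance is fine; the sentence ``non-vanishing at $X$'' is not.

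The third paragraph is where the genuine gap lies, and it is not the gap you identify. You run the low-point/jumping-number bookkeeping on $F$ itself and then worry that ``the $p$-component of the Hecke eigensystem of $\theta_2^{p}(F)$ need not coincide with that of $\theta_2(F)$.'' That worry is empty: since $\lambda_{\theta_2^m(F)}(p^i)=p^{mi}\lambda_F(p^i)\equiv 0$ in $\bF_p$ for every $m\ge 1$ and $i\ge 1$, the $p$-slot eigenvalues of $\theta_2^m(F)$ are identically zero for all $m\ge 1$, so $\theta_2^{p}(F)$ and $\theta_2(F)$ trivially agree at $p$ (and agree at $\ell\ne p$ because $\ell^{p}=\ell$). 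The actual obstruction is one step earlier: the identities $\sum_i c_i=p-1$ and $\sum_i b_i=p$ that drive the bookkeeping in Theorem \ref{vec-ord} are derived from the closure $w_2(\theta_2^{p-1}(F))=w_2(F)$, and \emph{this} closure genuinely fails when $F$ is semi-ordinary, because $\lambda_F(p)\ne 0$ while $\lambda_{\theta_2^{p-1}(F)}(p)=0$, so convention (\ref{conv1}) does not equate $w_2(F)$ with $w_2(\theta_2^{p-1}(F))$. Consequently the bookkeeping you attempt has no valid boundary condition.

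The fix the paper uses (made explicit in the scalar semi-ordinary case and invoked implicitly here with ``the semi-ordinary case is similar'') is not a Fourier-coefficient check but simply to pass to $G:=\theta_2(F)$. Since $\lambda_G(p^i)=p^i\lambda_F(p^i)=0$, the form $G$ is non-semi-ordinary, and it is not identically zero on $S_{(0,0)}$ (that is exactly what your first argument establishes), so Theorem \ref{vec-ord} applies to $G$ and yields $\mathrm{Cyc}(G)=(k'+p,\ldots,k'+(p-2)p,k')$ with $k'=w_2(G)$. Then $\mathrm{Cyc}(F)=(w_2(G),w_2(\theta_2 G),\ldots,w_2(\theta_2^{p-2}G))$ is read off by a shift of index. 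You should also note that this computation produces $(k',k'+p,\ldots,k'+(p-2)p)$, which agrees with the displayed formula $(k'+p,\ldots,k'+(p-1)p)$ only if one reads $k'$ there as $k$; with $k'=w_2(\theta_2(F))$ as defined in the statement, the first entry of $\mathrm{Cyc}(F)$ is $k'$ itself, not $k'+p$, so the stated formula appears to carry an off-by-$p$ slip that your argument, once corrected as above, would have exposed.
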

In forthcoming paper \cite{yam} the author will study some theta cycles for 
Siegel modular forms of general weights $(k_1,k_2)$ and it will be proved that the theta cycles include non-trivial jumping points only when $k_1=k_2$. 
 

\section{Appendix A}
In this section we give an explicit form of Pieri's decomposition. 
Let $R$ be an $\bF_p$-algebra. Put ${\rm St}_2(R)=Re_1\oplus Re_2$ and let $GL_2(R)$ acts on $St_2$ by 
$$ge_1=ae_1+c e_2,\ ge_2=be_1+de_2,\ g=\begin{pmatrix} a & b \\ c& d \end{pmatrix}.$$
For a positive integer $n$, 
let $V(n)={\rm Sym}^n {\rm St}_2(R)$ be the $n$-th symmetric representation of $GL_2(R)$. 
Put $V(n,m)=V(n)\otimes_R \det^n(St_2(R))$ for an integer $m\ge 0$. 

We are concerned with an explicit decomposition of $V(n)\otimes_R V(2)$. 
Consider the basis $u_i=e^i_1e^{n-i}_2,\ i=0,\ldots,n$ (resp. $v_2=e^2_1,\ v_1=e_1e_2,\ v_0=e^2_2$) of $V(n)$ 
(resp. $V(2)$). 
We define the operators $E,F$ on $V(n)$ by 
$Eu_i=iu_{i-1},\ Fu_i=(n-i)u_{i+1}$. 
We also define the same operators on $V(n)\otimes_R V(n')$ by Leibniz rule 
$E(u_i\otimes u_{j})=Eu_i\otimes u_j+u_i\otimes Eu_j,\ F(u_i\otimes u_{j})=Fu_i\otimes u_j+u_i\otimes Fu_j$.  
 
We first assume that $n+3\le p$.  
Put 
$$w_0=u_n\otimes v_2,\ w_1=u_n\otimes v_1-u_{n-1}\otimes v_2,\ w_2=u_n\otimes v_0-2u_{n-1}\otimes v_1+u_{n-2}\otimes v_2$$ Since $Fw_i=0$ for $i=0,1,2$, we expect that these vectors would be highest weight vectors. 
Put 
$$W_0(n)=\langle f^{(0)}_i:=\frac{1}{(n+2)_i}E^iw_0,\ 0\le i\le n+2   \rangle_R,\ 
W_1(n)=\langle f^{(1)}_i:=\frac{1}{(n)_i}E^iw_1,\ 0\le i\le n   \rangle_R,$$
and $$W_2(n)=\langle f^{(2)}_i:=\frac{1}{(n-2)_i}E^iw_2,\ 0\le i\le n-2   \rangle_R$$ 
where $(x)_i=x!/(x-i)!$  and we set $(\ast)_0:=1$. 

Recall $V(n,m)=V(n)\otimes_R \det^m$ and denote by $\{u_i\}_i$ its basis  by abuse of notation. 
By direct computation, we have the following: 
\begin{prop} For $j=0,1,2$, as $GL_2(R)$-modules, 
$$V(n+2-2j,j)\stackrel{\sim}{\lra} W_j(n),\ u_i\mapsto f^{(j)}_i.$$
\end{prop}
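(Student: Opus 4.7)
The plan is to recognize each $w_j$ as a highest-weight generator of an irreducible $GL_2(R)$-submodule of $V(n)\otimes V(2)$ isomorphic to $V(n+2-2j,j)$, and then to match the normalized basis $\{f^{(j)}_i\}$ to the standard basis $\{u_i\}$ of the target. First I would verify directly from the explicit formulas that $Fw_j=0$ for $j=0,1,2$; this is a short Leibniz-rule computation on tensor products, as already noted in the text. I would then check that $w_j$ is a torus weight vector of weight $a^{n+2-j}d^j$: every monomial $u_\ell\otimes v_m$ appearing in $w_j$ has the same $(a,d)$-eigenvalue, which is precisely why those particular linear combinations were chosen. This weight coincides with the highest weight of $V(n+2-2j,j)={\rm Sym}^{n+2-2j}{\rm St}_2\otimes\det^j$, so by the universal property of highest-weight modules for $GL_2$ there is a unique $GL_2(R)$-equivariant map $\varphi_j\colon V(n+2-2j,j)\lra V(n)\otimes V(2)$ sending the highest-weight vector of the source to $w_j$.

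Next I would show that $\varphi_j$ is injective with image $W_j(n)$. Because $n+3\le p$, the highest weight $n+2-2j$ is strictly less than $p$, and it is a standard fact that for $SL_2$ in characteristic $p$ the Weyl modules with highest weight less than $p$ are irreducible; hence $V(n+2-2j,j)$ is simple, so the nonzero map $\varphi_j$ is injective. The commutator relations in $U(\mathfrak{sl}_2)$, applied to a highest-weight vector of weight $n+2-2j$, give the recursion
\[
F\cdot E^iw_j=i(n+3-2j-i)\cdot E^{i-1}w_j,
\]
so the $R$-span of $\{E^iw_j:0\le i\le n+2-2j\}$ is stable under $E$, $F$, and the torus, hence under the whole $GL_2(R)$-action; a dimension count identifies this span with the image of $\varphi_j$, which is $W_j(n)$ by construction.

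The final step is to pin down the explicit matching $u_i\mapsto f^{(j)}_i$. The normalizing constants $(n+2-2j)_i$ are products of positive integers at most $n+2-2j\le n+2$, all of which are invertible in $R$ by the hypothesis $n+3\le p$; this is the only place where the characteristic assumption enters essentially. A direct induction on $i$, using the recursion above together with the analogous recursion on the standard basis of $V(n+2-2j,j)$, then yields $\varphi_j(u_i)=f^{(j)}_i$. The main obstacle will be the combinatorial bookkeeping in expanding $E^iw_j$ in the tensor basis $\{u_\ell\otimes v_m\}$, so as to read off exactly the Pochhammer denominators $(n+2-2j)_i$ in the normalization and to verify that the chain terminates at $E^{n+3-2j}w_j=0$; the hypothesis $n+3\le p$ is invoked precisely to ensure that no Pochhammer factor vanishes modulo $p$ and that the target Weyl module is simple.
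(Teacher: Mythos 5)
Your argument is correct and takes a genuinely more structural route than the paper, which simply asserts the result ``by direct calculation.'' You recognize the $w_j$ as weight-$a^{n+2-j}d^j$ vectors annihilated by the raising operator $F$, invoke the simplicity of the Weyl module of highest weight $n+2-2j<p$, and obtain the map from the universal property; the paper's intended verification is just to check by hand that $u_i\mapsto f^{(j)}_i$ intertwines the action of the torus and the two unipotent one-parameter subgroups. Your approach explains \emph{why} the multiplicity-free decomposition exists and separates the two roles of the hypothesis $n+3\le p$ (invertibility of the Pochhammer normalizers, and simplicity of the target), at the modest cost of invoking the Weyl-module formalism.

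One step needs tightening. The recursion $F\cdot E^iw_j=i(n+3-2j-i)E^{i-1}w_j$ gives $F$-stability and torus-stability of the span of $\{E^iw_j:0\le i\le n+2-2j\}$, but it does \emph{not} by itself give $E$-stability: you also need $E^{n+3-2j}w_j=0$, and this is not a weight-multiplicity statement (for $j=1,2$ the relevant weight $-(n+4-2j)$ still has nonzero multiplicity in $V(n)\otimes V(2)$), nor is it literally a consequence of the Pochhammer factors being invertible. The cleanest repair uses the map $\varphi_j$ you already constructed: $E^{n+3-2j}$ annihilates all of $V(n+2-2j,j)$ (since $E^ku_\ell=0$ once $k>\ell$ and here $\ell\le n+2-2j$), so by equivariance $E^{n+3-2j}w_j=\varphi_j\bigl(E^{n+3-2j}u_{n+2-2j}\bigr)=0$; alternatively one may note that $E^{n+3-2j}w_j$ is defined over $\Z$ and vanishes over $\Q$, hence over $R$. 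With that in place the rest of your outline goes through. One small caveat on the base-matching: comparing weights shows that $u_i$ must correspond to $f^{(j)}_{(n+2-2j)-i}$ rather than to $f^{(j)}_i$ literally (the paper's statement appears to contain the same index reversal), so the induction you describe in the last step should be run with that reindexing.
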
 
When $n=p-1$ or $p-2$, the denominators of the coefficients appearing in $f^{(2)}_i:=\frac{1}{(n-2)_i}E^iw_2$ are not 
divisible by $p$. Hence only $W_2(n)$ still makes sense and so does 
the isomorphism $V(n-2,2)\stackrel{\sim}{\lra} W_2(n)$. 
Clearly  $W_2(n)$ gives a splitting of the surjection $V(n)\otimes_R V(2)\lra V(n-2,2)$. 

To end this section we give an explicit realization of an isomorphism $V(n)\otimes_R V(2)\simeq W_2(n)\oplus W_1(n)\oplus W_0(n)$ in 
terms of our basis. Note that $\theta^{\underline{k}}_1$ is related to $W_2(n)$. 
We identify $v=\ds\sum_{0\le i\le n \atop 0\le j\le 2}a_{ij}u_i\otimes v_j\in V(n)\otimes_R V(2)$ with the low vector
$$(a_{n2},a_{n1},a_{n0},a_{(n-1)2},a_{(n-1)1},a_{(n-1)0},\ldots,a_{12},a_{11},a_{10},a_{02},a_{01},a_{00}).$$ 
Let us first assume that $n<p-2$. 
If we write 
$$v=\sum_{i=0}^{n+2}b^{(0)}_if^{(0)}_i+\sum_{i=0}^{n}b^{(1)}_if^{(1)}_i+\sum_{i=0}^{n-2}b^{(2)}_if^{(2)}_i,$$
then we have 
\begin{equation}\label{decom1}
{}^t(b^{(0)}_i)_{0\le i\le n+2}=
\left(\begin{array}{c}
a_{n2} \hfill \\
a_{(n-1)2}+a_{n1} \hfill\\
 a_{(n-2)2}+a_{(n-1)1}+a_{n0} \hfill \\
 a_{(n-3)2}+a_{(n-2)1}+a_{(n-1)0} \hfill \\
  \vdots \\
  a_{12}+\hspace{3.5mm}a_{21}\hspace{3.5mm}+a_{30} \\
   a_{02}+\hspace{3.5mm}a_{11}\hspace{3.5mm}+a_{20} \\
\hspace{13.5mm} a_{01}\hspace{3.5mm}+a_{10} \\ 
\hspace{27mm}a_{00}  
\end{array}
\right)
\end{equation}
where the superscript $``t"$ stands for the transpose. 

As for $b^{(1)}_i$ we have 
\begin{equation}\label{decom2}
b^{(1)}_i=\left\{\begin{array}{lc}
-\ds\frac{2}{n+2}a_{(n-1)2}\hspace{5mm}+\frac{n}{n+2}a_{n1}& (i=0) \\
 & \\
-\ds\frac{2+2i}{n+2}a_{(n-1-i)2}+\frac{n-2i}{n+2}a_{(n-i)1}+\frac{2n+2-2i}{n+2}a_{(n+1-i)0},\ 
&(i=1,\ldots,n-1) \\
& \\
\hspace{31mm}-\ds\frac{n}{n+2}a_{01}\hspace{7mm}+\hspace{10mm}\frac{2}{n+2}a_{10} & (i=n)
\end{array}
\right.
\end{equation}
For the remaining coefficients $b^{(2)}_i$, we have 
\begin{equation}\label{decom3}
b^{(2)}_i=\frac{(i+1)(i+2)}{n(n+1)}a_{(n-2-i)2}-
\frac{(i+1)(n-i-1)}{n(n+1)}a_{(n-1-i)1}+
\frac{(n-i-1)(n-i)}{n(n+1)}a_{(n-i)0}
\end{equation}
for $i=0,\ldots, n-2$. 
When $n=p-1$ or $p-2$ 
we have a splitting projection $V(n)\otimes_R V(2)\lra W_2(n)$ which is given by 
replacing $b^{(2)}_i$ in (\ref{decom3}) with 
\begin{equation}\label{decom4}
b^{(2)}_i=(i+1)(i+2)a_{(n-2-i)2}-
(i+1)(n-i-1)a_{(n-1-i)1}+
(n-i-1)(n-i)a_{(n-i)0}
\end{equation} 
obtained by multiplying $n(n+1)$. Notice that $n(n+1)$ is zero if $n=p-1$ but we can justify 
by working over $\Z[1/(n+2)!]$ at first and then by multiplying $n(n+1)$. 
This yields (\ref{decom4}) from (\ref{decom3}).


\begin{thebibliography}{99}
\bibitem{AK}A. Altman and S. Kleiman, Introduction to Grothendieck duality theory. 
Lecture Notes in Mathematics, Vol. 146 Springer-Verlag, Berlin-New York 1970 ii+185 pp. 
\bibitem{and}A-N. Andrianov, Quadratic forms and Hecke operators. Grundlehren der Mathematischen Wissenschaften, 286. Springer-Verlag, Berlin, 1987. 
\bibitem{asgari&schmidt}M. Asgari and R. Schmidt, Siegel modular forms and representations.
 Manuscripta Math. 104 (2001), no. 2, 173-200. 
\bibitem{a&s}A. Ash and G. Stevens, Cohomology of arithmetic groups and congruences between systems of Hecke eigenvalues. J. Reine Angew. Math. 365 (1986), 192-220. 
\bibitem{berger1}L. Berger, Repr\'esentations modulaires de $GL_2(\Q_p)$ et repr\'esentations galoisiennes de dimension 2. 
Ast\'erisque No. 330 (2010), 263-279.
\bibitem{blz}L. Berger, H. Li and H-J. Zhu,  Construction of some families of 2-dimensional crystalline representations. Math. Ann. 329 (2004), no. 2, 365-377. 

\bibitem{BBM}P. Berthelot, L. Breen, and W. Messing, Th\'eorie de Dieudonn\'e cristalline II, LNM 930, Springer-Verlag (1982).

\bibitem{b&n}S. B\"ocherer and S. Nagaoka, On mod $p$ properties of Siegel modular forms. Math. Ann. 338 (2007), no. 2, 421-433
\bibitem{borel&jacquet}A. Borel and H. Jacquet, {\em Automorphic forms and automorphic representations}, Proc. Sympos. Pure Math., XXXIII,  1977, Part 1, 189-207.  

\bibitem{boxer}G. Boxer, 
Torsion in the Coherent Cohomology of Shimura Varieties and Galois Representations, arXiv:1507.05922. 

\bibitem{BCGP}
G. Boxer, F. Calegari, T. Gee, and V. Pilloni, 
Abelian surfaces over totally real fields are potentially modular. 
Publ. Math. Inst. Hautes Études Sci. 134 (2021), 153–501. 

\bibitem{bdj}K. Buzzard, F. Diamond, and F. Jarvis, On Serre's conjecture for mod $\ell$ 
Galois representations over totally real fields. Duke Math. J. 155 (2010), no. 1, 105-161. 
\bibitem{bg}K. Buzzard and T. Gee, Explicit reduction modulo $p$ of certain two-dimensional crystalline representations. Int. Math. Res. Not. IMRN 2009, no. 12, 2303-2317. 
\bibitem{CG}F. Calegari and T. Gee, Irreducibility of automorphic Galois representations of 
$GL(n)$, $n$ at most 5. Ann. Inst. Fourier (Grenoble) 63 (2013), no. 5, 1881-1912. 

\bibitem{CG1} F. Calegari and D. Geraghty, Minimal modularity lifting for nonregular symplectic representations. With an appendix by Calegari, Geraghty and Michael Harris. Duke Math. J. 169 (2020), no. 5, 801–896.

\bibitem{c&f}G. Faltings and  C-L. Chai, Degeneration of abelian varieties. 
With an appendix by David Mumford. Ergebnisse der Mathematik und ihrer Grenzgebiete (3), 22. Springer-Verlag, Berlin, 1990. xii+316 pp.



\bibitem{GS}E. de Shalit and  E-Z. Goren, A theta operator on Picard modular forms modulo an inert prime. Res. Math. Sci. 3 (2016), Paper No. 28, 65 pp. 

\bibitem{GS1}E. de Shalit and  E-Z. Goren, Theta operators on unitary Shimura varieties. Algebra Number Theory 13 (2019), no. 8, 1829–1877. 


\bibitem{Edix}B. Edixhoven, The weight in Serre's conjectures on modular forms. Invent. Math. 109 (1992), no. 3, 563-594. 
\bibitem{edix2} \bysame, Serre's conjecture. Modular forms and Fermat's last theorem (Boston, MA, 1995), 
209-242, Springer, New York, 1997. 

\bibitem{EFGMM}E-E. Eischen, M. Flander, A. Ghitza, E. Mantovan, and A. McAndrew, Differential operators mod $p$: analytic continuation and consequences. Algebra Number Theory 15 (2021), no. 6, 1469–1504. 

\bibitem{EM}E. Eischen and E. Mantovan, Entire theta operators at unramified primes, International Mathematics Research Notices, Vol. 00, No. 0, pp. 1–59
https://doi.org/10.1093/imrn/rnab190.

\bibitem{evdokimov}S. A. Evdokimov, {\em Euler products for congruence subgroups of the Siegel group of genus 2}, Math. USSR Sb. 28 (1976), 431-458. 

\bibitem{flander}M. Frander, A theta operator for Siegel modular forms (mod $p$), thesis 2013.  

\bibitem{GHS}T. Gee, F. Herzig, and D. Savitt, General Serre weight conjectures. J. Eur. Math. Soc. (JEMS) 20 (2018), no. 12, 2859–2949. 

\bibitem{gg}T. Gee and D. Geraghty, Companion forms for unitary and symplectic groups. Duke Math. J. 161 (2012), no. 2, 247-303. 



\bibitem{gerritzen}L. Gerritzen, Periods and Gauss-Manin connection for families of $p$-adic Schottky groups. Math. Ann.  275  (1986),  no. 3, 425-453. 
\bibitem{gee}T. Gee, Automorphic lifts of prescribed types. Math. Ann. 350 (2011), no. 1, 107-144.

\bibitem{ghitza1}A. Ghitza, Hecke eigenvalues of Siegel modular forms (mod $p$) and of algebraic modular forms. J. Number Theory 106 (2004), no. 2, 345-384.
\bibitem{ghitza2}A. Ghitza,  All Siegel Hecke eigensystems (mod $p$) are cuspidal. Math. Res. Lett. 13 (2006), no. 5-6, 813-823.

\bibitem{GMc}A. Ghitza and A. McAndrew, Theta operators on Siegel modular forms and Galois representations, arXiv:1601.01835.  

\bibitem{GK} W. Goldring and J-S Koskivirta, Strata Hasse invariants, Hecke algebras and Galois representations,
Invent. Math. 217 (2019), no. 3, 887-984.

\bibitem{goren}E-Z. Goren, Hasse invariants for Hilbert modular varieties. Israel J. Math. 122  (2001), 157-174. 

\bibitem{GO}E-Z. Goren and F. Oort, Stratifications of Hilbert modular varieties. J. Algebraic Geom. 9 (2000), no. 1, 111-154. 

\bibitem{Gross}B. Gross, A tameness criterion for Galois representations associated to modular forms (mod $p$). 
Duke Math. J. 61 (1990), no. 2, 445-517.  

\bibitem{G-motives}B. Gross, On the Langlands correspondence for symplectic motives. 
Izv. Math. 80 (2016), no. 4, 678–692.  

\bibitem{harris}M. Harris, Special values of zeta functions attached to Siegel modular forms. Ann. Sci. ecole Norm. Sup. (4) 14 (1981), no. 1, 77-120.

\bibitem{hartshorne}R. Hartshorne, Algebraic geometry. Graduate Texts in Mathematics, No. 52. Springer-Verlag, New York-Heidelberg, 1977.
\bibitem{Hartwig} P. Hartwig, 
On the reduction of the Siegel moduli space of abelian varieties of dimension 3 with Iwahori level structure.  
M\"unster J. Math. 4 (2011), 185-226. 
\bibitem{herzig}F. Herzig,The weight in a Serre-type conjecture for tame $n$-dimensional 
Galois representations. Duke Math. J. 149 (2009), no. 1, 37-116.  
\bibitem{til&her}F. Herzig and J. Tilouine,  Conjecture de type de Serre et formes compagnons pour $GSp_4$. 
J. Reine Angew. Math. 676 (2013), 1-32. 

\bibitem{ichikawa}T. Ichikawa, Congruences between Siegel modular forms. Math. Ann. 342 (2008), no. 3, 527-532. 

\bibitem{ichikawa1} \bysame, Vector-valued $p$-adic Siegel modular forms. J. Reine Angew. Math. 690 (2014), 35-49. 
\bibitem{joch}N. Jochnowitz, A study of the local components of the Hecke algebra mod $l$. 
Trans. Amer. Math. Soc. 270 (1982), no. 1, 253-267
\bibitem{KO}T. Katsura and F. Oort, Families of supersingular abelian surfaces, Compositio Math. 62 (1987), no. 2, 107-167. 
\bibitem{katz}N. Katz, A result on modular forms in characteristic $p$. Modular functions of one variable, V (Proc. Second Internat. Conf., Univ. Bonn, Bonn, 1976), pp. 53-61. Lecture Notes in Math., Vol. 601
\bibitem{katz2}N. Katz, Serre-Tate local moduli. Algebraic surfaces (Orsay, 1976-78), pp. 138-202, Lecture Notes in Math., 868, Springer, Berlin-New York, 1981. 

\bibitem{kw1}C. Khare and J-P. Wintenberger, Serre's modularity conjecture. I. Invent. Math. 178 (2009), no. 3, 485-504. 

\bibitem{kw2} \bysame, Serre's modularity conjecture. II. Invent. Math. 178 (2009), no. 3, 505-586. 
\bibitem{kob}N. Koblitz,  $p$-adic variation of the zeta-function over families of varieties defined over finite fields. Compositio Math. 31 (1975), no. 2, 119-218. 

\bibitem{KoMo}J. Koll\'ar and S. Mori, Birational geometry of algebraic varieties. With the collaboration of C. H. Clemens and A. Corti. Translated from the 1998 Japanese original. Cambridge Tracts in Mathematics, 134. Cambridge University Press, Cambridge, 1998. viii+254 pp.

\bibitem{Kos}J-S. Koskivirta, Canonical sections of the Hodge bundle over Ekedahl-Oort strata of Shimura varieties of Hodge type, 
Journal of Algebra, 
Volume 449, 1 March 2016, Pages 446-459. 
\bibitem{l&s1}K-W. Lan and J. Suh, Liftability of mod $p$ cusp forms of parallel weights. Int. Math. Res. Not. IMRN 2011, no. 8, 1870-1879. 
\bibitem{l&s2}\bysame, Vanishing theorems for torsion automorphic sheaves on general PEL-type Shimura varieties. Adv. Math. 242 (2013), 228-286.
\bibitem{laumon}G. Laumon, 
Sur la cohomologie \`a supports compacts des vari\'et\'es de Shimura pour $GSp(4)_\Q$, Compositio Math. 105 (1997), no. 3, 267-359. 
\bibitem{manni&top} R. Salvati Manni and J. Top, {\em Cusp forms of weight 2 for the group $\G(4,8)$}, 
Amer. J. Math. 115 (1993), 455-486. 
\bibitem{m&m}W. Messing, The crystals associated to Barsotti-Tate groups: with applications to abelian schemes. Lecture Notes in Mathematics, Vol. 264. Springer-Verlag, Berlin-New York, 1972.  

\bibitem{Mum}D. Mumford, Abelian varieties. With appendices by C. P. Ramanujam and Yuri Manin. Corrected reprint of the second (1974) edition. Tata Institute of Fundamental Research Studies in Mathematics, 5. Published for the Tata Institute of Fundamental Research, Bombay; by Hindustan Book Agency, New Delhi, 2008. xii+263 pp.

\bibitem{oort1}F. Oort, A stratification of a moduli space of polarized abelian varieties in positive characteristic. Moduli of curves and abelian varieties, 47-64.  
\bibitem{oort2} \bysame, A stratification of a moduli space of abelian varieties. Moduli of abelian varieties (Texel Island, 1999), 345-416, Progr. Math., 195, Birkh\"auser, Basel, 2001. 
\bibitem{serre}J-P. Serre,  Sur les repr\'sentations modulaires de degr\'e 2 de Gal$(\bQ/\Q)$. 
Duke Math. J. 54 (1987), no. 1, 179-230. 
\bibitem{s&u} C. Skinner and E. Urban, {\em Sur les deformations p -adiques de certaines representations automorphes}, 
J. Inst. Math. Jussieu 5 (2006), 629-698.
\bibitem{taylor-thesis} R. Taylor, On congruences of modular forms, thesis 1988. 
\bibitem{taylor-low} \bysame,  Galois representations associated to Siegel modular forms of low weight, 
Duke Math. J. 63 (1991), 281-332. 
\bibitem{taylor}\bysame, Galois representations attached to torsion in the cohomology of automorphic vector bundles, after Boxer, 
talk at MPIM on Wed, 2014-06-11, https://www.mpim-bonn.mpg.de/node/5198. 
\bibitem{tilouine} J. Tilouine, {\em Nearly ordinary rank four Galois representations and $p$-adic Siegel modular forms}, With an appendix by Don Blasius. Compos. Math. 142 (2006), 1122-1156.
\bibitem{tilouine1}\bysame, Formes compagnons et complexe BGG dual pour $GSp_4$. 
Ann. Inst. Fourier (Grenoble) 62 (2012), no. 4, 1383-1436. 
\bibitem{wei1} R. Weissauer, {\em Four dimensional Galois representations}, Ast\'erisque. {\bf 302} (2005), 67-150.
\bibitem{wei} \bysame, Endoscopy for $GSp(4)$ and the cohomology of Siegel modular threefolds. Lecture Notes in Mathematics 1968. Springer-Verlag, Berlin, 2009.
\bibitem{wei2} \bysame, {\em Existence of Whittaker models related to four dimensional symplectic Galois representations},
Modular forms on Schiermonnikoog, 285-310, Cambridge Univ. Press, Cambridge, 2008.
\bibitem{wei3} \bysame, {\em Siegel modular forms mod $p$}, preprint 2008. 
\bibitem{winter}P. Winter, 
On the modular representation theory of the two-dimensional Special Linear Group over an algebraically closed field. 
J. Lond. Math. Soc., II. Ser. 16, 237-252 (1977). 
\bibitem{yasuda-yamashita}G. Yamashita and S. Yasuda, Reduction of two dimensional crystalline representations and Hypergeometric polynomials, 
in preparation. 
\bibitem{yam}T. Yamauchi, A new theta cycle for GSp4 and an Edixhoven type theorem, 
arXiv:2203.06325, submitted (2022).  
\bibitem{Zink}T. Zink, The display of a formal $p$-divisible group, Ast\'erisque 278 (2002) 127-248.

\end{thebibliography}
\end{document}